\newtheorem{thm}{Theorem} [section]
\newtheorem{lemma}[thm]{Lemma}
\newtheorem{corollary}[thm]{Corollary}
\newtheorem{prop}[thm]{Proposition}
\theoremstyle{definition}
\newtheorem{defn}[thm]{Definition}
\newtheorem{conj}[thm]{Conjecture}
\newtheorem{claim}[thm]{Claim}
\theoremstyle{remark}
\newtheorem{remark}[thm]{Remark}
\newtheorem{observation}[thm]{Observation}
\begin{document}

\numberwithin{equation}{section}

\newcommand{\hs}{\mbox{\hspace{.4em}}}
\newcommand{\ds}{\displaystyle}
\newcommand{\bd}{\begin{displaymath}}
\newcommand{\ed}{\end{displaymath}}
\newcommand{\bcd}{\begin{CD}}
\newcommand{\ecd}{\end{CD}}

\newcommand{\on}{\operatorname}
\newcommand{\proj}{\operatorname{Proj}}
\newcommand{\bproj}{\underline{\operatorname{Proj}}}

\newcommand{\spec}{\operatorname{Spec}}
\newcommand{\Spec}{\operatorname{Spec}}
\newcommand{\bspec}{\underline{\operatorname{Spec}}}
\newcommand{\pline}{{\mathbf P} ^1}
\newcommand{\aline}{{\mathbf A} ^1}
\newcommand{\pplane}{{\mathbf P}^2}
\newcommand{\aplane}{{\mathbf A}^2}
\newcommand{\coker}{{\operatorname{coker}}}
\newcommand{\ldb}{[[}
\newcommand{\rdb}{]]}

\newcommand{\Sym}{\operatorname{Sym}^{\bullet}}
\newcommand{\Symp}{\operatorname{Sym}}
\newcommand{\Pic}{\bf{Pic}}
\newcommand{\AAut}{\operatorname{Aut}}
\newcommand{\PAut}{\operatorname{PAut}}

\newcommand{\too}{\twoheadrightarrow}
\newcommand{\C}{{\mathbf C}}
\newcommand{\Z}{{\mathbf Z}}
\newcommand{\Q}{{\mathbf Q}}
\newcommand{\R}{{\mathbf R}}
\newcommand{\Cx}{{\mathbf C}^{\times}}
\newcommand{\Cbar}{\overline{\C}}
\newcommand{\Cxbar}{\overline{\Cx}}
\newcommand{\cA}{{\mathcal A}}
\newcommand{\cS}{{\mathcal S}}
\newcommand{\cV}{{\mathcal V}}
\newcommand{\cM}{{\mathcal M}}
\newcommand{\bA}{{\mathbf A}}
\newcommand{\cB}{{\mathcal B}}
\newcommand{\cC}{{\mathcal C}}
\newcommand{\cD}{{\mathcal D}}
\newcommand{\D}{{\mathcal D}}
\newcommand{\cs}{{\mathbf C} ^*}
\newcommand{\boldc}{{\mathbf C}}
\newcommand{\cE}{{\mathcal E}}
\newcommand{\cF}{{\mathcal F}}
\newcommand{\bF}{{\mathbf F}}
\newcommand{\cG}{{\mathcal G}}
\newcommand{\G}{{\mathbb G}}
\newcommand{\cH}{{\mathcal H}}
\newcommand{\cJ}{{\mathcal J}}
\newcommand{\cK}{{\mathcal K}}
\newcommand{\cL}{{\mathcal L}}
\newcommand{\baL}{{\overline{\mathcal L}}}
\newcommand{\M}{{\mathcal M}}
\newcommand{\Mf}{{\mathfrak M}}
\newcommand{\bM}{{\mathbf M}}
\newcommand{\bm}{{\mathbf m}}
\newcommand{\cN}{{\mathcal N}}
\newcommand{\theo}{\mathcal{O}}
\newcommand{\cP}{{\mathcal P}}
\newcommand{\cR}{{\mathcal R}}
\newcommand{\Pp}{{\mathbb P}}
\newcommand{\boldp}{{\mathbf P}}
\newcommand{\boldq}{{\mathbf Q}}
\newcommand{\bbL}{{\mathbf L}}
\newcommand{\cQ}{{\mathcal Q}}
\newcommand{\cO}{{\mathcal O}}
\newcommand{\Oo}{{\mathcal O}}
\newcommand{\OX}{{\Oo_X}}
\newcommand{\OY}{{\Oo_Y}}
\newcommand{\otY}{{\underset{\OY}{\ot}}}
\newcommand{\otX}{{\underset{\OX}{\ot}}}
\newcommand{\cU}{{\mathcal U}}\newcommand{\cX}{{\mathcal X}}
\newcommand{\cW}{{\mathcal W}}
\newcommand{\boldz}{{\mathbf Z}}
\newcommand{\qgr}{\operatorname{q-gr}}
\newcommand{\gr}{\operatorname{gr}}
\newcommand{\rk}{\operatorname{rk}}
\newcommand{\Sh}{\operatorname{Sh}}
\newcommand{\SH}{{\underline{\operatorname{Sh}}}}
\newcommand{\End}{\operatorname{End}}
\newcommand{\uEnd}{\underline{\operatorname{End}}}
\newcommand{\Hom}{\operatorname{Hom}}
\newcommand{\uHom}{\underline{\operatorname{Hom}}}
\newcommand{\uHomY}{\uHom_{\OY}}
\newcommand{\uHomX}{\uHom_{\OX}}
\newcommand{\Ext}{\operatorname{Ext}}
\newcommand{\bExt}{\operatorname{\bf{Ext}}}
\newcommand{\Tor}{\operatorname{Tor}}

\newcommand{\inv}{^{-1}}
\newcommand{\airtilde}{\widetilde{\hspace{.5em}}}
\newcommand{\airhat}{\widehat{\hspace{.5em}}}
\newcommand{\nt}{^{\circ}}
\newcommand{\del}{\partial}

\newcommand{\supp}{\operatorname{supp}}
\newcommand{\GK}{\operatorname{GK-dim}}
\newcommand{\hd}{\operatorname{hd}}
\newcommand{\id}{\operatorname{id}}
\newcommand{\res}{\operatorname{res}}
\newcommand{\lrar}{\leadsto}
\newcommand{\im}{\operatorname{Im}}
\newcommand{\HH}{\operatorname{H}}
\newcommand{\TF}{\operatorname{TF}}
\newcommand{\Bun}{\operatorname{Bun}}

\newcommand{\F}{\mathcal{F}}
\newcommand{\Ff}{\mathbb{F}}
\newcommand{\nthord}{^{(n)}}
\newcommand{\Gr}{{\mathfrak{Gr}}}

\newcommand{\Fr}{\operatorname{Fr}}
\newcommand{\GL}{\operatorname{GL}}
\newcommand{\gl}{\mathfrak{gl}}
\newcommand{\SL}{\operatorname{SL}}
\newcommand{\ff}{\footnote}
\newcommand{\ot}{\otimes}
\def\Ext{\operatorname {Ext}}
\def\Hom{\operatorname {Hom}}
\def\Ind{\operatorname {Ind}}
\def\bbZ{{\mathbb Z}}

\newcommand{\nc}{\newcommand}
\nc{\ol}{\overline} \nc{\cont}{\on{cont}} \nc{\rmod}{\on{mod}}
\nc{\Mtil}{\widetilde{M}} \nc{\wb}{\overline} \nc{\wt}{\widetilde}
\nc{\wh}{\widehat} \nc{\sm}{\setminus} \nc{\mc}{\mathcal}
\nc{\mbb}{\mathbb}  \nc{\K}{{\mc K}} \nc{\Kx}{{\mc K}^{\times}}
\nc{\Ox}{{\mc O}^{\times}} \nc{\unit}{{\bf \on{unit}}}
\nc{\boxt}{\boxtimes} \nc{\xarr}{\stackrel{\rightarrow}{x}}

\nc{\Ga}{\G_a}
 \nc{\PGL}{{\on{PGL}}}
 \nc{\PU}{{\on{PU}}}

\nc{\h}{{\mathfrak h}} \nc{\kk}{{\mathfrak k}}
 \nc{\Gm}{\G_m}
\nc{\Gabar}{\wb{\G}_a} \nc{\Gmbar}{\wb{\G}_m} \nc{\Gv}{G^\vee}
\nc{\Tv}{T^\vee} \nc{\Bv}{B^\vee} \nc{\g}{{\mathfrak g}}
\nc{\gv}{{\mathfrak g}^\vee} \nc{\RGv}{\on{Rep}\Gv}
\nc{\RTv}{\on{Rep}T^\vee}
 \nc{\Flv}{{\mathcal B}^\vee}
 \nc{\TFlv}{T^*\Flv}
 \nc{\Fl}{{\mathfrak Fl}}
\nc{\RR}{{\mathcal R}} \nc{\Nv}{{\mathcal{N}}^\vee}
\nc{\St}{{\mathcal St}} \nc{\ST}{{\underline{\mathcal St}}}
\nc{\Hec}{{\bf{\mathcal H}}} \nc{\Hecblock}{{\bf{\mathcal
H_{\alpha,\beta}}}} \nc{\dualHec}{{\bf{\mathcal H^\vee}}}
\nc{\dualHecblock}{{\bf{\mathcal H^\vee_{\alpha,\beta}}}}
\newcommand{\ramBun}{{\bf{Bun}}}
\newcommand{\ramBuno}{\ramBun^{\circ}}

\nc{\Buntheta}{{\bf Bun}_{\theta}} \nc{\Bunthetao}{{\bf
Bun}_{\theta}^{\circ}} \nc{\BunGR}{{\bf Bun}_{G_\R}}
\nc{\BunGRo}{{\bf Bun}_{G_\R}^{\circ}}
\nc{\HC}{{\mathcal{HC}}}
\nc{\risom}{\stackrel{\sim}{\to}} \nc{\Hv}{{H^\vee}}
\nc{\bS}{{\mathbf S}}
\def\Rep{\operatorname {Rep}}
\def\Conn{\operatorname {Conn}}

\nc{\Vect}{{\operatorname{Vect}}}
\nc{\Hecke}{{\operatorname{Hecke}}}

\newcommand{\ZZ}{{Z_{\bullet}}}
\nc{\HZ}{{\mc H}\ZZ} \nc{\eps}{\epsilon}

\nc{\CN}{\mathcal N} \nc{\BA}{\mathbb A}

\nc{\ul}{\underline}

\nc{\bn}{\mathbf n} \nc{\Sets}{{\on{Sets}}} \nc{\Top}{{\on{Top}}}
\nc{\IntHom}{{\mathcal Hom}}

\nc{\Simp}{{\mathbf \Delta}} \nc{\Simpop}{{\mathbf\Delta^\circ}}

\nc{\Cyc}{{\mathbf \Lambda}} \nc{\Cycop}{{\mathbf\Lambda^\circ}}

\nc{\Mon}{{\mathbf \Lambda^{mon}}}
\nc{\Monop}{{(\mathbf\Lambda^{mon})\circ}}

\nc{\Aff}{{\on{Aff}}} \nc{\Sch}{{\on{Sch}}}

\nc{\bul}{\bullet}
\nc{\module}{{\operatorname{mod}}}

\nc{\dstack}{{\mathcal D}}

\nc{\BL}{{\mathbb L}}

\nc{\BD}{{\mathbb D}}

\nc{\BR}{{\mathbb R}}

\nc{\BT}{{\mathbb T}}

\nc{\SCA}{{\mc{SCA}}}

\nc{\lotimes}{{\otimes}^{\mathbf L}}

\newcommand{\Coh}{L_{\operatorname{coh}}}

\nc{\QCoh}{L_{\on{qcoh}}}
\nc{\Perf}{L_{\on{perf}}}\nc{\Cat}{{\on{Cat}}}
\nc{\dgCat}{{\on{dgCat}}}
\newcommand{\Aut}{{\operatorname{Aut}}}
\nc{\bLa}{{\mathbf \Lambda}}

\nc{\RHom}{\mathbf{R}\hspace{-0.15em}\on{Hom}}
\nc{\REnd}{\mathbf{R}\hspace{-0.15em}\on{End}}

\title{Loop Spaces and Langlands Parameters}

\author{David Ben-Zvi}
\address{Department of Mathematics\\University of Texas\\Austin, TX 78712-0257}
\email{benzvi@math.utexas.edu}
\author{David Nadler}
\address{Department of Mathematics\\Northwestern University\\Evanston, IL 60208-2370}
\email{nadler@math.northwestern.edu}

\begin{abstract}
We apply the technique of $S^1$-equivariant localization to sheaves
on loop spaces in derived algebraic geometry, and obtain a
fundamental link between two families of categories at the heart of
geometric representation theory. Namely, we categorify the well
known relationship between free loop spaces, cyclic homology and de
Rham cohomology to recover the category of $\D$-modules on a smooth
stack $X$ as a localization of the category of $S^1$-equivariant
coherent sheaves on its loop space $\cL X$. The main observation is
that this procedure connects categories of equivariant $\D$-modules
on flag varieties with categories of equivariant coherent sheaves on
the Steinberg variety and its relatives. This provides a direct
connection between the geometry of finite and affine Hecke algebras
and braid groups, and a uniform geometric construction of all of the
categorical parameters for representations of real and complex
reductive groups. This paper forms the first step in a project to
apply the geometric Langlands program to the complex and real local
Langlands programs, which we describe.
\end{abstract}

\maketitle

\tableofcontents

\section{Introduction}

\nc{\fg}{\mathfrak g}

\nc{\Map}{\on{Map}} \nc{\fX}{\mathfrak X}

In this paper, we apply the technique of $S^1$-equivariant
localization to sheaves on loop spaces in derived algebraic
geometry. Namely, we categorify the well known relation between free
loop spaces, cyclic homology and de Rham cohomology into a theorem
that recovers the category of $\D$-modules on a smooth stack $X$ as
a localization of the category of $S^1$-equivariant coherent sheaves
on its loop space $\cL X$. This provides a fundamental link between
two families of categories at the heart of geometric representation
theory.

On the one hand, categories of equivariant $\D$-modules on flag
varieties are central in the representation theory of reductive
groups. Consider a complex reductive group $G$ with Lie algebra
$\fg$, Borel subgroup $B\subset G$, and flag variety $\cB= G/B$. The
localization theory of Beilinson-Bernstein identifies
representations of $\fg$ with global sections of (twisted)
$\D$-modules on $\cB$. In particular, highest weight representations
are realized by $B$-equivariant $\D$-modules on $\cB$, or in other
words, by $\D$-modules on the quotient stack $B\backslash \cB$.
Given a real form $G_\R$ of $G$ with associated symmetric subgroup
$K\subset G$, infinitesimal equivalence classes of admissable
representations of $G_\R$ correspond to Harish-Chandra modules for
the pair $(\fg, K)$. These in turn
%Similarly, for a symmetric
%subgroup $K\subset G$, Harish-Chandra modules for the pair $(\fg,
%K)$ (corresponding to infinitesimal equivalence classes of
%admissable representations of the associated
%real form $G_\R$)
are realized by $K$-equivariant $\D$-modules on $\cB$, or in other
words, by $\D$-modules on the quotient stack $K\backslash \cB$. By
the work of Adams, Barbasch and Vogan \cite{ABV} and Soergel
\cite{Soergel} on the real local Langlands correspondence,
(untwisted) $\D$-modules on $K\backslash \cB$ also appear as the
Langlands parameters for representations of real forms of the
{Langlands dual group} $G^\vee$. An important unifying structure in
the study of all these cases is the natural intertwiner or
convolution action of the finite Hecke algebra and braid group
carried by these categories.

On the other hand, categories of equivariant coherent sheaves on the
Springer resolution, the Steinberg variety, and their relatives have
played a prominent role in geometric representation theory since the
work of Kazhdan-Lusztig on the $p$-adic local Langlands conjecture.
These categories often arise as geometric versions of
representations of {affine} Hecke algebras and braid groups. For
example, a fundamental theorem of Kazhdan-Lusztig indentifies the
affine Hecke algebra of the Langlands dual group $G^\vee$ with the
Grothendieck group of equivariant coherent sheaves on the Steinberg
variety of $G$. Recent developments in the tamely ramified geometric
Langlands program (in particular, work of Bezrukavnikov and
collaborators, see \cite{Roma ICM}, and Frenkel-Gaitsgory \cite{FG})
have significantly advanced our understanding of the underlying
categorical structure. In particular, there are intimate connections
with representations of quantum groups, modular representations of
Lie algebras, and critical level representations of the loop algebra
for the Langlands dual group $G^\vee$.

The main point of this paper is to connect these two families of
categories by a categorified, algebro-geometric version of
$S^1$-equivariant localization. Namely, we explain that the Springer
resolution and Steinberg variety
%, and associated to an algebraic
%involution $\eta$ of $G$)
are the derived loop spaces of the quotient stacks $G\backslash \cB$
and $B\backslash \cB$ respectively. Motivated by the representation
theory of real groups we further introduce a collection of spaces,
the {\em Langlands parameter spaces}, which are similarly related to
the quotients $K\backslash \cB$ by symmetric subgroups. To make all
of this precise, we use the formalism of derived algebraic geometry
in which it makes sense to discuss the quotient stacks of algebraic
geometry and the loop spaces of algebraic topology at the same
time. Our main application of this viewpoint is a categorical form of
quantization of cotangent bundles which is particularly well adapted
to representation theory. Namely, the derived category of
quasicoherent sheaves on the loop space of a smooth stack carries a
circle action, and the resulting equivariant derived category is
expressed in terms of $\D$-modules on $X$.\footnote{It is possible to transport to the setting of derived
loop spaces some of the varied structures on free loop spaces in
topology. For example, given a stack $X$, quasicoherent sheaves on
its derived loop space $\cL X$ form a braided tensor category, a
derived form of the {\em Drinfeld double} of quasicoherent sheaves
on $X$, which is part of a collection of topological field theory
operations. We plan to elaborate on this structure in the future.}

In the special case when $X$ is one of the quotient stacks
$G\backslash \cB$, $B\backslash \cB$, or $K\backslash \cB$, we
recover the derived category of coherent $\D$-modules on $X$ from
coherent sheaves on the Springer resolution, Steinberg variety or
{Langlands parameter space} respectively. This provides a direct
connection between finite and affine geometric representation theory
of Hecke algebras and braid groups. In the case of the Langlands
parameter space, we obtain a uniform %conceptual and
geometric construction of all of the categorical parameters for
representations of real reductive groups. This is a crucial first
step in a project to apply ideas from the {geometric} Langlands
program to the complex and real local Langlands programs.

\medskip

The paper is organized as follows. In Section \ref{overview}, we
provide a detailed overview of the results of this paper, and in
Section \ref{applications}, an introduction to the intended
applications and our general project. In Section \ref{loop section},
we discuss properties of loop spaces and Hochschild (or small loop)
spaces in derived algebraic geometry. (The Appendix provides a quick
introduction to the theory of derived stacks.) In Section
\ref{sheaves section}, we study equivariant sheaves on derived
stacks. Our aim is to explain a categorification of the relation
between $S^1$-equivariant homology of loop spaces (cyclic homology)
and de Rham cohomology. Then in Sections \ref{Steinberg section} and
\ref{Langlands section}, we recall the role of the flag variety and
Steinberg variety in representation theory, introduce the Langlands
parameter spaces, and explain the resulting new perspective on the
parametrization of Harish-Chandra modules for real groups.

\medskip

The discussion in Section \ref{applications} summarizes a series of
papers in preparation \cite{geometric Vogan, complex,base change} on
the representation theory of real reductive groups. Among the
results are a conceptual geometric proof of (an affine
generalization of) Vogan duality, a canonical equivalence between
Harish-Chandra bimodules for Langlands dual complex groups
(compatible with Hecke actions), and a derivation of a strengthened
form of Soergel's conjecture from a geometric version of the
principle of automorphic base change.

\subsection{Acknowledgments}
We have benefited tremendously from many conversations on the
subjects of loop spaces, cyclic homology and homotopy theory. We
would like to thank Kevin Costello, Ian Grojnowski, Mike Mandell,
Tom Nevins, Tony Pantev, Brooke Shipley, Dima Tamarkin, Constantin
Teleman, Boris Tsygan, Amnon Yekutieli, and Eric Zaslow for their
many useful comments. We would like to especially thank Andrew
Blumberg for his explanations of homotopical algebra, Roman
Bezrukavnikov for his continued interest, many helpful discussions
and for sharing with us unpublished work, and finally Jacob Lurie
and Bertrand To\"en for introducing us to the beautiful world of
derived algebraic geometry and patiently answering our many
questions.

The first author was partially supported by NSF grant DMS-0449830,
and the second author by NSF grant DMS-0600909. Both authors would
also like to acknowledge the support of DARPA grant
HR0011-04-1-0031, and to thank Edward Frenkel and Kari Vilonen for
their continued support and encouragement.

%%%%%%%%%%%%%%%%%%%%%%%%%%%%%%%%%%%%%%%%%%%

\section{Overview}\label{overview}
\subsection{Derived loop spaces.}
The free loop space $LX$ of a topological space $X$ comes equipped
with many fascinating structures. Of fundamental importance for our
purposes is the fact that $LX$ carries a circle action by loop
rotation with the constant loops $X$ as fixed points. The theory of
equivariant localization for circle actions, relating topology of an
$S^1$-space and its fixed points, has been applied to spectacular effect
in this setting in the work of Witten and many others.

\medskip

The homology and $S^1$-equivariant homology of $LX$ are intimately
related to the Hochschild homology and cyclic homology of cochains
on $X$ respectively. In general, Hochschild homology and cyclic
homology provide a purely algebraic or categorical approach to the
geometry of free loop spaces. When applied to commutative rings or
schemes, Hochschild homology captures the algebra of differential
forms, while cyclic homology captures de Rham cohomology. In other
words, cyclic homology allows us to view the de Rham complex of a
scheme as an algebraic analogue of $S^1$-equivariant cochains on a
free loop space.

The above interpretation of de Rham cohomology is quite familiar in
mathematical physics, or more specifically, in supergeometry.
Namely, consider the cohomology of the circle $H^*(S^1,\C)=\C[\eta]$
(with $\eta$ of degree one) as the supercommutative ring of
functions on the odd line $\C^{0|1}$. For a smooth scheme $X$
(likewise for smooth manifolds), the mapping space
$\Map(\C^{0|1},X)$ is the superscheme given by the odd tangent
bundle $\BT_X[-1]$ (we will remember its $\Z$-grading of one). Thus
one may think of $\BT_X[-1]$ as a linearized analogue of the free
loop space. Observe that functions on $\BT_X[-1]$ are simply
differential forms $\Omega_X^{-\bullet}=\Sym \Omega_X[1]$
placed in negative (homological) degrees.
% or in other words, the Hochschild homology of functions on $X$.
The analogue of the $S^1$-action on the loop space is translation
along $\C^{0|1}$. %(which is the graded Lie algebra assigned to $S^1$
%by the Milnor-Moore theorem).
This action defines a canonical,
square zero, odd vector field on $\BT_X[-1]$ which is easily seen to
be the de Rham differential considered as an odd derivation of
$\Omega_X^{-\bullet}$.

Another point of view on the same construction is to model the
circle by two points connected by two line segments, and to
interpret concretely what maps from such an object to $X$ should be.
First, mapping two points to $X$ defines the product $X\times X$.
One of the line segments connecting the points says the points are
equal: we should impose the equation $x=y$ that defines the
diagonal $\Delta\subset X\times X$. Then the other line segment says
the points are equal again, so we should impose the equation $x=y$
again, or in other words, take the self-intersection
$\Delta\cap\Delta\subset X\times X$. Of course, we could interpret
this naively as being a copy of $X$ again, however the intersection
is far from transverse. For $\cO_X$ the ring of functions on $X$,
the tensor product $\Oo_X\otimes_{\Oo_{X\times X}} \Oo_X$ needs to
be derived (there are higher Tor terms). If we use the Koszul
complex to calculate the derived tensor product
$\Oo_X\otimes^{\BL}_{\Oo_{X\times X}}\Oo_X$, we again find the
commutative differential graded ring $\Omega_X^{-\bullet}$. Thus if
we accept the broader framework of supergeometry as a correction for
the degenerate intersection, then we again discover $\BT_X[-1]$ as a
model for the loop space of $X$.

\medskip

We would like to apply a version of the above picture in more
complicated contexts where the geometry of a smooth variety $X$ is
replaced by the equivariant geometry of $X$ with respect to an
algebraic group action. Our motivating examples are flag varieties
equipped with the action of various groups of significance in
representation theory. Thus we would like to work in a context that
generalizes schemes simultaneously in two directions. We need to be
able to perform the following operations without fear of missing
some aspect of the geometry:

\begin{enumerate}
\item[$\bullet$] Quotients (and more general gluings or colimits) of schemes
\item[$\bullet$] Intersections (and more general fiber products or limits) of schemes
\end{enumerate}

In both cases, passing to derived versions of the above operations
guarantees that no information is lost. The need to correct
quotients has long been recognized in algebraic geometry, and a
powerful and flexible solution is provided by the theory of stacks.
Within the framework of this theory, there is an interesting formal
substitute for the loop space of a stack $X$. Namely, we can
consider the classifying stack $pt/\Z=B\Z$ as a version of the
circle, and define the {\em inertia space} of $X$ as the mapping
stack
$$
IX=\Hom(B\Z,X).
$$
Since the circle $B\Z$ is purely homotopical, it cannot map
nontrivially along the scheme direction of $X$ but only in the
stacky direction. As a result, the objects of $IX$ are objects of
$X$ equipped with automorphisms. In particular for the stack
$BG=pt/G$, we find that $I(BG)$ is the adjoint quotient $G/G$. Note
that the inertia stack $IX$ of a scheme $X$ is nothing more than $X$
again, so we have not yet accounted for the excess self-intersection
of the diagonal.

The need to correct for degenerate intersections has come to
prominence more recently, in particular through the derived moduli
spaces vision of Drinfeld, Deligne, Feigin and Kontsevich \cite{Kont
dg}, and in particular the theory of virtual fundamental classes
(see \cite{BeFa,Behrend,CF-K} and references therein). Derived
intersections and fiber products are also necessary tools in
geometric representation theory since categories of sheaves are
sensitive to derived structures. In recent years, through the work
of To\"en-Vezzosi, Lurie and others, the foundations of a theory of
{\em derived schemes and stacks} have emerged, elegantly combining
algebraic geometry and homotopical algebra. In broad outline, one
replaces commutative rings by simplicial commutative rings (or
connective commutative differential graded rings, in characteristic
zero), and functors of points take values not in sets but in
topological spaces. (One of the formal but complicated aspects of
the story is that both the domain and target of such functors must
be treated with the correct homotopical understanding.) For the
reader's convenience, we provide an overview of the theory of
derived stacks in the Appendix, and we recommend To\"en's survey
\cite{Toen} for more details and references.

%One immediate benefit of working in the world of higher stacks,
%where functors of points take values in topological spaces,
The framework of derived stacks allows one to correctly take
quotients (since functors of points take values in topological
spaces), and to correctly form intersections (since functors of
points are defined on simplicial commutative rings). One can
immediately import all of the structures of homotopy theory to this
setting. In particular, we can look at the derived stack of maps
from $S^1=B\Z$ to another stack $X$. This gives an enhanced version
of the inertia stack which we simply call the {\em loop space}
$$
\cL X=\R \Hom(S^1,X).
$$
It combines the notion of odd tangent bundle and inertia stack: $\cL
X=\BT_X[-1]$, for $X$ a smooth scheme, and $\cL (BG)=G/G$ for the classifying space
of an algebraic group $G$.
In general, for $X$ a smooth scheme equipped with a $G$-action, the
loop space $\cL (X/G)$ combines even directions coming from
stabilizers of orbits in $X$, and odd directions coming from normals
to the orbits.

Inside of the loop space $\cL X$, we single out the
{small loop space} or {Hochschild space} $\cH X$. By definition, it
is the formal completion of $\cL X$ along the constant loops
$X\subset \cL X$. For a smooth stack, we show that $\cH X$ is the formal
completion of the shifted tangent bundle $\BT_X[-1]$ along its zero section. In particular,
$\cH(BG)=\widehat{G}/G$ is the adjoint quotient of the formal group,
while $\cH X=\cL X$ for $X$ a smooth scheme. The key motivation
for introducing $\cH X$ is that
small loops are local objects on $X$ in a suitable sense.

\medskip

Although not needed in this paper, it is interesting to revisit
(Section \ref{structures}) some of the familiar structures on loop
spaces and Hochschild chains in the setting of a smooth scheme $X$.
Namely, $\cL X$ forms a family of groups over $X$ (in an appropriate
homotopical sense),
% the based loops $\Omega_x X$ for $x\in X$,
and $\cH X$ is the corresponding family of formal groups. Functions
on $\cH X$ are simply the Hochschild chains of $X$, and the
Hochschild-Kostant-Rosenberg theorem then plays the role of the
Baker-Campbell-Hausdorff theorem (as explained in \cite{Mark} and
expanded on in \cite{Rama1,Rama2, RW}). It identifies the formal
completion of the Lie algebra given by the odd tangent bundle
$\BT_X[-1]$ with the formal group $\cH X$ itself. The Lie algebra
structure on $\BT_X[-1]$ is given by the Atiyah class as explained
in~\cite{Kap RW}. The Hochschild cochains are the enveloping algebra
of $\BT_X[-1]$, and hence may be thought of as distributions
supported on constant loops inside small loops. Thus Hochschild
cochains are analogues of the algebras of chiral differential
operators of Malikov, Schechtman and Vaintrob, while the composition
of loops is analogous to the factorization structure on the small
formal loop space of Kapranov and Vasserot.

%%%%%%%%%%%%%%%%%%%%%%%%%%%%%%%%%%%%%%%%%%%%%%%

\subsection{Sheaves on loop spaces.}
We are primarily
interested in the loop space $\cL X$ of a stack for its derived
category of quasicoherent sheaves (or rather its differential graded (dg)
enhancement) which we denote by $\QCoh(\cL X)$.\footnote{By general formalism,
the category $\QCoh(\cL X)$ inherits the rich structures of loop
spaces. The theory of string topology provides an appealing way to
organize many of these structures. It describes the
natural operations on the homology of loop spaces as a part of
two-dimensional topological field theory. In the setting of derived
algebraic geometry, one can show that $\QCoh(\cL X)$ possesses a
categorified version of the string topology operations carried by
the homology of loop spaces. In particular, the pair of pants
defines a braided tensor structure on $\QCoh(\cL X)$ (more
precisely, an $E_2$-category structure). This generalizes the notion
of the usual Drinfeld double of $G$ whose modules are
$\QCoh(\cL(BG))=\QCoh(G/G)$.} In order to avoid technical
complications we will work throughout with $X$ a smooth Artin
(1-)stack with affine diagonal, though much of the paper could be generalized to higher
stacks and presumably with proper care to singular and derived targets
as well.
We will focus on the dg derived
category of quasicoherent sheaves with coherent cohomology
which we denote by $\Coh(\cL X)$.

The loop space $\cL X$ automatically carries an action of the group
$S^1$, which may be expressed for example through the Connes
formalism of cyclic objects. We may consider $S^1$-equivariant
sheaves on the loop space $\cL X$, or equivalently, sheaves on the
space of unparametrized loops $\cL X/S^1$. The $S^1$-action is also
manifested as an automorphism of the identity functor of $\QCoh(\cL
X)$, which on a sheaf $\cF$ is given by the monodromy of $\cF$ along
the $S^1$-orbits. Roughly speaking, equivariant sheaves are given by
sheaves on $\cL X$ equipped with a homotopy between their monodromy
operator and the identity. On the other hand, on the space of small
loops $\cH X\simeq \wh{\BT}[-1]$ we have the odd derivation given by
the de Rham differential. To relate the two, we discuss a general
Koszul duality formalism for equivariant sheaves in Section
\ref{equivariant}. In the case of $S^1$ acting on a point, this
gives rise to the well-known equivalence between cyclic vector
spaces and complexes with exterior algebra action (mixed complexes).
After restricting to the Hochschild space $\cH X$ of small loops, we
check that an $S^1$-equivariant structure is precisely a lifting of
the odd vector field on $\BT_X[-1]$ to the sheaf. In other words,
the sheaf becomes endowed with an action of the de Rham differential
-- considered as a {\em homotopy} on the {\em homological} complex
$\Omega_X^{-\bullet}$ -- or equivalently an action of the algebra
$\Omega_X^{-\bullet}[d]$ in which we've adjoined $d$ in degree $-1$.
We thus have the following categorification of the relation between
de Rham cohomology and cyclic homology:

\begin{thm}[Theorem \ref{cyclic and D} below]
For a smooth Artin stack $X$, there is a canonical quasi-equivalence
of dg derived categories
$$
\QCoh(\cH X)^{S^1} \simeq \QCoh(X, \Omega_X^{-\bullet}[d])
$$
preserving subcategories of coherent objects.
\end{thm}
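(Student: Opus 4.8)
The plan is to establish the equivalence by passing through a sequence of identifications, each of which reduces an $S^1$-equivariance datum to an explicit algebraic structure. First I would recall from the earlier discussion that $\cH X$ is the formal completion of the shifted tangent bundle $\BT_X[-1]$ along its zero section $X$, so that $\QCoh(\cH X)$ is modelled by quasicoherent sheaves on $X$ equipped with a module structure over the sheaf of algebras $\Omega_X^{-\bullet} = \Sym_{\Oo_X}\Omega_X[1]$, i.e. with a square-zero odd section of the tangent complex acting $\Oo_X$-linearly. (One must be slightly careful: the formal-completion language means that the module is, more precisely, a module over the completed Rees/pro-algebra, but after taking coherent cohomology this subtlety is harmless, and I would isolate it in a lemma on the structure of $\QCoh(\cH X)$.) The content of the theorem is then to understand what it means to additionally equip such a sheaf with an $S^1$-equivariant structure.

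The key step is the Koszul-duality input from Section \ref{equivariant}. For $S^1$ acting on a point, that formalism produces the equivalence between cyclic (or, on the derived level, $C_*(S^1)$-comodule) structures and modules over $C^*(BS^1) = \C[u]$ with $|u|=2$; equivalently, after shifting, the classical equivalence between mixed complexes and $S^1$-equivariant complexes. The plan is to apply this \emph{relatively} over $\cH X$. Since the $S^1$-action on $\cH X$ is by translation along $\C^{0|1}$, and since $H^*(S^1,\C) = \C[\eta]$ with $\eta$ in (cohomological) degree $1$, the infinitesimal generator of the action is the odd derivation of $\Omega_X^{-\bullet}$ that was identified above with the de Rham differential. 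Thus an $S^1$-equivariant structure, unwound through Koszul duality, amounts to promoting the de Rham differential from an honest derivation to a \emph{homotopy} acting on the given $\Omega_X^{-\bullet}$-module — concretely, to adjoining a degree $-1$ operator $d$ compatible with the module structure, i.e.\ to upgrading the action of $\Omega_X^{-\bullet}$ to an action of $\Omega_X^{-\bullet}[d]$. Making this precise is a matter of identifying the bar/cobar resolution computing $S^1$-invariants with the Koszul complex of $\Omega_X^{-\bullet}[d]$ over $\Omega_X^{-\bullet}$, which is exactly the $S^1$-acting-on-a-point computation fibered over $X$.

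The main obstacle, and where I would spend the most care, is the passage between the \emph{homotopical} circle $S^1 = B\Z$ (with its action on $\cL X$ expressed via the Connes cyclic formalism) and the \emph{algebraic} odd line $\C^{0|1}$ with its translation action on $\BT_X[-1]$ — in other words, checking that the Koszul-dual description of $S^1$-equivariant sheaves really does match the de Rham picture on the nose, with the correct gradings and signs, and that this identification is natural enough to globalize over the stack $X$ rather than just over an affine chart. This requires knowing that the relevant derived-categorical constructions (completion along constant loops, formation of $S^1$-invariants, the HKR-type identification of $\cH X$) all commute suitably with the smooth descent needed to reduce to the affine case, and that the braided/$S^1$ structures are compatible with this descent. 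Granting the general Koszul-duality statement of Section \ref{equivariant} and the identification of $\cH X$ with $\wh{\BT_X[-1]}$, the remaining work is to assemble these, verify compatibility with coherence (both functors are built from $\Oo_X$-linear operations of finite homological dimension, so coherent cohomology is preserved in both directions), and thereby conclude the claimed quasi-equivalence $\QCoh(\cH X)^{S^1} \simeq \QCoh(X,\Omega_X^{-\bullet}[d])$ restricting to an equivalence on coherent subcategories.
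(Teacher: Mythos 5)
Your proposal is correct and follows essentially the same route as the paper: reduce to the smooth affine case by descent along a simplicial presentation, use Proposition~\ref{hochschild is completed odd tangent} (HKR) to identify $\QCoh(\cH X)$ with $\Omega_X^{-\bullet}$-modules, and then convert the $S^1$-equivariance datum into the adjoined degree $-1$ operator $d$ via the Koszul duality of Section~\ref{equivariant} (Corollary~\ref{cyclic to Lie}), identifying the Connes differential with the de Rham differential. The point you flag as the main obstacle — matching the cyclic/homotopical circle action with the algebraic de Rham differential — is exactly the step the paper handles by passing through the cyclic structure on $C^{-\bullet}(\cO_X)$ and citing the standard identification in Loday.
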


Modules over the de Rham complex are intimately related to sheaves
with flat connection, or $\D$-modules, on $X$. As explained in
\cite{Kap dR, BD Hitchin} the relation between differential
operators $\D_X$ and the de Rham complex $(\Omega_X,d)$ is a form of
Koszul duality.\footnote{Koszul duality does not naively give an
equivalence on categories of quasicoherent sheaves. This can be
corrected by modifying the notion of equivalence of de Rham modules
(as in \cite{BD Hitchin}), or by killing some large $\D$-modules
which are missed by the de Rham functor.
% (which is the option we follow).
In either case, the categories of coherent modules, which are our
primary interest, are unaffected.} First, a form of Koszul duality
identifies $\Omega_X^{-\bullet}[d]$-modules with dg modules over the
Rees algebra $\cR_X$ of $\D_X$ placed in positive even degrees.

To recover the category of $\D_X$-modules, or equivalently dg
modules over the de Rham complex $(\Omega_X^{\bullet},d)$, we pass
to the {\em periodic} version, by tensoring the category of modules
with the ring $\C[u,u\inv]$. On the other side, this amounts to
performing the usual localization of $S^1$-equivariant cohomology.
Observe that $S^1$-equivariant sheaves form a category linear over
the ring $H^*_{S^1}(pt)=\C[u]$, with $|u|=2$. If we invert $u$, we
obtain a $\Z/2\Z$-periodic dg category
$$
\QCoh(\cH X)_{per}^{S^1}=\QCoh(\cH X)^{S^1}\ot_{\C[u]}\C[u,u\inv].
$$
The inversion of $u$ matches up with the localization from the Rees
algebra to $\D_X$ itself, resulting in the following relation
between $\D_X$-modules and the periodic cyclic category of $X$:

\begin{corollary}
For a smooth Artin stack $X$, there are canonical quasi-equivalences
of dg derived categories of coherent sheaves
$$
\Coh(\cH X)^{S^1}\simeq \Coh(X,\cR_X)
$$
$$
\Coh(\cH X)_{per}^{S^1}\simeq \Coh(X,\D_X)_{per}
$$
\end{corollary}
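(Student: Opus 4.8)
The plan is to deduce the Corollary directly from Theorem~\ref{cyclic and D} by combining it with the Koszul duality between the de Rham complex and the Rees algebra, followed by $u$-localization. First I would observe that the quasi-equivalence $\QCoh(\cH X)^{S^1} \simeq \QCoh(X, \Omega_X^{-\bullet}[d])$ of the Theorem is linear over $H^*_{S^1}(pt) = \C[u]$ on the left-hand side, and that under the equivalence the variable $u$ corresponds to the homotopy generator adjoined to $\Omega_X^{-\bullet}$, i.e. the degree $-2$ operator that on a mixed complex encodes the Connes $B$-operator. Passing to coherent objects (which the Theorem already preserves) gives $\Coh(\cH X)^{S^1} \simeq \Coh(X, \Omega_X^{-\bullet}[d])$. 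The first displayed equivalence of the Corollary then follows once we identify $\Coh(X, \Omega_X^{-\bullet}[d])$ with $\Coh(X, \cR_X)$, which is exactly the statement (cited to \cite{Kap dR, BD Hitchin}) that dg modules over $\Omega_X^{-\bullet}[d]$ are identified, via Koszul duality, with dg modules over the Rees algebra $\cR_X$ of $\D_X$ placed in positive even degrees. I would spell this out as a local computation: Koszul duality sends the Chevalley--Eilenberg-type algebra $\Sym(\BT_X[1]) = \Omega_X^{-\bullet}$ with the adjoined differential to the universal enveloping algebra of the shifted tangent Lie algebroid, filtered by order, whose Rees construction is precisely $\cR_X$; the adjoined degree $-1$ generator $d$ matches the Rees parameter in degree $2$.

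Next I would address the periodic statement. The ring $\C[u]$ acting on $\Coh(\cH X)^{S^1}$ matches, under the two equivalences above, the Rees parameter $\hbar$ inside $\cR_X$: both sit in cohomological degree $2$, both are the deformation variable interpolating between the associated graded ($\Omega_X^{-\bullet}$ with $d=0$, resp. $\Sym \BT_X$) and the ``quantum'' object. Therefore inverting $u$ on one side is the same operation as inverting $\hbar$ on the other, i.e. tensoring $\Coh(X,\cR_X)$ over $\C[\hbar]$ with $\C[\hbar,\hbar\inv]$. Inverting the Rees parameter in $\cR_X$ returns the sheaf of differential operators $\D_X$ with its $u$-grading, so $\Coh(X,\cR_X)\ot_{\C[u]}\C[u,u\inv] \simeq \Coh(X,\D_X)_{per}$. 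Combining, $\Coh(\cH X)_{per}^{S^1} = \Coh(\cH X)^{S^1}\ot_{\C[u]}\C[u,u\inv] \simeq \Coh(X,\D_X)_{per}$, which is the second displayed equivalence. Since everything in sight is compatible with the structure of $\C[u]$- (resp.\ $\C[u,u\inv]$-) linear dg categories, the localization commutes with passing to coherent subcategories, so no separate finiteness argument is needed beyond what the Theorem already supplies.

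The step I expect to be the main obstacle is making the Koszul duality identification $\Coh(X,\Omega_X^{-\bullet}[d]) \simeq \Coh(X,\cR_X)$ fully precise at the level of sheaves of dg algebras on a stack, rather than pointwise or for affine $X$. Two subtleties arise: first, Koszul duality for the de Rham complex does not give an equivalence on \emph{all} quasicoherent modules (as the footnote after Theorem~\ref{cyclic and D} notes, one must either modify the notion of equivalence of de Rham modules or discard certain large $\D$-modules), so I must argue that on the coherent subcategories -- which are our only concern here -- these pathologies disappear and the equivalence is honest. Second, the grading bookkeeping: the de Rham side naturally lives in nonpositive (homological) degrees while the Rees algebra lives in nonnegative (even) degrees, and one must track the $\Z$-grading coming from loop rotation carefully so that the $\C[u]$-linear structures genuinely correspond before localizing. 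I would handle both by reducing to the known affine/smooth-scheme statements of \cite{BD Hitchin, Kap dR}, applied smooth-locally on $X$, and then invoking fpqc (or smooth) descent for the dg categories of coherent sheaves, which is available in the derived setting; the compatibility with the $S^1$-action and with $u$ is then checked on the local models where it is the classical mixed-complex Koszul duality recalled in Section~\ref{equivariant}.
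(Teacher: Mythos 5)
Your approach is essentially the paper's own: it derives the first equivalence by composing Theorem~\ref{cyclic and D} with the Koszul duality $\Coh(X,\Omega_X^{-\bullet}[d])\simeq\Coh(X,\cR_X)$ (the paper's Theorem~\ref{Koszul de Rham} and Corollary~\ref{cyclic and rees}), and the second by identifying $u\in H^2(BS^1)$ with the Rees parameter $t\in\cR_X^2$ and localizing. One small point you gloss over: you assert that ``no separate finiteness argument is needed,'' but the paper does invoke the existence of good filtrations on coherent $\D$-modules to justify that $\Coh(X,\cR_X)\ot_{k[t]}k[t,t\inv]\simeq\Coh(X,\D_X)\ot_k k[t,t\inv]$ is an equivalence rather than merely a functor; this is a standard but genuine step.
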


This picture of $\D$-modules as $S^1$-equivariant sheaves on the
Hochschild space is of course closely related to many other ways to
express flat connections. Most directly, the dg Lie algebra
$\BT_X[-1]$ acts by endomorphisms of the identity of $\QCoh(X)$.
Namely, the action $\BT_X[-1]\ot\cF\to\cF$ is given by the Atiyah
class of $\cF$, which is the one-jet extension $\cJ
\cF\in\Ext^1(\cF,\cF\ot\Omega_X)$. A trivialization of the Atiyah
class of a sheaf is precisely the data of a connection. The
structure of trivialization of the monodromy on sheaves on $\cH X$
is related by pullback to trivialization of the Atiyah class on $X$.
This gives an alternative route to recover the relation between
$S^1$-equivariant sheaves on $\cH X$ and flat connections on $X$.
% to which we plan to return.

For another point of view, note that Koszul duality identifies
sheaves on the odd tangent bundle (modules for
$\Omega^{-\bullet}_X$) with sheaves on the cotangent bundle (modules
for $\Sym \BT_X$). Passing from the graded ring
$\Omega^{-\bullet}_X$ to the differential graded ring
$(\Omega_X^{-\bullet},d)$ corresponds to deforming the graded ring
$\Sym T_X$ to the filtered ring $\D_X$. Our original motivation for
this story came from the observation that in applications to
representation theory, it is often easier to identify the
differential $d$ than the deformation quantization $\D_X$. Thus we
think of loop spaces and their circle action as a useful geometric
counterpart to cotangent bundles and their quantization. The same
paradigm appears in relating string topology of $X$ (that is,
topology of $LX$) to the A-model (Fukaya category) of $T^*X$. In
that sense, this picture is a counterpart to the emerging relation
between Fukaya categories and $\D$-modules or constructible sheaves
(see \cite{KW,NZ,Nad}).

We would also like to mention the beautiful work of Simpson and
Teleman \cite{ST} on de Rham's theorem on stacks. It deals with
$\D$-modules on general stacks as sheaves equivariant for the formal
neighborhood of the diagonal. This natural picture is related to our
loop space picture by a somewhat awkward shift of grading (taking
the homological or simplicial ring given by Hochschild homology and
translating it into a cohomological or cosimplicial ring giving the
usual de Rham stack). On the categorical level, this requires
lifting to a ``mixed" setting \`a la \cite{BGS}. We hope to return
to this issue in the future.

%%%%%%%%%%%%%%%%%%%%%%%%%%%%%%%%%%%%%%%%%%%%%

\subsection{Equivariant $\D$-modules on flag varieties}
We now turn to representation theory, which is our primary impetus
for this work and which takes up the last two sections of the paper.
Our motivation is the observation that (the equivariant versions of)
the Steinberg variety and its relatives, the Langlands parameter
spaces, are loop spaces, and that the corresponding equivariant
localization (as described above) matches well with the Langlands
program. In this section and the next we review some of the
background in geometric representation theory, with a large emphasis
on representations of real Lie groups. Our results are described in
Section \ref{results}. We recommend that the reader interested in
applications in geometry or complex groups skip the real material on
first reading (see Section \ref{Langlands spaces} for the definition
of the Langlands parameter spaces).

\medskip

Let $G$ be a complex reductive group with Lie algebra $\fg$, Borel
subgroup $B\subset G$, and flag variety $\cB= G/B$. A primary
motivation for studying $\D$-modules on algebraic stacks comes from
the localization theory of Beilinson-Bernstein. It identifies
representations of $\g$ with
%the abelian category %$\D_{\cB}-\module$ of
global sections of twisted $\D$-modules on $\cB.$ Furthermore, given
a subgroup $K\subset G$, it identifies modules for the
Harish-Chandra pair $(\g,K)$ with global sections of $K$-equivariant twisted
$\D$-modules on $\cB$. The following well-known cases are of
traditional interest:

\medskip

(1) The case $K=G$ gives the Borel-Weil description of irreducible
algebraic (equivalently, finite-dimensional) representations of $G$
as sections of equivariant line bundles on $\cB$.
% of which only the trivial representation has trivial infinitesimal character.

\medskip

(2) The case $K=B$  gives highest-weight modules as sections of
twisted $\D$-modules smooth along the Schubert stratification. Such
modules are closely related to the objects of category $\Oo$
%. Thus we
%obtain %the category of highest-weight representations of $\g$ (
%the subcategory of
of Bernstein-Gelfand-Gelfand.
% with trivial infinitesimal
%character.
Via the identification
$$
B\backslash \cB= G\backslash (\cB\times \cB),
$$
they are also closely related to Harish-Chandra bimodules, and
% with trivial infinitesimal character.
thus also to admissible representations
%with trivial infinitesimal character
of $G$ considered as a real Lie group.

\medskip

(3) Finally, the case of a {symmetric subgroup} $K\subset G$ is of
fundamental interest for its relation to real groups. By a symmetric
subgroup, we mean the fixed points of an algebraic involution
${\eta}$. Such involutions correspond to antiholomorphic involutions
$\theta$ and hence real forms $G_\R$ of $G$ via the assignment $
\theta=\eta\circ \kappa $ where $\kappa$ is a commuting Cartan
involution of $G$.
%Denote by $G_\R$ the real form of  $G$ defined by $\theta$.
In this case, the irreducible $K$-equivariant twisted $\D$-modules
give infinitesimal equivalence classes of irreducible admissible
representations of $G_\R$. % with trivial infinitesimal character.

\medskip

%Representations with other infinitesimal
%characters are similarly recovered from {twisted} $\D$-modules on
%$\cB$.
In this paper, we will restrict our attention to untwisted
$\D$-modules. This choice reflects the fact that we intend to apply
the ideas of this paper to $\D$-modules in their role as Langlands
parameters. According to Adams-Barbasch-Vogan~\cite{ABV} and
Soergel~\cite{Soergel}, it is untwisted $\D$-modules which arise in
this way. In addition, rather than abelian categories, we will work
with the corresponding $K$-equivariant derived categories
$\D_K(\cB)$ of $\D$-modules on $\cB$ for each of the above subgroups
$K$. (Our convention implicit in the notation $\D_K(\cB)$ will be to
consider only coherent $\D$-modules.) It is important to note that
the equivariant categories are {\em not} the derived categories of
the abelian categories. For example, there are higher Ext groups
between equivariant sheaves as can be seen already in the case
$K=G=\G_m$ where we have
$$
\D_K(\cB) = \D_{\G_m}(pt) = H^*_{S^1}(pt)-\module.
$$

In the case $K=B$, the category  $\D_B(\cB)$ is a monoidal category
under convolution, which acts on any category of the form
$\D_K(\cB)$.
%This action comes from the work of
%Kazhdan-Lusztig on representations of finite Hecke algebras which
%motivated the Beilinson-Bernstein construction (to which it reduces
%on appropriate $K$-groups), and
The action gives rise to an action on $\D_K(\cB)$ of the Artin braid
group of $G$, generalizing the classical principal series
intertwining operators. The Koszul duality theorem of
Beilinson-Ginzburg-Soergel~\cite{BGS} identifies $\D_B(\cB)$
 with the derived category of Harish-Chandra bimodules
 with unipotent {generalized} infinitesimal character.

\medskip

The $K$-equivariant derived categories $\D_K(\cB)$ for a symmetric
subgroup $K\subset G$ play an important role in the local Langlands
program over the real numbers \cite{ABV,Soergel}. In \cite{ABV},
Adams, Barbasch and Vogan recast the parametrization of irreducible
admissible representations of real forms of a complex reductive
group $\Gv$ (in the form it developed from the works of
Harish-Chandra, Langlands, Shelstad and others) in terms of these
categories.\footnote{Note the nonstandard switching of the roles of
$G$ and its Langlands dual group $G^\vee$. This notational choice is
(partially) excused by the fact that this paper takes place
completely on the spectral side of Langlands duality.} They
reinterpret Vogan's character duality \cite{Vogan} as an extension
of this classification to the Grothendieck groups of such
categories.

To explain the general shape of this picture, we introduce some
further notation. Fix once and for all an algebraic involution
$\eta$ of the complex reductive group $G$. Then associated to $\eta$
is a finite collection $\Theta(\eta)$ of antiholomorphic involutions
of the Langlands dual group $\Gv$.
%(all in the same inner class).
For each $\theta\in \Theta(\eta)$, we write
$G^\vee_{\R,\theta}\subset \Gv$ for the corresponding real form.

Let $\h$ denote the universal Cartan algebra of $\fg$, and let $W$
denote its Weyl group. For each $[\lambda]\in \h/W\simeq
(\h^{\vee})^*/W$, we write $\HC_{\theta, [\lambda]}$ for the
category of Harish-Chandra modules for the real form $\Gv_{\R,
\theta}$ with generalized infinitesimal character given by
$[\lambda]$.

For simplicity, we will restrict our attention to the case when $[\lambda]$ is regular.
Fix a semisimple lift $\lambda \in \fg$ of the infinitesimal
character $[\lambda]$, and let $\alpha\in G$ denote the element
$\exp(\lambda)$. Let $G_\alpha\subset G$ be the reductive subgroup
that centralizes $\alpha$, and let $\cB_\alpha=G_\alpha/B_{\alpha}$
be its flag variety. Consider the finite set of twisted conjugacy
classes
$$
\Sigma({\eta,\alpha})=\{\sigma\in G|\sigma\eta(\sigma)=\alpha\}/G.
$$
Each $\sigma\in \Sigma({\eta,\alpha})$ defines an involution of
$G_\alpha$, and we write $K_{\alpha, \sigma}\subset G_\alpha$ for
the corresponding symmetric subgroup.

\begin{thm}[\cite{Vogan,ABV}]
There is a perfect pairing between the Grothendieck groups
$$
\bigoplus_{\theta\in \Theta(\eta)} K({\HC}_{\theta,[\lambda]}) \quad
\leftrightarrow \quad \bigoplus_{\sigma\in \Sigma({\eta,\alpha})}
K(\D_{K_{\alpha,\sigma}}(\cB_\alpha))
$$
respecting Hecke symmetries.
% and
%identifying the classes of irreducible representations with those of
%irreducible $\D$-modules.\footnote{check...duality??}
\end{thm}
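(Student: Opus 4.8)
The plan is to transport both Grothendieck groups to geometry on flag varieties, reduce the statement to a bijection of explicit parameter sets, and then identify that bijection with Vogan's character duality.

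First I would localize. Since $[\lambda]$ is regular, Beilinson--Bernstein localization is an equivalence, so each $\HC_{\theta,[\lambda]}$ is identified with the category of $K^\vee_\theta$-equivariant twisted $\D$-modules on the flag variety $\cB^\vee$ of $\Gv$, where $K^\vee_\theta\subset\Gv$ is the symmetric subgroup attached to $\theta$. Via Riemann--Hilbert and the theory of equivariant perverse sheaves, $K(\HC_{\theta,[\lambda]})$ then acquires a canonical basis of ``standard'' objects --- and another of irreducible objects --- indexed by pairs $(Q,\cL)$ with $Q$ a $K^\vee_\theta$-orbit on $\cB^\vee$ and $\cL$ an irreducible equivariant local system on $Q$. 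Summing over $\theta\in\Theta(\eta)$ produces the set of complete geometric parameters for the real forms $\Gv_{\R,\theta}$ at infinitesimal character $[\lambda]$. On the other side, the passage to the centralizer $G_\alpha$ of $\alpha=\exp(\lambda)$ is the standard device for replacing a regular infinitesimal character by an integral one: each $\sigma\in\Sigma(\eta,\alpha)$ cuts out a symmetric subgroup $K_{\alpha,\sigma}\subset G_\alpha$, and $K(\D_{K_{\alpha,\sigma}}(\cB_\alpha))$ likewise carries standard and irreducible bases indexed by $K_{\alpha,\sigma}$-orbits on $\cB_\alpha$ with equivariant local systems. So both sides become free abelian groups on explicit geometric parameter sets, equipped with unitriangular change-of-basis matrices given by Kazhdan--Lusztig--Vogan polynomials.

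Next I would write down the pairing, following Adams--Barbasch--Vogan \cite{ABV}: pair an irreducible object on one side against a standard object on the other, that is, pair the characteristic cycles of the corresponding perverse sheaves inside a common conormal Lagrangian of Steinberg type. The assertion of the theorem is then that, expressed in the standard bases, this pairing matrix becomes --- after the known unitriangular KLV base change --- a signed permutation matrix; equivalently, there is a bijection of the two parameter sets, coming from the Langlands classification, under which an irreducible pairs to $\pm 1$ with the standard object it matches and to $0$ with every other. I expect this to be the main obstacle, as it is precisely Vogan's duality \cite{Vogan}: its proof shows that the recursion defining the KLV polynomials on one parameter set is exchanged with the one on the other, and it requires the full combinatorics of cross-actions and Cayley transforms, together with careful tracking of how the blocks in $\Theta(\eta)$ are matched against the twisted conjugacy classes in $\Sigma(\eta,\alpha)$.

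Finally I would check that the pairing respects the Hecke symmetries, up to the standard antiautomorphism $T_w\mapsto T_{w}\inv$ of $\Hec$. On the geometric side the action is convolution with $\D_B(\cB)$; transported through the localization of the first step, it is the coherent-continuation (intertwining-operator) action on Harish-Chandra modules. Each Hecke generator is realized geometrically as a pushforward--pullback along a $\Pp^1$-bundle of flag varieties, and such a correspondence acts by mutually adjoint operators on the two families of conormal cycles defining the pairing; this adjointness is the required compatibility. Assembling the three steps yields the theorem.
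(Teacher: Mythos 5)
The statement you are proving is not actually proved in this paper: it is quoted as background, cited to~\cite{Vogan} and~\cite{ABV}, and the paper later derives it as a corollary of its own ``affine Vogan duality'' (Theorem~\ref{affine Vogan}) via $S^1$-equivariant localization, which is a genuinely different, loop-space-theoretic route. So there is no proof in this paper for you to match; what you have written is a reconstruction of the Adams--Barbasch--Vogan argument, and as such it is a reasonable outline.

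Two remarks on the details. First, your definition of the pairing via ``characteristic cycles inside a common conormal Lagrangian of Steinberg type'' is not quite how ABV set it up: the two geometric parameter sets live on different flag varieties ($\cB^\vee$ for the $\theta$-side and $\cB_\alpha$ for the $\sigma$-side), so there is no common Lagrangian to intersect in. In ABV the pairing is declared combinatorially --- an irreducible on one side pairs against a standard on the other via the (signed) Langlands--ABV bijection of complete geometric parameters --- and perfectness is then a theorem, equivalent to Vogan's statement that the two KLV matrices are inverse-transposes of each other. The microlocal/characteristic-cycle picture enters ABV for a different purpose (Arthur packets and their microlocal multiplicities), not as the definition of this pairing. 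Second, the passage from real forms $\Gv_{\R,\theta}$ at regular $[\lambda]$ to twisted conjugacy classes $\Sigma(\eta,\alpha)$ in the centralizer $G_\alpha$ encodes a nontrivial matching of blocks (the map $\Theta(\eta)\leftrightarrow\Sigma(\eta,\alpha)$ is not a naive bijection of index sets), and this is where much of the content of Vogan's duality sits; your sketch acknowledges this but you should not underestimate it. With those caveats, your three-step plan --- localize via Beilinson--Bernstein, reduce to a signed bijection of geometric parameter sets, verify compatibility with coherent continuation / Hecke operators --- is faithful to the cited sources, though the paper itself takes a different path to the same result.
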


In \cite{ABV}, this theorem is combined with the microlocal geometry
of $\D$-modules (in particular, the geometry of the cotangent
bundles $T^*(K_{\alpha,\sigma}\backslash \cB_\alpha)$) to study
Arthur's conjectures.

\medskip

Soergel \cite{Soergel} extended this $K$-theoretic picture to the
categorical level, conjecturing the existence of a (Koszul duality)
equivalence of derived categories of Harish-Chandra modules and
derived categories of $\D$-modules on the corresponding geometric
parameter spaces $K_{\alpha,\sigma}\backslash
\cB_\alpha$. A form of Soergel's conjecture reads as follows.
To state it, we write ${\HC}^{pro}_{\theta,[\lambda]}$ for the
abelian category obtaind by pro-completing
${\HC}_{\theta,[\lambda]}$ with respect to the generalized
infinitesimal character.

\begin{conj}[\cite{Soergel}]
There is an equivalence of derived categories
$$
\bigoplus_{\theta\in \Theta(\eta)}
\D({\HC}^{pro}_{\theta,[\lambda]}) \simeq \bigoplus_{\sigma\in
\Sigma({\eta,\alpha})} \D_{K_{\alpha,\sigma}}(\cB_\alpha)
$$
\end{conj}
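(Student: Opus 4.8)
The plan is to recognize both sides as categories of $S^1$-equivariant coherent sheaves on loop spaces, and then to realize the equivalence as a Langlands-dual identification of those loop spaces. On the right, each summand is the category of coherent $\D$-modules on the smooth Artin stack $X_\sigma = K_{\alpha,\sigma}\backslash\cB_\alpha$, so the Corollary to Theorem~\ref{cyclic and D} gives
$$
\D_{K_{\alpha,\sigma}}(\cB_\alpha)\;\simeq\;\Coh(\cH X_\sigma)_{per}^{S^1},
$$
the periodic $S^1$-equivariant coherent sheaves on the Hochschild (small loop) space of $X_\sigma$ --- the formal completion along constant loops of the Langlands parameter space attached to $(G_\alpha,K_{\alpha,\sigma})$. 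On the left, Beilinson--Bernstein localization in its Harish-Chandra form (with the pro-completion built into $\HC^{pro}$, which is exactly what forces the derived category $\D(\HC^{pro}_{\theta,[\lambda]})$ to agree with an equivariant derived category rather than the derived category of an abelian category) identifies the $\theta$-summand with a block of $\D_{K^\vee}(\cB^\vee)$, the $K^\vee$-equivariant ($\lambda$-twisted) $\D$-modules on the flag variety of $\Gv$, where $K^\vee\subset\Gv$ is the symmetric subgroup attached to $\theta$; the regularity of $[\lambda]$ keeps us in a single block, and the use of \emph{generalized} infinitesimal character is consistent with working over the formal space $\cH$ rather than the full loop space $\cL$. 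Applying the Corollary to Theorem~\ref{cyclic and D} to $X^\vee = K^\vee\backslash\cB^\vee$ rewrites this summand as ($\lambda$-twisted) $S^1$-equivariant coherent sheaves on a loop space: the dual Langlands parameter space for $\Gv$.

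Granting these two reductions, the conjecture becomes a statement entirely on the coherent/spectral side: one must produce an equivalence
$$
\Coh\Big(\cH\big(K^\vee\backslash\cB^\vee\big)\Big)_{per}^{S^1}\;\simeq\;\bigsqcup_{\sigma\in\Sigma(\eta,\alpha)}\Coh\big(\cH X_\sigma\big)_{per}^{S^1},
$$
intertwining the two circle actions and the finite Hecke/braid symmetries carried by the two families of categories. The combinatorial skeleton of such an equivalence --- a bijection between the $K^\vee$-orbit data on $\cB^\vee$ (organized into $L$-packets) and the orbit data of the symmetric subgroups $K_{\alpha,\sigma}$ on $\cB_\alpha$, together with a matching of the relevant microlocal geometry --- is precisely the content of Vogan duality, and is already recorded at the level of Grothendieck groups in the Vogan--ABV pairing above. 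The task is thus to upgrade that numerical duality to an equivalence of dg categories, which I would realize as an integral (Fourier--Mukai) transform with kernel supported on a correspondence between the two parameter spaces: a geometric Langlands statement for this restricted class of Steinberg-type loop spaces.

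This last step is where the genuine difficulty lies, and it is not formal: constructing the transform with all of its compatibilities amounts to a special case of the geometric Langlands correspondence. My strategy for it is a categorical \emph{automorphic base change}, reducing the real case to the complex case. On the spectral side one presents $\bigsqcup_\sigma\Coh(\cH X_\sigma)_{per}^{S^1}$ as the categorical invariants of an involution acting on $\Coh\big(\cH(B\backslash\cB)\big)_{per}^{S^1}$ --- the periodic $S^1$-equivariant coherent sheaves on the loop space of $B\backslash\cB$, i.e.\ (a completion of) the Steinberg variety of $G$, which by Theorem~\ref{cyclic and D} is $\D_B(\cB)$ itself under $B\backslash\cB=G\backslash(\cB\times\cB)$. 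Dually one presents $\bigoplus_\theta\D(\HC^{pro}_{\theta,[\lambda]})$ as the invariants of a dual involution on the category of Harish-Chandra bimodules for the complex group $\Gv$ (equivalently, on $\D_{B^\vee}(\cB^\vee)$). For complex groups the required equivalence between these two is essentially the Koszul/Langlands self-duality of the affine Hecke category: Beilinson--Ginzburg--Soergel duality for $\D_B(\cB)$ combined with the self-duality of the Steinberg variety under Fourier transform, a pairing that fits the loop-space picture cleanly precisely because the Steinberg variety is $\cL(B\backslash\cB)$. It then remains to check that the base-change involutions on the two sides correspond under this complex equivalence and to descend to categorical fixed points.

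I expect the main obstacle to be exactly this descent: controlling categorical invariants under the base-change involution, and in particular verifying that coherence and the periodic ($u$-inverted, $\Z/2\Z$-graded) structure are preserved and that no blocks are lost or doubled. This is where the Koszul-duality and equivariance formalism of Sections~\ref{equivariant} and~\ref{sheaves section}, together with careful tracking of the Rees algebra $\cR_X$ versus $\D_X$, will do the real work. The remaining points --- matching the summands over $\Sigma(\eta,\alpha)$ and $\Theta(\eta)$ with connected components of the relevant loop spaces, matching the regularity of $[\lambda]$ with the smoothness of $\cB_\alpha$, and matching the pro-completion and the generalized infinitesimal character with the Hochschild completion --- are bookkeeping to be dispatched afterward.
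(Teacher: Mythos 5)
The statement you are addressing is stated in the paper as a \emph{Conjecture}, attributed to Soergel, and the paper does \emph{not} prove it; what the paper offers (Section~\ref{applications}) is a program, culminating in a theorem from the in-preparation reference \cite{base change} that a Hecke-equivariant form of Soergel's conjecture would \emph{follow} from a (still conjectural) geometric base change principle for the antipodal map on $\pline$. Your proposal is therefore correctly calibrated in spirit --- you recognize that the heart of the matter is an unproved geometric Langlands-type duality and a categorical descent/base change step --- and it shares the paper's overall philosophy of replacing $\D$-modules by $S^1$-equivariant coherent sheaves on loop spaces. But the route differs from the paper's in a substantive way: the paper keeps the automorphic side as a category of constructible/monodromic sheaves on the real moduli stack $\Bun_\theta$, relates the Harish-Chandra categories to it via Kashiwara--Schmid localization and the $U(1)$-fixed-point Observation~\ref{real automorphic fixed points}, and only then invokes a (conjectural) affine-level duality $\Sh(\Buntheta)\leftrightarrow\Coh(\St^{\vee,\eta}/G^\vee)$ followed by $S^1$-localization on both sides. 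You instead try to convert \emph{both} sides of Soergel's conjecture into coherent-sheaf categories on Hochschild spaces and run base change entirely on the spectral side. The paper's choice is not an accident: de-equivariantizing to the affine level is exactly what makes Bezrukavnikov's categorification of Kazhdan--Lusztig available, and the constructible/coherent asymmetry is what Langlands duality trades between; collapsing everything to coherent sheaves at the finite level forfeits that leverage and leaves the ``Fourier--Mukai kernel'' you invoke without a construction.

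There is also a concrete error in your reduction. You write
$$
\D_{K_{\alpha,\sigma}}(\cB_\alpha)\;\simeq\;\Coh(\cH X_\sigma)_{per}^{S^1},
$$
but the paper's Corollary and Theorem~\ref{real cyclic crystalization} give
$$
\Coh(\cH X_\sigma)_{per}^{S^1}\;\simeq\;\Coh(X_\sigma,\D_{X_\sigma})\otimes_{\C}\C[u,u^{-1}],
$$
i.e.\ a $2$-periodic ($\Z/2\Z$-graded) category, not $\D_{K_{\alpha,\sigma}}(\cB_\alpha)$ itself. The two sides of Soergel's conjecture are honest $\Z$-graded dg derived categories, and no equivalence with a $\C[u,u^{-1}]$-linear category is possible without first tensoring both sides up. The paper flags exactly this: to remove the periodicity one needs a ``mixed'' $\Z$-graded refinement which is explicitly deferred (see the Remark after the periodic Corollary). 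So even granting everything else, your proposal as written would at best prove a $2$-periodic form of Soergel's conjecture; this is a genuine gap in the reduction, separate from the unproved Langlands/base-change step which you do correctly flag as the central difficulty.
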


Soergel establishes this conjecture in the case of tori, $SL_2$ and
most importantly for complex groups $\Gv$ (considered as real forms
of their complexifications).

As mentioned in \cite{ABV}, it is important to find a way to fit
together the geometric parameter spaces $K_{\alpha,\sigma}\backslash
\cB_\alpha$ for varying $\alpha$. In particular, such a family is
necessary if one hopes to have a uniform picture for representations
with different infinitesimal characters. One of the outcomes of this
work is a solution to this problem. As we will see, the geometric
parameter spaces naturally emerge from the loop geometry of
Steinberg varieties.

%%%%%%%%%%%%%%%%%%%%%%%%%%%%%%%%%%%%%%%%%%%%%

\subsection{Equivariant coherent sheaves on Steinberg varieties}
Consider a complex reductive group $G$ with Borel subgroup $B\subset
G$, maximal unipotent subgroup $U\subset B$, universal Cartan
$H=B/U$, and flag variety $\cB = G/B$.

\medskip

We will use the name Grothendieck-Springer simultaneous resolution
for the smooth scheme $\wt{G}$ that parametrizes pairs $(g,B)$ of an
element $g\in G$ and a Borel subgroup $B\in \cB$ such that $g\in B$.
We will always think of $\wt G$ as a family over the universal
Cartan $H$ via the canonical projection
$$
(g,B) \mapsto [g] \in B/U.
$$
The fiber over the identity $e\in H$ is the usual Springer
simultaneous resolution $\wt{G}_e$ that parametrizes pairs $(u,B)$
of a Borel subgroup $B\in \cB$ and a unipotent element $u\in B$. Its
two canonical projections exhibit $\wt{G}_e$ on the one hand as the
cotangent bundle $T^*\cB$, and on the other hand as a smooth
resolution of the unipotent cone of $G$.

\medskip

We will use the name Steinberg variety for the scheme
$\St_{}$ that parametrizes triples $(g,B_1,B_2)$ of a pair of
Borel subgroups $B_1,B_2\in \cB$ and an element in their
intersection $g\in B_1\cap B_2$. We will always think of
$\St_{}$ as a family over the product of two copies of the
universal Cartan $H\times H$ via the canonical projection
$$
(g,B_1,B_2) \mapsto ([g]_1,[g]_2) \in B_1/U_1 \times B_2/U_2
$$
Its image consists of pairs of elements which are related by the
Weyl group action. The fiber over the identity $(e,e)\in H\times H$
is the usual Steinberg variety $\St_{e,e}$ that
parametrizes triples $(u,B_1,B_2)$ of a pair of Borel subgroups
$B_1,B_2\in \cB$ and a unipotent element in their intersection $u\in
B_1\cap B_2$. In general, the Steinberg variety
$\St_{}$ is connected, but has irreducible components  labeled
by the Weyl group, and hence as long as $G$ is nonabelian,
$\St_{}$ is singular.

\medskip

The fundamental relationship between the Grothendieck-Springer
simultaneous resolution $\wt G$ and the Steinberg variety $\St_{}$
is that the latter is given by the fiber product
$$
\St_{} = \wt{G}\times_G\wt{G}.
$$
Because the projection $\wt G\to G$ is semi-small, the derived fiber product coincides
with the above naive fiber product.
By the usual formalism of correspondences, this implies that
coherent sheaves on $\St_{}$ form a convolution algebra which
acts on coherent sheaves on $\wt G$ (see \cite{CG} for a detailed
exposition). The importance of this construction in representation
theory derives from the work of Kazhdan-Lusztig on the tamely
ramified $p$-adic local Langlands program (the Deligne-Langlands
conjecture) \cite{KL}. The starting point of this theory is their
identification of the Grothendieck group of $(G\times
\Cx)$-equivariant coherent sheaves on the fiber $\St_{e,e}$
with the affine Hecke algebra of the Langlands dual group $\Gv$.
%(while other fibers give parabolic versions).
As a result, all modules over the affine Hecke algebra admit a
spectral description in terms of various Grothendieck groups of
equivariant coherent sheaves on the Springer resolution.

More recently, the equivariant derived categories of coherent
sheaves underlying the above $K$-theoretic story have begun to be
understood by the work of Bezrukavnikov and collaborators (including
Arkhipov, Ginzburg, Mirkovic, and Rumynin; see \cite{Roma ICM} for
an overview), the work of Frenkel-Gaitsgory (\cite{FG}, see
\cite{ramifications} for an overview) and the work of Gukov-Witten
\cite{GW}. %on
%the Springer variety and
%Steinberg variety have begun to be understood.
These advances have a wide range of applications to modular
representation theory and the Lusztig conjectures, representation
theory of quantum groups, representations of affine algebras at the
critical level, and the local geometric Langlands conjecture.
%For
%all such applications beyond the study of Grothendieck groups, one
%needs to consider a derived version of the Steinberg variety.
In a striking categorification of the work of Kazhdan-Lusztig,
Bezrukavnikov has identified the equivariant derived category of the
Steinberg variety (as a monoidal differential graded category) with
the affine Hecke category of $\D$-modules on the affine flag variety
of the Langlands dual group $\Gv$. One immediate consequence is that
the equivariant derived categories of Springer fibers carry actions
of the affine Hecke category, and hence of the affine braid group.
More generally, all module categories over the affine Hecke category
admit a spectral description in terms of equivariant coherent
sheaves on Springer fibers.

\subsubsection{Langlands parameter spaces}\label{Langlands spaces}
In an
ongoing project to better understand representations of real groups,
Vogan duality, and Soergel's conjecture, the fundamental objects
that arise are certain (automorphic) module categories for the
affine Hecke category. By definition, a successful characterization
of these categories would involve their spectral description as
module categories over equivariant coherent sheaves on the Steinberg
variety. Thanks to various structures on these categories, it is
possible to guess what form this spectral description should take.

As in Vogan duality and Soergel's conjecture, our starting point is
a fixed algebraic involution $\eta$ of the group $G$. In
Section~\ref{Langlands section}, given such an involution $\eta$, we
introduce a scheme $\St^\eta$ which we call the Langlands parameters
space. By construction, it parametrizes pairs consisting of a Borel subgroup $B\subset G$
and an element $g\in G$ whose $\eta$-twisted square is contained in $B$:
$$
\St^\eta=\{(g,B)\in G\times\cB \;|\; g\eta(g) \in B\}.
$$
The group $G$ naturally acts on $\St^\eta$ by twisted conjugation.
By the general formalism of correspondences, equivariant coherent
sheaves on $\St^\eta$ form a module category over equivariant
coherent sheaves on the Steinberg variety $\St$.

One of the primary aims of this paper is to explain the close
relationship between $\St^\eta$ and the geometric parameter spaces
appearing in Vogan duality and Soergel's conjecture. In particular,
we will see that $\D$-modules on the geometric parameter spaces can
be recovered from equivariant coherent sheaves on $\St^\eta$.
Furthermore, the form of this relationship can be transported back
to the original (automorphic) module categories for which
equivariant coherent sheaves on $\St^\eta$ should provide a spectral
description. Namely, there is a precise form in which the loop
spaces of the geometric parameter spaces and their $S^1$-equivariant
geometry can be interpreted in terms of equivariant coherent sheaves
on $\St^\eta$ and their intrinsic categorical structures.

%%%%%%%%%%%%%%%%%%%%%%%%%%%%%%%%%%%%%%%%%%%%%%%

\subsection{Langlands parameters as loop spaces}\label{results}
The central theme of this paper is that the fundamental relationship
between the equivariant geometry of the flag variety $\cB$ and that
of the Springer variety $\wt G$ and Steinberg variety $\St$ is given
by the formalism of loop spaces.

To begin, observe that the quotient of the flag variety $\cB$ by the
group $G$ is nothing more than the classifying stack $pt/ B$ of a
Borel subgroup $B\subset G$. Thus the loop space $\cL(pt/B)$ is
immediately seen to be the adjoint quotient $B/B$. But this is
precisely the equivariant Springer variety
$$
\wt G / G \simeq \cL (G\backslash \cB).
$$

We observe that this simple statement generalizes to the Steinberg
variety.

\begin{thm} There is a canonical isomorphism of derived stacks
$$
\St/G\simeq \cL(B\backslash \cB)\simeq\cL(G\backslash (\cB\times
\cB)).
$$
\end{thm}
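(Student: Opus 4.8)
The plan is to reduce everything to the universal identity $\cL Y = Y \times_{Y\times Y} Y$ (the self-intersection of the diagonal), valid for any derived stack $Y$, combined with the observation that $\cL$ commutes with homotopy fiber products and sends $\mathrm{Hom}(S^1,-)$ through quotients by $B\mathbb{Z}$. First I would record the key special case already used in the text: for $Y = pt/H$ with $H$ an algebraic group, $\cL(pt/H) = \mathrm{\mathbf{R}Hom}(B\mathbb{Z}, BH) = H/H$, the adjoint quotient, since a loop in $BH$ is an automorphism of the trivial $H$-bundle. Next I would identify $G\backslash(\cB\times\cB)$ with $pt/B \times_{pt/G} pt/B$: a point is a pair of Borel subgroups together with the datum trivializing them into the same $G$-torsor, i.e. two reductions to $B$ of one $G$-bundle over a point. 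Applying $\cL$ and using that $\cL$ preserves homotopy fiber products (it is a right adjoint / mapping stack out of a fixed object) gives
$$
\cL(G\backslash(\cB\times\cB)) \simeq \cL(pt/B)\times_{\cL(pt/G)}\cL(pt/B) \simeq (B/B)\times_{G/G}(G/G\text{-stuff}).
$$

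More concretely, I would unwind the right-hand fiber product explicitly: an $S$-point of $\cL(pt/B)\times_{\cL(pt/G)}\cL(pt/B)$ is a $B$-torsor $P_1$ with automorphism $g_1$, a $B$-torsor $P_2$ with automorphism $g_2$, and an isomorphism of the induced $G$-torsors $P_1\times^B G \simeq P_2\times^B G$ intertwining $g_1$ and $g_2$. Rigidifying (working $G$-equivariantly and choosing the trivial $G$-torsor), this is exactly the data of two Borel reductions $B_1, B_2$ of the trivial $G$-bundle and an element $g\in G$ lying in $B_1\cap B_2$ — which is precisely the Steinberg variety $\St$, now presented as a stack quotient by $G$. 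This matches the defining description of $\St$ in the text as triples $(g,B_1,B_2)$ with $g\in B_1\cap B_2$. The middle isomorphism $\cL(B\backslash\cB)\simeq \cL(G\backslash(\cB\times\cB))$ then follows from the elementary stack identity $B\backslash\cB = G\backslash(\cB\times\cB)$ already noted in the Introduction (both equal $pt/B\times_{pt/G}pt/B$), or directly from $\cB/B \cong \cB\times\cB/G$ by the standard orbit-stabilizer argument on the $G$-action on $\cB\times\cB$.

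The one genuine subtlety — and the step I expect to be the main obstacle — is the claim that the naive fiber product computes the derived one, i.e. that $\St/G \simeq \cL(B\backslash\cB)$ as \emph{derived} stacks and not merely on the level of classical truncations. This is where one must invoke that the Grothendieck–Springer map $\wt G \to G$ is semismall, equivalently that $\wt G \times_G \wt G$ is already derived-complete; the earlier remark in the paper (``Because the projection $\wt G\to G$ is semi-small, the derived fiber product coincides with the above naive fiber product'') is exactly what is needed, but one should check it survives the passage to the quotient stack and the loop-space formalism. Concretely, one verifies that $\St = \wt G\times_G\wt G$ has the expected dimension along each Weyl-group component, so that the Tor-groups of $\cO_{\wt G}\otimes_{\cO_G}\cO_{\wt G}$ vanish in positive degree; since quotienting by the smooth group $G$ and forming $\cL$ are exact operations here, the derived enhancement is trivial. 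I would present this as a lemma, deferring its proof to the detailed treatment of $\St$ as a loop space in Section~\ref{Steinberg section}, and note that for $G$ nonabelian the ambient self-intersection $\Delta\cap\Delta \subset \cB\times\cB$ \emph{is} non-transverse, so the derived structure is essential on $\cB/B$ even though it collapses on $\St/G$ — consistent with the general principle articulated in the Overview that $\cL X$ for $X = \cB/G$ mixes even stabilizer directions with odd normal directions.
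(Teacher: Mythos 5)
Your argument is correct, but it takes a genuinely different route from the paper's. The paper proves this statement as Theorem~\ref{steinberg is a loop space} in two short steps: first, the \emph{underived} mapping stack (inertia stack) $I((\cB\times\cB)/G)=\Hom(B\Z,(\cB\times\cB)/G)$ is identified with $\St/G$ directly from the definitions (an object together with an automorphism is a pair of Borel subgroups together with an element of their intersection); second, the derived structure of $\cL((\cB\times\cB)/G)$ is shown to be trivial by the general criterion of Proposition~\ref{dg structure of loops} --- the loop space of a smooth Artin stack is underived if and only if its isomorphism classes of objects are discrete --- which applies because the $G$-orbits on $\cB\times\cB$ are the finitely many Bruhat cells. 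You instead take the presentation $(\cB\times\cB)/G\simeq pt/B\times_{pt/G}pt/B$ as the backbone, push $\cL$ through the limit to get $(B/B)\times^{\R}_{G/G}(B/B)\simeq(\wt G\times^{\R}_G\wt G)/G$, and then kill the derived structure by the dimension count on $\wt G\times_G\wt G$. Both of your ingredients do appear in the paper --- the identification $\cL(pt/B)=B/B$ and the remark that the naive fiber product $\wt G\times_G\wt G$ coincides with its derived enhancement --- but the paper does not assemble them into the proof this way. Your route buys an explicit realization of $\St/G$ as a fiber product of adjoint quotients, with the Tor-vanishing localized in a concrete computation on $\wt G\times_G\wt G$; the paper's route buys uniformity, since the same two-step argument (inertia stack plus Proposition~\ref{dg structure of loops}) is reused verbatim for the Langlands parameter spaces in Section~\ref{Langlands section}, where no such convenient fiber-product presentation with underived factors is available. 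One slip in your closing aside: $\cL(B\backslash\cB)$ is in fact \emph{also} underived by Proposition~\ref{dg structure of loops} (there are finitely many Schubert cells) --- indeed that is the content of the theorem itself; what is genuinely derived is the loop space of the scheme $\cB$, i.e.\ the non-equivariant self-intersection of the diagonal.
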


Via this statement, we can transport all of the structures of loop
spaces to the equivariant Steinberg variety. For example, it follows
that $\St/G$ carries a circle action, and the derived category of
quasicoherent sheaves on $\St/G$ carries a braided monoidal structure and
other string topology operations.

\medskip

Our primary application of the above theorem follows from restricting to
small loops and applying $S^1$-equivariant localization. In this way,
we recover the category of $B$-equivariant $\D$-modules on the flag
variety. More generally, by replacing small loops by alternative
formal completions, we obtain categories of Borel equivariant
$\D$-modules on flag varieties for various subgroups.

For any $(\alpha,\beta)\in H\times H$, we write $\St_{\alpha,\beta}$
for the inverse image in $\St$ of the formal neighborhood of
$({\alpha},{\beta})$ under the canonical projection. Note that
$\St_{\alpha,\beta}$ is nonempty if and only if $\alpha$ and $\beta$
are related by the Weyl group. In particular, we have the formal
Steinberg variety $\St_{e,e}$ corresponding to the identity
$(e,e)\in H\times H$.
%And for any $\alpha\in H$, we have the
%maximally transverse formal Steinberg variety
%$\St_{\alpha,w_0\alpha}$ where $w_0$ is the longest element of the
%Weyl group.

Finally, for any $\alpha\in H$, we write $G_\alpha\subset G$ for the
reductive subgroup that centralizes $\alpha$, and $\cB_\alpha =
G_\alpha/B_\alpha$ for its flag variety.

\begin{thm}\label{complex crystalization}
There is a canonical quasi-equivalence of periodic dg derived
categories
$$
\Coh(\St_{e,e}/G)^{S^1}_{per}\simeq \D_B(\cB)\ot_\C\C[u,u\inv].
$$

More generally, for any $\alpha\in H$, and Weyl group element $w$,
there are canonical quasi-equivalences of periodic dg derived
categories
$$
\Coh(\St_{\alpha, w\alpha}/G)^{S^1}_{per} \simeq
\D_{B_\alpha}(\cB_\alpha)\ot_\C\C[u,u\inv].
$$
\end{thm}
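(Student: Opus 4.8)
The plan is to reduce Theorem \ref{complex crystalization} to the general machinery already established, namely Theorem \ref{cyclic and D} (together with its Corollary) applied to the smooth Artin stacks $X = G\backslash\cB = pt/B$ and, more generally, to the stacks relevant to the twisted pieces. The first step is to identify the relevant Hochschild space. Starting from the isomorphism $\St/G \simeq \cL(B\backslash\cB)$ of the previous theorem, I would pass to formal completions: the formal Steinberg variety $\St_{e,e}/G$ is the completion of $\St/G$ along the fiber over $(e,e)\in H\times H$, which under the projection $\St/G \to H\times H$ corresponds precisely to the constant-loops locus inside $\cL(B\backslash\cB)$ together with its normal formal directions in the $H$-direction. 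I would then check that this completion is exactly the Hochschild (small loop) space $\cH(B\backslash\cB)$ as defined in Section \ref{loop section} --- this is the geometric heart of the argument and amounts to matching the formal completion of $\St$ along $\St_{e,e}$ with the formal completion of $\BT_{B\backslash\cB}[-1]$ along its zero section. Once this identification $\St_{e,e}/G \simeq \cH(B\backslash\cB)$ is in hand, the Corollary to Theorem \ref{cyclic and D} gives directly
$$
\Coh(\St_{e,e}/G)^{S^1}_{per} \simeq \Coh(\cH(B\backslash\cB))^{S^1}_{per} \simeq \Coh(B\backslash\cB, \D_{B\backslash\cB})_{per} = \D_B(\cB)\ot_\C\C[u,u\inv],
$$
which is the first assertion.

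For the second, more general assertion, the idea is to run the same argument after twisting by $\alpha$. The key observation is that the formal neighborhood $\St_{\alpha,w\alpha}/G$ of a point $(\alpha,w\alpha)$ in the base $H\times H$ should be identified not with the Hochschild space of $B\backslash\cB$ itself but with the Hochschild space of a twisted form. Concretely, I would analyze the fiber $\St_{\alpha,w\alpha}$ geometrically: a triple $(g,B_1,B_2)$ with $[g]_1$ near $\alpha$ and $[g]_2$ near $w\alpha$ forces $g$ to lie near a semisimple element $s$ with $\exp$ conjugate to $\alpha$, and the Borels $B_1, B_2$ must both contain $s$. The centralizer data then collapses the problem to the group $G_\alpha$: the formal slice in the $G$-direction transverse to the semisimple part is governed by $G_\alpha$ acting on its own flag variety $\cB_\alpha$, and the remaining $\exp$-formal direction accounts for the unipotent/nilpotent normal directions. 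Thus I expect an identification $\St_{\alpha,w\alpha}/G \simeq \cH(B_\alpha\backslash\cB_\alpha)$ (the Weyl element $w$ indexing which component of $\St$ one lands in, but not changing the stack up to the relevant equivalence), after which the Corollary again yields $\Coh(\St_{\alpha,w\alpha}/G)^{S^1}_{per} \simeq \D_{B_\alpha}(\cB_\alpha)\ot_\C\C[u,u\inv]$.

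The main obstacle I anticipate is making the twisted identification $\St_{\alpha,w\alpha}/G \simeq \cH(B_\alpha\backslash\cB_\alpha)$ precise at the level of derived stacks, not just classical ones. On the classical level this is essentially a Luna-slice-type statement: linearize $G$-twisted conjugation near the semisimple locus and use that the centralizer $G_\alpha$ is reductive. But the point of working in derived algebraic geometry is precisely that $\St$ carries (or should carry) nontrivial derived structure in the non-semi-small or non-transverse situations; here one must verify that the formal completion does not introduce spurious higher $\Tor$ beyond those packaged into the odd tangent bundle $\BT_{\cB_\alpha}[-1]$, and that the circle action restricts correctly. I would handle this by working functor-of-points style: describe $\St^{\wedge}_{\alpha,w\alpha}$ as a mapping stack $\R\Hom(S^1, -)$ from $S^1 = B\Z$ into the appropriate formal neighborhood of $B_\alpha\backslash\cB_\alpha$ inside $B\backslash\cB$, using that loops into a formal completion are computed by completing the loop space. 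The compatibility of the $H\times H$-grading on $\St$ with the $\Z$-grading/circle action on the loop space, and the bookkeeping of which Weyl component $w$ selects, are the remaining routine-but-delicate points; I would also need to confirm that passing to the periodic ($u$-inverted) categories commutes with these formal completions, which follows from the fact that inverting $u$ is exact and base-change compatible.
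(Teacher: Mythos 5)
There is a genuine gap, and it is one the paper explicitly flags. You assert that $\St_{e,e}/G$ \emph{is} the Hochschild space $\cH(B\backslash\cB)$. This is false: $\St_{e,e}$ is only the preimage under the monodromic projection $\St\to H\times H$ of the formal neighborhood of $(e,e)$, so it is completed only in the $H$-direction while the unipotent directions remain large (the classical fiber over $(e,e)$ is the full Steinberg variety of unipotent pairs). The Hochschild space, being the completion of $\cL(B\backslash\cB)\simeq\St/G$ along the constant loops $(\cB\times\cB)/G$, is the \emph{formal} Steinberg space $\wh\St_{e,e}/G$, completed in \emph{all} normal directions. What one has is a proper $S^1$-equivariant embedding $\cH(B\backslash\cB)=\wh\St_{e,e}/G\hookrightarrow\St_{e,e}/G$. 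The missing ingredient is the paper's Lemma that the restriction functor $\Coh(\St_{e,e}/G)\to\Coh(\wh\St_{e,e}/G)$ is an $S^1$-equivariant equivalence, proved by choosing a generic one-parameter $\G_m\subset B$ contracting the unipotent radical and observing that restriction is $\G_m$-weight completion, with inverse given by taking $\G_m$-finite vectors. This step uses coherence in an essential way and cannot simply be absorbed into ``the general machinery''; without it your reduction to Theorem \ref{cyclic and D} does not go through.

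There is a second, independent gap in your treatment of general $\alpha$. You aim to identify $\St_{\alpha,w\alpha}/G$ with $\cH(B_\alpha\backslash\cB_\alpha)$ directly, but the translation taking the monodromicity from $\alpha$ to $e$ (multiplication by the fixed semisimple $\tilde\alpha$) is \emph{not} $S^1$-equivariant: the universal monodromies on the two sides differ by the automorphism $\varphi_{\tilde\alpha}$ of the identity functor, given by the central element $\tilde\alpha\in G(\alpha)$. The paper's argument (Theorem \ref{localized monodromic reduction}) is that although this discrepancy does not vanish on plain $S^1$-equivariant categories, it \emph{does} vanish after inverting $u$: once you are in $\Coh(\D_{(\cB(\alpha)\times\cB(\alpha))/G(\alpha)})\otimes\C[u,u^{-1}]$, the element $\tilde\alpha$ lies in a torus inside the stabilizer of any point, can be connected to the identity, and therefore acts trivially on $\D$-modules. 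Your sketch treats the $\alpha$-translation as if it were a routine change of base and the functor-of-points gloss does not account for this monodromy obstruction, so the $S^1$-equivariance you assume is not available at the stage you invoke it — only the localized statement holds, and establishing exactly that is the content of the theorem.
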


It is worth pointing out that the theorem is not a direct
consequence of our previous results on the relation between
$S^1$-equivariant sheaves on Hochschild spaces and
$\D$-modules. For example, the adjoint quotient $\St_{e, e}/G$ is
not the Hochschild space of $B\backslash \cB$, but rather also
contains loops which are large in the unipotent direction. What we
have is an $S^1$-equivariant embedding
$$
\cH(B\backslash \cB) \hookrightarrow \St_{e,e}/G.
$$
One can show that restricting coherent sheaves along this embedding
gives an equivalence and then the theorem follows from our previous
results. A similar argument holds for a general parameter $\alpha$.

\medskip

An interesting aspect of the theorem is the general ``discontinuity"
of the objects appearing on the right hand side. From a geometric
perspective, the quotients $B_\alpha\backslash \cB_\alpha$ do not
form a nice family as we vary the parameter $\alpha$.
%Similarly, the cotangent bundles of these quotients also do not form a nice family.
But the theorem says that the loop spaces of these quotients do fit
into the nice family formed by the equivariant Steinberg variety
$\St/G$. Here we should emphasize that we are thinking about $\St/G$
as a loop space, rather than thinking about it along the more
traditional lines of a cotangent bundle. In the discussion to
follow, we describe similar results for $\D$-modules on the
geometric parameter spaces for real reductive groups. In that
context, it is only the loop spaces which fit together into a nice
family, not the cotangent bundles.

\medskip

Now fix an algebraic involution $\eta$ of the group $G$. Our aim is
to describe a generalization of the above results for the Langlands
parameter space $\St^\eta$ and the geometric parameter spaces
$K_{\alpha,\sigma}\backslash \cB_\alpha$ appearing in Vogan duality
and Soergel's conjecture. The basic observation that underlies all
applications is that the equivariant Langlands parameter space
$\St^\eta/G$ can be naturally identified as a {path space}.

\begin{thm} The equivariant Langlands parameter space $\St^\eta/G$ is the derived space of paths
$$
\gamma= (\gamma_1,\gamma_2):[0,1] \to G\backslash (\cB \times \cB)
$$
satisfying the boundary equation
$$
(\gamma_1(0),\gamma_2(0)) = (\eta(\gamma_2(1)),\eta(\gamma_1(1)))
$$
\end{thm}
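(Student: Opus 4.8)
The plan is to recognize the asserted path space as a \emph{twisted loop space} and to run, in the twisted setting, the same computation that identifies $\cL(G\backslash(\cB\times\cB))$ with $\St/G$. Write $Y=G\backslash(\cB\times\cB)$ and let $\tau$ be the involution of $\cB\times\cB$ given by $\tau(B_1,B_2)=(\eta(B_2),\eta(B_1))$. A one-line check gives $\tau(g\cdot(B_1,B_2))=\eta(g)\cdot\tau(B_1,B_2)$, so $\tau$ intertwines the diagonal $G$-action with its $\eta$-twist and hence descends to an involution $\tau$ of the stack $Y$. The boundary equation $(\gamma_1(0),\gamma_2(0))=(\eta(\gamma_2(1)),\eta(\gamma_1(1)))$ says precisely that a path $\gamma\colon[0,1]\to Y$ has endpoints satisfying $\gamma(0)=\tau(\gamma(1))$. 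Interpreting $[0,1]$ as a homotopy type (contractible), so that $\Map([0,1],-)$ is available in derived algebraic geometry just as $\Map(S^1,-)$ is for $S^1=B\Z$, the derived space of such paths is by construction
$$
P=\Map([0,1],Y)\times_{Y\times Y}\Gamma_\tau,
$$
where $\Gamma_\tau\subset Y\times Y$ is the graph of $\tau$ and $\Map([0,1],Y)\to Y\times Y$ is evaluation at the endpoints. Because $[0,1]$ is contractible, $\Map([0,1],Y)\simeq Y$ with the endpoint map equivalent to the diagonal $\Delta\colon Y\to Y\times Y$; hence $P\simeq Y\times_{Y\times Y}\Gamma_\tau$, the derived $\tau$-fixed locus of $Y$, which is the evident twisted analogue of $\cL Y=Y\times_{Y\times Y}Y$.

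The second step is to compute $P$ explicitly, paralleling the identification $\cL(\cB\times\cB/G)=\St/G$. For a quotient stack $Z/H$ and an involution covered by a map $\tau\colon Z\to Z$ which is $\phi$-twisted $H$-equivariant (i.e.\ $\tau(h\cdot z)=\phi(h)\cdot\tau(z)$), its derived $\tau$-fixed locus is the derived quotient $\{(h,z)\in H\times Z:\tau(z)=h\cdot z\}/H$, with $H$ acting by $g\cdot(h,z)=(\phi(g)\,h\,g^{-1},\,g\cdot z)$, i.e.\ by $\phi$-twisted conjugation on the first factor. Applying this with $Z=\cB\times\cB$, $H=G$, $\phi=\eta$, a point of $P$ is a triple $(\gamma,B_1,B_2)$ subject, derivedly, to $\eta(B_2)=\gamma B_1\gamma^{-1}$ and $\eta(B_1)=\gamma B_2\gamma^{-1}$. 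The first of these is a transverse graph condition expressing $B_1$ through $(\gamma,B_2)$, so it may be solved for $B_1$ without disturbing the derived structure; substituting into the second collapses it to $\eta(\gamma)\gamma\in N_G(B_2)=B_2$. Thus $P\simeq\{(\gamma,B)\in G\times\cB:\eta(\gamma)\gamma\in B\}/G$ with $G$ acting by $g\cdot(\gamma,B)=(\eta(g)\gamma g^{-1},gBg^{-1})$, and the change of variable $\gamma\mapsto\eta(\gamma)$ carries this isomorphically onto $\{(\gamma,B):\gamma\eta(\gamma)\in B\}/G=\St^\eta/G$ with its twisted-conjugation action, which is the assertion.

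The hard part is the derived-structure bookkeeping in the passage from $P$ to the naively-defined $\St^\eta/G$. One must check that the twisted loop-stack formula above holds at the derived and not merely the classical level, and --- the delicate point --- that, after eliminating $B_1$, the surviving condition $\gamma\eta(\gamma)\in B$ equips $\St^\eta$ with exactly the derived structure carried by $P$, i.e.\ that the classical scheme $\St^\eta$ is already derived-correct. This is the analogue of the fact, invoked for $\St=\wt G\times_G\wt G$, that semismallness of $\wt G\to G$ forces the derived fiber product to be classical; here the relevant presentation is $\St^\eta=(G\times\cB)\times_{G\times\cB}\wt G$, the pullback of $\wt G$ along $(\gamma,B)\mapsto(\gamma\eta(\gamma),B)$. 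Since the twisted squaring map $q\colon G\to G$, $q(\gamma)=\gamma\eta(\gamma)$, is not submersive everywhere, the vanishing of higher $\Tor$ is not automatic and must be extracted from a dimension count for the fibers of $\St^\eta\to\cB$ combined with the known geometry of $\wt G\to G$; this is the step I expect to demand real work, the rest (the homotopical modeling of $[0,1]$, the contractibility identification, and the $G$-equivariance verifications) being routine.
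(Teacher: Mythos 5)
Your construction is, up to packaging, the one the paper uses: the paper forms the extension $G_\eta=G\rtimes\Z/2\Z$, lets the nontrivial element act on $\cB\times\cB$ by $(B_1,B_2)\mapsto(\eta(B_2),\eta(B_1))$, and defines $\St^\eta/G$ to be the non-identity component of the $\Z/2\Z$-rigidified loop space $\cL((\cB\times\cB)/G_\eta)$. Your twisted fixed locus $Y\times_{Y\times Y}\Gamma_\tau$ is exactly that component, and your elimination of $B_1$ reproduces the paper's explicit description of the fiber of $\St^\eta/G$ over a fixed $B\in\cB$ as $B\times_G(G_\eta\setminus G)$ with respect to the square map. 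The only real divergence is at the step you flag as hard, and there your proposed route is both unnecessary and more awkward than what the paper does. No dimension count for the twisted squaring map $q(\gamma)=\gamma\eta(\gamma)$ is needed: since the twisted path space is by construction a component of the loop space of the smooth Artin stack $(\cB\times\cB)/G_\eta$, Proposition~\ref{dg structure of loops} applies verbatim --- the isomorphism classes of objects of $(\cB\times\cB)/G_\eta$ are the finitely many $G_\eta$-orbits on $\cB\times\cB$, hence discrete, so the entire loop space (both components, i.e.\ both $\St/G$ and $\St^\eta/G$) has trivial derived structure. This is the same one-line mechanism that makes $\St/G$ classical, and it dissolves your concern about the non-submersivity of $q$; the presentation of $\St^\eta$ as a pullback of $\wt G$ along $q$, and the bespoke $\Tor$-vanishing argument you anticipate, are not required.
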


An alternative way to state the theorem is to say that $\St^\eta/G$
is the second component of the loop space of the quotient of
$G\backslash (\cB \times \cB)$ by the $\Z/2\Z$-action
$$
(B_1,B_2) \mapsto (\eta(B_2),\eta(B_1)).
$$
(To be precise, to recover $\St^\eta/G$, one must also equip such
loops with a trivialization of their associated $\Z/2\Z$-torsor.)
From this perspective, we see that there is a canonical $S^1$-action
of loop rotation on $\St^\eta/G$.

For any $\alpha\in H$, we write $\St^\eta_\alpha$ for the inverse
image in $\St^\eta$ of the formal neighborhood of  $\alpha$ under
the canonical projection. We can now state the loop interpretation
of the categories appearing as spectral
parameters in Soergel's conjecture. %
%\begin{thm}
%There is a canonical isomorphism of derived stacks
%$$\St^\eta_\alpha/G\simeq \cH(\coprod_{\sigma\in\Sigma_\alpha} K_{\sigma}\sm \cB_{\alpha}), \hskip.2in
%\Sigma_\alpha=\{\sigma\in G| \sigma\;\eta(\sigma)=\alpha\}/G.$$
%\end{thm}

\begin{thm}\label{real cyclic crystalization}
For any $\alpha\in H$ there is a canonical quasi-equivalence of
periodic dg derived categories
$$
\Coh(\St^\eta_{\alpha}/G)^{S^1}_{per} \simeq \bigoplus_{\sigma\in
\Sigma({\eta,\alpha})} \D_{K_{\alpha,\sigma}}(\cB_\alpha)\ot_\C
\C[u,u^{-1}]
$$
where the right hand side is the periodic version of Soergel's
category of Langlands parameters.
\end{thm}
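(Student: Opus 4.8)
The plan is to reduce Theorem \ref{real cyclic crystalization} to Theorem \ref{complex crystalization} and the Corollary above by exhibiting an $S^1$-equivariant embedding of a formal completion into $\St^\eta_\alpha/G$, exactly as the remark following Theorem \ref{complex crystalization} reduces that theorem to the general relation $\Coh(\cH X)^{S^1}_{per}\simeq\Coh(X,\D_X)_{per}$. First I would use the path-space description (the theorem immediately preceding the statement): $\St^\eta/G$ is the space of paths $\gamma=(\gamma_1,\gamma_2)\colon[0,1]\to G\backslash(\cB\times\cB)$ with $(\gamma_1(0),\gamma_2(0))=(\eta(\gamma_2(1)),\eta(\gamma_1(1)))$, together with a trivialization of the associated $\Z/2\Z$-torsor, equivalently the ``second component'' of the loop space $\cL(Y)$ where $Y$ is the $\Z/2\Z$-quotient of $G\backslash(\cB\times\cB)$ by $(B_1,B_2)\mapsto(\eta(B_2),\eta(B_1))$. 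Restricting to the formal neighborhood of $\alpha\in H$ under the canonical projection singles out those loops that are ``small'' only in the Cartan direction but may be large in the unipotent (orbit-normal) directions.

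Next I would identify the constant (or rather $\Z/2\Z$-fixed) loops. A path as above that is constant corresponds to a pair $(B_1,B_2)$ with $(B_1,B_2)=(\eta(B_2),\eta(B_1))$, i.e.\ $B_2=\eta(B_1)$; writing $B=B_1$, such Borels are parametrized, up to $G$-conjugacy, precisely by the twisted conjugacy classes $\sigma\in\Sigma(\eta,\alpha)$ together with the choice of a Borel of the reductive centralizer $G_\alpha$, and the stabilizer of such a point in $G$ is the symmetric subgroup $K_{\alpha,\sigma}\subset G_\alpha$. Thus the fixed locus of the $S^1$-action on $\St^\eta_\alpha/G$ is, componentwise, $\coprod_{\sigma\in\Sigma(\eta,\alpha)}\, K_{\alpha,\sigma}\backslash\cB_\alpha$. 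The odd/normal directions of $\St^\eta_\alpha/G$ transverse to this fixed locus are, fiberwise over $K_{\alpha,\sigma}\backslash\cB_\alpha$, the shifted cotangent/odd-tangent directions $\BT[-1]$ of that quotient stack, so the formal completion of $\St^\eta_\alpha/G$ along its constant loops is exactly $\coprod_\sigma \cH(K_{\alpha,\sigma}\backslash\cB_\alpha)$. Then I would invoke the Corollary (periodic version): $\Coh(\cH(K_{\alpha,\sigma}\backslash\cB_\alpha))^{S^1}_{per}\simeq\Coh(K_{\alpha,\sigma}\backslash\cB_\alpha,\D)_{per}=\D_{K_{\alpha,\sigma}}(\cB_\alpha)\ot_\C\C[u,u^{-1}]$, and sum over $\sigma$.

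The remaining and main point is to show that restriction of $S^1$-equivariant coherent sheaves along the closed embedding
$$
\coprod_{\sigma\in\Sigma(\eta,\alpha)}\cH(K_{\alpha,\sigma}\backslash\cB_\alpha)\;\hookrightarrow\;\St^\eta_\alpha/G
$$
is an equivalence on periodic $S^1$-equivariant categories. This is the analogue of the fact, used for Theorem \ref{complex crystalization}, that although $\St_{e,e}/G$ is larger than $\cH(B\backslash\cB)$ (it contains loops large in the unipotent direction), $S^1$-localization collapses this difference: the extra, ``non-formal'' directions are swept out freely by the circle, so after inverting $u$ they contribute nothing. Concretely, I expect to argue this by a filtration/devissage along the $G_\alpha$-orbit (Bruhat-type) stratification of $\cB_\alpha$ compatible with the $K_{\alpha,\sigma}$-action, reducing to the case of a single orbit where $\St^\eta_\alpha/G$ restricted to that stratum is an affine bundle over $\cH$ of the stratum with free $S^1$-action in the fiber directions transverse to $\cH$; there $S^1$-equivariant coherent sheaves, after periodization, are supported on the fixed locus, giving the equivalence. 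The hard part will be making the ``$S^1$ acts freely transverse to constant loops, hence periodic localization concentrates on $\cH$'' step precise in the derived-stack setting — i.e.\ controlling the failure of $\St^\eta_\alpha/G$ to be a literal formal/Hochschild space and checking the devissage is $S^1$-equivariant and exact — which is precisely the subtlety flagged in the discussion after Theorem \ref{complex crystalization} and where I would spend the bulk of the argument. Finally, the compatibility with Hecke (affine braid group) symmetries follows formally since the embedding and all identifications are equivariant for the convolution action of $\Coh(\St/G)^{S^1}$ coming from the correspondence defining $\St^\eta$ as a module over $\St$.
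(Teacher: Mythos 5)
Your proposal has the right skeleton --- decompose $\St^\eta_\alpha/G$ into components indexed by $\Sigma(\eta,\alpha)$, identify a formal completion along the ``most constant'' points with $\cH(K_{\alpha,\sigma}\backslash\cB_\alpha)$, apply the corollary $\Coh(\cH X)^{S^1}_{per}\simeq\Coh(X,\D_X)_{per}$, and show restriction from the local to the formal space is an equivalence --- but it has two genuine gaps.

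First, the identification of the formal completion $\wh\St^\sigma_\alpha/G$ with the Hochschild space $\cH(K_{\alpha,\sigma}\backslash\cB_\alpha)$ is \emph{not} $S^1$-equivariant, and you treat it as if it were. A point of $\St^\eta_\alpha/G$ is not a loop in $G\backslash(\cB\times\cB)$ but a point of the $\eta$-twisted component, carrying an element $\delta\in G_\eta\setminus G$ with $\delta^2\in B$; the comparison with the Hochschild space of $\cB_\alpha/K_{\alpha,\sigma}$ goes through the squaring map $(B,(g,k))\mapsto(B,(g^2,k\iota))$ on formal groups (an isomorphism only because we are in characteristic zero). Under this map the universal monodromies differ by the automorphism $\varphi_\iota$ of the identity functor given by the involution $\iota$. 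So you cannot directly invoke the corollary relating $S^1$-equivariant sheaves on $\cH X$ to $\D_X$-modules: you must first show that $\varphi_\iota$ acts trivially on the \emph{localized} equivariant category. The paper does this (in the analogous complex case, Theorem \ref{localized monodromic reduction}) by passing to the $\D$-module side and observing that $\iota$ lies in the connected automorphism group (a torus) of every object of the stack, hence acts trivially on $\D$-modules. Without this step your final application of the corollary is unjustified.

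Second, your mechanism for the restriction equivalence is not the right one. You propose an equivariant-localization concentration argument (``the non-formal unipotent directions are swept out freely by $S^1$, so they die after inverting $u$''), but the $S^1$-action on the loop space is the monodromy automorphism, which is not free on the large unipotent loops in any sense that would make such a concentration theorem apply; this is why you correctly identify it as the hard step, but the difficulty is not one of making a correct idea precise --- the idea itself does not match the geometry. The paper's Lemma \ref{restriction is an equivalence} proves the restriction $\Coh(\St^\sigma_\alpha/G)\to\Coh(\wh\St^\sigma_\alpha/G)$ is an equivalence \emph{before} any localization, by a purely coherent argument: after base change one reduces to $\Coh(\wh B_U/B)\to\Coh(\wh B/B)$, chooses a generic one-parameter subgroup $\G_m\subset B$ contracting $U$ to $e$, identifies restriction with completion along $\G_m$-weights, and inverts it by taking $\G_m$-finite vectors. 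Replacing your localization heuristic with this contraction argument (and noting it is manifestly $S^1$-equivariant) closes the gap.
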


In parallel with the complex case, the adjoint quotient
$\St^\eta_\alpha/G$ is not the Hochschild space of the union of the
geometric parameter spaces $K_{\alpha,\sigma}\backslash\cB_\alpha$
appearing in the right hand side of the theorem. Rather the
Hochschild space of the union canonically sits inside of
$\St^\eta_\alpha/G$, and the restriction of coherent sheaves along
this embedding gives an equivalence.

\medskip

The above theorem gives a description of $\D$-modules on the
geometric parameter spaces $K_{\alpha,\sigma}\backslash\cB_\alpha$
as part of a nice family with respect to the paramater $\alpha$.
Namely, these categories can be recovered from the loop spaces of
$K_{\alpha,\sigma}\backslash\cB_\alpha$, and the loop spaces in turn
fit into the nice family formed by the equivariant Langlands
parameter space $\St^\eta_\alpha/G$. In this setting, it is crucial
that we sought such a uniform picture in the realm of loop spaces
rather than cotangent bundles.

%%%%%%%%%%%%%%%%%%%%%%%%%%%%%%%%%%%%
%%%%%%%%%%%%%%%%%%%%%%%%%%%%%%%%%%%%
%%%%%%%%%%%%%%%%%%%%%%%%%%%%%%%%%%%%

\section{Applications}\label{applications}
In this section we outline how the results of this paper fit into
our ongoing project \cite{geometric Vogan,complex,base change} to
apply ideas from the geometric Langlands program to representation
theory of real groups, specifically to Vogan duality and Soergel's
conjecture, which give refined forms of the local Langlands program
over the reals.

In broad outline, we relate the local geometric Langlands program to
the real local Langlands program using two principles,
$S^1$-equivariantization and geometric base change. The local
geometric Langlands program describes module categories over loop
groups and their Hecke algebras in terms of coherent sheaves on
spaces of Langlands parameters. Equivariant localization for loop
rotation relates the loop group (and affine Hecke algebras) to the
group $G$ (and finite Hecke algebras), and coherent sheaves on
Langlands parameters to $\D$-modules on flag varieties of the dual
group. This latter step is the role of the current paper. Thus
representation theory of $G$ (the complex local Langlands program)
arises as the $S^1$-equivariantization (or ``string states") of
representation theory of $LG$. On the other hand, the geometric base
change conjecture relates the geometric Langlands programs over
complex and real curves. The result is the real local Langlands
classification identifying categories of representations of real
forms of $G$ through their local Langlands parameters, which are
$\D$-modules on dual flag varieties \cite{ABV,Soergel}.

In Section \ref{complex Langlands} we explain the application of the
localization principle in the complex case and the resulting duality
for finite Hecke categories \cite{complex}. In Section \ref{real
Langlands} we describe a real form of the geometric Langlands
conjecture on $\pline$ and its application to Vogan duality
\cite{geometric Vogan}. Finally, in Section \ref{base change} we
introduce the geometric base change conjecture, and explains how it
implies a strong form of Soergel's conjecture \cite{base change}.

%%%%%%%%%%%%%%%%%%%%%%%%%%%%%%%%%%%%

\subsection{Ramified geometric Langlands on $\pline$}\label{complex
Langlands}
Fix a complex reductive group $G$ with Langlands dual
group $G^\vee$. When referring to objects associated to $G^\vee$, we
will often adjoin the superscript ${}^\vee$ to our usual notation
without further comment. So for example, as usual $\cB$ will denote
the flag variety of $G$, and $\cB^\vee$ the flag variety of
$G^\vee$.

\medskip

The equivariant Steinberg variety $\St^\vee/G^\vee$ admits a natural
interpretation as the space of $G^\vee$-local systems on $\pline$
with parabolic structure (simple poles and compatible flags) at the
points $0,\infty\in\pline$. In particular, the map to the adjoint
quotient $G^\vee/G^\vee$ gives the monodromy of a local system
around the equator. Under this interpretation, the $S^1$-action on
$\St^\vee/G^\vee$ by loop rotation coincides with the $\Cx$-action
induced by the standard rotation of $\pline$ fixing $0,\infty$. Note
that the $\Cx$-action reduces to an $S^1$-action since it is
infinitesimally trivialized by the local system structure.

\medskip

Next consider the moduli stack $\ramBun$ of $G$-bundles on $\pline$
equipped with flags at the points $0,\infty\in\pline$. The geometric
Langlands program predicts an intimate connection between the
derived category of coherent sheaves $\Coh(\St^\vee/G^\vee)$ and the
derived category of monodromic $\D$-modules $\D(\ramBun)_{mon}$. By
Bezrukavnikov's work, both of the above categories are module
categories for the affine Hecke category. On the one hand,
$\Coh(\St^\vee/G^\vee)$ is nothing more than the regular module
category. On the other hand, in~\cite{complex}, we show that
$\D(\ramBun)_{mon}$ is the dual module category (in a precise sense
which would take some space to spell out).

One can interpret the duality of affine Hecke module categories
$$
\D(\ramBun)_{mon} \leftrightarrow \Coh(\St^\vee/G^\vee)
$$
as the {\em tamely ramified geometric Langlands correspondence} on
$\pline$. A key property of the duality is that it respects
automorphisms of the curve $\pline$ fixing the points
$0,\infty\in\pline$. Namely, the $S^1$-action on
$\Coh(\St^\vee/G^\vee)$ by loop rotation is transported to the
$\Cx$-action on $\D(\ramBun)_{mon}$ induced by the standard rotation
of $\pline$. Note that here as well the $\Cx$-action also reduces to
an $S^1$-action since it is infinitesimally trivialized by the
$\D$-module structure.

\medskip

Consider for a moment the moduli stack $\ramBun_{mon}$ of
$G$-bundles on $\pline$ with unipotent level structure at the points
$0,\infty\in\pline$. One can interpret objects of
$\D(\ramBun)_{mon}$ as $\D$-modules on $\ramBun_{mon}$ that are
constructible along the fibers of the projection $\ramBun_{mon}\to
\ramBun$. A key observation is that the $\Cx$-action on
$\ramBun_{mon}$ by the standard rotation of $\pline$ does {\em not}
reduce to an $S^1$-action since its orbits contain nontrivial moduli
of objects. Rather the action reveals important structure as
summarized in the following statement.

\begin{observation}\label{automorphic fixed points}
The fixed points of the natural $\Cx$-action on $\ramBun_{mon}$ are
precisely the open locus $\ramBun_{mon}^{\circ}$ of trivial
$G$-bundles with parabolic structure.
\end{observation}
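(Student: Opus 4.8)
The plan is to analyze the fixed-point locus of the $\Cx$-action on $\ramBun_{mon}$ directly, working with the standard modular description of bundles on $\pline$. First I would recall that a $G$-bundle on $\pline$ together with a point of $\ramBun_{mon}$ — unipotent level structure at $0$ and $\infty$ — can be described by a clutching/transition function, or equivalently via the Birkhoff/Grothendieck decomposition: every $G$-bundle on $\pline$ is induced from a dominant coweight $\lambda$, and the level structures at $0,\infty$ record reductions of the fibers to the unipotent radical $U\subset B$. The $\Cx$-action is the one induced by the standard scaling $z\mapsto tz$ on $\pline$, which fixes the two marked points $0,\infty$ and hence acts on the whole moduli problem. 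The key point is that scaling acts on the ``modulus'' of how the level structures at $0$ and $\infty$ are glued across the equator, and this modulus can only be scaled away (i.e. be a fixed point) when it is trivial.

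The main steps in order: (1) Give the explicit chart. Identify $\ramBun_{mon}$ near a point with data $(\lambda, \text{bundle induced from }\lambda, \text{ two }U\text{-reductions})$; using the $\Aut$-group of the underlying bundle (which for $\lambda=0$ is $G$ itself, and in general a parabolic-type group of automorphisms of $\mathcal{O}_{\pline}\otimes_{\mathbb{G}_m}\lambda$) one reduces the gluing data of the two level structures to a finite-dimensional space on which $\Cx$ acts with positive or mixed weights. (2) Compute the $\Cx$-weights. Because the two marked points are $0$ and $\infty$, the rescaling $z\mapsto tz$ acts on a relative position/gluing parameter living in a space isomorphic (as a $\Cx$-representation) to a sum of line bundle sections $H^0(\pline,\mathcal{O}(n))$ with $n>0$ for the nontrivial strata, hence with strictly positive weights; a point is $\Cx$-fixed only if all these coordinates vanish, which forces the $G$-bundle to be trivial and the two $U$-reductions to be in ``general position'' giving precisely the open locus $\ramBun_{mon}^\circ$ of trivial bundles with their canonical parabolic structure. (3) Conversely, verify that every point of $\ramBun_{mon}^\circ$ is fixed: a trivial bundle with parabolic (flag) structure at $0$ and $\infty$ has no moduli in the $\pline$-direction — the data is literally constant along $\pline$ — so the scaling acts trivially. (4) Assemble: the inclusion $\ramBun_{mon}^\circ \hookrightarrow (\ramBun_{mon})^{\Cx}$ is both open (by genericity of the trivial-bundle locus) and closed (as the vanishing locus of the positive-weight coordinates on each stratum), and a connectedness argument on $(\ramBun_{mon})^{\Cx}$ — or simply a stratum-by-stratum check that every nontrivial Birkhoff stratum carries strictly positive weights and hence contributes nothing — yields equality.

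The hard part will be step (2): controlling the $\Cx$-weights uniformly across all the Birkhoff strata indexed by dominant coweights $\lambda$, and making sure the bookkeeping of the automorphism group action (which itself may absorb some of the gluing parameters) does not accidentally create a fixed point with $\lambda\neq 0$. One must check that after quotienting by automorphisms the residual gluing parameter still sits in strictly positive $\Cx$-weight, which amounts to the observation that the unipotent level structure at $0$ and the unipotent level structure at $\infty$ are ``spread apart'' by the whole of $\pline$ and therefore interact through sections of $\mathcal{O}(\langle\lambda,\alpha\rangle)$ for the relevant roots $\alpha$, all of which have no $\Cx$-invariants beyond the constants already accounted for. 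A clean way to organize this, which I would pursue, is to phrase the whole computation on the level of the attracting/repelling decomposition (Bia\l ynicki-Birula) for the $\Cx$-action and identify the fixed locus with maps $\Cx \backslash pt \to \ramBun_{mon}$ that extend over the compactification — i.e. with $\Cx$-equivariant bundles on $\pline$, which by rigidity of the trivial bundle forces triviality. Once step (2) is in hand, steps (1), (3), (4) are routine, and the observation follows: the $\Cx$-fixed points of $\ramBun_{mon}$ are exactly $\ramBun_{mon}^\circ$.
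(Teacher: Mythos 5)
The paper itself offers no proof of this Observation: it appears in the expository Section \ref{applications}, which explicitly summarizes results from the companion paper \cite{complex} in preparation, so there is nothing internal to compare your argument against. Judged on its own terms, your strategy is the natural one and its core is sound: decompose $\Bun_G(\pline)$ by Birkhoff--Grothendieck type $\lambda$, note that rotation fixes $0$ and $\infty$, and detect non-fixedness of the nontrivial strata through the $\Cx$-weights on the level-structure data. The decisive mechanism, which you gesture at with the weights $\langle\lambda,\alpha\rangle$, is that any lift of the rotation action to $P_\lambda$ acts on the two fixed fibers $P_\lambda|_0$ and $P_\lambda|_\infty$ with weights differing by $\lambda$ (and no global automorphism of $P_\lambda$ can shift this difference), while a one-parameter subgroup preserving a reduction to $U$ of a fiber must land in a conjugate of $U$ and hence be trivial. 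So an equivariant structure fixing unipotent level structures at \emph{both} $0$ and $\infty$ forces $\lambda=0$. The converse and the openness of the trivial locus are, as you say, immediate.

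Two points need repair. First, your closing claim that the fixed locus consists of ``$\Cx$-equivariant bundles on $\pline$, which by rigidity of the trivial bundle forces triviality'' is false as stated: \emph{every} $P_\lambda$ admits a $\Cx$-equivariant structure (it is induced from an equivariant line bundle), so equivariance of the underlying bundle proves nothing. What does the work is the incompatibility of any such equivariant structure with the level structures at the two antipodal fixed points, as above; you should run the argument through the fibers at $0$ and $\infty$ rather than through the bundle alone. (Relatedly, passing from ``the isomorphism class is fixed by each $\rho_t$'' to ``there is a genuine equivariant structure'' requires a brief splitting argument for the extension of $\Cx$ by $\Aut(P,u_0,u_\infty)$; this is standard but should be acknowledged.) Second, the assertion that the residual gluing parameters all sit in ``strictly positive'' weight is not uniform over the strata -- the weights $\langle\lambda,\alpha\rangle$ change sign with $\alpha$ -- so the stratum-by-stratum positivity bookkeeping in your step (2) is fragile; the fiber-weight mismatch argument is both shorter and avoids this. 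With those corrections the argument establishes the Observation.
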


%
%The approach to Vogan duality and Soergel's conjecture involves
%mirroring the $S^1$-localization and geometric base change on the
%Langlands dual side. As mentioned above, Bezrukavnikov's local
%geometric Langlands theorem identifies $\Coh_G(\St)$ with the {\em
%affine Hecke category}, which is an analog of the category
%$\D_B(\cB)$ for the loop group of the Langlands dual group $\Gv$. On
%the level of $K$-groups this gives Kazhdan-Lusztig's identification
%of the equivariant $K$-group of the Steinberg variety with the
%affine Weyl group, which labels Schubert cells in the affine flag
%variety $\Fl^\vee$ of $\Gv$.\footnote{For simplicity we will ignore
%the parameter $(\alpha,\beta)\in H\times H$ - to be precise we
%should consider Iwahori monodromic twisted $\D$-modules on
%$\Fl^\vee$, with monodromicity and twisting given by
%$(\alpha,\beta)$.} In \cite{complex}, we reinterpret this theorem as
%giving an equivalence (or more precisely a duality) between
%$\Coh_G(\St)$ and $\D$-modules on
%$Bun_{\Gv}:=Bun_{\Gv}(\pline,0,\infty)$, the moduli space of
%$\Gv$-bundles on $\pline$ equipped with flags at $0$ and $\infty$.
%The resulting equivalence with $\Coh_G(\St)$ is compatible with the
%rotational symmetry of $\pline,0,\infty$, and identifies the tensor
%actions of $Rep G$ with the Hecke correspondences on $\D$-modules on
%the moduli of $\Gv$-bundles.

As explained in \cite{complex}, the general principle of
$S^1$-equivariant localization applies directly to the derived
category of $\D$-modules on $\ramBun_{mon}$. The localization of the
category of $\Cx$-equivariant objects is equivalent to the periodic
version of the derived category of $\D$-modules on the fixed points
$\ramBun_{mon}^\circ$. By Observation~\ref{automorphic fixed
points}, the fixed points can be identified with the quotient of a
product of monodromic flag varieties
$$
\ramBun^\circ_{mon} \simeq G\backslash (\cB_{mon}\times\cB_{mon})
$$
Here by the monodromic flag variety $\cB_{mon}$, we mean the moduli
of a Borel subgroup $B\subset G$, together with an element $h\in
B/U$.
%Thus we conclude that starting from the
%derived category $\D(\ramBun)_{mon}$,
%we can recover the differential $\Z/2\Z$-graded version of the derived category $\D_G(\cB\times\cB)_{mon}$ via the formalism of $S^1$-equivariant
%localization.

\medskip

Combining  the above discussion with the results of this paper, we
can summarize the situation in the following schematic diagram.

$$ \CD
\text{\underline{Automorphic side} } @.
\text{\underline{Spectral side} }\\
  \D_{G} (\cB \times \cB)_{mon}
 @>{\text{complex local Langlands equivalence}}>>
\D_{G^\vee} (\cB^\vee \times \cB^\vee)
\\
  @V{\text{Observation~\ref{automorphic fixed points}}}VV
  @VV{\text{Theorem~\ref{complex crystalization}}}V \\
  \D(\ramBun^\circ)_{mon}  @. %@>{\text{universal monodromy}}>>
  \Coh(\St^\vee/G^\vee)^{S^1} \\
   @A{\text{$\Cx$--fixed points}}AA @AA\text{$S^1$--fixed points}A \\
   \D(\ramBun)_{mon}
   @>{\ \ \text{Ramified geometric Langlands}\ \ }>>
   \Coh(\St^\vee/G^\vee)
\endCD $$
\\

The equivalence on the bottom row, as discussed above, is a form of
the tamely ramified local geometric Langlands theorem of
Bezrukavnikov, i.e. Langlands duality for affine Hecke categories.
The equivalence of the top row is a Langlands duality for finite
Hecke categories, and a form of the complex local Langlands
classification. The existence of such an equivalence is a theorem of
Soergel~\cite{Soergel} whose proof is based on the Koszul duality
theorem of Beilinson, Ginzburg, and Soergel~\cite{BGS}. (Concretely,
it derives from a calculation of Ext groups of generators on both
sides.) Via Beilinson-Bernstein localization, the left hand side is
the category of Harish-Chandra bimodules with trivial generalized
infinitesimal character, and the equivalence is the complex case of
Soergel's general conjecture on Langlands parametrization of
categories of Harish-Chandra modules for real groups. The arguments
we have sketched here (and develop in detail in~\cite{complex})
provide a {\em canonical} construction and characterization of this
equivalence as well as a conceptual explanation for its existence.
%\begin{thm}[\cite{complex}]
%There is a canonical quasi-equivalence of
%differential $\Z/2$-graded derived categories
%$$
% \D_{G} (\cB \times \cB)_{mon}
% \simeq
% \D_{G^\vee} (\cB^\vee \times \cB^\vee)
%$$
%respecting Hecke symmetries.
%\end{thm}
These results can also be viewed as a tamely ramified and conceptual
version of the $S^1$-equivariant geometric Satake correspondence of
Bezrukavnikov and Finkelberg \cite{BezFink}, which relates the
$\Gm$-equivariant version of the derived Satake category to
Harish-Chandra bimodules for the dual group.

%%%%%%%%%%%%%%%%%%%%%%%%%%%%%%%%%%%%%%%%%%%%%%%%%

\subsection{Geometric Langlands for real groups}\label{real
Langlands}

Next we introduce real forms of $G$ into the geometric Langlands
correspondence. Fix once and for all a quasi-split conjugation
$\theta$ of $G$, and let $\eta$ be the combinatorially corresponding
algebraic involution of $\Gv$.

\medskip

Consider the antipodal conjugation of $\pline$, and note that it
exchanges the points $0,\infty\in\pline$. Thus together with the
conjugation $\theta$, it provides a real form $\ramBun_\theta$ of
the moduli stack $\ramBun$. Similarly, we have the corresponding
real form $\ramBun_{\theta, mon}$ of the monodromic moduli stack
$\ramBun_{mon}$. Observe that since $0,\infty\in\pline$ are
exchanged by the anitpodal conjugation, the natural projection
$\ramBun_{\theta, mon}\to \ramBun_{\theta}$ is a torsor for a single
copy of the universal Cartan $H$.

The restriction of the standard $\Cx$-action on $\pline$ to the
unitary circle $U(1)\subset \Cx$ preserves the antipodal
conjugation. Thus we have the induced $U(1)$-action on the above
moduli stacks. The orbits of the action on $\ramBun_\theta$ are
discrete, but those of the action on $\ramBun_\theta$ have moduli.

\begin{observation}\label{real automorphic fixed points}
The fixed points of the natural $U(1)$-action on $\ramBun_{\theta,
mon}$ are precisely the open locus $\ramBun_{\theta, mon}^{\circ}$
where the underlying $G$-bundle is trivial.
\end{observation}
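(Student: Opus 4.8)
The plan is to deduce Observation~\ref{real automorphic fixed points} from its complex counterpart Observation~\ref{automorphic fixed points} by Galois descent. Recall that $\ramBun_{\theta,mon}$ is a real form of $\ramBun_{mon}$, corresponding to an anti-holomorphic involution $\sigma$ of $\ramBun_{mon}$ obtained by combining pullback along the antipodal conjugation $c$ of $\pline$ (which exchanges $0$ and $\infty$) with the conjugation $\theta$ of $G$; the fact that $c$ identifies the two level points is exactly what makes $\ramBun_{\theta,mon}\to\ramBun_{\theta}$ a torsor for a single copy of $H$. The first step is to observe that the $U(1)$-action on $\ramBun_{\theta,mon}$ is, by construction, the restriction to $U(1)\subset\Cx$ of the rotation $\Cx$-action on $\ramBun_{mon}$: this makes sense precisely because $c$ conjugates the rotation $r_t$ to $r_{\bar t^{-1}}$, and $\bar t^{-1}=t$ for $t\in U(1)$, so that $\sigma$ commutes with $r_t$ for such $t$. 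In particular, since the algebraic group $U(1)$ over $\R$ has complexification $U(1)_{\C}\cong\Gm=\Cx$, the complexification of the $U(1)$-action recovers the rotation $\Cx$-action on $\ramBun_{mon}$.

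Granting these compatibilities, the argument would run as follows. Taking fixed points for the linearly reductive algebraic group $U(1)$ commutes with the flat base change $\R\to\C$, so the complexification of the fixed-point stack $(\ramBun_{\theta,mon})^{U(1)}$ is $(\ramBun_{mon})^{\Gm}=(\ramBun_{mon})^{\Cx}$, which by Observation~\ref{automorphic fixed points} is the open locus $\ramBun_{mon}^{\circ}$ of trivial $G$-bundles with parabolic structure. Next one checks that $\sigma$ preserves $\ramBun_{mon}^{\circ}$, since the condition that the underlying $G$-bundle be trivial is visibly stable under pullback along the isomorphism $c$ and under the automorphism $\theta$. Hence $\ramBun_{mon}^{\circ}$ is $\sigma$-stable and inherits a real form, which is by definition the open substack $\ramBun_{\theta,mon}^{\circ}\subset\ramBun_{\theta,mon}$ where the bundle is trivial. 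Since $(\ramBun_{\theta,mon})^{U(1)}$ and $\ramBun_{\theta,mon}^{\circ}$ are real forms of the complex stack $\ramBun_{mon}^{\circ}$ cut out by one and the same involution $\sigma$, they must agree, which is the assertion.

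I expect the main work to be in the foundational compatibilities collected in the first paragraph. One has to phrase the notion of a real form of a derived Artin stack carefully --- most cleanly as a $\mathrm{Gal}(\C/\R)$-equivariant structure, or by working with derived $\R$-stacks from the outset --- and then verify that the rotation action and the conjugation $\sigma$ interact as claimed, including on the monodromic level structures at the exchanged points $0,\infty$; one also needs the derived fixed-point formalism to be compatible with flat base change for the torus $U(1)$, which is standard but should be spelled out. An alternative, self-contained route avoids Observation~\ref{automorphic fixed points}: classify $\sigma$-equivariant $G$-bundles on $\pline$ (a real form of the Birkhoff--Grothendieck decomposition, i.e.\ sums of line bundles with a semilinear involution covering $c$), check that over the trivial bundle the residual data is a pair of monodromic flags on which rotation acts trivially, and that over any bundle of nonzero coweight the copy of $U(1)$ inside the automorphism group coming from the grading acts nontrivially on the level structure at $0$ (equivalently $\infty$); but this effectively reproves the complex statement and requires the real Birkhoff--Grothendieck classification, so I would prefer the descent argument.
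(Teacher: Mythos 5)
The paper does not actually prove Observation~\ref{real automorphic fixed points}. It appears in Section~\ref{applications}, which the introduction explicitly flags as a summary of the series of papers in preparation \cite{geometric Vogan, complex, base change}; both this observation and the complex Observation~\ref{automorphic fixed points} are presented there as announcements, with the proofs deferred to \cite{geometric Vogan}. There is therefore no in-paper argument to check your proposal against. That said, immediately after the statement the paper invokes Observation~\ref{automorphic fixed points} in order to describe $\ramBun^\circ_{\theta,mon}$ as a union of real quotients of monodromic flag varieties, which strongly suggests the intended route is exactly the one you take: reduce the real statement to the complex one by Galois descent.

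On the merits of your argument: the reduction is sound. The identity $c\circ r_t = r_{\bar t^{-1}}\circ c$ is precisely the check behind the paper's remark that the unitary circle ``preserves the antipodal conjugation,'' and it gives both the existence of the $U(1)$-action and the fact that its complexification is the full $\Gm$-rotation on $\ramBun_{mon}$. Compatibility of torus fixed points with flat base change, and $\sigma$-stability of the trivial-bundle locus, are as you say. The one step I would tighten is the conclusion. You phrase it as ``two real forms of $\ramBun_{mon}^{\circ}$ cut out by the same involution must agree,'' but the abstract real-form language leaves open the possibility of distinct real forms becoming isomorphic after base change via a nontrivial automorphism. What you actually have, and what is cleaner, is that both $(\ramBun_{\theta,mon})^{U(1)}$ and $\ramBun_{\theta,mon}^{\circ}$ are substacks of the \emph{same} real stack $\ramBun_{\theta,mon}$, which become equal as substacks after the faithfully flat base change $\R\to\C$; they are therefore equal by descent for (open) substacks, with no need to invoke the classification of real forms. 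Finally, you are right to flag the foundational point: $\ramBun_{\theta,mon}$ is an Artin stack, not a scheme, so one should specify which notion of fixed-point stack is being used and check it commutes with flat base change; this is standard for torus actions on Artin stacks with affine stabilizers, but should be cited or recorded rather than taken for granted.
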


Fix $\alpha\in H^\vee$, and consider the derived category
$\Sh(\ramBun_\theta)_\alpha$ of monodromic constructible sheaves on
$\ramBun_\theta$ with monodromicity $\alpha$. As explained
in~\cite{geometric Vogan}, the general principle of
$S^1$-equivariant localization applies in this setting: the
localization of the category of $U(1)$-equivariant objects of
$\Sh(\ramBun_\theta)_\alpha$ is equivalent to the differential
$\Z/2\Z$-graded version of the derived category
$\Sh(\ramBun^\circ_\theta)_\alpha$ of monodromic constructible
sheaves on the fixed points. Using Observation~\ref{automorphic
fixed points}, one can show that the fixed points are a union of
real quotients of monodromic flag varieties
$$
\ramBun^\circ_{\theta,mon} \simeq \coprod_{\theta'\in \Theta(\eta)}
G_{\R,\theta'}\backslash \cB_{mon}.
$$
Here the index set $\Theta(\eta)$ is precisely the one arising in
Vogan duality and Soergel's conjecture. Now there is an analytic
version of Beilinson-Bernstein localization due to
Kashiwara-Schmid~\cite{KS} that localizes representations of a real
form $G_{\R,\theta'}$ on the corresponding real quotient of the
monodromic flag variety $G_{\R,\theta'}\backslash \cB_{mon}$. Thus
for $\lambda\in\h^\vee$ with $\alpha=\exp(\lambda)$, we have an
identification of derived categories
$$
\Sh(\ramBun^\circ_\theta)_\alpha \simeq \bigoplus_{\theta'\in
\Theta(\eta)} \D({\HC}_{\theta',[\lambda]}).
$$
Here the right hand side (or rather its pro-completion) is the
derived category of Harish-Chandra modules appearing in Soergel's
conjecture.

\medskip

Now we can summarize our program to understand Soergel's conjecture
in the following schematic diagram. The left hand automorphic column
has been sketched in the preceding discussion. The right hand
spectral column follows from the results of this paper as described
in the overview. Finally, the bottom horizontal arrow is a
conjectural real geometric Langlands correspondence relating module
categories for the affine Hecke category. (Some intrinsic motivation
for the form of this statement will be given in the subsequent
section.)

$$ \CD
\text{\underline{Automorphic side} } @.
\text{\underline{Spectral side} }\\
\bigoplus_{\theta'\in \Theta(\eta)} \D({\HC}_{\theta',[\lambda]})
 @>{\text{Soergel conjecture}}>>
\bigoplus_{\sigma\in \Sigma({\eta,\alpha})}
\D_{K^\vee_{\alpha,\sigma}}(\cB^\vee_\alpha)
\\
  @V{\text{Localization of~\cite{KS}}}VV
  @VV{\text{Theorem~\ref{real cyclic crystalization}}}V \\
  \Sh(\Bunthetao)_{\alpha}  @. %@>{\text{universal monodromy}}>>
 \Coh(\St^{\vee,\eta}_{\alpha}/G^\vee)^{S^1}
 \\
   @A{\text{$U(1)$--fixed points}}AA @AA\text{$S^1$--fixed points}A \\
  \Sh(\Buntheta)_{\alpha}@>{\text{Real geometric Langlands conjecture}}>>
  \Coh(\St^{\vee,\eta}_\alpha/G^\vee) \\
  % @V{\text{base change}}VV @VV{\text{base change}}V \\
  %\Sh(\ramBun)
   %@>{\ \ \text{ramified geometric Langlands}\ \ }>>
   %\Coh(\ST)
\endCD $$
\\

By our other results, a construction of the real geometric Langlands
correspondence would resolve Soergels's conjecture. In the next
section, we will discuss how such a correspondence follows from a
conjectural {\em geometric base change} principle. This is something
we can currently verify holds on the level of Grothendieck groups.
Coupling it with the Kazhdan-Lusztig theorem on the affine Hecke
algebra, we obtain the following affine version of Vogan duality (or
real Kazhdan-Lusztig theorem).

%\begin{thm}[\cite{geometric Vogan}]
%The $S^1$-localized category $\D^{\alpha}_{S^1,loc}(Bun_\theta)$ of
%($\alpha$-twisted)\footnote{$\alpha$ regular?} $\D$-modules on the
%stack $Bun_\theta$ of real $\Gv$-bundles on $\pline$ is canonically
%identified (as differential $\Z/2$-graded category) with
%$\HC_{\eta,\alpha}$.
%\end{thm}
%
%We would like to relate the category of $\D$-modules on $Bun_\theta$
%to the category of coherent sheaves $\Coh_G(\St^\eta)$. Recall that
%the latter is obtained from $\Coh_G(\St)$ by a form of base change,
%using the tensor action of $Rep G$. We are then able to show by
%reduction to tori that the same base change on the level of
%$K$-groups recovers $K(\D^\alpha(Bun_\theta))$ from
%$K(\D^\alpha(Bun_\Gv))$:

\begin{thm}[\cite{geometric Vogan}]\label{affine Vogan}
(Affine Vogan Duality) For any $\alpha\in H^\vee$, there is a
canonical duality of modules for the affine Weyl group
$$
K(\Sh(\Buntheta)_{\alpha}))\leftrightarrow K(
\Coh(\St^{\vee,\eta}_\alpha/G^\vee)).
$$
%of modules for the affine Weyl group, obtained by applying base
%change for the spherical Hecke algebra $Rep G$ to the
%Kazhdan-Lusztig isomorphism
%$$K(\D^\alpha(Bun_{\Gv}))\simeq K(\Coh_G(\St)).$$
\end{thm}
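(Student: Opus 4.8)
The plan is to deduce the Grothendieck-group duality from the categorical picture assembled in the rest of the paper by passing to $K$-theory at every node of the schematic diagram in Section~\ref{real Langlands}, and then observing that each vertical arrow is an equivalence of ($\Z/2\Z$-graded) categories while the known horizontal identifications on Grothendieck groups can be matched. Concretely, on the spectral side, Theorem~\ref{real cyclic crystalization} identifies $\Coh(\St^{\vee,\eta}_\alpha/G^\vee)^{S^1}_{per}$ with $\bigoplus_{\sigma} \D_{K^\vee_{\alpha,\sigma}}(\cB^\vee_\alpha)\ot_\C\C[u,u^{-1}]$; the $S^1$-equivariant localization principle identifies this with the periodic category attached to the fixed locus, and taking $K$-theory turns the localization (inverting $u$) into the passage from $K^{S^1} = \C[u]$-modules to $\C[u,u^{-1}]$-modules, so that $K(\Coh(\St^{\vee,\eta}_\alpha/G^\vee))$ (suitably understood as the $S^1$-equivariant $K$-group, localized) matches $\bigoplus_\sigma K(\D_{K^\vee_{\alpha,\sigma}}(\cB^\vee_\alpha))$. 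On the automorphic side, the Kashiwara--Schmid localization of~\cite{KS} and Observation~\ref{real automorphic fixed points} identify $\Sh(\Bunthetao)_\alpha$ with $\bigoplus_{\theta'}\D(\HC_{\theta',[\lambda]})$, and the $U(1)$-fixed-point/equivariant-localization step relates $K(\Sh(\Buntheta)_\alpha)$ to the $K$-theory of this fixed locus.

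The second ingredient is that on Grothendieck groups the two sides have already been matched by Vogan's character duality, i.e.\ Theorem~\cite{Vogan,ABV}: the perfect pairing
$$
\bigoplus_{\theta'\in\Theta(\eta)} K(\HC_{\theta',[\lambda]})\;\leftrightarrow\;\bigoplus_{\sigma\in\Sigma(\eta,\alpha)} K(\D_{K^\vee_{\alpha,\sigma}}(\cB^\vee_\alpha))
$$
is precisely the map we want to reinterpret. So the logical skeleton is: (i) use the paper's geometric identifications to rewrite both $K(\Sh(\Buntheta)_\alpha)$ and $K(\Coh(\St^{\vee,\eta}_\alpha/G^\vee))$ in terms of the fixed-point categories; (ii) invoke Vogan duality to get the pairing between those fixed-point $K$-groups; (iii) check that the pairing so obtained is canonical and is equivariant for the affine Weyl group action. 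For step (iii) one notes that $\Sh(\Buntheta)_\alpha$ carries an affine Hecke (hence affine braid, hence affine Weyl on $K$-theory) action by Hecke modifications at $0,\infty$, and $\Coh(\St^{\vee,\eta}_\alpha/G^\vee)$ carries the convolution action of $\Coh(\St^\vee/G^\vee)$, whose Grothendieck group is the affine Hecke algebra of $G$ by Kazhdan--Lusztig~\cite{KL}; one then verifies that the equivalences of step (i) intertwine these two module structures, which reduces to compatibility of equivariant localization with the Hecke convolution.

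The main obstacle I expect is making step (i) honest at the level of $K$-theory for these \emph{equivariant, non-derived-of-abelian} and periodic categories: equivariant localization is a statement about categories (or about equivariant homology), and extracting a clean statement about ordinary Grothendieck groups requires knowing that $K^{S^1}$ of the relevant $\Coh$ categories is finitely generated and torsion-free over $\C[u]$ (or at least that inverting $u$ and taking $K_0$ commute), together with a dévissage matching the $K$-theory of the ambient loop space with that of the small-loop/fixed-point locus along the embedding $\cH(\cdots)\hookrightarrow \St^{\vee,\eta}_\alpha/G^\vee$. This is exactly the subtlety already flagged in the text after Theorem~\ref{complex crystalization} — the adjoint quotient is \emph{not} the Hochschild space, only contains it, and restriction of coherent sheaves along the inclusion is claimed to be an equivalence. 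Granting that equivalence (which the paper asserts), the $K$-theoretic consequence follows formally; so the real content is citing that equivalence and checking the affine-Weyl-equivariance in step (iii), while the literal existence of the pairing is then just Vogan duality transported across the dictionary. Since the statement is attributed to~\cite{geometric Vogan}, the expectation is that the detailed verification — in particular the geometric base change input needed to pin down the horizontal arrow canonically rather than merely abstractly — is carried out there, and here one records only that it holds on Grothendieck groups, which is what the theorem claims.
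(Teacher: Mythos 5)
Your plan inverts the logical order that the paper actually uses, and this is more than a stylistic difference. In the text, Theorem~\ref{affine Vogan} is proved \emph{first}, and the classical Vogan character duality of \cite{Vogan,ABV} is then deduced \emph{from} it as a corollary by applying $S^1$-equivariant localization (see the Corollary immediately following Theorem~\ref{affine Vogan}). Your proposal does the opposite: it takes Vogan's pairing on the fixed-point $K$-groups as the input at step~(ii) and transports it upward to an affine statement. Within the paper's development that is circular, and it also undermines the whole point of the theorem, which is advertised precisely as giving a ``canonical and conceptual \emph{new} proof of Vogan duality'' rather than a repackaging of the old one.

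Separately, you are invoking the wrong machinery. The paper emphasizes that the proof of Theorem~\ref{affine Vogan} ``does not depend on difficult categorical statements such as found in the work of Bezrukavnikov, but only the easier original $K$-theoretic Kazhdan-Lusztig theorem.'' The actual ingredients stated in Sections~\ref{real Langlands} and~\ref{base change} are: (a) the automorphic geometric base change principle, verified directly at the level of Grothendieck groups (this is the heart of \cite{geometric Vogan}), and (b) the Kazhdan-Lusztig identification of the affine Hecke algebra with equivariant $K$-theory of the Steinberg variety \cite{KL}, which fixes the horizontal comparison on the spectral side. Your route instead passes through Theorem~\ref{real cyclic crystalization}, Kashiwara--Schmid localization, and periodic $S^1$-equivariant coherent categories; these are categorical statements the paper deliberately avoids for this theorem. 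As a consequence, the technical worries you raise at the end — whether $K^{S^1}$ is torsion-free over $\C[u]$, whether inverting $u$ commutes with $K_0$, dévissage for the embedding of the Hochschild space — are artifacts of your chosen route and do not appear in the paper's argument. The gap to close is therefore not a missing lemma in your plan but a different plan: you should be organizing the proof around the $K$-theoretic base change comparison and Kazhdan--Lusztig, taking Vogan duality as output, not input.
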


It is worth emphasizing that the proof of this theorem does not
depend on difficult categorical statements such as found in the work
of Bezrukavnikov, but only the easier original $K$-theoretic
Kazhdan-Lusztig theorem.

\medskip

As a corollary, we obtain a canonical and conceptual new proof of
Vogan duality.

\begin{corollary} (Vogan Duality)
Vogan's character duality isomorphism
$$
\bigoplus_{\theta'\in \Theta(\eta)} K({\HC}_{\theta',[\lambda]})
\quad \leftrightarrow \quad \bigoplus_{\sigma\in
\Sigma({\eta,\alpha})} K(\D_{K_{\alpha,\sigma}}(\cB_\alpha))
$$
follows by applying $S^1$-equivariant localization to Theorem
\ref{affine Vogan}.
\end{corollary}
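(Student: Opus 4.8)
The plan is to take the $K$-theoretic statement of Theorem~\ref{affine Vogan} and push it through $S^1$-equivariant localization on both sides of the duality, identifying the resulting fixed-point categories with the data that appears in Vogan's theorem. Recall that the $\Cx$-action on $\St^{\vee,\eta}_\alpha/G^\vee$ by loop rotation and the $U(1)$-action on $\Buntheta$ by rotation of $\pline$ fixing $0,\infty$ make the two Grothendieck groups in Theorem~\ref{affine Vogan} into modules over the representation ring $R(S^1)\cong\Z[q^{\pm 1}]$, and the pairing of that theorem is $R(S^1)$-bilinear: this is the compatibility of the duality with automorphisms of $\pline$ fixing $0,\infty$, transported from the tamely ramified geometric Langlands correspondence (where it is manifest) across the real geometric Langlands equivalence in its $K$-theoretically available form. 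First I would make this $R(S^1)$-linearity precise and observe that localization — inverting the augmentation ideal of $R(S^1)$, equivalently passing to the periodic coefficient ring $\C[u,u^{-1}]$ used in the body of the paper — carries the perfect pairing of Theorem~\ref{affine Vogan} to a perfect pairing of the localized modules.

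Next I would identify the two localized modules with fixed-point data. On the spectral side, the $S^1$-equivariant localization theorem, in the derived-algebraic form underlying the construction $\Coh(-)^{S^1}_{per}$, identifies the localization of $K(\Coh(\St^{\vee,\eta}_\alpha/G^\vee))$ with $K\bigl(\Coh(\St^{\vee,\eta}_\alpha/G^\vee)^{S^1}_{per}\bigr)$, and Theorem~\ref{real cyclic crystalization} applied to $G^\vee$ rewrites the latter as $\bigl(\bigoplus_{\sigma\in\Sigma(\eta,\alpha)}K(\D_{K^\vee_{\alpha,\sigma}}(\cB^\vee_\alpha))\bigr)\ot_\Z\Z[q^{\pm 1}]$. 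On the automorphic side, the analytic $U(1)$-equivariant localization used in Section~\ref{real Langlands} identifies the localization of $K(\Sh(\Buntheta)_\alpha)$ with $K(\Sh(\Bunthetao)_\alpha)\ot_\Z\Z[q^{\pm 1}]$, and Kashiwara--Schmid localization together with Observation~\ref{real automorphic fixed points} rewrites this as $\bigl(\bigoplus_{\theta'\in\Theta(\eta)}K(\HC_{\theta',[\lambda]})\bigr)\ot_\Z\Z[q^{\pm 1}]$, the Harish-Chandra side being taken for the pro-completions as in Soergel's conjecture. Since the pairing is $R(S^1)$-bilinear, it is determined by, and restricts to, a perfect $\Z$-valued pairing $\bigoplus_{\theta'}K(\HC_{\theta',[\lambda]})\leftrightarrow\bigoplus_\sigma K(\D_{K^\vee_{\alpha,\sigma}}(\cB^\vee_\alpha))$; the discrepancy between $K_{\alpha,\sigma}$ and $K^\vee_{\alpha,\sigma}$ is the usual switching of the roles of $G$ and $G^\vee$ in this story.

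It remains to recognize this pairing as Vogan's. Here I would use that both the pairing just produced and Vogan's character-duality isomorphism of \cite{Vogan,ABV} intertwine the finite Hecke-algebra symmetries — on our side these arise as the $S^1$-localization of the affine Hecke symmetries, i.e.\ the loop-rotation-to-$G$ degeneration emphasized in the introduction — and that both are normalized on the standard objects attached to the open orbits. Two Hecke-compatible perfect pairings of the same pair of Grothendieck groups that agree on such a spanning family coincide; alternatively, one traces the fixed-point identifications through and checks directly that the induced pairing computes the signed Kazhdan--Lusztig-type polynomials defining Vogan's duality. I expect \textbf{this final identification to be the main obstacle}: once Theorems~\ref{affine Vogan} and~\ref{real cyclic crystalization} and the fixed-point computations are in hand, producing \emph{some} perfect, Hecke-compatible pairing is essentially formal, but pinning it down as Vogan's specific isomorphism requires either a rigidity statement for that isomorphism or an explicit character computation carried through the circle-fixed-point identifications. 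A secondary technical point, of the same flavor as the localization arguments already used in the paper, is to set up the $K$-theoretic $S^1$-localization in the derived setting with care: the choice of localization of $R(S^1)$, the interplay of completion with inverting the equivariant parameter, and the compatibility of $K$ of a dg category with $K$ of its periodic $S^1$-equivariant version.
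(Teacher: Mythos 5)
Your proposal faithfully unpacks what the paper means by ``applying $S^1$-equivariant localization to Theorem~\ref{affine Vogan}'': the paper itself supplies no detailed proof of this corollary but instead relies on the schematic diagram and surrounding discussion in Section~\ref{real Langlands}, deferring details to the in-preparation reference~\cite{geometric Vogan}. Your reconstruction traverses exactly that diagram — localizing the $R(S^1)$-bilinear pairing of the affine theorem, using Theorem~\ref{real cyclic crystalization} on the spectral side and the Kashiwara--Schmid/fixed-point identification on the automorphic side — so this is the same approach. You are also right that the paper glosses over the final normalization step: it asserts that the localized pairing \emph{is} Vogan's character duality, but gives no rigidity or uniqueness statement that would pin this down beyond ``a perfect Hecke-compatible pairing of the correct Grothendieck groups''; your observation that either a characterization of Vogan's pairing (for instance by Hecke-equivariance plus a normalization on standard modules) or an explicit trace through the fixed-point identifications is needed is a genuine and appropriate caveat, not a defect of your proposal.
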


%%%%%%%%%%%%%%%%%%%%%%%%%%%%%%%%%%%%%%%%%%%%%%%%%

\subsection{Geometric base change}\label{base change}
In this section, we sketch results from~\cite{base change}
explaining how the principle of base change from the Langlands
program manifests in the geometric setting. In particular, a
geometric form of base change provides a proof of
Theorem~\ref{affine Vogan}, and in particular, a conceptual proof of
Vogan duality. If geometric base change can be verified on $\pline$,
it will also provide a proof of Soergel's conjecture. More
generally, when it can be verified, it reduces the real geometric
Langlands correspondence to the usual complex version.

The following schematic diagram summarizes how base change fits into
our preceding discussion. The horizontal arrows have been discussed
in the two preceding sections. Our aim here is to explain the
vertical arrows. The right hand spectral base change is a result
from~\cite{base change}. The left hand automorphic base change is a
conjecture on the categorical level, and a theorem on the level of
Grothendieck groups~\cite{geometric Vogan}. To simplify what is a
very general discussion, we will suppress any further mention of
monodromic parameters.

$$ \CD
\text{\underline{Automorphic side} } @.
\text{\underline{Spectral side} }\\
%\bigoplus_{\theta'\in \Theta(\eta)}
%\D({\HC}_{\theta',[\lambda]})
% @>{\text{Soergel conjecture}}>>
%\bigoplus_{\sigma\in \Sigma({\eta,\alpha})}
%\D_{K^\vee_{\alpha,\sigma}}(\cB^\vee_\alpha)
%\\
%  @V{\text{Localization of~\cite{KS}}}VV
%  @VV{\text{Theorem~\ref{real cyclic crystalization}}}V \\
%  \Sh(\Bunthetao)_{\alpha}  @. %@>{\text{universal monodromy}}>>
% \Coh(\St^{\vee,\eta}_{\alpha}/G^\vee)^{S^1}
% \\
%   @A{\text{$U(1)$--fixed points}}AA @AA\text{$S^1$--fixed points}A \\
  \Sh(\Buntheta)_{}@>{\text{Real geometric Langlands conjecture}}>>
  \Coh(\St^{\vee,\eta}/G^\vee) \\
   @V{\text{Conjectural base change}}VV @VV{\text{Spectral base change}}V \\
  \Sh(\ramBun)
   @>{\ \ \text{Ramified geometric Langlands}\ \ }>>
   \Coh(\St^\vee/\Gv)
\endCD
$$

\medskip

Recall that the equivariant Steinberg variety $\St^\vee/G^\vee$
admits a natural interpretation as the space of $G^\vee$-local
systems on $\pline$ with parabolic structure (simple poles and
compatible flags) at the points $0,\infty\in\pline$. Likewise, the
equivariant Langlands parameter space $\St^{\vee,\eta}/G^\vee$
admits a natural interpretation as the space of $G^\vee$-local
systems on $\pline$ with parabolic structure at the points
$0,\infty\in\pline$, and an $\eta$-twisted $\Z/2\Z$-equivariance
under the antipodal involution of $\pline$. It is often illuminating
to think of such an object as an $\eta$-twisted local system on the
quotient space $\R\pline = \pline/\Z/2\Z$ with a parabolic structure
at the point $0=\infty\in \R\pline$. From this perspective, it is
clear from Tannakian principles why we conjecture that
$\Coh(\St^{\vee,\eta}/G^\vee)$ should provide the spectral
description for $\Sh(\Buntheta)_{}$ under a real geometric Langland
correspondence. In what follows, we will also give a motivation for
this answer using the principle of base change.

\medskip

Consider the general situation of a covering map of curves $\hat
C\to C$ with Galois group $\Gamma$ so that $C= \hat C/\Gamma$.
Consider the stack of $G^\vee$-connections $\Conn_{G^\vee}(\hat C)$,
and its derived category of coherent sheaves
$\Coh(\Conn_{G^\vee}(\hat C))$. As explained in \cite{base change},
a simple spectral base change argument recovers
$\Coh(\Conn_{\Gv}(C))$ from natural operations on
$\Coh(\Conn_{\Gv}(\hat C))$. Namely, for each point of $\hat C$, we
have a tautological action of the tensor category $\Rep(\Gv)$ of
representations on $\Coh(\Conn_{\Gv}(\hat C))$, and imposing that
the action is identified for $\Gamma$-related points is precisely
the correct descent data.
%We refer to this construction as spectral base change.
One can go further and given an action of $\Gamma$ on $\Gv$ (or more
generally, a compatible action on a group-scheme over $\hat C$)
extend this picture to obtain a Galois-twisted version. It is also
worth remarking that the construction is compatible with respect to
automorphisms of the curves.

As a special case, taking $\hat C = \pline$, and $C=\R\pline$, with
$\Gamma=\Z/2\Z$ acting on $\pline$ via the antipodal map, and on
$\Gv$ via the involution $\eta$, we see that
$\Coh(\St^{\vee,\eta}/G^\vee)$ can be recovered from
$\Coh(\St^{\vee}/G^\vee)$ by spectral base change. Thus combining
this with the results of the current paper on $S^1$-equivariant
localization, we see that the derived category of equivariant
$\D$-modules on the geometric parameter spaces that appears in
Soergel's conjecture can be obtained from $\Coh(\St^\vee/\Gv)$ by
entirely formal categorical considerations.

\medskip

In \cite{base change}, we formulate and study the geometric version
of the base change principle in the automorphic setting. In general,
given a covering map of curves $\hat C\to C$, we arrive at a
conjecture that relates categories of $\D$-modules on the moduli
space $\Bun_{G}(\hat C)$ of $G$-bundles to $\D$-bundles on the
moduli space $\Bun_{G}(C)$. As in the spectral setting, the
construction is purely categorical involving only the natural Hecke
operators of the theory.

In the special case when $\hat C = \pline$, and $C=\R\pline$, with
$\Gamma=\Z/2\Z$ acting on $\pline$ via the antipodal map, and on $G$
via the conjugation $\theta$, we arrive at a conjectural way to
recover $\Sh(\ramBun_\theta)$ directly from $\Sh(\ramBun)$. It is
worth emphasizing that this statement together with the results of
the current paper says that we should be able to see all of the
complicated categorical structures in the representation theory of
real groups directly from the complex case by abstract nonsense. In
fact, in this special case, we are able to verify automorphic base
change on the level of Grothendieck groups, hence we already know a
large part of the combinatorics satisfies this principle. The
argument is the primary ingredient in the proof of
Theorem~\ref{affine Vogan}, and packages all of the combinatorics in
Vogan duality into a concise conceptual framework.

A proof of automorphic base change at the categorical level,
combined with the work of Bezrukavnikov in the complex case and the
results of the current paper, would provide a proof of Soergel's
conjecture. In fact, this line of argument gives an improved
formulation since all of the steps are canonical, and compatible
with Hecke actions. By contrast, Soergel conjectures an equivalence
via the existence of generators on both sides with isomorphic
endomorphism algebras.

\begin{thm}[\cite{base change}] A canonical,
Hecke-equivariant form of Soergel's conjecture follows from the
geometric base change conjecture for the antipodal map on $\pline$.
\end{thm}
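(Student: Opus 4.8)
The plan is to run the diagram chase of Section~\ref{base change}, taking the automorphic base change conjecture for the antipodal map on $\pline$ as the single hypothesis and reading off Soergel's conjecture from the composite; every other arrow in that chase is already in place. Explicitly, we have at our disposal Bezrukavnikov's tamely ramified geometric Langlands equivalence $\Sh(\ramBun)\simeq\Coh(\St^\vee/\Gv)$ on $\pline$, the spectral base change theorem of~\cite{base change}, the $S^1$-equivariant localization of Theorem~\ref{real cyclic crystalization} identifying $\Coh(\St^{\vee,\eta}_{\alpha}/\Gv)^{S^1}_{per}$ with the periodic Soergel category $\bigoplus_{\sigma\in\Sigma(\eta,\alpha)}\D_{K^\vee_{\alpha,\sigma}}(\cB^\vee_\alpha)\ot_\C\C[u,u\inv]$, and on the automorphic side the analytic localization of Kashiwara--Schmid~\cite{KS} together with Observation~\ref{real automorphic fixed points}.

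The first step is to construct the real geometric Langlands equivalence
\[
\Sh(\Buntheta)_\alpha\;\simeq\;\Coh(\St^{\vee,\eta}_{\alpha}/\Gv)
\]
as the $\eta$-twisted $\Z/2\Z$-descent of Bezrukavnikov's equivalence. The key observation is that automorphic base change and spectral base change are, by construction, the same categorical operation: one imposes that the two tautological actions of $\Rep(\Gv)$ -- equivalently, the two families of Hecke functors -- attached to a pair of points of $\pline$ exchanged by the antipodal involution be identified, twisting along the way by $\eta$, and then passes to the resulting category of descent data. Since Bezrukavnikov's equivalence intertwines the Hecke actions on the two sides, it intertwines the two base-change constructions, so the square in the diagram of Section~\ref{base change} commutes and produces the displayed equivalence, canonically and compatibly with the affine Hecke category action. (On Grothendieck groups this compatibility is precisely the content verified in~\cite{geometric Vogan}, cf.\ Theorem~\ref{affine Vogan}.)

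The second step is to apply equivariant localization to the two columns of the diagram of Section~\ref{real Langlands}. On the spectral side, $\St^{\vee,\eta}_{\alpha}/\Gv$ is a derived path space (the theorem of Section~\ref{results}), so it carries a loop-rotation $S^1$-action, and Theorem~\ref{real cyclic crystalization} computes the localized equivariant category. On the automorphic side, the standard rotation $\Cx$-action on $\pline$ preserves the antipodal conjugation, hence restricts to $U(1)$ acting on $\ramBun_{\theta,mon}$; by Observation~\ref{real automorphic fixed points} its fixed locus is $\ramBun_{\theta,mon}^\circ$, and localizing the $U(1)$-equivariant version of $\Sh(\Buntheta)_\alpha$ yields the $\Z/2\Z$-graded $\Sh(\Bunthetao)_\alpha$, which Kashiwara--Schmid localization identifies with $\bigoplus_{\theta'\in\Theta(\eta)}\D(\HC_{\theta',[\lambda]})$ after pro-completion. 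The decisive compatibility is that the correspondence of the first step intertwines the automorphic $U(1)$-action (rotation of $\pline$ fixing $0=\infty$) with the spectral $S^1$-action (loop rotation on $\St^{\vee,\eta}_{\alpha}/\Gv$): both come from the same automorphisms of the curve $\pline$, and Bezrukavnikov's duality respects such automorphisms, a property inherited by its twisted descent. Feeding the $S^1$-equivariant equivalence of the bottom row into the functoriality of $S^1$-equivariant localization then produces a canonical equivalence of periodic dg derived categories
\[
\bigoplus_{\theta'\in\Theta(\eta)}\D(\HC^{pro}_{\theta',[\lambda]})\ot_\C\C[u,u\inv]\;\simeq\;\bigoplus_{\sigma\in\Sigma(\eta,\alpha)}\D_{K^\vee_{\alpha,\sigma}}(\cB^\vee_\alpha)\ot_\C\C[u,u\inv].
\]
Since every arrow in the chase is canonical and (affine, hence after localization finite) Hecke-equivariant, so is this equivalence; this is already a periodic, canonical, Hecke-equivariant form of Soergel's conjecture, and the non-periodic statement is recovered by reversing the localization at $u$ used throughout Section~\ref{sheaves section}, i.e.\ by replacing $\D$ with its Rees algebra on both sides.

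The main obstacle is the compatibility bookkeeping rather than any single hard computation. Most delicate is showing that automorphic base change matches the complex local Langlands correspondence \emph{on the nose} -- not merely up to a non-canonical isomorphism -- while simultaneously respecting both the Hecke actions and the curve automorphisms; this is exactly the gap between the categorical conjecture of~\cite{base change} and what is presently known, namely the Grothendieck-group statement. Secondary, but still substantial, is controlling the interplay of $S^1$-equivariance in derived algebraic geometry (the Koszul-duality/de~Rham formalism of Section~\ref{equivariant}), together with the monodromic and pro-completion bookkeeping, with these descent constructions -- in particular justifying the final de-periodization, where the deformation of $\Sym\BT_{\cB^\vee_\alpha}$ to $\D_{\cB^\vee_\alpha}$ and the need to discard the large $\D$-modules invisible to the de~Rham functor both enter. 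Granting the base change conjecture and these compatibilities, the remaining steps are formal.
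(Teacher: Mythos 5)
Your proposal reproduces precisely the diagram chase that the paper itself sketches in Sections~\ref{real Langlands} and~\ref{base change}: combine the assumed automorphic base change with Bezrukavnikov's complex ramified geometric Langlands and the spectral base change theorem to manufacture the real geometric Langlands equivalence $\Sh(\Buntheta)_\alpha\simeq\Coh(\St^{\vee,\eta}_\alpha/\Gv)$, then apply $S^1$-equivariant localization (Theorem~\ref{real cyclic crystalization} on the spectral side, Kashiwara--Schmid together with Observation~\ref{real automorphic fixed points} on the automorphic side) to descend to a canonical, Hecke-compatible equivalence of the two Soergel columns. This is exactly the route the paper indicates, and you correctly identify where the canonicity and Hecke-equivariance come from, as well as the fact that the genuinely unknown ingredient is the categorical (rather than $K$-theoretic) form of automorphic base change.

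One small caution: your final sentence claiming to recover the \emph{non-periodic} Soergel conjecture by ``reversing the localization at $u$'' overreaches. Inverting $u$ is a genuinely lossy operation; passing from Rees modules to $\D$-modules and back is not an equivalence, and the paper does not claim the non-periodic refinement here -- the theorem deliberately says ``a canonical, Hecke-equivariant \emph{form} of Soergel's conjecture,'' and the output of the localization argument is the $\Z/2\Z$-periodic form appearing in Theorem~\ref{real cyclic crystalization}. Dropping that last de-periodization step, your argument matches the paper's.
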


%%%%%%%%%%%%%%%%%%%%%%%%%%%%%%%%%%%%%%%%%%%%%%%%%
%%%%%%%%%%%%%%%%%%%%%%%%%%%%%%%%%%%%%%%%%%%%%%%%%
%%%%%%%%%%%%%%%%%%%%%%%%%%%%%%%%%%%%%%%%%%%%%%%%%

\section{Loop and Hochschild Spaces}\label{loop section}

\nc{\DGA}{{\mathcal{DGA}}}

In this paper, all rings are assumed to be commutative, unital and
over a field $k$ {\em of characteristic $0$}.

\medskip

The theory of derived stacks provides a useful language to discuss
the objects  of representation theory that interest us. For the
reader's convenience, we have provided a brief Appendix
%(Section \ref{appendix})
summarizing some of the basic motivation and terminology from the
theory of quasicategories and derived stacks. For derived stacks, we
refer the reader to To\"en's extremely useful survey~\cite{Toen},
and to the papers of To\"en-Vezzosi \cite{HAG1,HAG2} and Lurie
\cite{Lurie,DAG1,DAG2,DAG3}. In particular, we were introduced to
derived loop spaces by \cite{Toen}. We will not need any deep
statements from this theory, only the formalism that allows us to
perform basic constructions and to obtain well-behaved categories of
sheaves. We will work in the context of quasicategories
\cite{Joyal,Bergner} (or $\infty$-categories in the language of
\cite{topoi}). As explained in the Appendix (see the survey
\cite{Bergner}), this is one of many equivalent categorical contexts
for homotopical algebra and geometry which lie in between the coarse
world of homotopy categories and the fine world of model categories.
In particular, the Dwyer-Kan simplicial localization of a model
category provides a primary source of quasicategories.

In this section, we collect the definitions and basic properties of
the loop space $\cL X$ of a derived stack. We then focus on
the space of small loops or Hochschild space $\cH X$ obtained by
formally completing $\cL X$ along the constant loops $X$. One
motivation for introducing small loops is that they are local in $X$
in a suitable sense. With our intended applications in mind, we content
ourselves with
the concrete situation when $X$ is a smooth Artin stack
(though the discussion surely holds in far greater generality). We show
that $\cH X$ is isomorphic to the formal completion of the odd
tangent bundle along its zero section. %$\wh{\BT}_X[-1]$.
Passing to functions
gives an isomorphism of the Hochschild homology and de Rham
algebra of $X$. One can view this as a version of the
Hochschild-Kostant-Rosenberg theorem in the context of stacks.
Although not needed in what follows, we also review
other well-known structures such as the Atiyah
bracket on Hochschild cohomology and its interpretation in this
context.

%%%%%%%%%%%%%%%%%%%%%%%%%%%%%%%%%%%%%%%%%%%%%%%%%

\subsection{The loop space $\cL X$}

Let $\cS$ denote the quasicategory of simplicial sets, or
equivalently (compactly generated Hausdorff) topological spaces. Let
$\SCA_k$ denote the quasicategory of simplicial commutative unital
$k$-algebras, or equivalently connective commutative differential
graded $k$-algebras.
%Let $\dstack$ denote the quasicategory of
%derived stacks over $k$.
An object of the quasicategory
%$\dstack$
of derived stacks over $k$ is a functor
$$
X:\SCA_k\to \cS
$$
which is a sheaf in the \'etale topology.

Some natural classes of derived stacks are provided by derived
schemes (in paricular, representable functors given by
affine derived schemes),
Artin stacks, and topological spaces. For the latter, any compactly
generated Hausdorff topological space $K$ defines a derived stack
given by the sheafification of the constant functor
$$
K:\SCA_k\to \cS %\Sets^\Delta
\qquad K(R)= K.
$$
To connect with other combinatorial models, it is often convenient
to choose a simplicial presentation of $K$ (for example, that given
by singular chains) and consider $K$ as a functor to simplicial
sets. Of course, any two simplicial presentations lead to equivalent
stacks.

\medskip

Given derived stacks $K,X$, morphisms of sheaves form a derived
mapping stack $ \RHom(K,X)$.
% which is an internal Hom (with respect
%to the monoidal structure $ (K\otimes X)(R) = K(R) \times X(R)$).
% in the sense that there is a canonical isomorphism
%$$
%\Hom(Z, \Map(K,X)) \simeq \Hom(Z \otimes K, X).
%$$
%Combining these constructions, we can form the derived mapping stack
%$\RHom(K,X)$ from a topological space $K$ to a derived stack $X$.
When $K$ is a topological space,
we can think of $\RHom(K,X)$ as a collection of equations imposed on
copies of $X$. One can check that for $K$ a
finite
simplicial set and $X$ a derived %($n$-)
Artin stack, the derived
mapping stack $\RHom(K,X)$ is also a derived %($n$-)
Artin stack. (The reader could consult the Appendix for a discussion
on what it means for a derived stack to be Artin.)

\medskip

In this paper, we will focus on the locally constant stack given by the
circle $K=S^1$, which is identified with the classifying space
$B\Z$. In this case, we refer to the corresponding derived mapping
stack $\RHom(S^1, X) $ as the loop space of $X$ and denote it
by $\cL X$. Roughly speaking, we take a copy of $X$ and impose the
equation that every point of $X$ must be equal to itself.

To make this concrete, we can choose a
simplicial presentations of $S^1$.
For example, a particularly small
presentation of $S^1$ as a simplicial set has two $0$-simplices, two
nontrivial $1$-simplices, and no nontrivial higher simplices. This
leads to the usual model of the loop space as the derived
fiber product
$$
\cL X \simeq X \times^{\R}_{X\times X} X
$$
along the diagonal maps. Thus the pushforward to $X$ of functions on $\cL
X$ is given by the derived tensor product
$$
\cO_{X} \otimes^{\bbL}_{\cO_{X\times X}} \cO_{X}.
$$
%(Here as usual we can consider our algebras either as simplicial
%commutative $k$-algebras or as a connective commutative differential
%graded $k$-algebras.)
When $X$ has affine diagonal, so that
$\cL X$ is affine over $X$, we can think of $\cL X$ as the
spectrum over $X$ of the derived tensor product.

Two examples are useful to keep in mind.
When $X$ is an ordinary affine scheme, the component ring
$\pi_0(\cO_{\cL X})$ is nothing more than $\cO_X$, thus the
underlying ordinary scheme of $\cL X$ is simply $X$ itself, and so $\cL
X$ is a purely derived enhancement. On the other hand, if
$X=BG$ is the classifying space of an algebraic group $G$, it is
easy to see that $\cL X=G/G$ is the adjoint quotient stack with trivial derived structure.

\medskip

Of the many interesting structures on $\cL X$, we will concentrate
on the $S^1$-action given by loop rotation
$$
S^1\times \cL X\to \cL X.
$$
Connes' theory of cyclic objects~\cite{Connes} provides a convenient
algebraization of $S^1$ and more generally of $S^1$-spaces (see \cite{Jones}
for an application to free loop spaces and \cite{Loday} for a
detailed exposition). Consider $S^1$ as the unit circle in the
complex plane, and let $\Z_{n+1}=\{z\in S^1| z^{n+1}=1\}$ denote the
$(n+1)$st roots of unity. Connes' cyclic category $\Cyc$ is the
category with objects finite ordered sets $\bn =\{0,1,\ldots, n\}$,
and morphisms homotopy classes of continuous, order preserving,
degree one maps of pairs
$$
\Cyc(\bn,\bm) = \left [s:  (S^1,\Z_{n+1}) \to (S^1,\Z_{m+1}) \right
].
$$
Here a map $s:S^1\to S^1$ is said to be order preserving if any lift
$\widetilde s:\R \to \R$ is non-decreasing. The geometric
realization of the simplicial set
$$
\Lambda=\Cyc(-,[0])
$$
is homeomorphic to $S^1$.

This presentation of $S^1$ leads to
the familiar model of $\cL X$ as a
cocyclic space with $n$-cosimplices given by
the products $X^{n+1}$ (with the usual diagonal and projection structure maps).
%If we write $S^1$ as the cyclic set $\Lambda$, then
The pushforward
to $X\times X$ of functions on $\cL X$ is given by the usual cyclic
complex of Hochschild chains
$$
%\cO_{\cL X} =
\cC^{-\bul}(\cO_X)= \cB^{-\bul}(\cO_X) \otimes_{\cO_{X\times X}}
\cO_X
$$
where the bar resolution of $\cO_X$ is given by
$$
\cB^{-\bul}(\cO_X) = \cdots \to \cB^{-2}(\cO_X) \stackrel{\del}{\to}
\cB^{-1}(\cO_X) \stackrel{\del}{\to} \cB^{0}(\cO_X)
$$
with terms
$$
\cB^{-q}(\cO_X) = \cO_{X^{(q+2)}} = \cO_X^{\otimes(q+2)} = \cO_X
\otimes\cdots\otimes \cO_X,
$$
with $\cO_{X \times X}$-module structure
%given by left-right multiplication on the extreme factors
$$
(a_\ell \otimes a_r) \cdot (r_0\otimes \cdots \otimes r_{q+1}) =
a_\ell r_0\otimes \cdots \otimes r_{q+1} a_r,
$$
and differential
$$
\del(r_0\otimes \cdots \otimes r_{q+1}) = \sum_{i=0}^q (-1)^i
r_0\otimes\cdots \otimes r_i r_{i+2} \otimes \cdots\otimes r_{q+1}.
$$

\subsection{The Hochschild space $\cH X$}\label{hochschild space}

Let $X$ denote a derived stack and let $\cL X$ denote its loop
space. Consider the canonical projection $\cL X\to X$ given by the
evaluation of loops at the base point. One might naively hope that
the loop space functor satisfies some form of descent with respect
to maps to $X$. But as in more traditional contexts, it is
impossible to realize all loops by gluing together local loops. We
can find a local version of loops
%which form a cosheaf over $X$,
if we restrict from all loops to ``small loops".
%This will be useful for discussing what $\cL X$ knows about sheaf theory on $X$.

To make this precise, consider the canonical map $X\to \cL X$ that
sends a point to the constant loop at that point.

\begin{defn} The {\em Hochschild space} (or small loop space) $\cH X$ of a derived %Artin
stack $X$ is the derived stack $\cH X=\wh{\cL X}_X$ obtained as the
formal completion of the loop space $\cL X$ along the constant loops
$X\to \cL X$.
\end{defn}

Note that since the constant loops $X\to \cL X$ are preserved by loop
rotation, the $S^1$-action on $\cL X$ descends to an action on the
Hochschild space $\cH X$.

\medskip

To return to our two previous examples,
when $X$ is an ordinary affine scheme, the Hochschild space $\cH X$
coincides with the loop space $\cL X$.
On the other hand,
for $X=BG$ the classifying space of an algebraic group $G$,
the Hochschild
space $\cH BG = \wh G/G$
is the stack quotient of the formal group $\wh{G}$
by the adjoint action of $G$

\medskip

For the reader's convenience, let us spell out what it means to take the formal completion in the
language of functors of points. We will give an interpretation in
which we separate the derived aspect of the situation from the
formal. For $R\in \SCA_k$, let $S=\Spec R$ be the affine derived
scheme given by the representable functor
$$
\Hom(R, -):\SCA_k \to \cS.
$$
The components $\pi_0(R)$ form an ordinary discrete commutative
algebra and we have a canonical projection $R\to \pi_0(R)$. Thus the
ordinary affine scheme $S_0 =\Spec \pi_0(R)$ comes equipped with a
canonical map $S_0\to S$ of affine derived schemes. In this way, we
may think of $S$ as a kind of ``derived thickening" of $S_0$.

Consider the nilradical $\cN\subset \pi_0(R)$ and the corresponding
reduced affine scheme $S_{0,r} = \Spec \pi_0(R)/\cN$. Via the
canonical map $S_{0,r}\to S_0$, we may think of $S_0$ as a ``formal
thickening" of $S_{0,r}$. Now given a map of derived stacks $X\to
Y$, the formal completion $\wh{Y}_{X}$ of $Y$ along $X$ assigns to
the affine derived scheme $S$ the space of homotopy commutative
diagrams
$$
\begin{array}{ccc}
S & \to & Y \\
\uparrow & & \uparrow \\
S_{0,r} & \to & X
\end{array}
$$
To be precise, maps from the test object $S$ into the formal
completion $\wh Y_{X}$ are given by the homotopy fiber product
$$
\Hom(S, \wh Y_X) = \Hom(S,Y) \times^{\mathbb R}_{\Hom(S_{0,r},Y)}
\Hom(S_{0,r},X).
$$

%%%%%%%%%%%%%%%%%%%%%%%%%%%%%%%%%%%%%%%%%%%%%%%%

\newcommand{\completedSym}{\wh{\operatorname{Sym}}^{\bullet}}

\subsection{Case of Artin stacks} In what follows,
we will restrict our study of the loop space $\cL X$ and Hochschild space $\cH X$ to the
situation where $X$ itself is a smooth Artin stack with affine
diagonal. Not only will this assumption simplify the discussion, but
our intended applications in representation theory fit into this
context. Our need to consider derived stacks arises from the fact
that they appear as a result of the loop space construction.

\medskip

Let $X$ be a smooth Artin stack with affine
diagonal.
Consider a presentation of $X$ with objects and morphisms given by
smooth schemes $X_0$ and $X_1$ respectively,
and groupoid structure maps denoted as follows
$$
\ell,r:X_1\to X_0
\qquad
e:X_0\to X_1
\qquad
m:X_1 \times_{X_0} X_1 \to X_1
\qquad
i:X_1\to X_1.
$$

Consider the pushforward to $X$ of the structure sheaf $\cO_{\cL X}$.
By construction, it can be realized as the descent from $X_0$ of the derived tensor product
$$
\cO_{X_1} \otimes^\bbL_{X_0\times X_0} \cO_{\Delta_{X_0}}
$$
where $X_1$ maps to $X_0\times X_0$ by the product $\ell\times r$,
and $\Delta_{X_0}$ denotes the diagonal of $X_0\times X_0$.
In other words, loops are thought of as pairs of a $1$-simplex and a $0$-simplex
such that
the two ends of the $1$-simplex are glued to the $0$-simplex.

\medskip

The first result we will need is a characterization when the loop space $\cL X$
has trivial derived structure.

\begin{prop}\label{dg structure of loops}
The loop space $\cL X$ has trivial derived structure if and only if
the isomorphism classes of objects $X$ are discrete.
\end{prop}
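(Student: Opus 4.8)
The plan is to compute the loop space $\cL X$ locally on a smooth chart and read off exactly when higher $\pi_i$ of the structure sheaf vanish. Fix a groupoid presentation $X_1 \rightrightarrows X_0$ as in the setup, with $X_0, X_1$ smooth schemes. The pushforward of $\cO_{\cL X}$ to $X$ descends from $X_0$ from the derived tensor product $\cO_{X_1} \otimes^\bbL_{\cO_{X_0 \times X_0}} \cO_{\Delta_{X_0}}$, where $X_1 \to X_0 \times X_0$ is $\ell \times r$. Since $X_0$ is smooth, $\Delta_{X_0} \hookrightarrow X_0 \times X_0$ is a regular embedding, so this derived tensor product is computed by the Koszul complex and its cohomology sheaves are the exterior powers $\Lambda^\bullet \cN$ of a certain sheaf $\cN$ on the scheme-theoretic fiber product $Z = X_1 \times_{X_0 \times X_0} \Delta_{X_0}$ (the usual ``inertia'' scheme of the presentation). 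Concretely $\cN$ is the excess normal bundle measuring the failure of transversality of $\ell \times r: X_1 \to X_0 \times X_0$ with the diagonal; its rank at a point of $Z$ is $\dim X_1 - \codim_{X_0\times X_0}(\Delta_{X_0}) + \dim_{\text{pt}} Z = \dim X_1 - \dim X_0 + (\text{local dimension of the inertia})$.

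The key computation is therefore: the derived structure on $\cL X$ is trivial (i.e. $\pi_i \cO_{\cL X} = 0$ for $i > 0$, equivalently $\Lambda^i \cN = 0$ for $i > 0$) if and only if $\cN = 0$, if and only if $\ell \times r$ meets $\Delta_{X_0}$ transversally, if and only if the inertia scheme $Z$ has dimension $\dim X_1 - \dim X_0$ everywhere. Unwinding, $Z$ is the scheme of ``self-automorphisms'': its points over a point $x \in X_0$ are the arrows $g \in X_1$ with $\ell(g) = r(g) = x$, i.e. the automorphism group $\Aut(x)$ of the object $x$ in $X$; and its tangent space involves, besides the tangent directions along the automorphism group, the directions in which $x$ can be deformed so that $g$ remains an automorphism of the deformed object. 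The transversality condition $\dim_{\text{pt}} Z = \dim X_1 - \dim X_0$ says exactly that this extra deformation space vanishes, i.e. every tangent vector to $X$ at $x$ along which the automorphism $g$ persists is already a tangent vector to the stratum with automorphism group containing $g$ — but by generic smoothness/upper-semicontinuity of automorphism ranks, the cleanest way to phrase ``no extra deformations'' is: the orbit of $x$ under the automorphism-preserving direction is zero-dimensional, i.e. the isomorphism classes of $X$ form a discrete set (the coarse space of $X$ is $0$-dimensional).

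So the steps, in order, are: (1) reduce to the affine-over-$X$ derived tensor product via the given presentation; (2) use smoothness of $X_0$ to identify its cohomology with $\Lambda^\bullet$ of the excess bundle $\cN$ of $\ell\times r$ against the diagonal, so that triviality of the derived structure $\iff \cN = 0 \iff$ transversality; (3) identify $Z = \ker(\ell,r)$ scheme-theoretically with the inertia groupoid / ``points with a chosen automorphism,'' and compute its tangent space, exhibiting the obstruction to transversality as precisely the tangent directions to the coarse moduli of $X$ (deformations of an object preserving a given automorphism); (4) conclude that $\cN = 0$ everywhere $\iff$ the coarse space of $X$ is discrete $\iff$ isomorphism classes of objects of $X$ are discrete. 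The main obstacle I expect is step (3): one has to be careful that the relevant ``extra'' tangent directions really see the coarse moduli rather than, say, infinitesimal automorphisms, and that the argument is uniform in the point of $Z$ (so that it is genuinely an if-and-only-if, not just a generic statement) — this is where one uses that $X$ is a smooth Artin stack, so that the automorphism ranks are upper semicontinuous and the stack is, locally, a quotient by a smooth group, making the tangent complex computation completely explicit. For the easy direction ($\cN = 0 \Rightarrow$ discrete isomorphism classes), one notes that $\cL X = X$ as a classical stack already when $\cN = 0$, and a positive-dimensional moduli of isomorphism classes would force a positive-dimensional deformation of some $(x, g = e)$ fixing the trivial automorphism... wait, that always deforms; rather the point is that a positive-dimensional family of objects with a \emph{nontrivial} common automorphism (which always exists near a jump locus of $\Aut$) contributes excess, so the honest statement is that discreteness of isomorphism classes is forced. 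The converse ($X$ has discrete isomorphism classes $\Rightarrow \cL X$ classical) is then a direct dimension count on $Z$.
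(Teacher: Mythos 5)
Your overall strategy is the same as the paper's: reduce the vanishing of the derived structure of $\cL X$ to a dimension count on the scheme-theoretic inertia $Z$ of the groupoid presentation, and observe that $Z$ has the expected dimension exactly when the isomorphism classes are discrete. But two of your steps are shaky. In step (2) you assert that the cohomology of the Koszul complex is $\Lambda^\bullet\cN$ for an excess bundle $\cN$ on $Z$; this is only true for a \emph{clean} intersection (in particular when $Z$ is smooth), which is not guaranteed — automorphism ranks can jump, and $Z$ can be quite singular. (Also note your excess rank formula has the wrong sign: it should be $\dim_{\mathrm{pt}} Z - (\dim X_1 - \dim X_0)$.) Fortunately you do not need any of this structure. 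What your argument actually uses is only that the higher Tor-sheaves vanish if and only if $Z$ has the expected codimension; that follows from regularity of $\Delta_{X_0}\hookrightarrow X_0\times X_0$ and Cohen--Macaulayness of the smooth scheme $X_1$, with no reference to an excess bundle. Second, in step (3) you try to prove the biconditional by analyzing the tangent space of $Z$ at a single point $(x,g)$, but the criterion is about the global dimension of $Z$, and the passage between the two (via generic smoothness, upper semicontinuity, the distinction between trivial and nontrivial automorphisms, and so on) is exactly what you leave gestural; your own last paragraph visibly loses the thread of which direction is being proved.

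The paper avoids both difficulties by a small rearrangement and a global count. It rewrites the pushforward of $\cO_{\cL X}$ as $\cO_{X_1}\otimes^\bbL_{\cO_{X_0\times X_1}}\cO_{X_1}$, i.e.\ as the derived intersection of the two \emph{graphs} $\Gamma_\ell,\Gamma_r\subset X_0\times X_1$. Each graph is a copy of $X_1$, so the expected dimension $n_1 + n_1 - (n_0+n_1) = n_1 - n_0$ is immediate. Then, rather than any tangent space analysis, one simply observes that each isomorphism class of objects of $X$ contributes to the naive intersection $\Gamma_\ell\cap\Gamma_r$ a locally closed piece of dimension exactly $n_1 - n_0$ (the orbit of a representative together with its automorphism group — equivalently, the fiber of $r$ over a representative). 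Thus the intersection has the expected dimension everywhere if and only if there is no positive-dimensional moduli of isomorphism classes. This global accounting gives the equivalence in one stroke and is both more elementary and more robust than the excess-bundle-plus-tangent-space route you propose.
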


\begin{proof}
It is convenient to rewrite the pushforward to $X$ of the structure sheaf $\cO_{\cL X}$
as the descent of the derived tensor product
$$
\cO_{X_1} \otimes^\bbL_{X_0\times X_1} \cO_{X_1}
$$
where $X_1$ maps to $X_0\times X_1$ by the product maps $\ell\times \id_{X_1}$
and $r\times \id_{X_1}$.
This can be viewed as the structure sheaf of the derived intersection of the subschemes
$\Gamma_\ell,\Gamma_r\subset X_0\times X_1$ given by the graphs of $\ell, r$ respectively.
Here loops are thought of as pairs of $1$-simplices that are equal
and such that the left end of the first is glued to the right end of the second.

Now our assertion will follow from a simple dimension count.
Let $n_0$ and $n_1$ be the dimensions of $X_0$ and $X_1$ respectively.
On the one hand,
the expected dimension of the intersection $\Gamma_\ell\cap \Gamma_r$ inside of $X_0\times X_1$
is given by $n_1+n_1 - (n_0 + n_1) = n_1 - n_0$. On the other hand,
each isomorphism class of objects of $X$
contributes a subscheme of precisely dimension $n_1-n_0$ to the intersection.
Thus the intersection has the expected dimension if and only if there is no nontrivial moduli
of isomorphism classes of objects.
\end{proof}

The next result we will need is an identification of the Hochschild space $\cH X$
with the completed odd tangent bundle.

Let $\BT_{X_0}$ denote the tangent sheaf of $X_0$, and let
$\fg_{X_1}$ denote the Lie algebroid on $X_0$ associated to the
groupoid. Recall that the tangent complex of $X$ is the descent from $X_0$ of
the complex built out of the action map
$$
\alpha:\fg_{X_1}[1] \to \BT_{X_0}.
$$
By definition, the odd tangent bundle of $X$ is the descent from $X_0$ of the
spectrum of the symmetric algebra of the shifted cotangent complex
$$
\BT_X[-1] = \Spec(\Sym_{\cO_X}(\Omega_X^1[1]
\stackrel{\alpha^*}{\to} \fg^*_{X_1})).
$$
We write $\wh{\BT}_X[-1]$ for the completion of $\BT_X[-1]$ along
the base $X$ and call it the completed odd tangent bundle.

\begin{prop}\label{hochschild is completed odd tangent}
For $X$ a smooth Artin stack with affine diagonal, the Hochschild
space $\cH X$ is canonically isomorphic to the completed odd tangent
bundle $\wh{\BT}_X[-1]$.
\end{prop}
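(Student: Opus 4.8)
The plan is to deduce the proposition from the Hochschild--Kostant--Rosenberg theorem, exploiting that a formal completion remembers only the formal neighborhood of the locus along which one completes, and that --- $X$ being smooth --- this neighborhood of the diagonal is controlled linearly by the (co)tangent complex. Since $X$ has affine diagonal, the loop space $\cL X\simeq X\times^{\bbL}_{X\times X}X$ is affine over $X$, hence so is its formal completion $\cH X$ along the constant loops $X\hookrightarrow\cL X$; the completed odd tangent bundle $\wh{\BT}_X[-1]$ is by construction (formally) affine over $X$ as well. It therefore suffices to produce a canonical equivalence between the pushforwards to $X$ of the two structure sheaves as sheaves of commutative differential graded algebras. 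Both constructions are moreover local and compatible with descent along a smooth groupoid presentation $X_1\rightrightarrows X_0$ of $X$, so I would carry out the comparison in terms of $X_0$, $X_1$ and the Lie algebroid $\fg_{X_1}$, keeping track of compatibility with the groupoid structure maps.

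The heart of the matter is the linearization of the diagonal. First, $\cH X=\wh{\cL X}_X$ depends only on the formal neighborhood $\wh{(X\times X)}_\Delta$: the derived self-intersection $\cO_X\otimes^{\bbL}_{\cO_{X\times X}}\cO_X$ is computed by a Koszul-type resolution of $\cO_\Delta$ involving only infinitesimal data, and completing afterwards changes nothing. Because $X$ is smooth, $\Omega^1_X$ is the conormal complex of $\Delta$, and in characteristic zero the Hochschild--Kostant--Rosenberg comparison (a formal exponential, rendered canonical by Kontsevich-type formality) identifies $\wh{(X\times X)}_\Delta$, as a formal derived stack over $X$, with the formal completion of the total space of the tangent complex $\BT_X$ along its zero section. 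Under this identification the constant loops correspond to the zero section, so $\cH X$ is identified with the formal completion along the zero section of the derived self-intersection $X\times^{\bbL}_{\BT_X}X$.

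It then remains to evaluate this self-intersection by Koszul duality. For the zero section $\iota\colon X\to\BT_X$ into the total space of the tangent complex, the Koszul resolution of $\iota_*\cO_X$ over $\cO_{\BT_X}=\Sym_{\cO_X}(\Omega^1_X)$ gives $\cO_X\otimes^{\bbL}_{\cO_{\BT_X}}\cO_X\simeq\Sym_{\cO_X}(\Omega^1_X[1])=\cO_{\BT_X[-1]}$; in the groupoid picture this is the descent of the corresponding statement over $X_0$, with $\Omega^1_X$ replaced by the two-term complex $\Omega^1_X[1]\stackrel{\alpha^*}{\to}\fg^*_{X_1}$. Completing along the zero section and combining with the previous step yields $\cH X\simeq\wh{\BT}_X[-1]$. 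Tracking the identifications shows that the resulting equivalence is the descent of the HKR map, hence canonical; and since the constant loops and the zero section are each preserved by the ambient circle actions, it is compatible with the $S^1$-action of loop rotation on $\cH X$.

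The main obstacle is the middle step in the stacky setting. For a smooth scheme $X$ the linearization of the formal neighborhood of the diagonal is the classical fact that, in characteristic zero, the exponential identifies $\wh{(X\times X)}_\Delta$ with the formal neighborhood of the zero section of the tangent bundle $\BT_X$; for an Artin stack the diagonal is not a closed immersion of schemes, so one must run this through the groupoid presentation, with the two-term tangent complex $\fg_{X_1}[1]\to\BT_{X_0}$ in the role of the normal bundle, and check that the HKR comparison is compatible with the cocyclic (groupoid) structure maps so that it descends. That compatibility --- and with it the canonicity asserted in the statement --- is exactly where the standing characteristic zero hypothesis of the paper is used.
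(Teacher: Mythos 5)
Your proposal lands on the right computation, but by a longer road than the paper takes, and the detour is where it is weakest. The paper's proof works directly with a groupoid presentation $X_1\rightrightarrows X_0$: it writes $\cO_{\cH X}$ as the descent from $X_0$ of the completed derived tensor product $\cO_{\wh X_{1,X_0}}\wh\otimes^{\bbL}_{\cO_{X_0\times X_0}}\cO_{\Delta_{X_0}}$, replaces $\cO_{\Delta_{X_0}}$ by its completed Koszul resolution on the formal neighborhood of the diagonal, and reads off $\completedSym_{\cO_X}(\Omega^1_X[1]\stackrel{\alpha^*}{\to}\fg^*_{X_1})$ in one step. Your intermediate step --- identifying $\wh{(X\times X)}_\Delta$ with the formal completion of the total space of $\BT_X$ along its zero section, and only then computing the derived self-intersection of the zero section --- is unnecessary, and it is also the one place the argument is genuinely shaky: that linearization of the formal neighborhood of the diagonal is \emph{not} canonical. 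It amounts to a choice of formal exponential (equivalently a torsion-free connection), and ``Kontsevich-type formality'' is a statement comparing Hochschild cochains with polyvector fields, not something that trivializes this torsor of choices. The canonicity asserted in the proposition does not come from a canonical exponential; it comes from the HKR antisymmetrization map itself, equivalently from the fact that the Koszul computation of the derived tensor product along the diagonal is independent of the chosen resolution. If you delete the linearization step and apply the completed Koszul resolution directly to $\cO_{\Delta_{X_0}}$, your argument collapses onto the paper's; your surrounding reductions (affineness over $X$, comparison of pushed-forward structure sheaves, descent along the groupoid, preservation of the $S^1$-action) are exactly the right supplementary checks, with the caveat that only \emph{functions} on $\cH X$ satisfy this descent, not the Hochschild space itself, which is precisely why one must phrase the comparison at the level of structure sheaves as you do.
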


\begin{proof}
Let $\wh X_{1, X_0}$ denote the completion of $X_1$ along the
unit morphism $e:X_0\to X_1$.
Consider the pushforward to $X$ of the structure sheaf $\cO_{\cH X}$.
By construction, it can be realized as the descent from $X_0$ of the
completed tensor product
$$
\cO_{\cH X} = \cO_{\wh X_{1, X_0}}
\wh \otimes^\bbL_{\cO_{X_0 \times
X_0}} \cO_{\Delta_{X_0}}.
$$
Consider the formal completion of the diagonal $\Delta_{X_0}$ inside of $X_0\times X_0$, and
the associated completed Koszul resolution of the sheaf of functions
$\cO_{\Delta_{X_0}}$. Then performing the completed tensor product,
we obtain a complex given by the completed symmetric product
$$
\completedSym_{\cO_X}(\Omega_X^1[1] \stackrel{\alpha^*}{\to}
\fg^*_{X_1}).
$$
This is precisely the ring of functions on the completed odd tangent
bundle $\wh\BT_X[-1]$.
\end{proof}

\begin{corollary}\label{dg structure of hochschild}
The Hochschild space $\cH X$ has trivial derived structure if and only if
each irreducible component of $X$ contains a dense isomorphism class of objects.
\end{corollary}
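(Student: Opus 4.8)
The plan is to deduce the corollary from Proposition~\ref{hochschild is completed odd tangent}, which identifies $\cH X$ with the completed odd tangent bundle $\wh\BT_X[-1]$. Pulling back along the smooth atlas $X_0 \to X$, the structure sheaf of $\cH X$ becomes the completed symmetric algebra $\completedSym_{\cO_{X_0}}(E^\bullet)$ of the two-term complex of vector bundles $E^\bullet = (\Omega^1_{X_0}[1] \xrightarrow{\alpha^*} \fg^*_{X_1})$, with $\Omega^1_{X_0}$ in (odd) cohomological degree $-1$, $\fg^*_{X_1}$ in degree $0$, and $\alpha$ the anchor of the groupoid Lie algebroid. Since smooth descent is exact, ``$\cH X$ has trivial derived structure'' is equivalent to $\completedSym_{\cO_{X_0}}(E^\bullet)$ being concentrated in cohomological degree $0$, so the whole problem becomes a discreteness question about this derived symmetric algebra.

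For necessity of the stated condition I would use the weight grading. The Koszul-type differential on $\completedSym(E^\bullet)$ preserves the polynomial degree in the fibre coordinates, so $\completedSym(E^\bullet) = \prod_{w\geq 0}\Symp^w(E^\bullet)$ as complexes; cohomology commutes with the product, and the weight-one factor is $E^\bullet$ itself, with $H^{-1}(E^\bullet) = \ker\alpha^*$. Hence if $\ker\alpha^* \neq 0$ the sheaf $\cO_{\cH X}$ has nonzero cohomology in degree $-1$. Because $X_0$ is smooth, $\Omega^1_{X_0}$ is locally free and $\ker\alpha^*$ is torsion free, so $\ker\alpha^* = 0$ exactly when $\alpha^*$ is injective at the generic point of every irreducible component of $X_0$, i.e. (dualizing over the generic point) when the anchor $\alpha$ is generically surjective there; equivalently, the groupoid orbit through the generic point of each component is dense. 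Transporting this through the atlas --- which matches isomorphism classes of $X$ with groupoid orbits in $X_0$, and irreducible components of $X$ with groupoid orbits of components of $X_0$, via a dimension count parallel to the proof of Proposition~\ref{dg structure of loops} --- yields precisely the assertion that each irreducible component of $X$ contains a dense isomorphism class of objects.

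The main obstacle is the converse. Once $\ker\alpha^* = 0$, the complex $E^\bullet$ is quasi-isomorphic to the single coherent sheaf $\coker\alpha^*$ in degree $0$, and one still has to show that $\completedSym_{\cO_{X_0}}(E^\bullet)$ --- the \emph{derived} completed symmetric algebra of $\coker\alpha^*$ --- is discrete. This is not formal, since higher derived symmetric powers $L\Symp^p(\coker\alpha^*)$ need not vanish for a non-locally-free sheaf; their vanishing is equivalent to the linear forms $\alpha^*(\fg_{X_1})$ forming a regular sequence in $\Symp^\bullet \fg^*_{X_1}$, which in turn is controlled by Eagon--Northcott / Buchsbaum--Eisenbud type conditions on the Fitting ideals of the matrix $\alpha^*$. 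The plan here is to verify, using smoothness of $X_0$, that the genericity of the anchor encoded in the density hypothesis forces these acyclicity conditions --- i.e. that the rank-drop loci of $\alpha$ have the expected codimension. Pinning down exactly what the density condition supplies here, and checking it is enough to run the acyclicity argument (rather than needing a supplementary hypothesis on the degeneracy loci of the anchor), is the delicate heart of the argument.
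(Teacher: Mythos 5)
Your necessity direction coincides with the entirety of the paper's proof: the argument given there is precisely the observation that the density condition is equivalent to injectivity of $\alpha^*$, i.e.\ to the vanishing of $H^{-1}$ of the weight-one piece of the completed symmetric algebra of Proposition~\ref{hochschild is completed odd tangent}. Where you part ways is that the paper stops there, implicitly treating injectivity of $\alpha^*$ as equivalent to discreteness of all of $\completedSym_{\cO_{X_0}}(\Omega^1_{X_0}[1]\to\fg^*_{X_1})$, whereas you correctly note that the higher weight pieces impose further conditions. Your diagnosis of what those conditions are is also correct: the total complex is the Koszul complex, over $\Symp^\bullet_{\cO_{X_0}}\fg^*_{X_1}$, on the images under $\alpha^*$ of a local frame of $\Omega^1_{X_0}$, and since the total space of $\fg_{X_1}$ is smooth (hence Cohen--Macaulay), acyclicity in negative degrees is equivalent to the isotropy locus $Z=\{(x,v):\alpha_x(v)=0\}$ having the expected codimension $\dim X_0$, i.e.\ to the condition $\dim\{x: \dim(\mbox{orbit of }x)=r\}\le r$ for every $r$.

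The gap you flag at the end, however, cannot be closed along the lines you propose: a dense orbit in each component does \emph{not} force the lower rank strata of the anchor to have the expected dimension, so the ``if'' direction fails as stated (and the paper's one-line proof does not address it). Concretely, take $X_0=\Hom(\C^2,\C^2)\cong \mathbf{A}^4$ with $\GL_2$ acting by postcomposition, so $\alpha_M(A)=AM$. The invertible maps form a dense orbit and $\alpha^*$ is injective, but the rank-one locus is a $3$-dimensional union of $2$-dimensional orbits; hence $Z=\{(M,A)\in\mathbf{A}^4\times\gl_2: AM=0\}$ has a component of dimension $3+2=5>4$, the four entries of $AM$ are not a regular sequence, and $H^{-1}(\cO_{\cH X})\ne 0$ (completion along the zero section is harmless here, since the differential preserves the weight grading and cohomology commutes with the resulting product decomposition). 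So the correct criterion is the codimension condition on all rank strata of $\alpha$, not merely on the top one; this stronger condition does hold whenever $X$ has finitely many isomorphism classes of objects, which covers every instance used later in the paper ($B\backslash G/B$, $K_\iota\backslash \cB$, etc.), but it is strictly stronger than the density hypothesis of the corollary.
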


\begin{proof}
The second postulate is equivalent to the injectivity of the dualized action map $\alpha^*$.
\end{proof}

For our purposes, the key consequence of the proposition is that the Hochschild space $\cH X$ is a
local object in the following sense. Consider the simplicial scheme
$X_\bul$ with $0$-simplices given by $X_0$, and for $k>0$,
$k$-simplices given by the fiber products
$$
X_k = X_1 \times_{X_0} \cdots \times_{X_0} X_1 \qquad \mbox{ with $k$
factors.}
$$
%Then functions on the odd tangent bundle $\cO_{\BT_X[-1]}$
%can be calculated as the totalization of functions on the odd tangent bundles $\cO_{\BT_{X_k}[-1]}$
%of the simplices.
Functions on the completed odd tangent bundle %$\wh \BT_X[-1]$
can be calculated as the limit of functions on the
completed odd tangent bundles %$\wh \BT_{X_k}[-1]$
of the simplices.
Thus by the proposition, functions on the
Hochschild space $\cO_{\cH X}$ can be calculated as the limit of functions on the
Hochschild spaces $\cO_{\cH X_k}$ of the simplices.
(It is worth remarking, as pointed out by
J. Lurie, that this descent for functions is not valid for the
Hochschild space itself.) Observe that this presentation is
compatible with the $S^1$-action of loop rotation:
the $S^1$-action on $\cO_{\cH X}$ is the limit of the
$S^1$-actions on the terms $\cO_{\cH X_k}$.

By the above discussion, to understand the local geometry of $\cH
X$, it will usually suffice to study the case when $X$ is simply a
smooth affine scheme. In this case, the odd tangent bundle
 and the completed odd tangent bundle coincide (since
both have $X$ as their underlying schemes). Both are the spectrum of
the de Rham algebra of differential forms $\Omega^{-\bul}_X$.
Furthermore, the
identification of the proposition is nothing
more than the Hochschild-Kostant-Rosenberg theorem. Under the
functor from cyclic modules to mixed modules (see~\cite{Loday}), the
$S^1$-action on $\cO_{\cH X}$ goes over to the de Rham differential
$d$ on $\Omega^{-\bul}_X$.

\subsection{Lie structure of loop spaces}\label{structures}
In this informal section, we mention further structures on loop
spaces and place them in our current context.
The discussion will not be used in the remainder of the paper.

\medskip

The circle $S^1$ (equipped with a fixed basepoint $1\in S^1$) has a
natural comultiplication in the category of pointed spaces
$$
S^1 \to S^1 \vee S^1.
$$
Given any derived stack $X$, the loop space $\cL X$ inherits a
multiplication
$$
\cL X \times_X \cL X\to\cL X
$$
from the comultiplication on $S^1$. As usual, this multiplication is
not associative but rather fits into an $A_\infty$-monoid structure
over $X$.

The circle $S^1$ also has a natural time-reversal automorphism
fixing the base-point. Thus the loop space $\cL X$ inherits a
parametrization-reversal automorphism.

We like to summarize the situation by thinking of the loop space
$\cL X$ as a Lie group and the Hochschild space $\cH X$ as its
formal group.
Taking this perspective, % that the loop space $\cL X$ is a Lie group,
it is natural to ask about its Lie algebra. One can interpret this
as the odd tangent bundle $\BT_X[-1]$ equipped with its canonical
Lie algebra structure
$$
\BT_X[-1] \otimes \BT_X[-1] \to \BT_X[-1]
$$
given by the Atiyah class~\cite{Kap RW}.
%More generally, $\BT_X[-1]$ acts by endomorphisms
%of the identity functor of the derived category. For an object $\cF$, the action
%$$
%\BT_X[-1]\ot \cF\to \cF
%$$ is identified with the extension of $\cF$ by one-jets
%$\cF\ot \Omega_X^1$.
%In other words, the Atiyah
%class is given by convolution with the first order neighborhood of
%the diagonal.
The analogue of the enveloping algebra, or
 space of distributions on $\cL X$ supported along $X$,
 is the usual Hochschild cochain complex
$$
\R\IntHom_{\cO_{X \times X}}(\cO_{\Delta_X},\cO_{\Delta_X})
%\simeq {\mc Ext}(\on{Id}_{D(X)},\on{Id}_{D(X)}).$$
$$
The Yoneda product of Exts gives an $A_\infty$-multiplication which
agrees with the convolution structure induced by the multiplication
of loops. This picture was explained by Markarian \cite{Mark} and
furthered by Ramadoss \cite{Rama1,Rama2} and Roberts-Willerton
\cite{RW}. From this perspective, the Hochschild-Kostant-Rosenberg
isomorphism becomes the Poincar\'e-Birkhoff-Witt isomorphism for the
Lie algebra $\BT_X[-1]$.

\section{Sheaves on loop spaces}\label{sheaves section}

In this section we consider the differential graded (dg) derived
category of quasicoherent complexes on the derived loop space $\cL
X$ of a smooth Artin stack and the categorical action of $S^1$
induced by loop rotation. We begin in Section \ref{dgcats} with a review of the construction
of dg derived categories of quasicoherent complexes on a derived
stack. In Section \ref{equivariant}, we
describe categories of $S^1$-equivariant sheaves on stacks in a
fashion inspired by the Koszul duality picture of Goresky, Kottwitz
and MacPherson \cite{GKM}. After a review of the Koszul dual descriptions
of de Rham modules and
$\D$-modules on stacks in Section \ref{cyclic D-modules},
we show in Section
\ref{D-modules} that periodic $S^1$-equivariant sheaves on
the Hochschild space $\cH X$ of a smooth Artin stack $X$ are
identified with periodic
$\D$-modules on $X$.
% We conclude in Section~\ref{doubles} with an
% informal discussion of the relation between sheaves on loop spaces,
% Drinfeld doubles and topological field theories.

\subsection{Quasicoherent sheaves on a derived stack}\label{dgcats}
In this section, we briefly summarize some of the key definitions
and properties concerning dg categories and their construction from
derived stacks. We refer the reader to \cite{Keller,Toen} for
excellent overviews. We are indebted to Bertrand To\"en for very
helpful explanations. We refer to \cite{DAG2,DAG3} for the theory of
monoidal and symmetric monoidal quasicategories, specifically the
notions of algebra objects, module categories over algebra objects,
and tensor product of module objects in the commutative case (see
also \cite{Schwede Shipley} and references therein for the more
familiar theory in the context of model categories).

\medskip

A dg category over $k$ is a category enriched over dg $k$-vector
spaces. We remind the reader that throughout this paper $k$ is assumed to be a field of
characteristic zero; this significantly simplifies the associated
homotopy theory. Our basic examples
of dg categories are obtained by localizing quasi-isomorphisms in the category of
complexes in an abelian category $\cA$ as explained by Keller
\cite{Keller} and Drinfeld \cite{Drinfeld}. We will consistently
abuse standard terminology by referring to the result of this
construction %$L(\cA)$
as the dg derived category of the underlying abelian
category $\cA$.
It is worth pointing out from the start that not all of our dg derived categories
will arise in this manner.
%(The notation is motivated by the relation
%with the Dwyer-Kan simplicial localization in the context of model
%categories.)

There is a notion of quasi-equivalence of dg categories, mimicking
the notion of quasi-isomorphism of complexes: a quasiequivalence
induces equivalences of homotopy categories. We would like to work
with dg categories up to quasiequivalence. More formally, dg
categories admit a model category structure \cite{Tabauda} in which
quasi-equivalences are the weak equivalences, giving rise to a
quasicategory (the Dwyer-Kan simplicial localization) in which quasi-equivalences
have been inverted. With his model structure in mind,
we will construct many of our dg categories as
limits of diagrams of dg categories.

\medskip

To a derived stack $Z$, there is assigned a
dg category $\QCoh(Z)$ which we call the dg derived category of quasicoherent
complexes on $Z$ (see \cite[p.36]{Toen}).
Let us briefly recall the construction of $\QCoh(Z)$.

First, consider a simplicial commutative $k$-algebra $A$, and the
representable affine derived scheme $Z=\Spec A$.
Via the normalization functor, we may think of $A$ as a connective
commutative differential graded algebra. With this understanding, we
take $\QCoh(Z)$ to be the dg derived category of dg modules over $A$.
In other words, we take the localization of dg modules over $A$ with respect
to quasi-isomorphisms.

In general, any derived stack $Z$ can be written as a colimit of a
diagram of affine derived schemes $Z_\bullet$. Then we take $\QCoh(Z)$
to be the limit of the corresponding diagram of dg categories $\QCoh(Z_\bullet)$.
We can think of objects of $\QCoh(Z)$ as collections of
quasicoherent complexes $\cF^\bullet$ on the terms $Z_\bullet$
together with compatible collections of quasi-isomorphisms between their pullbacks under the
diagram maps.

% We see that any property on quasicoherent complexes preserved by
% pullbacks gives a well-defined property on quasicoherent sheaves. In
% particular, the notion of the full subcategory $\Perf(Z)$ of perfect
% sheaves makes sense, but not the notion of coherent sheaf. For the
% latter, we must assume that our derived stack $Z$ is Artin. Then we
% can write $Z$ as a colimit of a diagram of affine derived schemes
% $Z^k$ with smooth maps. Any property of quasicoherent complexes
% preserved by smooth pullbacks gives a well-defined property for
% quasicoherent sheaves on a derived Artin stack. In this case, we
% have the full subcategory $\Coh(Z)$ of bounded complexes
% of\footnote{we only want the bounded category, right?} coherent
% sheaves.

In our applications, we are interested in $\QCoh(Z)$ for derived Artin stacks $Z$
with affine diagonal. In this case we can calculate $\QCoh(Z)$ by
traditional simplicial descent of derived categories as explained
for example in \cite[Section 7.4]{BD Hitchin}. Choose an atlas
$$
p:Z_0 \to Z,
$$
such that $Z_0$ is an affine derived scheme, and consider the
resolution of $Z$ by the simplicial affine derived scheme $Z_\bul$ with
$k$-simplices given by the fiber products
$$
Z_k = Z_0 \times_Z^{\bbL} \cdots \times_Z^{\bbL} Z_0 \qquad \mbox{ with
$k$ terms}.
$$
Quasicoherent sheaves on the simplices form a cosimplicial dg
category, and we take $\QCoh(Z)$ to be its totalization. Concretely,
we can think of objects of $\QCoh(Z)$ as collections of
quasicoherent sheaves $\cF^k$ on the simplices $Z_k$ together with
compatible quasi-isomorphisms between their pullbacks under the simplicial structure
maps. In other words, sheaves on $Z$ are described by modules for
the cosimplicial commutative dg algebra of functions on the simplices.

Finally for a map $p:X\to Y$ we have the usual pullback functor
$p^*:\QCoh(Y)\to \QCoh(X)$ and its right adjoint, the pushforward
$p_*:\QCoh(X)\to \QCoh(Y)$.

\medskip

We will be most interested in the dg derived category $\Coh(Z)$ of quasi-coherent sheaves
with finitely generated cohomology. Since one can define quasi-coherent sheaves
with respect to smooth test maps, it makes sense to consider this property.

%Finally, To\"en proves in \cite{Toen dg} that the inner Hom between
%$\QCoh(X)$ and $\QCoh(Y)$ for quasicompact quasiseparated schemes is
%given by $\QCoh(X\times Y)$. This is deduced from a general result
%in which the (completed) tensor product of dg categories $\cC$ and
%$\cC'$ is identified with the Hom between the opposite category
%$\cC^{op}$ and the completion $\ol{\cC}'$, combined with generation
%properties of $\QCoh(X)$ from \cite{Bondal vandenBergh}.

%Next, we note that the functor $\Perf$ commutes with colimits. This
%follows from a result of To\"en and Vaqui\'e \cite[Proposition
%3.4]{ToenVaquie} that the functor $\Perf$ from derived stacks to dg
%categories admits a right adjoint. More precisely, their result is a
%{\em construction} of the functor $\Perf$ as the left adjoint to the
%moduli space functor from dg categories to derived stacks.

%%%%%%%%%%%%%%%%%%%%%%%%%%%%%%%%%%%%%%%%%%%%%%%%%%%%%%%%%%%%%%%%%%%
%%%%%%%%%%%%%%%%%%%%%%%%%%%%%%%%%%%%%%%%%%%%%%%%%%%%%%%%%%%%%%%%%%%
%%%%%%%%%%%%%%%%%%%%%%%%%%%%%%%%%%%%%%%%%%%%%%%%%%%%%%%%%%%%%%%%%%%

\subsection{Models for equivariant sheaves}\label{equivariant} In this section, we consider equivariant
sheaves on derived stacks with group actions. Our point of view is
inspired by Koszul duality (specifically by \cite{GKM,Allday
Puppe}). The basic idea is that to give a space $Z$ with the action
of a group $G$ is the same as to give a space $Z/G$ with a map to
$BG$. The space $Z/G$ is the total space of the $Z$-bundle over $BG$
associated to the universal $G$-bundle $EG$ and $Z$ is the fiber of
this bundle. After linearization, the constructions $Z \mapsto Z/G$
and $Z/G\mapsto Z$ become invariants over the linearization of $G$ (which
will be a group algebra) and coinvariants over the linearization of $BG$
(which will be an equivariant cochain complex) respectively.
In what follows, we will concentrate on the case $G=S^1$.

\medskip

Goresky, Kottwitz and
MacPherson \cite{GKM} modify the grading conventions of BGG Koszul
duality to obtain an equivalence between the homotopy theories of
(bounded below) dg modules over the symmetric algebra
$$
\bS=H^*(BS^1)=k[u] \quad \mbox{ with $|u|=2$}
$$
and (bounded below) dg modules over the exterior algebra
$$
\bLa=H_{-*}(S^1)=k\oplus k\cdot\lambda \quad \mbox{ with
$|\lambda|=-1$}
$$
preserving subcategories of complexes with finitely generated
cohomology. Moreover, it is shown in~\cite{GKM} (in a geometric
setting) that the $\bS$-module of equivariant global sections of an
$S^1$-equivariant sheaf $\cF$ on an $S^1$-space $X$ corresponds to
the $\bLa$-module of ordinary global sections of $\cF$.

%\subsubsection{Cyclic objects and $S^1$-descent}

\medskip

Now consider a derived stack $Z$ with an action of $S^1$. This consists
of an action map
$$
act:S^1\times Z\to Z
$$
with coherent associativity and unit axioms. Equivalently, we can
give the derived stack $Z/S^1$ with a map to the classifying stack
$BS^1$, and an isomorphism
$$Z/S^1\times_{BS^1} ES^1\risom Z$$
where $ES^1\to BS^1$ is the universal $S^1$-bundle. In cases of interest
(such as loop spaces of Artin stacks) this action can be concretely
modeled by giving a cyclic affine derived scheme. Intuitively, a
sheaf on $Z\times S^1$ is simply a sheaf on $Z$ with an
automorphism, given by the monodromy along $S^1$, so that an action
of $S^1$ on $Z$ manifests itself at the categorical level as an
automorphism of the identity functor of $\QCoh(Z)$.\footnote{More
generally, a coaction $act^*:\cC\to \cC\otimes\QCoh(S^1)$ on a dg
category $\cC$ is equivalent to the data of an automorphism of the
identity functor $ m\in \Aut(\on{Id}_{\cC}). $}

By definition, we take the dg derived category $\QCoh(Z)^{S^1}$ of
$S^1$-equivariant quasicoherent sheaves on $Z$ to be the dg derived
category $\QCoh(Z/S^1)$ of the quotient derived stack $Z/S^1$. To
obtain an explicit model, we present $ES^1$ by its standard
simplicial model and calculate $Z/S^1=Z\times_{S^1} ES^1$ by the Borel
construction. We obtain $\QCoh(Z/S^1)$ by taking the totalization of
the corresponding cosimplicial dg category of quasicoherent sheaves
on the simplices. Informally, the equivariant category is the
homotopy equalizer of the identity and the monodromy operator on
$\QCoh(Z)$.

The equivariant derived category can be calculated by descent for
the map $p:Z\to Z/S^1$. Namely, the canonical adjunction between
$p_*$ and $p^*$ gives rise to a comonad on $\QCoh(Z)$. Since $p^*$
is conservative and we have enough colimits, the hypotheses of the
Barr-Beck theorem are satisfied (see \cite{DAG2} for the monadic
formalism and Barr-Beck theorem in the context of quasicategories).
The result is the following:

\begin{prop}\label{comonad}
The category $\QCoh(Z/S^1)$ is equivalent to the category of
comodule objects in $\QCoh(Z)$ for the coalgebra $p_*act^*$ in
$\End(\QCoh(Z))$. Equivalently, it is realized as comodule objects
for the coalgebra $(p\times act)_*\Oo_{S^1\times Z}$ in
$\QCoh(Z\times Z)$ with its convolution monoidal structure.
% in $\Oo_Z$-bimodules.
\end{prop}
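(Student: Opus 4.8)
The plan is to deduce Proposition \ref{comonad} as a direct application of the Barr--Beck--Lurie monadicity theorem to the atlas map $p:Z\to Z/S^1$, in exactly the same spirit as the simplicial descent description of $\QCoh$ recalled in Section \ref{dgcats}. First I would set up the adjunction $(p^*, p_*)$ between $\QCoh(Z/S^1)$ and $\QCoh(Z)$ and check the three hypotheses of the comonadic form of Barr--Beck: that $p^*$ is conservative, that $\QCoh(Z)$ admits (and $p^*$ preserves) the relevant cototalizations of cosimplicial diagrams, and that the natural Beck--Chevalley (base-change) condition holds. Conservativity of $p^*$ is the key geometric input: since $p:Z\to Z/S^1$ is a smooth atlas (an $ES^1$-torsor after the Borel construction), it is faithfully flat in the appropriate derived sense, so pullback detects isomorphisms of quasicoherent complexes. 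The existence of limits is part of the general formalism of $\QCoh$ on derived stacks (it is a limit of module categories, each of which is presentable), and $p^*$ preserves them because it is computed levelwise on the \v Cech nerve.

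Granting the comonadic hypotheses, Lurie's theorem (\cite{DAG2}) identifies $\QCoh(Z/S^1)$ with the category of comodules in $\QCoh(Z)$ over the comonad $T = p^*p_*$. The next step is to compute this comonad explicitly using base change along the defining square
$$
\begin{array}{ccc}
S^1\times Z & \xrightarrow{\ act\ } & Z \\
\downarrow & & \downarrow p \\
Z & \xrightarrow{\ p\ } & Z/S^1,
\end{array}
$$
which exhibits $S^1\times Z$ as $Z\times_{Z/S^1} Z$ (this is precisely the content of the isomorphism $Z/S^1\times_{BS^1}ES^1\risom Z$ unwound one simplicial level down). Flat base change then gives $p^*p_* \simeq pr_{2,*}act^* \simeq p_* act^*$ in the notation of the statement --- here I am writing $p$ also for the projection $S^1 \times Z \to Z$ forgetting the circle coordinate, matching the paper's shorthand. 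This furnishes the first description. For the second, I would observe that the coalgebra object $p_* act^*$ in $\End(\QCoh(Z))$ is represented, under the standard equivalence between (a suitable class of) endofunctors of $\QCoh(Z)$ and objects of $\QCoh(Z\times Z)$ acting by convolution, by the pushforward $(p\times act)_*\Oo_{S^1\times Z}$; comodules over the comonad then correspond to comodules over this convolution coalgebra, giving the "kernel" reformulation.

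The main obstacle I expect is not any single deep theorem but the bookkeeping needed to run Barr--Beck--Lurie cleanly in the quasicategorical, derived setting: one must be careful that $p_*$ commutes with the totalizations appearing in the comonadic resolution (this is where smoothness/flatness of the atlas and the affine-diagonal hypothesis on $Z$ --- inherited here because $Z$ is a loop space of a smooth Artin stack with affine diagonal --- are used to guarantee $p$ is nice enough, e.g. that $p_*$ has bounded cohomological amplitude on the relevant objects), and that the two comonads built from $p$ and from the kernel $(p\times act)_*\Oo_{S^1\times Z}$ are identified compatibly with their coalgebra structures, not merely as endofunctors. All of this is parallel to the descent computation of $\QCoh$ already invoked in Section \ref{dgcats}, so I would phrase the proof as "apply Barr--Beck as in \cite{DAG2}, exactly as in the simplicial descent for $\QCoh$," and spend the bulk of the write-up verifying conservativity of $p^*$ and the base-change identification of the comonad, relegating the convolution reformulation to a remark about the monoidal equivalence $\QCoh(Z\times Z)\simeq \mathrm{Fun}(\QCoh(Z),\QCoh(Z))$ for $Z$ with affine diagonal.
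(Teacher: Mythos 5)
Your proposal takes essentially the same route as the paper: apply the comonadic Barr--Beck theorem to the adjunction $p^*\dashv p_*$ along the atlas $p:Z\to Z/S^1$, with conservativity of $p^*$ as the key geometric input. The paper's argument is just a terse paragraph preceding the proposition; your write-up adds the base-change identification of the comonad with $p_*act^*$ and the convolution-kernel reformulation, which the paper leaves implicit but which are exactly the right details to supply.
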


As a result, $S^1$-equivariant quasicoherent sheaves on $Z$ are the
same thing as sheaves with a coaction of the cochain coalgebra
$C^*(S^1)$ lifting its action on $\Oo_Z$.
%(Note that for the above reformulation, in the presence of
%strong quasicompactness assumptions, we do not need the full strength of To\"en's
%identification of bimodules with endofunctors,
%only that the relevant endofunctor and its algebra structure is
%represented by a bimodule. In particular, an analogous result holds in
%the perfect setting.)

\medskip

As an illustration, consider the above descriptions of the
equivariant category $\QCoh(Z/S^1)$ in the case $Z=\Spec k$ so that
$Z/S^1 = BS^1$. It is traditional in this setting to regard comodule
objects for the formal coalgebra $C^*(S^1)$ rather as module objects
for the homology of the circle $\bLa= H_{-*}(S^1)=k\oplus
k\cdot\lambda$ with $|\lambda|=-1$.
%(where the algebra structure is induced by the group structure on $S^1$).
Using the Dold-Kan correspondence, we can also identify $\QCoh(BS^1)$
with the quasicategory of cyclic $k$-modules. As a result, we obtain
a variant of the result of Dwyer-Kan \cite{Dwyer Kan} giving an
equivalence of quasicategories between $\bLa$-modules and cyclic
$k$-modules.

% On the other hand, the cohomology of the classifying space
% $\bS=H^*(BS^1)=k[u]$ is a symmetric algebra on a generator $u$ with
% $|u|=2$.  Thus there is a canonical functor from the category
% $\QCoh(BS^1)$ to $\bS-\module$.

More generally, let us spell out the statement of Proposition \ref{comonad}
in the case of an affine derived scheme $Z=\Spec A$ with an
$S^1$-action. This will be the only form of the assertion used in what follows.
First, we may consider $A$ as a simplicial associative
rather then commutative algebra since this does not affect the
category of $A$-modules. Next, we may express the $S^1$-action by
considering $A$ as a cyclic associative $k$-algebra since cyclic
objects in a quasicategory model objects with an $S^1$-action. The
$S^1$-action on $Z$ gives rise to a coaction of $C^*(S^1)$ on $A$,
or equivalently an action of $\bLa$. We may therefore consider the
algebra $A[\bLa]$ generated by $A$ and $\bLa$ (in other words,
obtained by adjoining the generator $\lambda\in \bLa^{-1}$ to $A$).

\begin{corollary}\label{cyclic to Lie}
Let $A$ be a cyclic commutative $k$-algebra, and let $A[\bLa]$ be
the dg algebra generated by the $S^1$-action on $A$. Then the
equivariant dg derived category $\QCoh(\Spec A)^{S^1}$ is
quasiequivalent to the dg derived category $A[\bLa]-\module$.
\end{corollary}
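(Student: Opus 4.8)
The statement is essentially an unwinding of Proposition~\ref{comonad} in the affine case. First I would apply that proposition --- together with the ensuing remark that an $S^1$-equivariant quasicoherent sheaf on $Z$ is the same as a quasicoherent sheaf equipped with a coaction of the cochain coalgebra $C^*(S^1)$ lifting its coaction on $\cO_Z$ --- to the affine derived scheme $Z=\Spec A$ with the $S^1$-action recorded by the cyclic structure on $A$. This presents $\QCoh(\Spec A)^{S^1}$ as the dg derived category of dg $A$-modules $M$ carrying a coaction of $C^*(S^1)$ lifting the given coaction on the structure sheaf $A$; equivalently, $M$ is a $C^*(S^1)$-comodule in the monoidal dg category $A-\module$, where commutativity of $A$ supplies the monoidal structure and the relevant coalgebra object is $C^*(S^1)\otimes_k A$.

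Next I would invoke the Koszul duality recalled above, following Goresky--Kottwitz--MacPherson~\cite{GKM} and appearing in the Dwyer--Kan form that identifies $\QCoh(BS^1)$ with cyclic $k$-modules: over the characteristic-zero field $k$, a $C^*(S^1)$-comodule is precisely a module over the exterior algebra $\bLa=H_{-*}(S^1)=k\oplus k\lambda$ with $|\lambda|=-1$, on the same underlying complex. Running this relatively over $A$, a $C^*(S^1)$-coaction on an $A$-module $M$ lifting that on $A$ becomes an action of $\lambda$ on $M$ by a degree $-1$ operator compatible with the $A$-module structure and with the cyclic operator on $A$ itself. By the very definition of $A[\bLa]$ as the dg algebra obtained from $A$ by adjoining $\lambda$ subject to the relations dictated by the $S^1$-action --- so that on $A$ the generator $\lambda$ acts as the canonical cyclic, i.e.\ de Rham, differential --- this is exactly a dg $A[\bLa]$-module. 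Since $\bLa$ is formal in characteristic zero, all of these identifications are strict: no $A_\infty$-corrections or curvature appear, and the objects on all sides have literally the same underlying complexes.

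It then remains to check that this matching of objects promotes to a quasi-equivalence of dg categories. I would argue that $\QCoh(\Spec A)^{S^1}=\QCoh(\Spec A/S^1)$ and $A[\bLa]-\module$ are both computed as totalizations of the same cosimplicial dg category --- on one side the Borel construction presenting $\Spec A/S^1=\Spec A\times_{S^1}ES^1$, on the other the standard bar-type resolution computing $\bLa$-modules --- and that Koszul duality identifies these cosimplicial objects termwise, hence also their totalizations, compatibly with morphism complexes and higher homotopies.

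The step I expect to be the main obstacle is the relative Koszul duality: carrying out the comodule-to-module passage over the dg base $A$ and verifying that it lands in modules over \emph{precisely} the algebra $A[\bLa]$ --- that is, that the twisting data encoding the $S^1$-action which emerges from the $C^*(S^1)$-comodule structure is exactly the ``adjoin $\lambda$ as the cyclic differential'' relation, with no hidden curvature term. The absolute case $A=k$, where $A[\bLa]=\bLa$ and the statement reduces to the Dwyer--Kan equivalence $\QCoh(BS^1)\simeq\bLa-\module$ recalled above, is the model to imitate; the characteristic-zero hypothesis is precisely what makes the passage strict.
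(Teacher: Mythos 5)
Your argument follows the same route the paper takes: specializing Proposition~\ref{comonad} to $Z=\Spec A$, passing from $C^*(S^1)$-comodules to $\bLa$-modules via the Koszul duality of Goresky--Kottwitz--MacPherson and Dwyer--Kan, and then observing that a dg $A$-module with a compatible $\bLa$-action lifting the cyclic operator on $A$ is precisely a dg $A[\bLa]$-module. The paper states the corollary as an immediate consequence of the surrounding discussion, and your write-up simply makes the relative Koszul-duality step and the totalization bookkeeping explicit.
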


%%%%%%%%%%%%%%%%%%%%%%%%%%%%%%%%%%%%%%%%%%%%%

\subsection{Equivariant sheaves and de Rham modules}\label{cyclic
D-modules} Let $X$ be a smooth Artin stack with affine diagonal. Recall that its
Hochschild space $\cH X$ is a derived Artin stack and comes equipped
with a canonical $S^1$-action given by loop rotation. We are now
ready to give a concrete description of the dg derived category $
\QCoh(\cH X)^{S^1} $ of $S^1$-equivariant quasicoherent complexes on
$\cH X$.

\medskip

Choose a smooth simplicial presentation $X_\bul\to X$ such that each of the simplices $X_k$ is a
smooth affine scheme. Since we assume that $X$ has affine diagonal, we can choose
a smooth affine cover $X_0\to X$ with $X_0$ affine, and then take $X_\bul$ to be the Cech nerve
with $k$-simplices given by the fiber products
$$
X_k = X_0 \times_{X} \cdots \times_X X_0
\qquad
\mbox{ with $k+1$ factors}.
$$

Consider the limit dg derived category
$\QCoh(\cH X_\bul)$ of compatible quasicoherent sheaves on the Hochschild spaces $\cH X_k$
of each of the simplices $X_k$.
Consider as well the limit dg derived category
$\Coh(\cH X_\bul)$ of compatible coherent sheaves on the Hochschild spaces $\cH X_k$
of each of the simplices $X_k$.

\begin{prop}
The canonical $S^1$-equivariant map $\pi:\cH X_\bul {\to}
\cH X$ induces an $S^1$-equivariant quasi-equivalence
$$
\pi^*:\QCoh(\cH X) \risom \QCoh(\cH X_\bul)
$$
preserving subcategories of coherent objects.
\end{prop}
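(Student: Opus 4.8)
The plan is to deduce the statement directly from faithfully flat descent for quasicoherent sheaves, carried along the loop (or Hochschild) space construction. The key point is that the Hochschild space functor $X \mapsto \cH X$ commutes with the colimits that present $X$ as a quotient of affines, in the restricted sense that one needs: although $\cH$ does not satisfy descent as a stack (as noted after Corollary~\ref{dg structure of hochschild}, following Lurie), the \emph{structure sheaf} $\cO_{\cH X}$ is the totalization of the cosimplicial ring $\cO_{\cH X_\bullet}$, because by Proposition~\ref{hochschild is completed odd tangent} functions on $\cH X$ are computed as $\completedSym_{\cO_X}(\Omega^1_X[1] \to \fg^*_{X_1})$, and the shifted cotangent complex $\Omega^1_X[1]\to\fg^*_{X_1}$ is itself the descent along $X_\bullet \to X$ of the (honest) shifted cotangent complexes of the affine simplices $X_k$. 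So $\QCoh(\cH X)$ is, by definition, the totalization (limit) of the cosimplicial dg category $\QCoh(\cH X_\bullet)$ of modules over this cosimplicial dg algebra; that totalization is precisely $\QCoh(\cH X_\bullet)$ as written. The first thing I would do is spell out this identification of structure sheaves, being careful about the completion: $\cO_{\cH X}$ is the descent of $\cO_{\cH X_k}$ because completion along the diagonal is a pro-object that is computed simplex-wise, and the completed Koszul resolutions assemble compatibly over the simplicial structure maps.

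Second, I would realize the equivalence $\pi^*$ by the Barr--Beck/monadic formalism already invoked in Proposition~\ref{comonad} for the $S^1$-quotient: the augmented cosimplicial category $\QCoh(\cH X) \to \QCoh(\cH X_\bullet)$ is a limit diagram provided (i) $\pi^*$ admits a right adjoint $\pi_*$, (ii) $\pi^*$ is conservative, and (iii) the relevant base-change (Beck--Chevalley) squares hold so that the comonad $\pi_*\pi^*$ is computed by the cosimplicial object. Conservativity of $\pi^*$ follows from the fact that $X_0 \to X$ is a smooth cover (an effective epimorphism), hence $\cH X_0 \to \cH X$ is too, after passing to the simplicial resolution. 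Base change holds because $\cH X$ is affine over $X$ (here I use that $X$ has affine diagonal, so $\cL X$ and thus $\cH X$ are affine over $X$), so we are in the situation of flat descent for modules over a cosimplicial commutative dg ring, where the comparison is classical (e.g.\ as in \cite[Section~7.4]{BD Hitchin}). This gives the quasi-equivalence $\QCoh(\cH X) \risom \QCoh(\cH X_\bullet)$.

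Third, for $S^1$-equivariance: the whole simplicial diagram $\cH X_\bullet$ carries a compatible $S^1$-action (loop rotation preserves constant loops on every simplex and is compatible with the diagonal/projection structure maps), so as observed in Section~\ref{hochschild space} the $S^1$-action on $\cO_{\cH X}$ is the limit of the $S^1$-actions on $\cO_{\cH X_k}$. Concretely, using Corollary~\ref{cyclic to Lie}, each $\QCoh(\cH X_k)^{S^1}$ is modules over $\cO_{\cH X_k}[\bLa]$, these assemble into a cosimplicial dg algebra, and its totalization recovers $\cO_{\cH X}[\bLa]$. Thus the descent equivalence is a morphism of categories with $S^1$-action, i.e.\ it descends to the quotients $/S^1$; equivalently it intertwines the comonads defining the equivariant categories. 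Finally, preservation of coherent subcategories is the statement that an object of $\QCoh(\cH X_\bullet)$ has coherent cohomology iff its restriction to (equivalently, its image under $\pi^*$ from) $\cH X$ does --- this is local on the smooth cover, hence immediate from the definition of $\Coh$ via smooth test maps mentioned at the end of Section~\ref{dgcats}.

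I expect the main obstacle to be step two, specifically verifying that the augmented cosimplicial dg category is genuinely a limit diagram --- that is, checking the Beck--Chevalley condition and the conservativity/amenability hypotheses of the $\infty$-categorical Barr--Beck theorem in the presence of the completion defining $\cH X$. The completion means $\cH X_\bullet \to \cH X$ is not literally a smooth atlas of an honest stack but a simplicial object of ind-schemes (pro-nilpotent thickenings), so one must check that descent survives completion; this is where the affineness of $\cH X$ over $X$ and the explicit pro-Koszul presentation of $\cO_{\cH X}$ do the real work. The $S^1$-equivariance and the coherence statement, by contrast, are formal once the underlying equivalence is in hand.
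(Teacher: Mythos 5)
Your proposal is correct, and its essential input coincides with the paper's: the descent statement for the structure sheaf $\cO_{\cH X}$ coming from Proposition~\ref{hochschild is completed odd tangent} (the completed Koszul presentation of $\cO_{\cH X_k}$ assembling compatibly over the simplicial maps). The paper's proof is in fact just that one observation: since $\cH X$ is affine over $X$, $\QCoh(\cH X)$ is modules over the cosimplicial dg algebra $\cO_{\cH X_\bullet}$, and hence is by construction the limit $\QCoh(\cH X_\bullet)$; the equivariance and coherence statements are left implicit. Your second step repackages this as monadic descent along $\pi$ itself, which is where you (rightly) locate the potential difficulty, since $\cH X_\bullet\to\cH X$ is not an atlas in any ordinary sense; but this worry dissolves if, as in the paper, one instead runs descent along the honest smooth affine cover $X_\bullet\to X$ and then passes to module categories over the descended algebra --- affineness of $\cH X$ over $X$ is doing all the work, and no Beck--Chevalley verification for $\pi$ is needed. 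Your treatment of the $S^1$-action as the limit of the actions on the $\cO_{\cH X_k}$ and of coherence as a condition local on the smooth cover is exactly what the paper intends.
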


\begin{proof}
By the descent description of the algebra $\cO_{\cH X}$ provided by
Proposition~\ref{hochschild is completed odd tangent} and the discussion thereafter,
the
dg derived category $\QCoh(\cH X)$ can be calculated as the limit
dg derived category
$\QCoh(\cH X_\bul)$.
\end{proof}

Let us continue to work with
a smooth simplicial presentation $X_\bul\to X$ such that each of the simplices $X_k$ is a
smooth affine scheme.
%Since we assume that $X$ has affine diagonal, we can choose
%a smooth affine cover $X_0\to X$ with $X_0$ affine, and then take $X_\bul$ to be the Cech nerve
%with $k$-simplices given by the fiber products
%$$
%X_k = X_0 \times_{X} \cdots \times_X X_0
%\qquad
%\mbox{ with $k+1$ factors}.
%$$

On each simplex $X_k$,
consider the formal dg algebra of differential forms $\Omega^{-\bullet}_{X_k}$
placed in negative degrees, and
the formal dg algebra $\Omega^{-\bul}_{X_k}[d]$ obtained by adjoining
the de Rham differential as an element of degree $-1$.
%and the non-formal dg algebra $(\Omega^{-\bul}_{X_k}, d)$ given by the de Rham complex itself.
By an $\Omega^{-\bullet}_{X_k}$-module (respectively, $\Omega^{-\bul}_{X_k}[d]$-module),
%or $(\Omega^{-\bul}_{X_k},d)$-module,
we will mean a dg $\Omega_{X_k}^{-\bul}$-module
(respectively, $\Omega^{-\bul}_{X_k}[d]$-module)
%or dg $(\Omega^{-\bul}_{X_k},d)$-module
that is quasicoherent as an $\cO_{X_k}$-module.

We take
the dg derived categories $\QCoh(X,\Omega^{-\bullet}_X)$
and $\QCoh(X,\Omega^{-\bul}_{X}[d])$
%, and dg $(\Omega^{-\bul}_{X},d)-\module$
to be the limits of the corresponding cosimplicial
dg derived categories.
%$\Omega^{-\bullet}_{X_\bul}-\module$  (respectively, $\Omega^{-\bul}_{X_\bul}[d]-\module$).
%, and dg $(\Omega^{-\bul}_{X_\bul},d)-\module$
It is not difficult to check that the limit dg categories are independent
of the choice of simplicial presentation $X_\bul\to X$.

\medskip

\begin{thm}\label{cyclic and D}
For a smooth Artin stack $X$, there are canonical quasiequivalences of dg
derived categories
$$
\QCoh(\cH X) \simeq \Omega_X^{-\bullet}-\module
$$
$$
\QCoh(\cH X)^{S^1} \simeq \Omega_X^{-\bullet}[d]-\module
$$
preserving subcategories of coherent objects.
\end{thm}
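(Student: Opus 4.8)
The plan is to reduce the statement to the case of a smooth affine scheme by simplicial descent, and there to combine Proposition~\ref{hochschild is completed odd tangent} with Corollary~\ref{cyclic to Lie}. Fix a smooth simplicial presentation $X_\bul\to X$ with each simplex $X_k$ a smooth affine scheme (the nerve of an affine atlas). By the definitions recalled just before the theorem, all four dg categories in the statement --- $\QCoh(\cH X)$, $\QCoh(\cH X)^{S^1}$, $\Omega_X^{-\bullet}-\module$ and $\Omega_X^{-\bullet}[d]-\module$ --- are limits of the corresponding cosimplicial dg categories built from the $X_k$, and the preceding proposition shows that the descent $\pi^*:\QCoh(\cH X)\risom\QCoh(\cH X_\bul)$ is $S^1$-equivariant. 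So it suffices to produce, functorially in the smooth affine scheme $Y=X_k$, quasiequivalences $\QCoh(\cH Y)\simeq\Omega_Y^{-\bullet}-\module$ and $\QCoh(\cH Y)^{S^1}\simeq\Omega_Y^{-\bullet}[d]-\module$ preserving coherent objects, and then pass to limits.

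For the affine case, I would first use Proposition~\ref{hochschild is completed odd tangent} (together with $\cH Y=\cL Y$ for a scheme) to identify $\cH Y$ with the affine derived scheme $\Spec\Omega_Y^{-\bullet}$; the first quasiequivalence is then just the definition of quasicoherent complexes on an affine derived scheme, and it manifestly preserves subcategories with finitely generated cohomology. The real content is the equivariant statement. Here I would pin down the cyclic commutative $k$-algebra underlying $\cO_{\cH Y}$: by the discussion at the end of Section~\ref{hochschild space}, under the cyclic-to-mixed functor the loop-rotation $S^1$-action on $\cO_{\cH Y}$ goes over to the de Rham differential $d$ on $\Omega_Y^{-\bullet}$ --- this is the Hochschild--Kostant--Rosenberg theorem together with its compatibility with the Connes $B$-operator. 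Hence the dg algebra $A[\bLa]$ of Corollary~\ref{cyclic to Lie}, obtained from $A=\Omega_Y^{-\bullet}$ by adjoining the degree $-1$ generator acting through the $S^1$-action, is precisely $\Omega_Y^{-\bullet}[d]$, and Corollary~\ref{cyclic to Lie} then yields $\QCoh(\cH Y)^{S^1}\simeq\Omega_Y^{-\bullet}[d]-\module$; the coherence clause follows from the corresponding clause in that corollary (the GKM Koszul-duality equivalence preserves finitely-generated cohomology), coherence being detected on the underlying $\cO_Y$-modules.

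Finally I would assemble these equivalences over the simplicial presentation. Both identifications, $\cH Y=\Spec\Omega_Y^{-\bullet}$ and loop rotation $\leftrightarrow d$, are natural in $Y$ for smooth morphisms of smooth affine schemes (naturality of HKR and of the de Rham differential), so the affine equivalences are compatible with pullback along the structure maps of $X_\bul$ and descend to quasiequivalences of the limit dg categories; by the remark before the theorem the limits are independent of the chosen presentation, and they are by construction $\QCoh(\cH X)$, $\QCoh(\cH X)^{S^1}$ and $\Omega_X^{-\bullet}-\module$, $\Omega_X^{-\bullet}[d]-\module$, with coherent subcategories preserved since this holds at each simplicial level. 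The step I expect to be the main obstacle is controlling the $S^1$-equivariant structure through the descent: as noted (following J.~Lurie) the Hochschild space itself does not satisfy descent, so the whole argument must be run at the level of the cyclic algebras of functions $\cO_{\cH X_k}$ and their equivariant module categories --- not at the level of the spaces --- and one must verify that the Connes cyclic structures, not merely the underlying commutative dg algebras, are compatible along the simplicial maps.
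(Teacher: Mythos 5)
Your proposal is correct and follows essentially the same route as the paper's proof: reduction to the smooth affine case via the limit of cosimplicial dg categories, identification of $\cH Y$ with $\Spec\Omega_Y^{-\bullet}$ via Proposition~\ref{hochschild is completed odd tangent} (i.e.\ Hochschild--Kostant--Rosenberg), and then the cyclic-to-mixed translation identifying the loop-rotation action with the Connes, hence de Rham, differential so that Corollary~\ref{cyclic to Lie} gives the equivariant equivalence. Your closing remark about running the descent at the level of the cyclic algebras of functions rather than the Hochschild spaces themselves is exactly the caveat the paper flags after Proposition~\ref{hochschild is completed odd tangent}.
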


\begin{proof}
Observe that given a a simplicial presentation $X_\bul\to X$ such that each of the simplices $X_k$ is a
smooth affine scheme, all of
the dg categories under consideration are calculated by taking the
limit of the corresponding cosimplicial dg categories. Thus
it suffices to prove the theorem for $X$ a smooth affine scheme.

For $X$ a smooth affine scheme,
the first assertion is immediate from Proposition~\ref{hochschild is completed odd tangent}.
This is simply a reformulation of the Hochschild-Kostant-Rosenberg theorem
providing a quasi-isomorphism between Hochschild chains $C^{-\bullet}(\cO_X)$ and
the de Rham algebra $ \Omega_X^{-\bullet}.$

%given by the completed Hochschild chain complex $\hat C^{-\bullet}(\cO_X)$.
For the second assertion, the $S^1$-action on $\cH X$ corresponds to a cyclic
structure on $\cO_{\cH X}$. Under the Dold-Kan correspondence, the
cyclic structure goes over to the $\bLa$-structure on
$C^{-\bullet}(\cO_X)$ where the generator $\lambda\in \bLa^{-1}$
acts by the Connes (homological) differential. Then under the
Hochschild-Kostant-Rosenberg theorem, the Connes differential
becomes identified with the de Rham differential on $ \Omega_X^{-\bullet}$ (see \cite{Loday}).
Finally, Corollary \ref{cyclic to Lie} further identifies cyclic
modules over $C^{-\bullet}(\cO_X)$ with differential graded modules over the
dg algebra $\Omega_X^{-\bullet}[d]$. This establishes the second
assertion.
\end{proof}

%%%%%%%%%%%%%%%%%%%%%%%%%%%%%%%%%%%%%%%%%%%%%%%%%%%%%%%%%%%%%%%%%%%%%%%
%%%%%%%%%%%%%%%%%%%%%%%%%%%%%%%%%%%%%%%%%%%%%%%%%%%%%%%%%%%%%%%%%%%%%%%
%%%%%%%%%%%%%%%%%%%%%%%%%%%%%%%%%%%%%%%%%%%%%%%%%%%%%%%%%%%%%%%%%%%%%%%

%\nc{\fgmodule}{{\on{mof}}}

%\nc{\fgQCoh}{L_{\on{qcoh},{\on{fg}}}}

\subsection{Koszul dual description}\label{D-modules}
%In this section we review the Koszul dual description
%of $\Omega^{-\bul}_X[d]$-modules on a smooth Artin stack $X$
%as modules over the Rees algebra of differential operators.
Let $X$ be a smooth Artin stack with affine diagonal.
In this section, we explain the close connection between $S^1$-equivariant quasicoherent
sheaves on the Hochschild space $\cH X$ and filtered $\D$-modules on
$X$.
To make this precise, we will apply the Koszul duality functor of
Goresky-Kottwitz-MacPherson~\cite{GKM}.
% to the dg derived category $\Omega_X^{-\bullet}[d]-\module$.
This functor only differs from
 the Koszul duality functor of Kapranov~\cite{Kap dR}
and Beilinson-Drinfeld~\cite{BD Hitchin} with respect to its grading
convention.

\medskip

Choose a smooth simplicial presentation $X_\bul\to X$ such that each of the simplices $X_k$ is a
smooth affine scheme.

On each simplex $X_k$, consider the algebra of differential operators $\D_{X_k}$,
and for each $i\geq 0$, the subsheaf $\D_{X_k,\leq i}\subset \D_{X_k}$ of differential operators of order at most $i$.
Define the shifted Rees algebra $\cR_{X_k}$ to be the graded algebra
$$
\cR_{X_k}=\bigoplus_{i\geq 0} \D_{X_k,\leq i}[-2i]
$$
where the graded piece $\D_{X,\leq i}$ is placed in cohomological
degree $2i$.

Recall that $\Omega_X^{-\bullet}[d]$ denotes the de Rham algebra in
negative degrees with the de Rham differential $d$ adjoined as an
element of degree $-1$. If we think of $\cR_{X_k}$ as a left
$\cR_{X_k}$-module, then it admits a graded Koszul-de Rham complex
$\cK_{X_k}$. By definition, this is the dg $\cO_{X_k}$-module
$$
\cK_{X_k} =
(\Omega^{-\bul}_{X_k}[d]\otimes_{\cO_{X}} \cR_{X_k},\delta)
$$
where
the differential $\delta$ encodes the left action of $\cR_{X_k}$ on itself.
We will think of $\cK_{X_k}$ as
equipped with the obvious left action of $\Omega^{-\bul}_{X_k}[d]$ and right action of $\cR_{X_k}$.

On dg $\Omega^{-\bul}_{X_k}[d]$-modules $M$, and complexes of $\cR_{X_k}$-modules $N$,
%(respectively, dg $\Omega_{X}[d]$-modules $M$, and complexes of $\cR_{X}$-modules $N$),
we have adjoint functors
$$
F_{X_k}:N\mapsto \Hom_{\cR_{X_k}}(\cK_{X_k}, N)[-\dim X_k/X]
\qquad
G_{X_k}:M\mapsto M\otimes_{\Omega^{-\bul}_{X_k}[d]} \cK_{X_k}[\dim X_k/X]
$$

Consider the dg derived categories
$\Coh(X_k,\Omega^{-\bul}_{X_k}[d])$ and
$\Coh(X_k,\cR_{X_k})$
of dg modules whose cohomology is finitely generated over
the formal algebras
$\Omega^{-\bul}_{X_k}[d]$ and
$\cR_{X_k}$ respectively.
If we restrict to such modules,
then the above functors %$F_{X_k}$ and $G_{X_k}$
descend to quasi-equivalences
$$
F_{X_k}:\Coh(X_k,\cR_{X_k})\risom \Coh(X_k,\Omega^{-\bul}_{X_k}[d])
\qquad
G_{X_k}: \Coh(X_k,\Omega^{-\bul}_{X_k}[d])\risom\Coh(X_k,\cR_{X_k})
$$
For more details, the reader could consult Kapranov~\cite{Kap dR}.

Finally,
define the dg derived categories $\Coh(X,\Omega^{-\bul}_{X_\bul}[d])$ and
 $\Coh(X,\cR_X)$ to be the limits
of the corresponding cosimplicial dg derived categories.
Applying the above functors on each of the simplices
leads to analogous functors $F_{X}$ and $G_{X}$ on the limit dg categories.
For details of the following assertion, the reader could consult~\cite [Section 7.5]{BD Hitchin}.

\begin{thm}\label{Koszul de Rham}
The functors $F_X$ and $G_X$ provide quasi-equivalences of dg derived categories
$$
F_{X}:\Coh(X,\cR_{X})\risom \Coh(X,\Omega^{-\bul}_{X}[d])
\qquad
G_{X}: \Coh(X,\Omega^{-\bul}_{X}[d])\risom\Coh(X,\cR_{X})
$$
%
%(2) There is an equivalence of dg derived categories
%$$
%dR:\D_X-\module\to (\Omega^\bullet_X,d)-\module.
%$$
\end{thm}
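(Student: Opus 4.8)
The plan is to reduce the statement to the case of a smooth affine scheme, where it is the classical Koszul--de Rham duality, and then propagate the equivalence through the simplicial descent that defines both sides. First I would recall that, by construction, $\Coh(X,\cR_X)$ and $\Coh(X,\Omega^{-\bul}_X[d])$ are the limits of the cosimplicial dg derived categories $k\mapsto \Coh(X_k,\cR_{X_k})$ and $k\mapsto \Coh(X_k,\Omega^{-\bul}_{X_k}[d])$ attached to a smooth affine simplicial presentation $X_\bul\to X$, and that this limit is independent of the chosen presentation. On each simplex $X_k$ the functors $F_{X_k}$ and $G_{X_k}$ are already asserted above to be mutually inverse quasi-equivalences: this is the graded Koszul--de Rham duality for a smooth affine scheme of Kapranov~\cite{Kap dR} (with the Beilinson--Drinfeld grading convention of~\cite{BD Hitchin}), the underlying point being that the Koszul--de Rham complex $\cK_{X_k}$ is a two-sided resolution exhibiting $\Omega^{-\bul}_{X_k}[d]$ and $\cR_{X_k}$ as Koszul dual dg algebras; in particular the levelwise functors preserve the subcategories cut out by finite generation of cohomology.

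The heart of the argument is then to check that $F_{X_k}$ and $G_{X_k}$ are natural in $k$, i.e.\ that they assemble into a morphism of cosimplicial dg categories inducing the desired $F_X$, $G_X$ on limits. For each face or degeneracy $X_k\to X_\ell$ in the Cech nerve --- a smooth, indeed \'etale-locally product-projection, map of smooth affine schemes --- the formation of the ring of differential operators, its order filtration, the shifted Rees algebra $\cR$, the de Rham algebra $\Omega^{-\bul}[d]$, and the Koszul complex all commute with pullback. Hence $\cK_{X_k}$ is the pullback of $\cK_{X_\ell}$, compatibly with its left $\Omega^{-\bul}[d]$-module and right $\cR$-module structures, so that $\cK_{X_\bul}$ is a cosimplicial object of bimodules. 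The one piece of bookkeeping is that the relative-dimension shifts $[\dim X_k/X]$ appearing in $F_{X_k}$ and $G_{X_k}$ are compatible with the cocycle condition along the simplicial maps; this follows from additivity of relative dimension for compositions and base changes of smooth morphisms. Granting this naturality, passing to the limit over the relevant truncation of $\Simpop$ carries the levelwise quasi-equivalences to quasi-equivalences of the limit dg categories, since a limit of a levelwise weak equivalence between diagrams of dg categories is a weak equivalence.

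The step I expect to be the \textbf{main obstacle} is not the affine Koszul duality, which is standard, but the verification that the whole construction is functorial enough --- coherently up to homotopy --- for the limit dg categories to be well behaved and for $F_X$, $G_X$ to be genuine quasi-equivalences rather than merely levelwise ones; concretely, this means pinning down the quasicategorical (or model-categorical) meaning of ``limit of a cosimplicial dg category'' and checking the homotopy coherence of $\cK_{X_\bul}$ as a cosimplicial bimodule. For the details of this gluing I would refer the reader to~\cite[Section 7.5]{BD Hitchin}; once it is in place, the remaining content is precisely the levelwise assertion recorded above together with the independence of $\Coh(X,\Omega^{-\bul}_X[d])$ and $\Coh(X,\cR_X)$ from the presentation, which guarantees the limit functors are well defined and mutually inverse.
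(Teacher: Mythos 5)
Your proposal matches the paper's argument: the theorem is obtained exactly by combining the levelwise affine Koszul--de Rham duality of Kapranov~\cite{Kap dR} (the quasi-equivalences $F_{X_k}$, $G_{X_k}$ already recorded for each smooth affine simplex) with naturality in the simplicial direction and passage to the limit of the cosimplicial dg categories, with the homotopy-coherent gluing delegated to~\cite[Section 7.5]{BD Hitchin}. Your extra remarks on the compatibility of the shifts $[\dim X_k/X]$ and on pinning down the quasicategorical meaning of the limit are exactly the bookkeeping the paper leaves implicit, so there is nothing to add.
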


%%%%%%%%%%%%%%%%%%%%%%%%%%%%%%%%%%%%%%%%%%%%
%qbert

\nc\bL{\mathbf L}

%%%%%%%%%%%%%%%%%%%%%%%%%%%%%%%%%%%%%%%%%%%%%%%%%%%%%%%%%%%%%%%%%%%%%%%

Putting together Theorems~\ref{cyclic and D}
and~\ref{Koszul de
Rham}, we immediately obtain the following.

\begin{corollary}\label{cyclic and rees}
There is a canonical quasi-equivalence
of dg derived categories
$$
\Coh(\cH X)^{S^1}\simeq \Coh(X,\cR_X).
$$
\end{corollary}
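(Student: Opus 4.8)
The plan is to obtain the stated equivalence by simply composing the two equivalences already in hand. First, Theorem~\ref{cyclic and D} provides a canonical quasi-equivalence $\QCoh(\cH X)^{S^1} \simeq \Omega_X^{-\bullet}[d]-\module$ which preserves the subcategories of coherent objects. Restricting to those objects yields a canonical quasi-equivalence $\Coh(\cH X)^{S^1} \simeq \Coh(X,\Omega^{-\bul}_{X}[d])$, where the right-hand side is the dg derived category of dg $\Omega_X^{-\bullet}[d]$-modules with finitely generated cohomology, constructed as the limit of the cosimplicial dg categories attached to a smooth simplicial presentation $X_\bul\to X$ with each $X_k$ a smooth affine scheme.

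Second, Theorem~\ref{Koszul de Rham} provides canonical quasi-equivalences $F_{X}:\Coh(X,\cR_{X})\risom \Coh(X,\Omega^{-\bul}_{X}[d])$ and $G_{X}:\Coh(X,\Omega^{-\bul}_{X}[d])\risom\Coh(X,\cR_{X})$, assembled simplex by simplex out of the Koszul--de Rham functors. Composing the equivalence of the previous paragraph with $G_X$ (or its inverse with $F_X$) yields the desired canonical quasi-equivalence
$$
\Coh(\cH X)^{S^1} \simeq \Coh(X,\Omega^{-\bul}_{X}[d]) \simeq \Coh(X,\cR_X).
$$

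The only point requiring a word of care is that both sides of each intermediate equivalence have been defined via the same limit construction over a fixed smooth affine simplicial presentation $X_\bul\to X$; since the functors in Theorems~\ref{cyclic and D} and~\ref{Koszul de Rham} are each compatible with the cosimplicial structure maps, the composite is again a limit of quasi-equivalences, hence a quasi-equivalence, and is independent of the chosen presentation. I do not expect any genuine obstacle here: this is a formal consequence of the two theorems. The substantive content has already been expended in establishing the Hochschild--Kostant--Rosenberg identification of $\cH X$ with $\wh{\BT}_X[-1]$ (and the resulting cyclic/de~Rham comparison) underlying Theorem~\ref{cyclic and D}, together with the Koszul duality between $\Omega^{-\bul}_X[d]$-modules and modules over the shifted Rees algebra $\cR_X$ of Theorem~\ref{Koszul de Rham}.
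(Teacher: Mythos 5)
Your proposal is correct and matches the paper's own argument: the corollary is obtained exactly by composing the equivalence of Theorem~\ref{cyclic and D} (restricted to coherent objects) with the Koszul duality equivalence of Theorem~\ref{Koszul de Rham}, both of which are computed as limits over the same smooth affine simplicial presentation. The paper states this in one line ("Putting together Theorems~\ref{cyclic and D} and~\ref{Koszul de Rham}, we immediately obtain the following"), so your additional remark on compatibility with the cosimplicial structure maps is a harmless elaboration of the same reasoning.
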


%%%%%%%%%%%%%%%%%%%%%%%%%%%%%%%%%%%%%%%%%%%%%%%%%%%%%%%%%%%%%%%%%%%%%%%

\subsection{Periodic sheaves and $\D$-modules}

Finally, we will consider the periodic, or localized, version of the
above picture. The category $\Coh(\cH_X/S^1)$ (much as every
equivariant category) is linear over the equivariant cohomology ring
of a point
$$
\bS=H^*(BS^1) = k[u] \qquad \mbox{ with $|u|=2$},
$$
the Koszul dual of the homology ring $\bLa$ of $S^1$.

For a $\bS$-linear dg category $\cC$ we may consider the
corresponding periodic category, in which we localize $\cC$ with
respect to the action of the central element $u$ (following
\cite{Toen dg}):
$$
\cC^{S^1}_{per}=\cC^{S^1}\ot_{k[u]}k[u,u\inv].
$$
Since $u$ has cohomological degree $2$, the morphism complexes in
the new category are $2$-periodic.

One may also periodicize a dg category by force: for a dg category
$\cC$ linear over $k$, we can always extend scalars to obtain a new
category
$$
\cC \otimes_k k[u,u\inv],
$$
where again the morphism complexes are $2$-periodic. (More
precisely, we take the pre-triangulated envelope of the naive tensor
product.)

\begin{corollary}
There is a canonical quasi-equivalence
of periodic dg categories
$$
\Coh(\cH X)^{S^1}_{per}\simeq \Coh(X,\D_X)\ot_k k[u,u\inv]
$$

\end{corollary}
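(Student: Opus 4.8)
The plan is to deduce the periodic statement from the non-periodic equivalence $\Coh(\cH X)^{S^1}\simeq \Coh(X,\cR_X)$ of Corollary~\ref{cyclic and rees} by inverting the equivariant parameter $u$ on both sides and identifying the localized Rees category with $\D_X$ tensored up to Laurent polynomials. First I would make the $\bS$-linear structures explicit and check they agree under the equivalence. On the left, $\Coh(\cH X)^{S^1}=\Coh(\cH X/S^1)$ is linear over $\Coh(BS^1)$, hence over $\bS=H^*(BS^1)=k[u]$ with $|u|=2$, by Corollary~\ref{cyclic to Lie} and the GKM conventions ($\bLa=H_{-*}(S^1)$ Koszul dual to $\bS$). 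On the right, the shifted Rees algebra $\cR_X=\bigoplus_{i\geq 0}\D_{X,\leq i}[-2i]$ carries a canonical central element $u$ of cohomological degree $2$, namely the image of $1\in\D_{X,\leq 0}$ in the degree-$2$ summand $\D_{X,\leq 1}$, and multiplication by $u$ makes $\Coh(X,\cR_X)$ linear over $k[u]$. The claim I need is that the Koszul--de Rham equivalences $F_X, G_X$ of Theorem~\ref{Koszul de Rham}, composed with the identifications of Theorem~\ref{cyclic and D}, intertwine these two $u$-actions: the de Rham differential $d$ adjoined in degree $-1$ (the $\bLa$-module structure coming from loop rotation) corresponds under Koszul duality to the filtration parameter of $\D_X$.

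Granting that compatibility, the remaining steps are formal. For a $k[u]$-linear category of $\Coh$-modules over a dg algebra $\cA$ with $u\in\cA$ central, localizing the category at $u$ is the same as passing to modules over $\cA[u\inv]$; since $\cR_X$ is noetherian and filtered colimits are exact, coherence is preserved, so
$$
\Coh(\cH X)^{S^1}_{per}\;\simeq\;\Coh(X,\cR_X)\ot_{k[u]}k[u,u\inv]\;\simeq\;\Coh(X,\cR_X[u\inv]).
$$
Next I would identify $\cR_X[u\inv]$ by the standard degeneration of a Rees algebra under inverting the Rees parameter: each summand $\D_{X,\leq i}[-2i]$ can be rescaled to cohomological degree $0$ using $u^{\mp i}$, producing $\bigcup_i\D_{X,\leq i}=\D_X$ in degree $0$ together with a free rank-one $k[u,u\inv]$ factor recording the grading, i.e.
$$
\cR_X[u\inv]\;\simeq\;\D_X\ot_k k[u,u\inv]
$$
as graded (dg) sheaves of algebras on $X$. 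Hence $\Coh(X,\cR_X[u\inv])\simeq \Coh(X,\D_X)\ot_k k[u,u\inv]$, where the right side is the forced $2$-periodicization (pre-triangulated envelope of the naive tensor product) discussed above; here one uses that $\D_X$ is a coherent sheaf of algebras on the smooth stack $X$, so the coherence conditions on the two sides match. Chaining the three identifications gives the asserted quasi-equivalence, with canonicity inherited from Corollary~\ref{cyclic and rees} and Theorem~\ref{Koszul de Rham}.

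The main obstacle is the first step: verifying that the class $u\in H^*(BS^1)$ governing the $S^1$-equivariant category is carried precisely to the Rees parameter of $\D_X$ under the chain of equivalences. This is a bookkeeping matter about grading conventions in GKM Koszul duality, but it is where the real content lies; once it is settled, the localization and the Rees-to-$\D_X$ degeneration are routine, as is the reduction to $X$ affine (all categories involved are limits over a smooth affine simplicial presentation $X_\bul\to X$, exactly as in the proofs of Theorems~\ref{cyclic and D} and~\ref{Koszul de Rham}).
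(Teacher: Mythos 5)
Your proposal follows essentially the same route as the paper: identify the central element of $\cR_X^2$ (the Rees parameter) with $u\in H^*(BS^1)$ via the compatibility of the two Koszul dualities, invert it, and recognize $\cR_X[u^{-1}]\simeq \D_X\otimes_k k[u,u^{-1}]$. The only point worth sharpening is the passage to coherent module categories after localization, which the paper justifies by the existence of good filtrations on coherent $\D_X$-modules (ensuring essential surjectivity of the localization functor), rather than by coherence of $\D_X$ alone.
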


\begin{proof}

Consider the central element $t\in \cR_X^2$ given by the unit of
$\cO_X\subset\D_{X,\leq 1}$ under the identification
$$
\D_{X,\leq 1}[-2]=\cR_X^2.
$$ We consider $\cR_X$ as an $\bS$-algebra, where $u\in \bS$ acts by
multiplication by $t$. Inverting $t$ in the graded algebra $\cR_X$
we obtain the periodic version of the algebra $\D_X$:
$$
\cR_X[t\inv]\simeq \D_X\ot_k k[t,t\inv].
$$
Inverting $t$ on the level of module categories, we obtain an
equivalence
$$
\Coh(X,\cR_X)\ot_{k[t]}k[t,t\inv]\simeq \Coh(X,\D_X)\ot_k
k[t,t\inv].
$$
(The equivalence follows from the existence of good filtrations on
coherent $\D$-modules.)

In order to conclude, we need to identify the action of $u\in
H^2(BS^1)$ on the $S^1$-equivariant category $\QCoh(\cH X)^{S^1}$
with the action of the central element $t\in \cR_X^2$ on Rees
modules under the quasi-equivalence of Corollary~\ref{cyclic and
rees}. This identification is a consequence of the evident
compatibility between the Koszul duality between $\bLa$ and $\bS$
and the Koszul duality between
$\Omega_X^{-\bul}[d]=\Oo_{\cH_X}[\bLa]$ and the $\bS$-algebra
$\bS\hookrightarrow \cR_X$.

\end{proof}

\begin{remark}
To obtain a $\Z$-graded version of the above corollary, we should work in a ``mixed"
setting with an extra grading direction.
\end{remark}

\section{Steinberg varieties as loop spaces}\label{Steinberg section}

\nc{\ch}{\check}
%\nc{\fg}{\mathfrak g}
\nc{\fb}{\mathfrak b} \nc{\fu}{\mathfrak u} \nc{\st}{{st}}
\nc{\fU}{\mathfrak U}

\nc\fk{\mathfrak k} \nc\fp{\mathfrak p}

In this section, we apply the preceding results to a concrete
example in representation theory. We show how to relate two
prominently appearing categories associated to a complex reductive
group $G$. The first is the category of equivariant coherent sheaves
on the Steinberg variety of $G$.
%This is of
%fundamental importance in the representation theory of $p$-adic groups and
%Hecke algebras (Kazhdan-Lusztig)
%and applications of the geometric Langlands program (Bezrukavnikov et. al.).
The second is the category of Harish-Chandra bimodules of $G$ with
strictly trivial infinitesimal character.
%This is an algebraic version of the category of admissable
%representations of $G$, and is closely related to category $\cO$
%of Bernstein-Gelfand-Gelfand.
Via Beilinson-Bernstein localization, we can identify the latter
with equivariant $\D$-modules on the flag variety of $G$. We refer
the interested reader to the overview in Section~\ref{overview} for
a brief discussion of the importance of these categories in
representation theory, and in particular for their interpretation as
Langlands parameters for representations of the Langlands dual
group. Our aim here is to see how to recover the second category
from the first via the geometry of loop spaces and $S^1$-equivariant
localization.

One of the complications in explaining this picture is the fact that
there is not one result but rather a family of results parametrized
by a product of universal Cartan groups $H\times H$. (These
parameters correspond to the infinitesimal character of
representations of the Langlands dual group.) To deal with this, we
introduce more and more local notions of what one might mean by the
Steinberg variety. First and foremost, there is the global Steinberg
variety $\St$ whose quotient $\St/G$ admits a realization as a loop
space. Then for each (interesting) value of the parameter $(\alpha,
w\alpha)$, we have the local Steinberg variety
$\St_{\alpha,w\alpha}$ obtained by completing the global Steinberg
variety $\St$ with respect to a neighborhood of the parameter
$(\alpha, w\alpha)$. Finally, we have the formal Steinberg variety
$\wh\St_{\alpha,\beta}$ obtained by completing the local Steinberg
variety $\St_{\alpha,w\alpha}$ with respect to its unipotent
directions.

It is the formal Steinberg space $\wh\St_{\alpha,\beta}/G$ that
turns out to be a Hochschild space, while the local Steinberg space
$\St_{\alpha,\beta}/G$ is what appears most commonly in
representation theory. The general results of previous sections go
so far as to relate equivariant $\D$-modules on certain flag
varieties to coherent sheaves on the equivariant formal Steinberg
space $\wh\St_{\alpha,\beta}/G$. To complete the bridge to
representation theory, we check that the restriction of coherent
sheaves from the local space $\St_{\alpha,\beta}/G$ to the formal
space $\wh\St_{\alpha,\beta}/G$ is an equivalence.

In the first two sections that follow, we explain the above story
for the trivial parameter. In the third and final section, we
explain the natural generalization for arbitrary parameters.

\medskip

We first set some notation. Let $ G$ be a connected reductive
complex algebraic group with Lie algebra $ \fg$, and let $ \cB$ be
the flag variety of $ G$ parameterizing Borel subgroups $ B\subset
G$. For each $ B\in  \cB$, we have the Cartan quotient $ H= B/ U$
where $ U\subset  B$ is the unipotent radical. The natural $
G$-action on $\cB$ by conjugation canonically identifies the Cartan
quotients for different $ B$, and so we call $ H$ the universal
Cartan of $ G$.

%%%%%%%%%%%%%%%%%%%%%%%%%%%%%%%%%%%%%%%%%%%%%%%%%%%

\subsection{The Steinberg space}
All of the spaces we will consider naturally live over the
classifying space $pt/ G$. (To avoid confusion with our notation for
Borel subgroups, we will denote the classifying space by $pt/G$
instead of the customary $BG$.) What follows is a brief introduction
to our {\em dramatis personae}.

%%%%%%%%%%%%%%%%%%%%%%%%%%%%%%%%%%%%%%%%%%%%%%%%%%%
\subsubsection{The adjoint group}
Let $ G/{G}$ be the adjoint group defined by taking the quotient of
$ G$ by the adjoint action of $ G$ on itself. As we have seen, as a
group space over $pt/G$, it is isomorphic to the loop space $\cL
(pt/G)$.

%%%%%%%%%%%%%%%%%%%%%%%%%%%%%%%%%%%%%%%%%%%%%%%%%%%
\subsubsection{The Grothendieck-Springer space}
Let $\wt G$ be the the Grothendieck-Springer variety of pairs of an
element $g\in G$ and a Borel subgroup $B \in \cB$ such that $g\in
B$. We refer to the quotient $\wt G/G$ by the adjoint action of $G$
as the Grothendieck-Springer space or equivariant
Grothendieck-Springer variety.

Since $G$ acts transitively on $\cB$ and the stabilizer of $B\in\cB$
is precisely $B$, we see that $\wt G/G$ is canonically isomorphic to
the adjoint group $B/{B}$, for any $B\in \cB$. Thus it is isomorphic
to the loop space $\cL (pt/B)$ as a group space over $pt/B$.

\medskip

Let $p:\wt G/G\to G/{G}$ be the projection $p(g,B)=g$. If we think
of $\wt G/G$ as the  loop space $\cL (pt/B)$, and $G/{G}$ as the
loop space $\cL (pt/G)$, then $p$ is the map on loops induced by the
natural fibration $pt/B\to pt/G$.

\medskip

Let $\pi:\wt G/G\to \cB/G$ be the other projection $\pi(g,B)=B$.
Again, if we think of $\wt G/G$ as the  loop space $\cL (pt/B)$, and
$ \cB/G$ as the classifying space $pt/B$, then $\pi$ is the natural
projection $\cL (pt/B) \to pt/B$ obtained by evaluating a loop at
$1\in S^1$.

%%%%%%%%%%%%%%%%%%%%%%%%%%%%%%%%%%%%%%%%%%%%%%%%%%%
 \subsubsection{The flag space}
Consider the diagonal $ G$-action on the product $ \cB\times \cB$ of
two copies of the flag variety. We refer to the corresponding
quotient stack $ (\cB \times\cB)/ G $ as the flag space or
equivariant flag variety. For a fixed element $(B_1,B_2)\in
\cB\times\cB$, we have a canonical identification
$$
B_1 \backslash G/ B_2 \risom ( \cB \times\cB)/ G \qquad g\mapsto
(B_1, g B_2 g^{-1}).
$$

%%%%%%%%%%%%%%%%%%%%%%%%%%%%%%%%%%%%%%%%%%%%%%%%%%%
 \subsubsection{The global Steinberg space}
 By the global Steinberg variety, we mean the fiber product
 $$
 \St = \wt G \times_G \wt G.
 $$
By a simple dimension count,
the above ordinary fiber product coincides with its derived
enhancement. We refer to the corresponding quotient stack
$$
\St/G = (\wt G \times_G \wt G)/G \simeq (\wt G/G \times_{G/G} \wt
G/G)
$$
as the global Steinberg space or equivariant global Steinberg
variety.

\medskip

The natural projection $\pi:\St/G\to (\cB \times\cB)/G$ realizes
$\St/G$ as a group-space over $(\cB \times\cB)/G$. Concretely, we
have two universal Borel subgroups
$$
B_1^{univ}, B_2^{univ}\subset (\cB \times\cB \times G)/G,
$$
and $\St/G$ is their intersection
$$
\St/G = B^{univ}_1\cap B^{univ}_2.
$$
Again thanks to a dimension count, the above ordinary intersection
coincides with its derived enhancement.

\medskip

For our purposes, the fundamental viewpoint on the global Steinberg
space is given by its natural realization as the loop space of the
flag space.

\begin{thm}\label{steinberg is a loop space}
We have a canonical identification
$$
 \St/G\simeq\cL  ((\cB \times\cB)/G)\simeq \cL(B\backslash G/B)
$$
of group spaces over $(\cB \times\cB)/G\simeq B\backslash G/B$.
\end{thm}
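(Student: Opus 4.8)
The plan is to identify both sides of the claimed equivalence with the same derived mapping stack, using the simplicial model of $S^1$ together with the fact that $\cB/G \simeq pt/B$. First I would recall that $\cB/G = G\backslash(G/B) \simeq pt/B$, so that the flag space $(\cB\times\cB)/G$ is canonically identified with $B\backslash G/B = pt/B \times_{pt/G} pt/B$, i.e. with the homotopy fiber product of the two maps $pt/B \to pt/G$. (This is the statement that a point of $(\cB\times\cB)/G$ is a pair of Borels $B_1,B_2$ together with an element $g\in G$ conjugating a reference Borel to $B_1$, modulo the stabilizer $B_1$, which upon evaluation gives the double-coset description $B_1\backslash G/B_2$.) Thus $Y := (\cB\times\cB)/G$ is an Artin stack with affine diagonal (it is a quotient of $G/B\times G/B$ by $G$), and so the loop space $\cL Y = \R\Hom(S^1,Y)$ makes sense in the framework of Section~\ref{loop section}.

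The key step is to compute $\cL Y$ via the small simplicial presentation of $S^1$ with two $0$-simplices and two $1$-simplices, which as recalled in the excerpt gives
$$
\cL Y \simeq Y \times^{\R}_{Y\times Y} Y,
$$
the derived self-intersection of the diagonal of $Y$. Writing $Y = \wt G/G \times_{G/G} \wt G/G$ — which holds because $\wt G/G \simeq \cL(pt/B) \simeq B/B$ is the loop space of $pt/B$, and $B\backslash G/B$ is the fiber product of two copies of $pt/B$ over $pt/G$ — I would rearrange the iterated fiber product so that $\cL Y$ is identified with $\cL(pt/B)\times_{\cL(pt/G)}\cL(pt/B)$. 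By the basic examples computed in Section~\ref{overview}, $\cL(pt/G) = G/G$ and $\cL(pt/B) = B/B = \wt G/G$, and the two structure maps to $G/G$ are exactly the projections $p:\wt G/G \to G/G$, $(g,B)\mapsto g$. Hence
$$
\cL Y \simeq (\wt G/G)\times_{G/G}(\wt G/G) \simeq (\wt G\times_G \wt G)/G = \St/G,
$$
where the middle identification is just commuting the two quotient-by-$G$ operations with the fiber product, and the last equality is the definition of the global Steinberg space. The compatibility with the projection to $Y \simeq B\backslash G/B$ is automatic: under these identifications the evaluation-at-$1\in S^1$ map $\cL Y \to Y$ corresponds to the natural map $\St/G \to (\cB\times\cB)/G$ forgetting $g\in B_1\cap B_2$, since each factor $\wt G/G \to \cB/G$ is the evaluation map for $\cL(pt/B)\to pt/B$.

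The main obstacle I expect is the derived-geometric bookkeeping: one must be careful that all the fiber products above are \emph{derived} fiber products and that commuting $(-)/G$ past them is legitimate (this uses that taking quotient stacks is a colimit and derived fiber products preserve the relevant limits, together with affineness of the diagonal to keep everything within Artin stacks). A related point is to justify that $\cL Y \simeq Y\times^{\R}_{Y\times Y} Y$ and the cyclic/cocyclic model agree — this is exactly the content recalled from Connes' formalism in the excerpt, so I would simply cite it. Finally, one should remark — as the paper does elsewhere — that because $\wt G \to G$ is semi-small the derived fiber product $\wt G\times_G\wt G$ coincides with the naive one, so $\St/G$ as usually defined already carries the correct structure; this is a reassurance rather than a step in the argument, since the isomorphism of \emph{derived} stacks is what we are proving. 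The $S^1$-action on $\St/G$ and the group-space structure over $Y$ then come for free as the loop-rotation action and the comultiplication-induced multiplication on $\cL Y$, as discussed in Section~\ref{structures}.
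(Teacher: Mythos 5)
Your argument is correct, but it follows a genuinely different route from the paper's. The paper's proof is two lines: it observes that the \emph{inertia} stack $I((\cB\times\cB)/G)$ is, directly from the definitions, the classical Steinberg space $\St/G$, and then invokes Proposition~\ref{dg structure of loops} — the loop space has trivial derived structure exactly when the isomorphism classes of objects are discrete, which holds here because $G$ has finitely many orbits on $\cB\times\cB$ (Bruhat). You instead decompose $(\cB\times\cB)/G\simeq pt/B\times_{pt/G}pt/B$, use that $\R\Hom(S^1,-)$ preserves fiber products to reduce to the known computations $\cL(pt/B)\simeq B/B\simeq\wt G/G$ and $\cL(pt/G)\simeq G/G$, and then identify $(\wt G/G)\times^{\R}_{G/G}(\wt G/G)$ with $\St/G$. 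Your route has the advantage of exhibiting the identification and the group-space structure over $B\backslash G/B$ completely explicitly, at the cost of having to justify commuting loops and quotients past derived fiber products; the paper's route is shorter because Proposition~\ref{dg structure of loops} absorbs all the dimension counting. One framing quibble: you call the coincidence of $\wt G\times^{\R}_G\wt G$ with the naive fiber product "a reassurance rather than a step," but in your argument it is a genuine step — without it your chain of equivalences lands on the derived enhancement of $\St/G$ rather than the classical Steinberg space named in the theorem. Since you do include the dimension count, this is a matter of emphasis, not a gap; note that it is precisely the same transversality input that powers the paper's appeal to Proposition~\ref{dg structure of loops}.
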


\begin{proof}
Recall that by definition, the inertia stack $I((\cB \times\cB)/G)$
is the underived mapping stack $\Hom(S^1, (\cB \times\cB)/G)$. It is
immediate from the definitions that $I((\cB \times\cB)/G)$ is
precisely the global Steinberg space $\St/G$. Thus to establish the
theorem, we must see that the loop space $\cL  ((\cB \times\cB)/G)$
coincides with $I ((\cB \times\cB)/G)$. In other words, we must see
that the derived structure of $\cL  ((\cB \times\cB)/G)$ is trivial.
But this is immediate from Proposition~\ref{dg structure of loops}.
\end{proof}

%%%%%%%%%%%%%%%%%%%%%%%%%%%%%%%%%%%%%%%%%%%%%%

\subsubsection{The Steinberg space}

Let $ p:\St{\to}  H\times H $ be the natural projection to the
product of universal Cartans.
%$$
%p( B_1, B_2, g) = ([g],[g]) \in  B_1/ U_1\times  B_2/ U_1.
%$$
We call $H\times H$ the monodromic base, and refer to $p$ as the
monodromic projection.

%Let $e\in  H\times H$ be the unit,
Let $ \wh H_e \times\wh H_e$ be the formal neighborhood of the unit
$(e,e)\in H\times H$, and define the Steinberg variety $\St_{e,e}$
to be the inverse image
$$
\St_{e,e}=p^{-1}( \wh H_e \times\wh H_e).
$$
We refer to the corresponding quotient stack $ \St_{e,e}/G $ as the
Steinberg space or equivariant Steinberg variety.

Since $p$ is a group homomorphism, $\St_{e,e}/G$ is a group space
over $(\cB\times\cB)/G$.
%via the projection~$\pi$.

\begin{remark}
It is worth pointing out that each component of the monodromic
projection $p$ more naturally takes values in the adjoint group
$H/H$. Of course, the conjugation action here is trivial, thus we
have $H/H \simeq H\times pt/H$, and so we were able to project to
the factor $H$. It is likely that the extra parameters coming from
the factor $pt/H$ will contribute meaningful deformations of the
current story.
\end{remark}

%%%%%%%%%%%%%%%%%%%%%%%%%%%%%%%%%%%%%%%%%%%%%%

 \subsubsection{The formal Steinberg space}
%Let $e\in  H\times H$ be the unit,
We define the formal Steinberg variety $\wh \St_{e,e}$ to be the
formal completion of the Steinberg variety $\St_{e,e}$ along the
trivial section $(\cB\times\cB) \to \St_{e,e}$, or equivalently, the
formal completion of the global Steinberg variety $\St$ along the
trivial section $(\cB\times\cB) \to \St$. We refer to the
corresponding quotient stack $ \wh\St_{e,e}/G $ as the formal
Steinberg space or equivariant formal Steinberg variety.

\medskip

We have a diagram of group-spaces
$$
\wh \St_{e,e}/G \to \St_{e,e}/G \to \St/G
$$
over $(\cB\times\cB)/G$. The first is the formal group, the second
is the relative completion in the directions of the monodromic base,
and the last is the global group.

\medskip

By construction, Theorem~\ref{steinberg is a loop space} immediately
implies the following.

\begin{corollary}\label{formal steinberg is a hochschild space}
We have a canonical identification
$$
 \widehat \St_{e,e}/G \simeq\cH ((\cB\times\cB)/G) \simeq\cH(B\backslash G/B)
$$
of formal groups over $(\cB\times\cB)/G \simeq  B\backslash G/B$.
\end{corollary}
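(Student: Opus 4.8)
The plan is to deduce the corollary directly from Theorem~\ref{steinberg is a loop space} by taking formal completions on both sides. Recall that Theorem~\ref{steinberg is a loop space} provides a canonical isomorphism $\St/G\simeq\cL((\cB\times\cB)/G)$ of group spaces over $(\cB\times\cB)/G$, realized through the identification of $\St/G$ with the inertia stack $I((\cB\times\cB)/G)$. The first thing I would check is that under this isomorphism the trivial section $(\cB\times\cB)/G\to\St/G$ — the fiberwise identity, sending $(B_1,B_2)$ to the identity element of the group $B_1\cap B_2$ — corresponds exactly to the constant loops $(\cB\times\cB)/G\to\cL((\cB\times\cB)/G)$. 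This is immediate from the description of the isomorphism: constant loops map to the identity of the loop group, and under $I((\cB\times\cB)/G)\simeq\St/G$ the identity automorphism of an object corresponds to the identity element in the relevant intersection of Borels.

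Next, by definition the Hochschild space $\cH((\cB\times\cB)/G)$ is the formal completion of $\cL((\cB\times\cB)/G)$ along the constant loops (see Section~\ref{hochschild space}). On the other hand, the formal Steinberg variety $\widehat\St_{e,e}$ is by definition the formal completion of $\St$ (equivalently of $\St_{e,e}$) along the trivial section $(\cB\times\cB)\to\St$, and taking the quotient by $G$ commutes with formal completion along a $G$-stable closed substack, so $\widehat\St_{e,e}/G$ is the formal completion of $\St/G$ along the trivial section. Since the isomorphism of Theorem~\ref{steinberg is a loop space} carries the trivial section to the constant loops and formal completion is functorial, it induces a canonical isomorphism $\widehat\St_{e,e}/G\simeq\cH((\cB\times\cB)/G)$; the identification $(\cB\times\cB)/G\simeq B\backslash G/B$ then yields the second isomorphism in the statement.

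Finally, to upgrade this to an isomorphism of formal groups over $(\cB\times\cB)/G$, I would note that Theorem~\ref{steinberg is a loop space} is already an isomorphism of group spaces, hence compatible with the multiplication maps; the multiplication on $\cL((\cB\times\cB)/G)$ preserves constant loops, and correspondingly the fiberwise multiplication on $\St/G$ preserves the trivial section, so both multiplications restrict to the respective formal completions and the isomorphism above intertwines these restrictions. I do not expect any serious obstacle: the entire argument is an unwinding of definitions, the only point requiring minor care being the compatibility of the group structure with the formal-completion operation, which holds because the identity sections are group subobjects.
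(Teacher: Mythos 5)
Your argument is correct and is essentially the paper's own: the corollary is stated there as an immediate consequence of Theorem~\ref{steinberg is a loop space}, since $\widehat\St_{e,e}/G$ is by definition the formal completion of $\St/G$ along the trivial section, which the isomorphism of that theorem identifies with the constant loops along which $\cH((\cB\times\cB)/G)$ is defined. Your additional checks (quotient commuting with completion, compatibility of the group structures) are the right unwinding of "by construction."
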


%%%%%%%%%%%%%%%%%%%%%%%%%%%%%%%%%%%%%%%%%%%%%%%%

\subsection{Harish-Chandra bimodules}

Let $\fU(\fg \times \fg)$ denote the universal enveloping algebra of
the Lie algebra $\fg\times\fg$.
%and let $\fU^\bullet(\fg \times \fg)$ denote its standard
%graded version.
Consider the Harish-Chandra pair $(\fU(\fg\times\fg),G)$, and the
corresponding category %$(\fU^\bullet(\fg\times\fg),G)-\module$
of Harish-Chandra bimodules of $G$ with trivial infinitesimal
character. By definition, these are modules over
$(\fU(\fg\times\fg),G)$ which are finitely generated over $\fU(\fg
\times \fg)$ and on which the center $\mathfrak
Z(\fg\times\fg)\subset \fU(\fg\times\fg)$ acts via the trivial
character.

\medskip

By Beilinson-Bernstein localization, the abelian category of Harish
Chandra bimodules of $G$ with (strictly) trivial infinitesimal
character is equivalent to the abelian category of $G$-equivariant
%finitely-generated
$\D$-modules on $\cB\times\cB$. One should beware that the naive
corresponding dg derived category of Harish-Chandra bimodules is
{\em not} equivalent to the natural $G$-equivariant dg derived
category of $\D$-modules on $\cB\times\cB$. It is the latter
category that plays a more prominent role in representation theory
(for example, by \cite{BGS} it is Koszul dual to the derived
category of representations with generalized trivial infinitesimal
character, and by the work of ~\cite{Soergel} it arises as
parameters for representations of the dual group), and thus we will
proceed with it in mind as our model for the dg derived category of
Harish-Chandra bimodules. Alternatively, one could take the
appearance of Harish-Chandra bimodules as purely motivational, and
understand that it is the $G$-equivariant dg derived category of
$\D$-modules on $\cB\times\cB$ that we are most interested in.
%By Koszul duality, we have in turn a derived equivalence
%between the dg derived category of graded modules over $R^\bullet(\D_{(\cB\times\cB)/G})$
%and the dg derived category of graded modules over the de Rham algebra
%$\Omega_{(\cB\times\cB)/G}^{\bullet}[d]$.

\medskip

We have seen in Corollary~\ref{formal steinberg is a hochschild
space} that the formal Steinberg space $ \widehat \St_{e,e}/G$ is
the Hochschild space of the flag space $(\cB\times\cB)/G$. Thus
applying our general results, we have a canonical quasi-equivalence
of dg derived categories
$$
\Coh(\wh \St_{e,e}/G)^{S^1} \risom
\Coh(\cR_{(\cB\times\cB)/G}).
$$
Since the formal Steinberg space $\wh \St_{e,e}/G$ is not a commonly
appearing object in representation theory, the above identification
is not completely satisfactory. But thanks to the following lemma,
we can go one step further and relate these categories to coherent
sheaves on the Steinberg space $\St_{e,e}/G$ itself. Note that the
following lemma is the one place where we need to work with coherent
sheaves rather than quasicoherent sheaves.

\begin{lemma}\label{restriction is an equivalence}
The natural restriction map
$$
\Coh(\St_{e,e}/G) \to \Coh(\wh \St_{e,e}/G)
$$
is an $S^1$-equivariant equivalence.
%Equivariant coherent sheaves
%on a weighted cone with a contracting torus action are equivalent to those on the formal neighborhood
%of the cone point.
\end{lemma}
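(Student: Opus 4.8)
The plan is to introduce a contracting $\G_m$-action on $\St_{e,e}$ whose fixed locus is the trivial section $\cB\times\cB$, and to reduce the lemma to the principle that for a coherent sheaf which is ``cohomologically proper and graded in non-negative degrees'' the completion along the positively-graded ideal does nothing. To this end, note first that the projection $\St_{e,e}\to\cB\times\cB$ is affine: the fibre over $(B_1,B_2)$ consists of the elements of $B_1\cap B_2$ whose semisimple part lies in the formal neighbourhood of the identity, that is, the unipotent group $U_1\cap U_2$ times a formal torus. Passing to Lie-algebra coordinates near the identity through the exponential map --- legitimate since these fibres are pro-unipotent --- realizes $\St_{e,e}$ as a closed $G$-stable subscheme, formally completed in the adjoint-quotient directions, of the trivial bundle $\fg\times(\cB\times\cB)$ over $\cB\times\cB$. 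Fibrewise scaling on $\fg$ then defines a $\G_m$-action that commutes with the conjugation action of $G$, covers the contraction of $\wh H_e\times\wh H_e$ onto $(e,e)$, contracts $\St_{e,e}$ onto the zero section $\cB\times\cB$, and has that section as its fixed locus with strictly positive weights transverse to it; this is the classical Springer/cotangent dilation. By Corollary~\ref{formal steinberg is a hochschild space} the embedding $\cH((\cB\times\cB)/G)=\wh\St_{e,e}/G\hookrightarrow\St_{e,e}/G$ is precisely the completion along this fixed locus, and the $\G_m$-action grades $\cO_{\St_{e,e}/G}$ --- and every coherent sheaf on it --- in non-negative degrees, with degree-zero part over $(\cB\times\cB)/G$ and the ideal $\mathcal I$ of the section equal to the positive-degree part.

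The crucial input is that $\St_{e,e}/G$ is cohomologically proper over the formal disk $\mathcal D=\operatorname{Spf}\cO(\St_{e,e})^G$, of dimension the rank of $G$: although $\St_{e,e}\to\cB\times\cB$ is only affine, the adjoint quotient $\St/G$ is proper over $G/G$ (the Grothendieck--Springer map $\St\to G$ being proper), while $G/G\to G/\!\!/G$ is cohomologically proper by the finiteness of invariants for modules over the reductive group $G$; since $\St_{e,e}/G=(\St/G)\times_{G/\!\!/G}\mathcal D$, it follows that $\St_{e,e}/G$ is cohomologically proper over $\mathcal D$. Combined with the non-negative grading, this forces any coherent sheaf $\F$ on $\St_{e,e}/G$ to be supported in bounded degrees in each $\G_m$-weight space of its cohomology, so that the tower $\{\F/\mathcal I^n\F\}$ stabilizes in every degree; hence the $\mathcal I$-adic completion $\F\mapsto\wh\F$ is --- up to the evident identification, and although the completed structure sheaf is strictly larger as a topological ring --- the identity on coherent sheaves. (This is the one place where coherence, i.e.\ boundedness, is essential: for general quasicoherent sheaves $\iota^*$ is not an equivalence.) It follows that $\iota^*:\Coh(\St_{e,e}/G)\to\Coh(\wh\St_{e,e}/G)$ is fully faithful, since morphism complexes between coherent sheaves are concentrated in bounded degrees and so unchanged by completion, and essentially surjective, since a coherent sheaf on the completion is generated by a bounded amount of data and already spreads out over $\St_{e,e}/G$.

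Finally, loop rotation preserves the trivial section and commutes with the grading $\G_m$ --- these are independent actions, as one sees in the $\pline$-model of Section~\ref{complex Langlands} where the first is rotation of $\pline$ and the second an independent dilation --- so the equivalence just constructed is $S^1$-equivariant and descends to the periodic $S^1$-equivariant categories. I expect the cohomological-properness step to be the main obstacle: it is the one genuinely geometric point, and it is exactly what distinguishes the Steinberg picture from a generic affine bundle over $\cB\times\cB$, for which restriction to a formal completion is certainly not an equivalence. One must combine the properness of the Grothendieck--Springer map with finiteness of invariants near the nilpotent cone, and propagate this finiteness through the formal directions of $\wh H_e\times\wh H_e$, so that every coherent sheaf is ``bounded'' in precisely the sense that makes the $\mathcal I$-adic completion invertible --- equivalently, so that $\iota^*$ is essentially surjective.
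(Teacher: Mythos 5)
Your strategy---a contracting $\G_m$-action under which the formal completion along the fixed locus becomes invisible to coherent sheaves---is exactly the right one, and is the same idea the paper uses. But the paper's implementation is simpler, and your version contains two genuine deviations worth flagging.

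First, the choice of $\G_m$-action. The paper does not construct a scaling action through exponential coordinates on $\fg$. Instead it base changes to a fiber: fixing a Borel $B$ with unipotent radical $U$, one is reduced to showing that restriction $\Coh(\wh B_U/B)\to\Coh(\wh B/B)$ is an equivalence (where $\wh B_U$ is the completion of $B$ along $U$, and $\wh B$ is the formal group). Both sides are categories of $B$-representations with extra structure. One then takes a \emph{generic cocharacter} $\G_m\subset B$, so that the induced \emph{conjugation} action on $U$ has strictly positive weights and contracts $U$ to $e$, while acting trivially on the torus directions (which are already formal on both sides). In your version the $\G_m$-action is the dilation on $\fg$, which also scales the torus directions; this still works, but it is not what the paper does, and the exponential coordinatization requires some care since only the unipotent directions of the fiber are genuinely algebraic while the torus directions are merely formal. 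The paper avoids this by never leaving the group and using a group-theoretic (conjugation) $\G_m$.

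Second, and more importantly, your appeal to cohomological properness over $\operatorname{Spf}\cO(\St_{e,e})^G$ is not needed, and the ``boundedness of degrees'' you try to extract from it is not literally true: the structure sheaf itself has unbounded $\G_m$-weights along the unipotent directions. The correct and elementary statement is graded Nakayama: for a finitely generated graded module $M$ over a non-negatively graded ring whose ideal $\mathcal I$ of positive-degree elements is the completion ideal, each weight space of $M/\mathcal I^n M$ stabilizes for $n\gg 0$ (because $M$ is generated in finitely many degrees), so $M\to\wh M$ is an isomorphism of graded modules. This is automatic from coherence and equivariance under the contracting $\G_m$, with no input from the Grothendieck--Springer map being proper. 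The paper encodes the same observation more concretely by exhibiting the inverse functor: it sends a coherent sheaf on the formal group $\wh B/B$ to its $\G_m$-finite vectors, which spreads it out over $\wh B_U/B$. Your essential-surjectivity step (``already spreads out'') is exactly this construction, but it remains an assertion in your write-up rather than an argument; constructing the $\G_m$-finite-vectors functor is where the work actually happens.

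The $S^1$-equivariance is handled in the paper by the trivial observation that the inclusion $\wh\St_{e,e}/G\hookrightarrow\St_{e,e}/G$ is $S^1$-equivariant, so the restriction functor is as well; your appeal to the $\pline$-model of Section~\ref{complex Langlands} is colorful but unnecessary, since the compatibility is formal.

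So: right mechanism, but replace the scaling $\G_m$ with a generic cocharacter of $B$ acting by conjugation, drop the cohomological properness (it is not where the finiteness comes from), and prove essential surjectivity by actually writing down the inverse functor (finite vectors for the contracting $\G_m$).
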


\begin{proof}
The map $\wh \St_{e,e}/G\to  \St_{e,e}/G$ is clearly
$S^1$-equivariant, and so the $S^1$-equivariance of the lemma is
immediate.

To see the restriction is an equivalence, fix a Borel subgroup $B\in
\cB$ with unipotent radical $U\subset B$, and consider the formal
completion $\wh B_U$ along $U$ and the formal group $\wh B$. By base
change, it suffices to show that the restriction functor
$$
\Coh(\wh B_U/B) \to \Coh(\wh B/B).
$$
is an equivalence. Observe that each of the above categories can be
thought of as a category of representations of $B$ equipped with
extra structure.

Fix a generic one-parameter subgroup $\G_m\subset B$ so that the
induced conjugation action of $\G_m$ on the unipotent radical $U$
contracts it to the identity $e\in U$. It is straightforward to
check that the above restriction functor is equal to the completion
functor with respect to the weights of the $\G_m$-action.
Conversely, we can define an inverse functor
$$
\Coh(\wh B/B) \to \Coh(\wh B_U/B)
$$
by taking the $\G_m$-finite vectors of any object.
\end{proof}

Putting together Corollary~\ref{formal steinberg is a hochschild
space}, Lemma~\ref{restriction is an equivalence} and our general
results, we arrive at our goal as summarized in the following
statements.

\begin{thm}\label{unlocalized}
There is a canonical quasi-equivalence of dg derived categories
$$
\Coh(\St_{e,e}/G)^{S^1} \risom
\Coh(\cR_{(\cB\times\cB)/G}).
$$
\end{thm}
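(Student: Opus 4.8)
The plan is to assemble the statement by chaining together three facts already in hand, keeping the circle action visible at every stage. Recall first that $\cB\times\cB$ is smooth and projective and $G$ is reductive, so the flag space $(\cB\times\cB)/G$ is a smooth Artin stack with affine diagonal, and the general machinery of Section~\ref{sheaves section} applies to $X=(\cB\times\cB)/G$ without modification.

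The first step is Corollary~\ref{formal steinberg is a hochschild space}, which provides a canonical identification $\wh\St_{e,e}/G\simeq\cH((\cB\times\cB)/G)$ of formal groups over $(\cB\times\cB)/G$. This identification intertwines the loop-rotation $S^1$-action on the Hochschild space with the $S^1$-action on the formal Steinberg space, so it induces an $S^1$-equivariant quasi-equivalence $\Coh(\wh\St_{e,e}/G)\simeq\Coh(\cH((\cB\times\cB)/G))$, and hence a quasi-equivalence of the corresponding $S^1$-equivariant categories.

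The second step is Corollary~\ref{cyclic and rees} applied to $X=(\cB\times\cB)/G$, which gives $\Coh(\cH((\cB\times\cB)/G))^{S^1}\simeq\Coh(\cR_{(\cB\times\cB)/G})$; this is where the Hochschild-Kostant-Rosenberg identification of Theorem~\ref{cyclic and D} and the Koszul-de Rham equivalence of Theorem~\ref{Koszul de Rham} enter. The third step is to replace the formal Steinberg space by the Steinberg space itself using Lemma~\ref{restriction is an equivalence}: the restriction functor $\Coh(\St_{e,e}/G)\to\Coh(\wh\St_{e,e}/G)$ is an $S^1$-equivariant equivalence. Since forming $S^1$-equivariant objects (realized, as in Proposition~\ref{comonad}, as comodules over the cochain coalgebra of $S^1$, equivalently as a totalization of the Borel cosimplicial diagram) is a functorial operation on dg categories with $S^1$-action that carries equivariant equivalences to equivalences, we obtain $\Coh(\St_{e,e}/G)^{S^1}\simeq\Coh(\wh\St_{e,e}/G)^{S^1}$. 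Composing the three equivalences yields the theorem.

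The one genuinely nonformal ingredient is Lemma~\ref{restriction is an equivalence}, and its proof is already recorded above: after base change one reduces to showing that restriction $\Coh(\wh B/B)\to\Coh(\wh B_U/B)$ along the completion of the unipotent directions is an equivalence, which follows because a generic cocharacter $\G_m\subset B$ contracts the unipotent radical $U$ to the identity, so that this restriction is precisely the weight-completion functor, with inverse given by passage to $\G_m$-finite vectors. This is the step that forces us to work with coherent rather than merely quasicoherent sheaves, and it is the only real obstacle. Everything else is bookkeeping: the main care required is tracking the $S^1$-action through the geometric identification of Corollary~\ref{formal steinberg is a hochschild space} and through the Koszul-dual repackaging of Corollary~\ref{cyclic and rees}.
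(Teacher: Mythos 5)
Your proposal is correct and follows the paper's own argument essentially verbatim: Corollary~\ref{formal steinberg is a hochschild space} to identify $\wh\St_{e,e}/G$ with $\cH((\cB\times\cB)/G)$ $S^1$-equivariantly, Corollary~\ref{cyclic and rees} to pass to Rees modules, and Lemma~\ref{restriction is an equivalence} (proved exactly as you describe, via a contracting $\G_m\subset B$) to replace the formal Steinberg space by $\St_{e,e}/G$. The only blemish is a reversed arrow in your display for the reduction step — the restriction functor in the lemma goes $\Coh(\wh B_U/B)\to\Coh(\wh B/B)$, from the completion along $U$ to the formal group — but your description of it as weight-completion with inverse given by $\G_m$-finite vectors is the intended argument.
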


\begin{corollary}\label{localized}
There is a canonical quasi-equivalence of dg derived categories
$$
\Coh(\St_{e,e}/G)_{per}^{S^1} \risom
\Coh(\D_{(\cB\times\cB)/G})\otimes_\C \C[u,u^{-1}].
$$
\end{corollary}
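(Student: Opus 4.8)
The plan is to deduce this periodic statement from the unlocalized equivalence of Theorem~\ref{unlocalized} by inverting the equivariant parameter $u$, running the same localization argument that was used to pass from Corollary~\ref{cyclic and rees} to its periodic refinement. Throughout I would set $X=(\cB\times\cB)/G$; this is a smooth Artin stack with affine diagonal, so the general machinery of Section~\ref{sheaves section} applies to it, and in particular the shifted Rees algebra $\cR_X$ and the central element $t\in\cR_X^2$ (the unit of $\cO_X\subset\D_{X,\leq 1}$ under the identification $\D_{X,\leq 1}[-2]=\cR_X^2$) are available.

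First I would record that $\Coh(\St_{e,e}/G)^{S^1}$ is linear over $\bS=H^*(BS^1)=\C[u]$ with $|u|=2$, the element $u$ acting through the $S^1$-equivariant structure, and identify the image of $u$ under the quasi-equivalence $\Coh(\St_{e,e}/G)^{S^1}\risom\Coh(\cR_X)$ of Theorem~\ref{unlocalized}. The claim is that $u$ is carried to the central element $t\in\cR_X^2$. To check this one unwinds that the equivalence of Theorem~\ref{unlocalized} is assembled from Corollary~\ref{formal steinberg is a hochschild space}, Lemma~\ref{restriction is an equivalence}, and Corollary~\ref{cyclic and rees}, each of which is $S^1$-equivariant (equivalently $\bS$-linear), and that Corollary~\ref{cyclic and rees} itself factors through Theorem~\ref{cyclic and D} and Theorem~\ref{Koszul de Rham}. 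The matching of $u$ with $t$ is then exactly the compatibility — already invoked in the proof of the periodic refinement of Corollary~\ref{cyclic and rees} — between the Koszul duality relating $\bLa$ and $\bS$ and the Koszul duality relating $\Omega_X^{-\bul}[d]=\cO_{\cH X}[\bLa]$ to the $\bS$-algebra $\bS\hookrightarrow\cR_X$.

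With that in hand I would simply localize. Tensoring the $\bS$-linear quasi-equivalence of Theorem~\ref{unlocalized} with $\C[u,u\inv]$ over $\C[u]$ produces on the left, by definition, the periodic category $\Coh(\St_{e,e}/G)^{S^1}_{per}$, and on the right, inverting the central element $t$ in the graded algebra $\cR_X$ gives the periodic form of $\D_X$,
$$
\cR_X[t\inv]\simeq \D_X\ot_\C\C[t,t\inv],
$$
and hence on module categories
$$
\Coh(X,\cR_X)\ot_{\C[t]}\C[t,t\inv]\simeq \Coh(X,\D_X)\ot_\C\C[t,t\inv].
$$
Combining these two displays with the identification of $u$ and $t$ from the previous step yields the asserted quasi-equivalence $\Coh(\St_{e,e}/G)^{S^1}_{per}\risom\Coh(\D_X)\ot_\C\C[u,u\inv]$.

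The one genuinely non-formal point — the part I would treat most carefully — is the passage to module categories in the last display: that localizing $\cR_X$-modules at $t$ lands in $\D_X$-modules and nothing larger. This is exactly where coherence is essential, resting on the existence of good filtrations on coherent $\D_X$-modules, just as in the proof of the periodic refinement of Corollary~\ref{cyclic and rees}; it is the reason the statement is phrased for $\Coh$ rather than $\QCoh$. Everything else is bookkeeping: transporting the already-established periodic argument for $\cH X$ along the identifications packaged in Theorem~\ref{unlocalized}.
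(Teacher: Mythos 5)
Your proposal is correct and is essentially the paper's own (largely implicit) argument: Corollary~\ref{localized} is obtained from Theorem~\ref{unlocalized} by exactly the localization procedure used for the general periodic statement $\Coh(\cH X)^{S^1}_{per}\simeq\Coh(X,\D_X)\ot_k k[u,u\inv]$, namely matching $u$ with the central element $t\in\cR_X^2$ via the compatibility of the two Koszul dualities and then inverting, with good filtrations on coherent $\D$-modules justifying the passage on module categories. Nothing further is needed.
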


%%%%%%%%%%%%%%%%%%%%%%%%%%%%%%%%%%%%%%%%%%%%%

\subsection{General monodromicities}
In this section, we establish results for a general monodromic
parameter analogous to those of the previous sections.

Recall that we have the monodromic projection $p:\St\to H \times H$,
and that the Steinberg variety $\St_{e,e}$ is defined to be the
inverse image under $p$ of the formal group $\wh H_e\times\wh H_e$.
Given an arbitrary pair $(\alpha,\beta)\in H\times H$, we denote its
formal neighborhood by $\wh H_\alpha \times\wh H_\beta$, and define
the monodromic Steinberg variety $\St_{\alpha,\beta}$ to be the
inverse image
$$
\St_{\alpha,\beta}=p^{-1}( \wh H_\alpha \times\wh H_\beta).
$$
%So in particular, for the identity $(e,e)\in H\times H$, we have $\St=\St_{e,e}$.
We refer to the corresponding quotient stack $ \St_{\alpha,\beta}/G
$ as the monodromic Steinberg space or equivariant monodromic
Steinberg variety.

Our aim here is to give a loop space interpretation of coherent
sheaves on $\St_{\alpha,\beta}/G$.

\medskip

Let $W$ be the Weyl group of $G$.

\begin{lemma}
The monodromic Steinberg space $\St_{\alpha,\beta}/G$ is nonempty if
and only if there is $w\in W$ such that $\beta = w\alpha$.
\end{lemma}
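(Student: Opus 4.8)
The plan is to reduce the statement to a concrete assertion about the flag variety and then to the standard structure theory of Borel subgroups. First I would observe that $\St_{\alpha,\beta}/G$ is nonempty precisely when $\St_{\alpha,\beta}$ is nonempty (a quotient stack is empty if and only if its total space is), and that $\St_{\alpha,\beta}=p^{-1}(\wh H_\alpha\times\wh H_\beta)$ is nonempty precisely when the genuine fiber $p^{-1}(\alpha,\beta)$ is nonempty, since the formal neighborhood $\wh H_\alpha\times\wh H_\beta$ is nonempty if and only if its reduced point $(\alpha,\beta)$ lies in the image of $p$. (Here I would also note in passing that $p$ really does descend to $\St/G$: the universal Cartan identifications are invariant under conjugation, so $[hgh^{-1}]$ and $[g]$ agree in $H$.) Thus the lemma is equivalent to: there exists a triple $(g,B_1,B_2)$ with $g\in B_1\cap B_2$ and Cartan images $([g]_1,[g]_2)=(\alpha,\beta)$ if and only if $\beta=w\alpha$ for some $w\in W$.

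For the forward direction, I would start from such a triple $(g,B_1,B_2)$. Writing $g=su$ for the Jordan decomposition, the unipotent part $u$ lies in $U_1\cap U_2$ (every unipotent element of a Borel subgroup lies in its unipotent radical, since the Cartan quotient is a torus), so $[g]_i=[s]_i$ for $i=1,2$. The key step is then: since $B_1\cap B_2$ is a connected solvable group containing a maximal torus of $G$, and semisimple elements of a connected solvable group lie in a maximal torus, I may choose a maximal torus $T\subseteq B_1\cap B_2$ of $G$ with $s\in T$. (Alternatively one argues inside $Z_G(s)^\circ$, whose intersections with $B_1$ and $B_2$ are Borel subgroups sharing a maximal torus of $G$ containing the central element $s$.) Writing $B_2=nB_1n^{-1}$ with $n\in N_G(T)$ and $w=nT\in W$, the canonical isomorphisms $B_i/U_i\cong T$ and the canonical identification $B_1/U_1\cong B_2/U_2$ (conjugation by $n$) together identify $[s]_1$ with $s$ and $[s]_2$ with $w^{-1}\!\cdot s\in T$; hence $\alpha$ and $\beta$ correspond to $s$ and $w^{-1}s$ in $H$, so $\beta=w^{-1}\alpha$.

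For the converse, given $\beta=w\alpha$, I would fix a maximal torus $T$ inside a Borel $B$, use the resulting isomorphism $H\cong T$ to pick the semisimple element $s\in T$ corresponding to $\alpha$, choose $n\in N_G(T)$ representing $w^{-1}$, and take the triple $(s,B,nBn^{-1})$; then $s\in B\cap nBn^{-1}$ and, by the same computation as above, its Cartan images are $(\alpha,\beta)$, so $\St_{\alpha,\beta}$ --- hence $\St_{\alpha,\beta}/G$ --- is nonempty. The main obstacle I anticipate is the structural input used in the forward direction: not merely that two Borel subgroups share a maximal torus, but that the prescribed semisimple element $s\in B_1\cap B_2$ can be placed in a common maximal torus, together with a careful bookkeeping of the universal Cartan identifications (which Weyl-group element appears, and on which side it acts). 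The final conclusion --- that $\alpha$ and $\beta$ lie in one $W$-orbit --- is insensitive to these conventions, which makes the computation robust even if the precise normalization takes some care.
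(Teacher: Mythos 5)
Your proof is correct and follows the same idea as the paper's: reduce to an actual point $(g,B_1,B_2)\in\St$ and observe that the canonical conjugation identifying $B_1/U_1$ with $B_2/U_2$ forces $\alpha$ and $\beta$ into the same $W$-orbit. You simply spell out what the paper leaves implicit (the reduction to the reduced fiber, the Jordan decomposition to pass to the semisimple part, the placement of $s$ in a common maximal torus, and the converse direction), so the argument is more detailed but not different in substance.
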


\begin{proof}
For $(g,B_1,B_2)\in \St$, let $\alpha$ be the class of $g$ in
$B_1/U_1$ and $\beta$ the class of $g$ in $B_2/U_2$. We may
conjugate $B_1$ to $B_2$ to see that $\alpha$ must be conjugate to
$\beta$.
\end{proof}

Now fix $\alpha\in H$, and consider the monodromic Steinberg variety
$\St_{\alpha,w\alpha}$, for some $w\in W$.

\medskip

Let $\cO_\alpha\subset G$ denote the semisimple conjugacy class
corresponding to $\alpha$. Fix once and for all an element $\tilde
\alpha\in\cO_\alpha$, and let $G(\alpha)\subset G$ denote its
centralizer. In general, $G(\alpha)$ is reductive, and often turns
out to be a Levi subgroup.

We will affix the symbol $(\alpha)$ to our usual notation when
referring to objects associated to $G(\alpha)$. So for example, we
write $\cB(\alpha)$ for the flag variety of $G(\alpha)$, and
$\St(\alpha)$ for its global Steinberg variety. Furthermore, we have
the monodromic projection $p(\alpha):\St(\alpha)\to H \times H$, the
monodromic Steinberg variety $\St(\alpha)_{\alpha,\alpha}$ given by
the inverse image
$$
\St(\alpha)_{\alpha,\alpha}=p(\alpha)^{-1}( \wh H_\alpha \times\wh
H_\alpha),
$$
and the corresponding monodromic Steinberg space
$\St(\alpha)_{\alpha,\alpha}/G(\alpha)$.

\begin{thm}\label{monodromic reduction}
For each $w\in W$, we have a canonical $S^1$-equivariant
identification of monodromic Steinberg spaces
$$
 \St_{\alpha,w\alpha}/G\simeq \St(\alpha)_{\alpha,\alpha}/G(\alpha).
$$
\end{thm}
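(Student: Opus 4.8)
The plan is to present both sides as pieces of loop spaces cut out by a monodromy condition and then to reduce the identification to the classical fact that the conjugation‑equivariant geometry of $G$ near a semisimple conjugacy class is governed, through a transverse slice, by the centralizer of a point of that class.

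First I would fix the $S^1$‑equivariant framework. By Theorem~\ref{steinberg is a loop space}, $\St/G\simeq\cL((\cB\times\cB)/G)$ and $\St(\alpha)/G(\alpha)\simeq\cL((\cB(\alpha)\times\cB(\alpha))/G(\alpha))$ carry loop‑rotation $S^1$‑actions, and both are underived; moreover the monodromic projections $p$ and $p(\alpha)$ to $H\times H$ are $S^1$‑invariant, since they record the Weyl‑orbit of the monodromy eigenvalues of a loop, which is insensitive to reparametrization. Hence passing to $\St_{\alpha,w\alpha}/G=p^{-1}(\wh H_\alpha\times\wh H_{w\alpha})$ and to $\St(\alpha)_{\alpha,\alpha}/G(\alpha)=p(\alpha)^{-1}(\wh H_\alpha\times\wh H_\alpha)$ are $S^1$‑equivariant operations, and these stacks are again underived. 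So the task is to produce an isomorphism of the two $S^1$‑stacks, compatible with their loop‑space presentations and with the monodromic projections up to the Weyl‑group identification $\wh H_\alpha\simeq\wh H_{w\alpha}$.

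For the reduction I would use Jordan decomposition. Fix the chosen $\tilde\alpha\in\cO_\alpha$, so $G(\alpha)=Z_G(\tilde\alpha)$. If $(g,B_1,B_2)\in\St$ has $[g]_1$ near $\alpha$, then the semisimple part $g_s$ lies in a formal neighbourhood of $\cO_\alpha$ in $G$, lies in $B_1\cap B_2$, and the unipotent part $g_u=gg_s^{-1}$ centralizes $g_s$; moving $g_s$ by $G$ into the transverse slice $\tilde\alpha\cdot\wh{G(\alpha)}_e$ at $\tilde\alpha$ --- Luna's \'etale slice theorem for the conjugation action, whose hypotheses hold because $\operatorname{Ad}\tilde\alpha$ is semisimple, so $\fg=\mathrm{Lie}(G(\alpha))\oplus\operatorname{im}(1-\operatorname{Ad}\tilde\alpha)$ --- one arranges $g\in G(\alpha)$, each $B_i$ containing $\tilde\alpha$, and $B_i\cap G(\alpha)$ a Borel of $G(\alpha)$. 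This exhibits a $G(\alpha)$‑equivariant map $\St(\alpha)_{\alpha,\alpha}\to\St_{\alpha,w\alpha}$ sending a triple $(g',B_1',B_2')$ of Borels of $G(\alpha)$ to $(g',B_1,B_2)$, where $B_i$ is the Borel of $G$ through $\tilde\alpha$ extending $B_i'$, pinned down by requiring the image of $g'$ in $B_i/U_i$ to lie near $\alpha$ for $i=1$ and near $w\alpha$ for $i=2$; the latter condition is what builds $w$ in, through the twist it forces on the extension of $B_2'$. I would then check that the induced map
$$
G\times_{G(\alpha)}\St(\alpha)_{\alpha,\alpha}\to\St_{\alpha,w\alpha}
$$
is an isomorphism of $G$‑stacks --- essentially the slice isomorphism $\wh G_{\cO_\alpha}\simeq G\times_{G(\alpha)}\wh{G(\alpha)}_{\tilde\alpha}$ decorated with the compatible flags --- and pass to $G$‑quotients to get $\St_{\alpha,w\alpha}/G\simeq\St(\alpha)_{\alpha,\alpha}/G(\alpha)$. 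The independence of the right side from $w$ is then transparent: $\cO_\alpha=\cO_{w\alpha}$, so $G(\alpha)$ and the slice are the same for every $w$, and varying $w$ merely changes which $G$‑orbit of pairs of Borels through $\tilde\alpha$ serves as source, each mapping isomorphically onto $\St(\alpha)_{\alpha,\alpha}/G(\alpha)$.

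The step I expect to be the main obstacle is checking that this isomorphism --- assembled entirely from underived data (the transverse slice, the fibrewise Borels) --- is $S^1$‑equivariant for loop rotation. Loop rotation is not visible on the underived stacks themselves but only through the cyclic objects (or the Borel‑construction resolutions) that present the quotients as loop spaces, so one must trace the slice identification through those presentations: one must see that the $G(\alpha)$‑equivariant slice induces a genuine map of the cyclic objects computing $\cL((\cB(\alpha)\times\cB(\alpha))/G(\alpha))$ and $\cL((\cB\times\cB)/G)$, not merely of their coarsenings. This is routine once the underived isomorphism is established, but it is the part that carries essentially all of the bookkeeping.
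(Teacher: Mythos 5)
Your proposal follows essentially the same route as the paper: the paper's proof constructs exactly the map $\St(\alpha)_{\alpha,\alpha}\to\St_{\alpha,w\alpha}$, $(g,B(\alpha)_1,B(\alpha)_2)\mapsto(g,B_1,B_2)$, with the $B_i$ uniquely characterized by $B_i\cap G(\alpha)=B(\alpha)_i$ together with the monodromy conditions $[g]_1\in\wh H_\alpha$, $[g]_2\in\wh H_{w\alpha}$, and then passes to the respective quotients. Your Jordan-decomposition/Luna-slice argument and the remarks on tracing $S^1$-equivariance through the cyclic presentations supply details the paper leaves to the reader ("one can check"), but the underlying identification is the same.
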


\begin{proof}
For each $w\in W$, one can check that there is a map
$$
\St(\alpha)_{\alpha,\alpha}\to \St_{\alpha,w\alpha}
$$
$$
(g, B(\alpha)_1, B(\alpha)_2) \mapsto (g, B_1, B_2)
$$
uniquely characterized by the properties:
$$
B_1\cap G(\alpha) = B(\alpha)_1 \quad B_2\cap G(\alpha) =
B(\alpha)_2
$$
$$
[g]_1 \in \wh H_\alpha \quad [g]_2 \in \wh H_{w\alpha}
$$
where $[g]_1,[g]_2$ denote the classes of $g$ in $B_1/U_1$,
$B_2/U_2$ respectively.

Passing to the respective quotients gives the sought-after
isomorphism.
\end{proof}

By Theorem~\ref{monodromic reduction}, to understand
$S^1$-equivariant coherent sheaves on $ \St_{\alpha,w\alpha}/G$ it
suffices to understand them on
$\St(\alpha)_{\alpha,\alpha}/G(\alpha)$. This is very close to a
problem we have already solved. Namely, consider the Steinberg space
$\St(\alpha)_{e,e}/G(\alpha)$ still for the group $G(\alpha)$, but
now for the trivial monodromic paramer. Applying
Theorem~\ref{unlocalized}, we obtain a canonical quasi-equivalence
of dg derived categories
$$
\Coh(\St(\alpha)_{e,e}/G(\alpha))^{S^1} \risom
\Coh(\cR_{(\cB(\alpha)\times\cB(\alpha))/G(\alpha)}).
$$
Moreover, multiplication by the fixed central element
$\tilde\alpha\in G(\alpha)$ provides an isomorphism
$$
\St(\alpha)_{e,e}/G(\alpha) \risom
\St(\alpha)_{\alpha,\alpha}/G(\alpha)
$$
$$
(g, B(\alpha)_1, B(\alpha)_2) \mapsto (\tilde\alpha g, B(\alpha)_1,
B(\alpha)_2).
$$
Thus putting the two statements together, we should be able to
conclude something about $S^1$-equivariant coherent sheaves on
$\St(\alpha)_{\alpha,\alpha}/G(\alpha)$.

Unfortunately, the above isomorphism is {\em not} $S^1$-equivariant,
and in general $S^1$-equivariant coherent sheaves on the two sides
will {\em not} be the same. What is true is that it induces a
quasi-equivalence on the level of {\em localized} $S^1$-equivariant
categories. Applying Corollary~\ref{localized}, we have a canonical
quasi-equivalence of dg derived categories
$$
\Coh(\St(\alpha)_{e,e}/G(\alpha))_{per}^{S^1} \risom
\Coh(\D_{(\cB(\alpha)\times\cB(\alpha))/G(\alpha)})\otimes_\C
\C[u,u^{-1}].
$$
Thanks to the rigidity of $\D$-modules (as opposed to Rees modules),
we have the following.

\begin{thm}\label{localized monodromic reduction}
There is a canonical quasi-equivalence of dg derived categories
$$
\Coh(\St(\alpha)_{e,e}/G(\alpha))^{S^1}_{per} \risom
\Coh(\St(\alpha)_{\alpha,\alpha}/G(\alpha))^{S^1}_{per}
$$
\end{thm}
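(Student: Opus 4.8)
The plan is to deduce this from the already-established $(e,e)$ case by translating with the central element $\tilde\alpha$, paying for the lost $S^1$-equivariance with the rigidity of $\D$-modules. First I would note that, since $\tilde\alpha$ is central in $G(\alpha)$ and hence lies in every Borel subgroup, multiplication by $\tilde\alpha$ in the group coordinate defines an isomorphism of derived stacks
\[
t_{\tilde\alpha}\colon \St(\alpha)_{e,e}/G(\alpha)\risom \St(\alpha)_{\alpha,\alpha}/G(\alpha),\qquad (g,B_1,B_2)\mapsto(\tilde\alpha g,B_1,B_2),
\]
carrying the trivial section onto the $\tilde\alpha$-section. Transporting the loop-rotation action along $t_{\tilde\alpha}$ produces a second $S^1$-action $\sigma$ on $\St(\alpha)_{e,e}/G(\alpha)$ which differs from loop rotation by the central monodromy twist attached to $\tilde\alpha$. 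The theorem then reduces to the assertion that $\sigma$ and loop rotation have the same \emph{periodic} equivariant category, even though (as the paper emphasizes) they do not have the same equivariant category.

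To prove that, I would first localize the comparison to the Hochschild space. Both $\sigma$ and loop rotation preserve the formal completion along the trivial section, so by Lemma~\ref{restriction is an equivalence} and Corollary~\ref{formal steinberg is a hochschild space}, both applied to $G(\alpha)$, it suffices to compare the two actions on $\cH Y$, where $Y=(\cB(\alpha)\times\cB(\alpha))/G(\alpha)$. Re-running the argument of Theorems~\ref{cyclic and D} and~\ref{Koszul de Rham} with $\sigma$ in place of loop rotation, $\sigma$-equivariant coherent sheaves on $\cH Y$ should be identified with modules over a $\tilde\alpha$-twisted de Rham algebra, and Koszul-dually with coherent modules over a twisted shifted Rees algebra $\cR_Y^\lambda$, where $\exp(\lambda)=\tilde\alpha$; its fiber at $u=1$ is the sheaf $\D_Y^\lambda$ of $\lambda$-twisted differential operators. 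Inverting $u$ then gives $\Coh(\cH Y)^\sigma_{per}\simeq\Coh(\D_Y^\lambda)\otimes_\C\C[u,u^{-1}]$.

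The decisive point — the rigidity of $\D$-modules — is that $\lambda$ is an \emph{integral} weight for $G(\alpha)$: since $G(\alpha)$ is by definition the centralizer of $\exp(\lambda)$, its root system is exactly the set of roots of $G$ on which $\lambda$ is integral. Hence $\cO(\lambda)$ is a $G(\alpha)$-equivariant line bundle on $\cB(\alpha)$ and tensoring with it gives a canonical equivalence $\Coh(\D_Y^\lambda)\simeq\Coh(\D_Y)$, whereas the graded algebras $\cR_Y^\lambda$ and $\cR_Y$ remain non-isomorphic — which is precisely why the un-periodicized statement is false. Combining with the $G(\alpha)$-case of Corollary~\ref{localized}, which identifies $\Coh(\St(\alpha)_{e,e}/G(\alpha))^{S^1}_{per}$ with $\Coh(\D_Y)\otimes_\C\C[u,u^{-1}]$, yields the claimed canonical quasi-equivalence. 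I expect the main obstacle to be the middle step: verifying that the twisted action $\sigma$ produces exactly the twisted de Rham differential for the weight $\lambda=\log\tilde\alpha$ (in particular that the answer is insensitive to the multivaluedness of the logarithm once $u$ is inverted), and checking that the Koszul-duality formalism of Section~\ref{D-modules} carries over verbatim to the twisted setting; everything else is either a translation of results already proved or the elementary integrality observation.
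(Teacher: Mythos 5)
Your first step coincides with the paper's: you use the same translation isomorphism by the central element $\tilde\alpha$, you correctly observe that it is not $S^1$-equivariant, and you correctly locate the fix in periodicization plus the rigidity of $\D$-modules. But the mechanism you propose for the key step is both different from the paper's and, I believe, broken. The paper never re-runs the Koszul duality machinery for the twisted action. It simply writes the discrepancy between the two monodromies as $m_\alpha=m_e\circ\varphi_{\tilde\alpha}$, where $\varphi_{\tilde\alpha}$ is the automorphism of the identity functor given by the group element $\tilde\alpha$, notes that $\varphi_{\tilde\alpha}$ descends to the periodic $m_e$-equivariant category, and then shows it acts trivially there: under the identification of that category with $\Coh(\D_{(\cB(\alpha)\times\cB(\alpha))/G(\alpha)})\otimes_\C\C[u,u^{-1}]$, the element $\tilde\alpha$ lies in the identity component (a maximal torus) of the automorphism group of every point of the stack, and an automorphism of the identity functor of a $\D$-module category cannot vary continuously along such a path, so $\varphi_{\tilde\alpha}=\varphi_e=\on{id}$. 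Once $\varphi_{\tilde\alpha}$ is trivial, trivializations of $m_e$ and of $m_\alpha$ are canonically the same data, and the theorem follows with no twisted de Rham or Rees algebras at all.

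Your route has two genuine gaps. First, the identification of the $\sigma$-equivariant periodic category with $\D^{\lambda}$-modules is the entire content of the argument and is only asserted; establishing it would require redoing Sections 4.3--4.5 in a twisted setting, which is strictly harder than the claim being proved. Second, and more seriously, the untwisting step fails: the condition that $\tilde\alpha$ centralizes $G(\alpha)$ only forces $\beta(\lambda)\in\Z$ for the roots $\beta$ of $G(\alpha)$, which places $\lambda$ in the abstract weight lattice of the root system of $G(\alpha)$ but \emph{not} in the character lattice $X^*(T)$; the gap is measured by the fundamental group of the derived group of $G(\alpha)$ and, worse, by the central directions of $\fg(\alpha)$, on which $\lambda$ is completely unconstrained. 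For instance, for $G=SL_3$ and $\tilde\alpha=\on{diag}(\zeta,\zeta,\zeta^{-2})$ with $\zeta=e^{2\pi i a}$, $a$ irrational, one has $G(\alpha)\simeq S(GL_2\times GL_1)$ and no branch of $\log\tilde\alpha$ lies in $X^*(T)$, so $\cO(\lambda)$ does not exist as an equivariant line bundle and $\Coh(\D^\lambda)\not\simeq\Coh(\D)$ as categories of equivariant modules (the central characters differ). Since the theorem is nevertheless true, this shows your proposed identification with $\D^{\log\tilde\alpha}$-modules cannot be correct: the discrepancy $\varphi_{\tilde\alpha}$ is the action of an honest group element connected to the identity inside every stabilizer, not an infinitesimal twist of the de Rham differential, and $\D$-modules are blind to it --- which is precisely the paper's one-line argument.
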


\begin{proof}
Consider the isomorphism
$$
\St(\alpha)_{e,e}/G(\alpha) \risom
\St(\alpha)_{\alpha,\alpha}/G(\alpha)
$$
given by multiplication by the fixed central element
$\tilde\alpha\in G(\alpha)$. Let us compare the universal
monodromies of the respective $S^1$-actions under the induced
quasi-equivalence
$$
\Coh(\St(\alpha)_{e,e}/G(\alpha))\risom
\Coh(\St(\alpha)_{\alpha,\alpha}/G(\alpha)).
$$
By construction, we have an identity
$$
m_\alpha = m_e \circ \varphi_{\tilde\alpha}
$$
where $m_\alpha$ is the universal monodromy of the right hand side,
$m_e$ is that of the left hand side, and $\varphi_{\tilde \alpha}$
is the automorphism of the identity functor given by the central
element $\tilde \alpha \in G(\alpha)$.

Since automorphisms of the identity functor mutually commute, the
automorphism $\varphi_{\tilde \alpha}$ passes to the
$S^1$-equivariant category with respect to the monodromy $m_e$ and
further to its localization. We claim that after taking the
localized $S^1$-equivariant category with respect to the monodromy
$m_e$, the automorphism $\varphi_{\tilde \alpha}$ acts {\em
trivially}. If so, then every localized $S^1$-equivariant object or
morphism with respect to $m_e$ is canonically a localized
$S^1$-equivariant object or morphism with respect to $m_\alpha$ and
vice versa. Thus to prove the theorem, it suffices to establish the
above claim.

To prove the claim, let us consider the automorphism
$\varphi_{\tilde \alpha}$ under the identification
$$
\Coh(\St(\alpha)_{e,e}/G(\alpha))_{per}^{S^1} \risom
\Coh(\D_{(\cB(\alpha)\times\cB(\alpha))/G(\alpha)})\otimes_\C
\C[u,u^{-1}].
$$
Given any object of the stack
$(\cB(\alpha)\times\cB(\alpha))/G(\alpha)$, its automorphism group
contains a maximal torus $T(\alpha)\subset G(\alpha)$. Thus the
central element $\tilde\alpha\in G(\alpha)$ can be connected to the
identity by a path in the automorphism group. Thus it acts trivially
on any $\D$-module on $(\cB(\alpha)\times\cB(\alpha))/G(\alpha)$,
and the claim follows.
\end{proof}

Now returning to our original problem, Theorems~\ref{monodromic
reduction} and~\ref{localized monodromic reduction} and
Corollary~\ref{localized} immediately imply the following.

\begin{corollary}
For each $w\in W$, there is a canonical quasi-equivalence of
dg derived categories
$$
\Coh(\St_{\alpha,w\alpha}/G)_{per}^{S^1} \risom
\Coh(\D_{(\cB(\alpha)\times\cB(\alpha))/G(\alpha)})\otimes_\C
\C[u,u^{-1}].
$$
\end{corollary}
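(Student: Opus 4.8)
The plan is simply to concatenate the three results invoked just above, reading the equivalences from left to right. First I would apply Theorem~\ref{monodromic reduction}, which gives an $S^1$-equivariant identification of derived stacks $\St_{\alpha,w\alpha}/G\simeq \St(\alpha)_{\alpha,\alpha}/G(\alpha)$. Since the assignment sending an $S^1$-stack to its periodic $S^1$-equivariant dg category of coherent sheaves is functorial, this identification transports to a canonical quasi-equivalence
$$
\Coh(\St_{\alpha,w\alpha}/G)^{S^1}_{per}\risom \Coh(\St(\alpha)_{\alpha,\alpha}/G(\alpha))^{S^1}_{per}.
$$

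Next I would invoke Theorem~\ref{localized monodromic reduction}, applied to the group $G(\alpha)$, which compares the localized $S^1$-equivariant categories attached to $\St(\alpha)_{\alpha,\alpha}/G(\alpha)$ and to the trivially monodromic Steinberg space $\St(\alpha)_{e,e}/G(\alpha)$, producing a canonical quasi-equivalence
$$
\Coh(\St(\alpha)_{\alpha,\alpha}/G(\alpha))^{S^1}_{per}\risom \Coh(\St(\alpha)_{e,e}/G(\alpha))^{S^1}_{per}.
$$
It is essential here that we have passed to the periodic version: as noted before Theorem~\ref{localized monodromic reduction}, the multiplication-by-$\tilde\alpha$ isomorphism is not itself $S^1$-equivariant, the discrepancy being the automorphism $\varphi_{\tilde\alpha}$ of the identity functor, and this automorphism only becomes trivial after inverting $u$.

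Finally, since $G(\alpha)$ is a connected reductive complex algebraic group (being the centralizer of the semisimple element $\tilde\alpha$), Corollary~\ref{localized} applies with $G$ replaced by $G(\alpha)$, giving
$$
\Coh(\St(\alpha)_{e,e}/G(\alpha))^{S^1}_{per}\risom \Coh(\D_{(\cB(\alpha)\times\cB(\alpha))/G(\alpha)})\otimes_\C \C[u,u^{-1}].
$$
Composing the three quasi-equivalences yields the asserted statement.

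There is no real obstacle left at this stage: the entire weight of the argument sits in Theorems~\ref{monodromic reduction} and~\ref{localized monodromic reduction} and in Corollary~\ref{localized} (which rests in turn on Theorem~\ref{unlocalized}, Lemma~\ref{restriction is an equivalence}, and the loop-space/crystallization machinery of Sections~\ref{loop section} and~\ref{sheaves section}). The only things I would take care to state explicitly are the functoriality of $(-)\mapsto\Coh(-)^{S^1}_{per}$ used to pass from the stack identification of Theorem~\ref{monodromic reduction} to an equivalence of categories, and the routine observation that every cited result is phrased for an arbitrary connected reductive complex group and hence is legitimately applied to $G(\alpha)$.
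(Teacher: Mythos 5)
Your argument is exactly the paper's: the corollary is stated there to follow "immediately" from Theorems~\ref{monodromic reduction} and~\ref{localized monodromic reduction} and Corollary~\ref{localized}, which you concatenate in the same order. Your write-up merely spells out the three steps the paper leaves implicit (including the observation that the first identification is $S^1$-equivariant on the nose, while the second holds only after periodicization), so this is the same proof.
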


%%%%%%%%%%%%%%%%%%%%%%%%%%%%%%%%%%%%%%%%%%%%%

%\subsection{Compatibility with Hecke actions}

%%%%%%%%%%%%%%%%%%%%%%%%%%%%%%%%%%%%%%%%%%%%%%%%%
%%%%%%%%%%%%%%%%%%%%%%%%%%%%%%%%%%%%%%%%%%%%%%%%%
%%%%%%%%%%%%%%%%%%%%%%%%%%%%%%%%%%%%%%%%%%%%%%%%%

\nc{\RP}{\mathbf{RP}} \nc{\rigid}{\text{rigid}}
\nc{\glob}{\text{glob}}

\section{Langlands parameter spaces}\label{Langlands section}

This section contains our main intended application of the relation
between localized $S^1$-equivariant coherent sheaves on Hochschild
spaces and $\D$-modules. Given an involution $\eta$ of the complex
reductive group $G$, we introduce a stack $\St^\eta/G$ which we call
the global Langlands parameter space. Our aim in this section is to
explain the relationship between localized $S^1$-equivariant
coherent sheaves on $\St^\eta/G$ and $\D$-modules on certain flag
spaces. For further motivation, we refer the interested reader to
the overview and applications described in Sections~\ref{overview}
and \ref{applications}. Roughly speaking, the global Langlands space
$\St^\eta/G$ plays an analogous role to that of the global Steinberg
space $\St/G$ but now for {\em real forms} of the dual group
$G^\vee$. The flag spaces that arise in this section are precisely
the geometric parameter spaces of Adams, Barbasch and Vogan
\cite{ABV}, which appear in the geometry of Vogan duality and
Soergel's conjecture.

Many of the constructions and arguments of this section are direct
generalizations of those of Section~\ref{Steinberg section}. In
fact, the results of Section~\ref{Steinberg section} are the special
case when we take $G$ to be a product $G_o\times G_o$ and $\eta$ to
be the involution that switches the factors. We have chosen to
separate this case and explain it previously on its own for two
reasons. First, this case corresponds to the representation theory
of the complex dual group $G_o^\vee$ itself considered as a real
group, and so being of special significance warrants its own
statements.  Second, the proofs in the general case are more
complicated notationally but not conceptually.

In parallel with Section~\ref{Steinberg section}, the results of
this section will be parametrized by a (single) universal Cartan
group $H$. To deal with this, we introduce more and more local
notions of the global Langlands parameter space $\St^\eta/G$. For
each value of the parameter $\alpha$, we have the local Langlands
parameter space $\St^\eta_{\alpha}/G$ obtained by completing the
global Langlands parameter space $\St^\eta/G$ with respect to a
neighborhood of the parameter $\alpha$. Furthermore, we have the
formal Langlands parameter space $\wh\St^\eta_{\alpha}/G$ obtained
by completing the local Langlands parameter space
$\St^\eta_{\alpha}/G$ with respect to its unipotent directions.

In continued parallel with Section~\ref{Steinberg section}, it is
the formal Langlands parameter space $\wh\St^\eta_{\alpha}/G$ that
turns out to be a Hochschild space, while the local Langlands
parameter space $\St^\eta_{\alpha}/G$ is what plays a significant
role in representation theory as discussed in
Section~\ref{applications}. Our general results go so far as to
relate equivariant $\D$-modules on certain flag varieties to
coherent sheaves on the formal Langlands parameter space
$\wh\St^\eta_{\alpha}/G$. To complete the bridge to our desired
applications, we check that the restriction of coherent sheaves from
the local space $\St^\eta_{\alpha}/G$ to the formal space
$\wh\St^\eta_{\alpha}/G$ is an equivalence.

Because of the close parallels with Section~\ref{Steinberg section},
we have kept the arguments of this section brief; they often simply
refer to the analogous arguments of the preceding section. In the
first two sections that follow, we explain the story for the trivial
parameter. In the third and final section, we explain the natural
generalization for arbitrary parameters.

\medskip

We continue with the notation of the previous section so for
example, $G$ is a complex reductive group with flag variety $\cB$.
Fix once and for all an algebraic involution $\eta$ of $G$. Form the
semidirect product or $L$-group
$$
G_\eta = G \rtimes \Z/2\Z
$$
where the nontrivial element of $\Z/2\Z$ acts on $G$ by $\eta$. We
identify $G$ with the identity component of $G_\eta$, and write
$G_\eta \setminus G$ for the other component.

%%%%%%%%%%%%%%%%%%%%%%%%%%%%%%%%%%%%%%%%%%%%%%%%%

\subsection{The Langlands space}

Many of the spaces we will consider naturally live over the
classifying space $pt/G_\eta$. It sometimes will be useful to fix
the trivial $\Z/2\Z$-bundle on a point, and consider the resulting
base change diagram
$$
\begin{array}{ccc}
pt/G & \to & pt/G_\eta \\
\downarrow & & \downarrow \\
pt & \to & pt/ (\Z/2\Z)
\end{array}
$$
In general, for a space $Y$ living over $pt/G_\eta$, we will refer
to the base change
$$
Y_{\rigid}=Y\times_{pt/G_\eta} pt/G
$$
as the $\Z/2\Z$-rigidification of $Y$.

\medskip

As in the preceding section, we first introduce the {\em dramatis
personae} of our story.

%%%%%%%%%%%%%%%%%%%%%%%%%%%%%%%%%%%%%%%%%%%%%%%%%

\subsubsection{The adjoint group}
The loop space $\cL (pt/G_\eta)$ is nothing more than the adjoint
group $G_\eta/G_\eta$. Observe that $\cL (pt/G_\eta)$ has two
connected components
$$
\cL (pt/G_\eta) = G/ G_\eta \cup (G_\eta \setminus G)/G_\eta
$$
corresponding to $\Z/2\Z = \pi_0(G_\eta) \simeq \pi_1(pt/G_\eta)$.
The $\Z/2\Z$-rigidification of $\cL (pt/ G_\eta)$ is simply the
quotient $G_\eta/ G$ by the restriction of the conjugation action.

%%%%%%%%%%%%%%%%%%%%%%%%%%%%%%%%%%%%%%%%%%%%%%%%%

\subsubsection{The symmetric flag space}
Consider the diagonal $G$-action on the product $ \cB\times \cB$ of
two copies of the flag variety. We extend this to an action of
$G_\eta$ by setting
$$
(1, \eta)\cdot (B_1,B_2) = (\eta(B_2), \eta(B_1)).
$$
We refer to the corresponding quotient stack $ ( \cB \times\cB)/
G_\eta $ as the symmetric flag space.

%%%%%%%%%%%%%%%%%%%%%%%%%%%%%%%%%%%%%%%%%%%%%%%%%

\subsubsection{The global Langlands space}
Consider the loop space $\cL (( \cB \times\cB)/ G_\eta)$, and its
corresponding $\Z/2\Z$-rigidification $\cL (( \cB \times\cB)/
G_\eta)_{\rigid}$ Our immediate goal is to spell out what this space
is in terms of explicit equations.

\medskip

First and foremost, by Proposition~\ref{dg structure of loops}, we know that $\cL (( \cB
\times\cB)/ G_\eta)_{\rigid}$ is an ordinary stack with trivial
derived structure.

\medskip

Next, the canonical projection $( \cB \times\cB)/ G_\eta\to
pt/G_\eta$ induces a projection of rigidified loop spaces
$$
\cL (( \cB \times\cB)/ G_\eta)_\rigid\to G_\eta/G.
$$
Taking the preimages of the two connected components of $G_\eta/G$,
we obtain a decomposition of $\cL (( \cB \times\cB)/ G_\eta)_\rigid$
into two connected components.

The component of of $\cL (( \cB \times\cB)/ G_\eta)_\rigid$ above
the identity component of $G_\eta/G$ is canonically isomorphic to
the usual global Steinberg space $\St/G$. Both can be identified
with the loop space of the usual flag space $(\cB \times \cB)/G.$

\medskip

We write $\St^\eta/G$ for the second component of $\cL (( \cB
\times\cB)/ G_\eta)_\rigid$,
 and refer to it as the global Langlands space.
 By definition, it consists of paths in the flag space $( \cB \times\cB)/ G$ which begin
at a pair of flags $(B_1,B_2)$ and end at the pair
$(\eta(B_2),\eta(B_1))$.

To make this more explicit, consider the composite map
$$
\St^\eta/{G} \to  (\cB\times\cB)/G \to \cB/G
$$
given by projection to the first flag. Then the fiber of
$\St^\eta/{G}$ above a fixed flag $B\in \cB$ is the ordinary fiber
product
$$
B \times_G (G_\eta \setminus G)
$$
with respect to the inclusion $B\hookrightarrow G$ and the square
map $(G_\eta \setminus G)\to G$. Allowing the flag to vary, we see
that $\St^\eta/G$ is the moduli stack
$$
\St^\eta/G = \{ (B \in \cB, (g, \delta) \in B \times_G (G_\eta
\setminus G)\}/G.
$$
Simplifying the notation, we see that $\St^\eta/G$ is the stack
parametrizing pairs
$$
\St^\eta/G = \{B \in \cB, \delta\in (G_\eta\setminus G)
 \mbox{ such that }
 \delta^2 \in B\}/G.
$$

If we further identify $(G_\eta \setminus G)$ with $G$ itself, then
the square map becomes simply
$$g \mapsto \eta(g) g.
$$
Thus we may think of $\St^\eta_G$ as the stack parametrizing pairs
$$
\St^\eta/G= \{B \in \cB, g \in G
 \mbox{ such that }
  \eta(g) g \in B\}/G.
$$

%%%%%%%%%%%%%%%%%%%%%%%%%%%%%%%%%%%%%%%%%%%

 \subsubsection{The Langlands space}

Let $H=B/U$ be the universal Cartan, and consider the natural
projection $ p:\St^\eta/{G}{\to}  H $ given by
$$
p(B,\delta) = [\delta^2] \in B/U.
$$
We call $H$ the monodromic base, and refer to $p$ as the monodromic
projection.

Let $ \wh H_e$ be the formal neighborhood of the unit $e\in H$, and
define the Langlands space $\St^\eta_e/G$ to be the inverse image
$$
\St^\eta_e/G =p^{-1}( \wh H_e).
$$

%%%%%%%%%%%%%%%%%%%%%%%%%%%%%%%%%%%%%%%%%%%%%%

 \subsubsection{The formal Langlands space}

\nc{\cI}{\mathcal I}

The Langlands space $\St_e^\eta/G$ breaks up into many connected
components.
%based on the connected components of the image of the projection
%$$
%\St^\eta\to
%(G_\eta \setminus G)/G.
%$$
%Consider an element $\delta\in G_\eta\setminus G$ and a flag $B\in \cB$ such that $\delta^2\in B$.
%Observe that if $[\delta^2] = 1\in H = B/U$ and $\delta$ is semisimple, then $\delta^2 = 1$.
%(On the other hand, if $\delta^2=1$ then $\delta$ must be semisimple.)
To describe this decomposition, consider the space of involutions
$$
\cI = \{\iota \in (G^\eta\setminus G) | \iota^2 = 1\}.
$$
For convenience, fix once and for all an element $\iota$ in each
connected component of $\cI$, and let $\cI_\iota$ denote the
connected component containing $\iota$.

Each $\iota$ provides an involution of $G$ by conjugation, and we
write $K_\iota\subset G$ for the corresponding fixed point subgroup.
By construction, the quotient stack $\cI_\iota/G$ is isomorphic to
the classifying space $pt/K_\iota$.

For each $\iota$, we write $\cN_\iota$ for the connected component
of the image of the projection
$$
\St_e^\eta\to (G_\eta \setminus G).
$$
containing $\iota$. Then we have that $\St^\eta_e/G$ is the disjoint
union of the connected components
$$
\St^\iota_e/G= \{ (B \in \cB, (g, \delta) \in B \times_G \cN_\iota
\}/G.
$$
As in the construction of $\St^\eta/G$, the ordinary fiber product
here is with respect to the inclusion $B\hookrightarrow G$ and the
square map $\cN_\iota \to G$.

Consider the $K_\iota$-action on the flag variety $\cB$ and the
canonical embedding
$$
e: \cB/ K_\iota \to \St^\iota_e/G \qquad e(B) =  (B,(1,\iota)).
$$
We write $\wh\St^\iota_e/G$ for the formal neighborhood of the image
of $e$, and refer to it as the formal Langlands space for $\iota$.

\begin{thm}\label{formal langlands is a hochschild space}
There is a canonical isomorphism
$$
\cH (\cB/ K_\iota) \simeq \wh\St^\iota_e/G.
$$
\end{thm}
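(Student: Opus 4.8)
The plan is to repeat, with the $L$-group $G_\eta$ in place of $G$, the route that leads from Theorem~\ref{steinberg is a loop space} to Corollary~\ref{formal steinberg is a hochschild space}: first exhibit $\St^\eta/G$ as (a component of) a loop space, and then pass to the formal completion along the ``constant loops''. The one genuinely new feature is that, in the $\eta$-twisted sector, there is no identity automorphism and the role of the constant loops (the zero section of $\cL$) is played instead by the locus where the monodromy $\delta$ is an involution, which will turn out to be $\coprod_\iota \cB/K_\iota$.

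First I would recall the path-space description of $\St^\eta/G$ stated above: $\St^\eta/G$ is the $\eta$-twisted, $\Z/2\Z$-rigidified connected component of $\cL((\cB\times\cB)/G_\eta)$, and hence carries the loop-rotation $S^1$-action. Since the symmetric flag space $(\cB\times\cB)/G_\eta$ has only finitely many isomorphism classes of objects ($G_\eta$-orbits on $\cB\times\cB$), Proposition~\ref{dg structure of loops} shows this component, and therefore the monodromic restriction $\St^\eta_e/G = p^{-1}(\wh H_e)$ together with its component $\St^\iota_e/G$, has trivial derived structure and inherits the $S^1$-action. The next step is to identify the embedding $e\colon \cB/K_\iota \to \St^\iota_e/G$ of the statement with the $\iota$-component of the $S^1$-fixed locus, equivalently with the locus of involutions $\delta^2=1$: the involutions in the component $\cN_\iota$ form the single $G$-orbit $\cI_\iota$, with stabilizer $K_\iota$, so modulo $G$ this locus is precisely $\cB/K_\iota$, and $e$ is an $S^1$-equivariant closed embedding (with $S^1$ acting trivially on the source).

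It remains to identify the formal completion $\wh\St^\iota_e/G$ of $\St^\iota_e/G$ along $e$ with $\cH(\cB/K_\iota)$. By Proposition~\ref{hochschild is completed odd tangent} and the locality of $\cH$, it suffices to produce a canonical $S^1$-equivariant identification of $\wh\St^\iota_e/G$ with the completed odd tangent bundle $\wh\BT_{\cB/K_\iota}[-1]$. For this I would work in exponential coordinates near a point $(B,\iota)$, writing $\delta = \exp(x)\iota$ so that $\delta^2 = \exp(x)\exp(\theta_\iota x)$, where $\theta_\iota$ is the involution of $\fg$ induced by conjugation by $\iota$ and $\fg = \fk_\iota \oplus \fp_\iota$ is its eigenspace decomposition. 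Presenting $\St^\iota_e$ as the derived fibre product of $\cN_\iota$ and the Grothendieck--Springer variety $\wt G$ over $G$ along $\delta\mapsto\delta^2$, a computation of tangent complexes shows that, after completing along $e$ and dividing by $G$, the $\fk_\iota$-directions provide the stacky directions of $\cB/K_\iota$, while the $\fp_\iota$-directions provide the odd summand and are matched --- via the Killing form, which identifies $T_B\cB$ with $\fg/\fb$ --- with the shifted cotangent complex of $\cB/K_\iota$; the Koszul differential of the odd tangent bundle is then identified with the loop-rotation action, yielding the $S^1$-equivariance. Finally, just as in the passage from Theorem~\ref{steinberg is a loop space} to Corollary~\ref{formal steinberg is a hochschild space}, completing $\St^\eta/G$ along the monodromic neighbourhood $\wh H_e$ \emph{and} along $e$ coincides with the single completion of the twisted component of $\cL((\cB\times\cB)/G_\eta)$ along the $\cN_\iota$-stratum of its fixed locus, which by definition is $\cH$ of that stratum, i.e.\ $\cH(\cB/K_\iota)$.

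The main obstacle is the step above: carrying out, canonically and $S^1$-equivariantly, the matching of the formal neighbourhood of the involution locus inside the twisted loop-space component with the completed odd tangent bundle of $\cB/K_\iota$. The delicate point is that the normal directions to $e$ a priori constitute the normal bundle of $\cB/K_\iota$ in an ambient space, and one must show they are \emph{canonically} the shifted (co)tangent complex of $\cB/K_\iota$ itself; this is where one uses that near an involution the squaring map $\delta\mapsto\delta^2$ behaves like a derived self-intersection of a diagonal, so that its Koszul complex reproduces $\Omega^{-\bullet}$, and where one must check that completing along $e$ already forces the correct unipotent restriction, so that the base of the bundle is exactly $\cB/K_\iota$ and not a larger locus. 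A secondary, purely bookkeeping, difficulty is keeping track of the $G$-equivariance and the chosen basepoints $\iota$ in the components of $\cI$ so that the resulting isomorphism is genuinely canonical.
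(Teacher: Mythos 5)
There is a genuine problem with your proposal: you assert that the identification is $S^1$-equivariant (``the Koszul differential of the odd tangent bundle is then identified with the loop-rotation action, yielding the $S^1$-equivariance''), but the isomorphism of Theorem~\ref{formal langlands is a hochschild space} is emphatically \emph{not} $S^1$-equivariant, and the paper says so immediately after the proof. The point is that the locus $e(\cB/K_\iota)\subset \St^\iota_e/G$ does not consist of constant loops: these are $\iota$-twisted loops whose monodromy in $G_\eta$ is $\iota\neq 1$, so loop rotation acts on them through the automorphism $\varphi_\iota$ of the identity functor rather than trivially. Concretely, under the identification one has $m^\iota_\St=m^\iota_\cH\circ\varphi_\iota$, and this discrepancy is essential to the rest of the section: it is precisely why Theorem~\ref{localized langlands} only holds after localizing (passing to the periodic category, where $\varphi_\iota$ acts trivially because $\iota$ can be connected to the identity inside the automorphism groups). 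A proof that claimed strict equivariance would make that subsequent localization step unnecessary, which should have been a warning sign.

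Beyond that, your route differs from the paper's and leaves the key construction incomplete. The paper writes both sides as explicit moduli of pairs, $\cH(\cB/K_\iota)=\{(B,(g,k))\in\cB\times(B\times_G\wh K_\iota)\}/K_\iota$ and $\wh\St^\iota_e/G=\{(B,(g,\delta))\in\cB\times(B\times_G\cN_\iota)\}/G$, and defines the isomorphism directly by $(B,(g,k))\mapsto(B,(g^2,k\iota))$; that this is an isomorphism reduces to the fact that squaring is an isomorphism on a formal group in characteristic zero. Your linearized approach (matching tangent complexes, splitting $\fg=\fk_\iota\oplus\fp_\iota$, comparing Koszul complexes) can at best identify the two formal stacks up to non-canonical choices; it does not by itself produce the canonical map, which is exactly the obstacle you flag at the end. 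The canonical map is the group-theoretic one above, and its inverse is where the formal square root is used.
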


\begin{proof}
First, observe that the left hand side parametrizes pairs
$$
\cH (\cB/ K_\iota) =\{(B\in\cB, (g,k) \in B\times_G \wh
K_\iota)\}/K_{\iota}
$$
where $\wh K_\iota$ denotes the formal group of $K_\iota$. Here the
ordinary fiber product is with respect to the inclusions of
subgroups $B\hookrightarrow G$ and $\wh K_\iota \hookrightarrow G$.

Recall that the Langlands space $\St^\iota_e/G$ parametrizes pairs
$$
\St^\iota_e/G=\{(B\in\cB, (g,\delta) \in B\times_G \cN_\iota)\}/G
$$
where the ordinary fiber product is with respect to the inclusion
$B\hookrightarrow G$ and the square map $\cN_\iota \to G$.

Now we can define a map
$$
\cH (\cB/ K_\iota) \to \wh\St^\iota_e/G \qquad (B, (g,k) ) \mapsto
(B, (g^2, k\iota)).
$$
That this is an isomorphism follows from the constructions and the
fact that for any formal group in characteristic zero, the squaring
map is an isomorphism.
\end{proof}

As can be seen explicitly from the proof, Theorem~\ref{formal
langlands is a hochschild space} does {\em not} in general provide
an $S^1$-equivariant isomorphism. Let us compare the universal
monodromies of the respective $S^1$-actions under the induced
quasi-equivalence
$$
\QCoh(\cH (\cB/ K_\iota) )\risom \QCoh(\wh\St^\iota_e/G).
$$
By construction, we have an identity
$$
m^\iota_\St = m^\iota_\cH \circ \varphi_{\iota}
$$
where $m^\iota_{\St}$ is the universal monodromy of the right hand
side, $m^\iota_{\cH}$ is that of the left hand side, and
$\varphi_{\iota}$ is the automorphism of the identity functor given
by the element $\iota \in G_\eta \setminus G$.

Although we will not use it, it is also worth pointing out that
since $\iota^2=1$, the squared $S^1$-actions on each side in fact
coincide.

%%%%%%%%%%%%%%%%%%%%%%%%%%%%%%%%%%%%%%%%%%%%%%%%%

\subsection{Harish-Chandra modules}
Let $\fU(\fg)$ denote the universal enveloping algebra of the Lie
algebra $\fg$.
%and let $\fU^\bullet(\fg)$ denote its standard
%graded version.
For a fixed involution $\iota$ of $G$ with fixed-point subgroup
$K_\iota$, consider the Harish-Chandra pair $(\fU(\fg), K_\iota)$,
and the corresponding category of Harish-Chandra modules with
trivial infinitesimal character. By definition, these are modules
over the Harish-Chandra pair $(\fU(\fg), K_\iota)$ which are
finitely generated over $\fU(\fg)$ and on which the center
$\mathfrak Z(\fg)\subset \fU(\fg)$ acts via the trivial character.

\medskip

By Beilinson-Bernstein localization, the abelian category of Harish
Chandra modules for $(\fU(\fg), K_\iota)$ with trivial infinitesimal
character is equivalent to the abelian category of
$K_\iota$-equivariant
%finitely-generated
$\D$-modules on $\cB$. One should beware that the naive
corresponding dg derived category of Harish-Chandra modules is {\em
not} equivalent to the natural $K_\iota$-equivariant dg derived
category of $\D$-modules on $\cB$. It is the latter category that
plays a more prominent role in representation theory (for example,
by the work of ~\cite{ABV,Soergel} as parameters for certain
representations), and thus we will proceed with it in mind as our
model for the dg derived category of Harish-Chandra modules.
Alternatively, one could take the appearance of Harish-Chandra
modules as purely motivational, and understand that it is the
$K_\iota$-equivariant dg derived category of $\D$-modules on $\cB$
that we are most interested in.

\medskip

Now
%by the Theorem~\ref{formal langlands is a hochschild space}
%and the discussion thereafter,
we arrive at our goal of identifying localized $S^1$-equivariant
coherent sheaves on the component of the Langlands space
$\St^\iota_e/G$.

\begin{thm}\label{localized langlands}
There is a canonical quasi-equivalence of dg derived categories
$$
\Coh(\St^\iota_e/G)_{per}^{S^1} \risom
\Coh(\D_{\cB/K_\iota})\otimes_\C \C[u,u^{-1}].
$$
\end{thm}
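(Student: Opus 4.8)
The plan is to follow the Steinberg case of Section~\ref{Steinberg section}, in particular the proofs of Theorems~\ref{unlocalized} and~\ref{localized monodromic reduction}, comparing three incarnations of the Langlands space: the local space $\St^\iota_e/G$ that enters representation theory, its formal completion $\wh\St^\iota_e/G$ along the zero section, and the Hochschild space $\cH(\cB/K_\iota)$. By Theorem~\ref{formal langlands is a hochschild space}, $\wh\St^\iota_e/G$ and $\cH(\cB/K_\iota)$ are canonically isomorphic as stacks, so our general machinery for Hochschild spaces (Theorem~\ref{cyclic and D}, the Koszul duality of Theorem~\ref{Koszul de Rham}, and periodic localization) already yields a canonical quasi-equivalence
$$
\Coh(\cH(\cB/K_\iota))^{S^1}_{per}\risom\Coh(\D_{\cB/K_\iota})\otimes_\C\C[u,u^{-1}],
$$
where the $S^1$-structure on the left is loop rotation, with universal monodromy $m^\iota_\cH$. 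Two things then remain: to descend from $\wh\St^\iota_e/G$ to $\St^\iota_e/G$, and to reconcile the loop-rotation $S^1$-structure on $\wh\St^\iota_e/G$, with monodromy $m^\iota_\St$, against the one transported from $\cH(\cB/K_\iota)$ --- these differ by the automorphism $\varphi_\iota$ of the identity functor, and it is precisely because $\varphi_\iota$ can only be trivialized after inverting $u$ that the present theorem is stated in its periodic form.

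For the first point I would prove the exact analogue of Lemma~\ref{restriction is an equivalence}: the restriction functor $\Coh(\St^\iota_e/G)\to\Coh(\wh\St^\iota_e/G)$ is an $S^1$-equivariant equivalence. The $S^1$-equivariance is immediate since the inclusion of the formal completion is $S^1$-equivariant. That it is an equivalence I would see by base-changing along the projection $\St^\iota_e/G\to\cB/G\simeq pt/B$ to a fixed Borel subgroup $B$, which reduces it to an equivalence of the same type as in the proof of Lemma~\ref{restriction is an equivalence} --- restriction from a completion of an affine $B$-scheme in its Cartan directions to its completion in the unipotent directions --- settled as before by a generic one-parameter subgroup $\Gm\subset B$ contracting the unipotent directions, whose inverse functor is passage to $\Gm$-finite vectors. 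As in Section~\ref{Steinberg section}, this is the one step that genuinely requires coherent rather than quasicoherent sheaves.

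The reconciliation of the two $S^1$-structures is the main obstacle, and I would carry it out exactly as in the proof of Theorem~\ref{localized monodromic reduction}. The isomorphism of Theorem~\ref{formal langlands is a hochschild space} intertwines $m^\iota_\cH$ with $m^\iota_\St=m^\iota_\cH\circ\varphi_\iota$. Since automorphisms of the identity functor mutually commute, $\varphi_\iota$ descends to the localized $S^1$-equivariant category for $m^\iota_\cH$, that is, to $\Coh(\D_{\cB/K_\iota})\otimes_\C\C[u,u^{-1}]$, and the key point is that it acts there trivially. I would check this by transporting $\varphi_\iota$ to the $\D$-module side and arguing as in Theorem~\ref{localized monodromic reduction}: the induced symmetry is that of the element $\iota$, and since $K_\iota$ is the fixed-point subgroup of conjugation by $\iota$, this $\iota$ centralizes $K_\iota$, so on objects of $\cB/K_\iota$ it can be deformed to the identity (equivalently, one may invoke $\iota^2=e$ together with the coincidence of the squared $S^1$-actions); hence it is invisible to flat connections and acts trivially after localization. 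Granting this, every localized $m^\iota_\cH$-equivariant object or morphism is canonically a localized $m^\iota_\St$-equivariant one and conversely, so
$$
\Coh(\St^\iota_e/G)^{S^1}_{per}\risom\Coh(\wh\St^\iota_e/G)^{S^1}_{per}\risom\Coh(\cH(\cB/K_\iota))^{S^1}_{per}\risom\Coh(\D_{\cB/K_\iota})\otimes_\C\C[u,u^{-1}],
$$
which is the claim. The one delicate step is the triviality of $\varphi_\iota$ on the localized category; the rest is a routine transcription of Section~\ref{Steinberg section}.
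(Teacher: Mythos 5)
Your proof follows the paper's sketch almost verbatim: the paper itself only outlines the argument as ``completely analogous'' to Section~\ref{Steinberg section}, directing the reader to (i) an analogue of Lemma~\ref{restriction is an equivalence}, and (ii) the monodromy-reconciliation argument of Theorem~\ref{localized monodromic reduction} via the identity $m^\iota_\St = m^\iota_\cH\circ\varphi_\iota$ from the discussion after Theorem~\ref{formal langlands is a hochschild space}. Your two steps are exactly these, and your contraction-to-$\G_m$-finite-vectors treatment of the restriction equivalence is the right transcription of Lemma~\ref{restriction is an equivalence}.

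The one place where I would flag a real issue is your justification that $\varphi_\iota$ acts trivially after localization. You write that since $\iota$ centralizes $K_\iota$, ``on objects of $\cB/K_\iota$ it can be deformed to the identity.'' But $\iota\in G_\eta\setminus G$ is not an element of $K_\iota$, nor even of $G$, so it is not an element of the automorphism group $B\cap K_\iota$ of a point of $\cB/K_\iota$; the literal transcription of the Steinberg argument --- that the central element lies in a maximal torus of the stabilizer and is therefore connected to the identity --- does not parse here. Your parenthetical fallback (``$\iota^2 = e$ together with the coincidence of the squared $S^1$-actions'') is closer to a usable argument, but note that the paper explicitly declines to use this (``Although we will not use it\ldots''), so it is not the route the authors had in mind. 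Since the paper leaves this verification to the reader as well, your reconstruction is in keeping with the text, but if one is going to fill in the details here, the deformation-to-identity assertion needs either a correct identification of the automorphism that $\varphi_\iota$ actually induces on the $\D$-module side of the equivalence, or a justification via the coincidence of the squared $S^1$-actions and its effect on the localized equivariant category. The rest of your argument is fine.
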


The proof is completely analogous to arguments appearing in
Section~\ref{Steinberg section}. First, as in Lemma~\ref{restriction
is an equivalence}, one checks that the natural restriction map
$$
\Coh(\St^\iota_{e}/G) \to \Coh(\wh \St^\iota_{e}/G)
$$
is an $S^1$-equivariant equivalence. Then as in
Theorem~\ref{localized monodromic reduction}, one uses
Theorem~\ref{formal langlands is a hochschild space} and the
discussion thereafter to deduce an equivalence of {localized}
$S^1$-equivariant categories. We leave it to the interested reader
to trace through the arguments.

%%%%%%%%%%%%%%%%%%%%%%%%%%%%%%%%%%%%%%%%%%%%%

\subsection{General monodromicities}
In this section, we establish results for a general monodromic
parameter analogous to those of the previous sections.

Recall that we have the monodromic projection $p:\St^\eta/{G}\to H$,
and that the Langlands space $\St^\eta/G$ is defined to be the
inverse image under $p$ of the formal group $\wh H_e$.

More generally, given $\alpha\in H$, we denote its formal
neighborhood by $\wh H_\alpha$, and define the monodromic Langlands
space $\St^\eta_{\alpha}/G$ to be the inverse image
$$
\St^\eta_{\alpha}/G=p^{-1}( \wh H_\alpha).
$$
%So in particular, for the identity $(e,e)\in H\times H$, we have $\St=\St_{e,e}$.

Our aim here is to give a loop space interpretation of coherent
sheaves on $\St^\eta_{\alpha}/G$.

\medskip

As with the case already considered when $\alpha$ is the identity,
we have the following picture. The monodromic Langlands space
$\St^\eta_{\alpha}/G$ breaks up into many connected components. To
describe this decomposition, fix a semisimple representative $\tilde
\alpha \in G$, and consider the space of elements
$$
\cI_\alpha = \{\iota \in (G^\eta\setminus G) | \iota^2 =
\tilde\alpha\}.
$$
For convenience, fix once and for all an element $\iota$ in each
connected component of $\cI_\alpha$, and let $\cI_{\alpha, \iota}$
denote the connected component containing $\iota$.

Let $G(\alpha)\subset G$ be the centralizer of $\tilde \alpha$. Each
$\iota$ provides an involution of $G(\alpha)$ by conjugation, and we
write $K(\alpha)_\iota\subset G(\alpha)$ for the corresponding
fixed-point subgroup. By construction, the quotient stack
$\cI_{\alpha, \iota}/G(\alpha)$ is isomorphic to the classifying
space $pt/K(\alpha)_\iota$.

For each $\iota$, we write $\cN_{\alpha, \iota}$ for the connected
component of the image of the projection
$$
\St^\eta_\alpha\to (G_\eta \setminus G).
$$
containing $\iota$. Then we have that $\St_\alpha^\eta/G$ is the
disjoint union of the connected components
$$
\St_\alpha^\iota/G= \{ (B \in \cB, (g, \delta) \in B \times_G
\cN_{\alpha, \iota} \}/G.
$$
As in the construction of $\St^\eta/G$, the ordinary fiber product
here is with respect to the inclusion $B\hookrightarrow G$ and the
square map $\cN_{\alpha, \iota} \to G$.

Let $\cB(\alpha)$ be the flag variety of $G(\alpha)$, and consider
the $K(\alpha)_\iota$-action on $\cB(\alpha)$. Given a Borel
$B(\alpha)\in \cB(\alpha)$, there is a unique Borel $B\in\cB$
containing $B(\alpha)$ such that the class of $\tilde \alpha$ in the
universal Cartan $H=B/U$ is equal to $\alpha$. Thus we have a
canonical embedding
$$
e: \cB(\alpha)/ K(\alpha)_\iota \to \St^\iota_\alpha/G \qquad
e(B(\alpha)) =  (B,(\tilde\alpha,\iota))
$$
We write $\wh\St_\alpha^\iota/G$ for the formal neighborhood of the
image of $e$, and refer to it as the formal monodromic Langlands
space for $\iota$.

\medskip

The following is easily checked as in Theorem~\ref{formal langlands
is a hochschild space} when $\alpha$ is trivial. We leave further
details to the interested reader.

\begin{thm}\label{formal monodromic langlands is a hochschild space}
There is a canonical isomorphism
$$
\cH (\cB(\alpha)/ K(\alpha)_\iota) \simeq \wh\St_\alpha^\iota/G.
$$
\end{thm}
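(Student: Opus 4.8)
The plan is to run the proof of Theorem~\ref{formal langlands is a hochschild space} essentially verbatim, replacing the group $G$, the symmetric subgroup $K_\iota$, and the involution $\iota$ (there satisfying $\iota^2=1$) by $G(\alpha)$, $K(\alpha)_\iota$, and the element $\iota\in\cI_\alpha$, which now satisfies $\iota^2=\tilde\alpha$. First I would record the two moduli descriptions. Exactly as in the trivial-parameter case,
$$
\cH(\cB(\alpha)/K(\alpha)_\iota)=\{(B(\alpha)\in\cB(\alpha),\ (g,k)\in B(\alpha)\times_{G(\alpha)}\wh{K(\alpha)}_\iota)\}/K(\alpha)_\iota ,
$$
with $\wh{K(\alpha)}_\iota$ the formal group of $K(\alpha)_\iota$ and the fiber product taken along $B(\alpha)\hookrightarrow G(\alpha)\hookleftarrow\wh{K(\alpha)}_\iota$. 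Unwinding the definitions of $\St^\iota_\alpha/G$ and of the embedding $e$, the right-hand side $\wh\St^\iota_\alpha/G$ is the formal completion, along the image of $e$, of the stack of pairs $(B,(g,\delta))$ with $\delta$ in the component $\cN_{\alpha,\iota}$ and $g=\delta^2\in B$ (so that $[\delta^2]\in\wh H_\alpha$ automatically).

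Next I would write down the comparison map. Put $\delta=k\iota$ and let $\theta$ denote $\iota$-conjugation on $G(\alpha)$, so $\theta^2=\operatorname{Ad}(\tilde\alpha)$; for $k\in K(\alpha)_\iota$, using that $\theta$ fixes $K(\alpha)_\iota$ pointwise and that $\tilde\alpha$ is central in $G(\alpha)$, one computes $(k\iota)^2=k\,\theta(k)\,\iota^2=k^2\tilde\alpha$. The map is then
$$
\cH(\cB(\alpha)/K(\alpha)_\iota)\longrightarrow\wh\St^\iota_\alpha/G ,\qquad (B(\alpha),(g,k))\longmapsto(B,(g^2\tilde\alpha,\ k\iota)) ,
$$
where $B\supset B(\alpha)$ is the unique Borel of $G$ with $[\tilde\alpha]_{B/U}=\alpha$. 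It is well defined: $g^2\tilde\alpha=k^2\tilde\alpha$ lies in $B(\alpha)\subset B$ and has class $[k]^2\alpha\in\wh H_\alpha$, the element $k\iota$ lies in $\cN_{\alpha,\iota}$ near $\iota$, and since $\theta$ is trivial on $K(\alpha)_\iota$ the $K(\alpha)_\iota$-twisted conjugation on $k\iota$ coincides with ordinary conjugation on $k$, so the map intertwines the $K(\alpha)_\iota$-action with the residual $G$-action.

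To finish I would show the map is an isomorphism. Both sides are formal completions of $(\cB(\alpha)/K(\alpha)_\iota)$-stacks along that common base, and the map is the identity there, so it suffices to check it induces an isomorphism on tangent complexes relative to the base. The only equation that is not pure bookkeeping is $\delta^2\in B$, which under $\delta=k\iota$ reads $k\,\theta(k)\tilde\alpha\in B$; its derivative at $k=1$ is the operator $1+\theta$ on $\g$. Since $\theta$ is semisimple, commutes with $\operatorname{Ad}(\tilde\alpha)$, and (an elementary eigenvalue count shows) has eigenvalue $-1$ only on the $(-1)$-eigenspace of $\theta$ inside $\g(\alpha)=\operatorname{Lie}(G(\alpha))$, a direct computation identifies the fiber tangent complex on the right with that on the left. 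This is exactly the $\theta$-twisted incarnation of the fact, used in Theorem~\ref{formal langlands is a hochschild space}, that over a field of characteristic zero the squaring map of a formal group is an isomorphism; hence the map is an isomorphism.

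The real work, I expect, is not any new idea but the bookkeeping of the last two paragraphs: correctly matching the $G$-quotient appearing on the spectral side with the $K(\alpha)_\iota$-quotient on the Hochschild side, and checking that completing $\St^\eta_\alpha$ along the image of $e$ cuts the formal deformations of $\iota$ down to precisely $\wh{K(\alpha)}_\iota$ (the $1+\theta$ computation) and to nothing larger. Conceptually everything is as in the trivial-parameter case already treated in Theorem~\ref{formal langlands is a hochschild space}, which is why the paper leaves these verifications to the reader.
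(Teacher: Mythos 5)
Your proposal is correct and follows essentially the same route the paper indicates: the paper gives no proof for general $\alpha$, instead deferring to the argument of Theorem~\ref{formal langlands is a hochschild space}, and you carry out exactly that argument, with the right corrections (the twisted square $(k\iota)^2=k^2\tilde\alpha$, hence the target element $g^2\tilde\alpha$, and the choice of $B\supset B(\alpha)$ with $[\tilde\alpha]=\alpha$). Your tangent-complex verification via $1+\theta$ is simply the infinitesimal form of the paper's ``squaring a formal group in characteristic zero is an isomorphism,'' spelled out to confirm that the completion along the image of $e$ cuts deformations of $\iota$ down to $\wh K(\alpha)_\iota$ and no more.
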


As in Theorem~\ref{formal langlands is a hochschild space}, the
identification of Theorem~\ref{formal monodromic langlands is a
hochschild space} does {\em not} in general provide an
$S^1$-equivariant isomorphism. Let us compare the universal
monodromies of the respective $S^1$-actions under the induced
quasi-equivalence
$$
\QCoh(\cH (\cB(\alpha)/ K(\alpha)_\iota) )\risom
\QCoh(\wh\St^\iota_\alpha/G).
$$
Under this identification, we have an identity
$$
m^\iota_{\St_\alpha} = m^\iota_{\cH_\alpha} \circ \varphi_{\iota}
$$
where $m^\iota_{\St_\alpha}$ is the universal monodromy of the right
hand side, $m^\iota_{\cH_\alpha}$ is that of the left hand side, and
$\varphi_{\iota}$ is the automorphism of the identity functor given
by the element $\iota \in G_\eta \setminus G$.

\medskip

We arrive at our goal of indentifying localized $S^1$-equivariant
coherent sheaves on the component of the mondromic  Langlands space
$\St^\iota_\alpha/G$.

\begin{thm}
There is a canonical quasi-equivalence of dg derived categories
$$
\Coh(\St^\iota_{\alpha}/G)_{per}^{S^1} \risom
\Coh(\D_{\cB_\alpha/K_{\alpha,\iota}})\otimes_\C \C[u,u^{-1}].
$$
\end{thm}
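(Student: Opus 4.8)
The plan is to run, for the reductive group $G(\alpha)$ and the involution implemented by $\iota$, exactly the chain of reductions behind Theorem~\ref{localized langlands} and Theorem~\ref{localized monodromic reduction}; here $\cB(\alpha)=\cB_\alpha$ and $K(\alpha)_\iota=K_{\alpha,\iota}$ in the notation of the statement. First I would establish the analogue of Lemma~\ref{restriction is an equivalence}: the natural restriction
$$
\Coh(\St^\iota_{\alpha}/G)\longrightarrow\Coh(\wh\St^\iota_{\alpha}/G)
$$
is an $S^1$-equivariant equivalence. The $S^1$-equivariance is immediate since the completion map $\wh\St^\iota_\alpha/G\to\St^\iota_\alpha/G$ is $S^1$-equivariant, and for the equivalence one argues as in Lemma~\ref{restriction is an equivalence}: using Theorem~\ref{formal monodromic langlands is a hochschild space} to present the formal space over $\cB(\alpha)/K(\alpha)_\iota$, base change reduces the claim to showing that restriction from the completion of a Borel $B(\alpha)\subset G(\alpha)$ along its unipotent radical to the formal group $\wh B(\alpha)$ is an equivalence of coherent categories; this follows by choosing a generic cocharacter $\G_m\subset B(\alpha)$ contracting the unipotent radical to the identity, identifying the restriction with completion with respect to $\G_m$-weights, and exhibiting the inverse by passage to $\G_m$-finite vectors.

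Next I would invoke Theorem~\ref{formal monodromic langlands is a hochschild space}, which identifies $\wh\St^\iota_{\alpha}/G$ with the Hochschild space $\cH(\cB(\alpha)/K(\alpha)_\iota)$, and apply the general results of Section~\ref{sheaves section} to the smooth Artin stack $X=\cB(\alpha)/K(\alpha)_\iota$. By Corollary~\ref{cyclic and rees} and its periodic consequence one obtains
$$
\Coh(\cH(\cB(\alpha)/K(\alpha)_\iota))^{S^1}_{per}\;\simeq\;\Coh(\D_{\cB(\alpha)/K(\alpha)_\iota})\otimes_\C\C[u,u^{-1}],
$$
whose right-hand side is precisely the category in the statement.

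The substantive step is reconciling the two circle actions, and this is the only place periodicization is essential. The identification of Theorem~\ref{formal monodromic langlands is a hochschild space} is \emph{not} $S^1$-equivariant: as recorded in the discussion following that theorem, the two universal monodromies satisfy $m^\iota_{\St_\alpha}=m^\iota_{\cH_\alpha}\circ\varphi_\iota$, with $\varphi_\iota$ the automorphism of the identity functor attached to $\iota\in G_\eta\setminus G$. Exactly as in the proof of Theorem~\ref{localized monodromic reduction}, $\varphi_\iota$ commutes with the monodromy, hence descends to the localized $S^1$-equivariant category, and it suffices to show it acts there \emph{trivially}; if so, localized $S^1$-equivariant data for $m^\iota_{\St_\alpha}$ and for $m^\iota_{\cH_\alpha}$ coincide canonically, and combining with the two previous steps gives the asserted quasi-equivalence. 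To check the triviality one transports $\varphi_\iota$ through the equivalence of the second step to an automorphism of the identity functor of periodic $\D$-modules on $\cB(\alpha)/K(\alpha)_\iota$; since $\iota$ normalizes $K(\alpha)_\iota$ and $\iota^2=\tilde\alpha$ is central in $G(\alpha)$ and connected to the identity through the maximal torus of $K(\alpha)_\iota$ sitting inside the automorphism group of every point of $\cB(\alpha)/K(\alpha)_\iota$, the automorphisms implementing $\varphi_\iota$ lie in a connected group, and periodic $\D$-modules are insensitive to such automorphisms — just as in Theorem~\ref{localized monodromic reduction}.

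The main obstacle is this last step. In the Steinberg setting of Theorem~\ref{localized monodromic reduction} the discrepancy came from a \emph{central} element of $G(\alpha)$, whereas here it comes from an element $\iota$ of the non-identity component $G_\eta\setminus G$; one must therefore track carefully how $\iota$ interacts with the decomposition of $\St^\eta_\alpha/G$ into the components $\St^\iota_\alpha/G$, and verify that the residual effect of $\varphi_\iota$ on periodic $\D$-modules is only through automorphism groups of objects of $\cB(\alpha)/K(\alpha)_\iota$, which — being connected and containing the identity component of $K(\alpha)_\iota$ — cannot be detected. Everything else is a notational elaboration of the arguments of Section~\ref{Steinberg section}, as the brief proofs of the preceding theorems of this section already indicate.
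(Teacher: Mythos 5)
Your proposal follows exactly the route the paper itself prescribes for this theorem: the restriction equivalence modeled on Lemma~\ref{restriction is an equivalence}, the identification of the formal monodromic Langlands space with $\cH(\cB(\alpha)/K(\alpha)_\iota)$ via Theorem~\ref{formal monodromic langlands is a hochschild space}, the general Hochschild-to-periodic-$\D$-modules equivalence, and the reconciliation of the two circle actions by showing $\varphi_\iota$ acts trivially after localization, as in Theorem~\ref{localized monodromic reduction}. The paper leaves these steps to the reader, and your write-up fills them in correctly along the same lines, so there is nothing to add.
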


The proof is completely analogous to that of Theorem~\ref{localized
langlands} and follows arguments appearing in Section~\ref{Steinberg
section}. First, as in Lemma~\ref{restriction is an equivalence},
one checks that the natural restriction map
$$
\Coh(\St^\iota_{\alpha}/G) \to \Coh(\wh \St^\iota_{\alpha}/G)
$$
is an $S^1$-equivariant equivalence. Then as in
Theorem~\ref{localized monodromic reduction}, one uses
Theorem~\ref{formal monodromic langlands is a hochschild space} and
the discussion thereafter to deduce an equivalence of {localized}
$S^1$-equivariant categories. We leave it to the interested reader
to trace through the arguments.

%%%%%%%%%%%%%%%%%%%%%%%%%%%%%%%%%%%%%%%%%%%%%%%%%
%%%%%%%%%%%%%%%%%%%%%%%%%%%%%%%%%%%%%%%%%%%%%%%%%
%%%%

\section{Appendix: derived stacks}\label{appendix}

In what follows, all schemes, stacks, etc are assumed to be over a field
$k$ {\em of characteristic $0$}. Unless otherwise stated, all rings are assumed to be commutative
with unit.

%(While this restriction may be unnecessary, we prefer to
%work in the world of differential graded categories, rather than in
%the brave new worlds of simplicial or infinity-categories.)
%All categories of sheaves
%are derived, by which we mean dg enhancements of derived categories.

%%%%%%%%%%%%%%%%%%%%%%%%%%%%%%%%%%%%%%%%%%%%%%%%%

\subsection{Some motivation}
Our main objects of study are {\em derived stacks} in the sense of \cite{Toen,Lurie}.
Before recalling what is meant by a derived stack, we informally review some motivation for their introduction.

As we will outline momentarily, derived stacks are a broad context for dealing
with natural questions in algebraic geometry.
But perhaps the best motivation for considering derived stacks is not what they bring to algebraic
geometry, but that they bring algebraic geometry to other areas.
One of the most exciting examples of this is the unity of stable homotopy theory
and derived formal groups
in the language of the ``brave new algebraic geometry" of $E_\infty$-ring spectra.
In this paper, we discuss
how derived algebraic geometry provides a natural language for discussing basic objects of geometric representation theory. In particular,
Steinberg varieties and $\D$-modules
may be described
as derived stacks and quasicoherent sheaves on derived stacks respectively.

Our starting point is the study of schemes, or more generally Artin algebraic spaces.
(The latter are obtained from the former by allowing \'etale equivalence
relations.) Throughout the discussion, we will think of such geometric objects via
their functors of points. Thus by an algebraic space, we will mean
a functor
$$
\cF:\mbox{Rings} \to \Sets
$$
that is a sheaf in the \'etale topology, and admits an \'etale atlas
(a representable \'etale surjection
$
U\to \cF
$
where $U$ is a scheme; the \'etale equivalence relation on $U$ is given by the fiber
product $U\times_\cF U$).
%In what follows, given a sheaf $\cF$,
%we refer to a representable surjection $U\to \cF$ with $U$ a scheme as an atlas for $\cF$.

Even if one is only interested in schemes or algebraic spaces,
natural geometric constructions produce
objects which lie beyond their definition. Two fundamental examples worth keeping in mind
are the problems of finding universal families and forming intersections. The difficulties
of the first are well-known: it is impossible to find a space
that parametrizes familiar objects
such as curves or vector bundles. In other words, there is an insurmountable obstruction to
constructing
a space whose points are in natural bijection with the objects under consideration.
The source of the difficulty is that such objects come with large amounts of symmetry; no space
will be rich enough to parametrize all possible twisted families.
To overcome this, there is the theory of algebraic stacks: one expands the notion
of a space to include functors
$$
\cF:\mbox{Rings} \to \mbox{Groupoids}.
$$
Natural notions of representability include Deligne-Mumford stacks (where $\cF$ is a sheaf
in the \'etale topology and admits an \'etale atlas)
and Artin stacks
(where $\cF$ is a sheaf
in the faithfully flat quasi-compact topology and admits a
smooth atlas).
Thus rather than trying to wrestle with the automorphisms of objects, we accept their
presence and parametrize the objects and their symmetries simultaneously.
It is worth commenting that if one continues with such considerations, one finds the same deficiency
in restricting to functors with values in groupoid. A hint of higher stacks appears:
one should rather consider
functors with values in arbitrary simplicial sets.

A similarly well-known difficulty is inherent in the intersection theory of schemes.
Basic facts about intersections of schemes fail as soon as the intersection is degenerate.
The usual solution is to recognize that the discrepancy may be accounted for by higher
homological invariants of the intersection. Rather than only considering
the naive tensor product which defines the scheme-theoretic intersection, we should also
keep track of the higher $\on{Tor}$ terms as well. Considering such derived functors
and their underlying differential graded avatars
has become central in the study of coherent sheaves.
But one aspect of the situation has only come into focus relatively recently:
when derived constructions (such as the tensor product of rings)
produce a differential graded ring, we should continue
to regard this as kind of generalized scheme. The resulting theory of derived schemes
considers functors from differential graded rings to simplicial sets; more generally (though equivalent to the differential graded theory in characteristic zero),
one studies functors of the form
$$
\cF:\mbox{Simplicial rings} \to \mbox{Simplicial sets}.
$$
Here the appearance of simplicial sets follows naturally from the fact that we have introduced
simplicial rings. For example, since morphisms between simplicial rings are enriched
over simplicial sets,
representable functors naturally take values in simplicial sets.
(But it is worth mentioning that the derived schemes that are the building blocks
of the theory to be discussed will continue to take ordinary rings
to ordinary sets.)
Although for the time being we are postponing any formal details, it is important to comment
that what we care about here is not the simplicial structure on our rings and sets, but rather only
homotopically meaningful properties. Thus via the geometric realization of simplicial sets, we may equivalently consider
functors on topological rings with values in topological spaces. It is worth mentioning that
one may also take the perspective that a derived
scheme is a kind of ``locally ringed space" with structure sheaf a topological ring.

Finally, we arrive at the theory of derived stacks by passing to the natural
level of generality
implicit in the above theories
of stacks and derived schemes. Namely, we continue to consider functors of the form
$$
\cF:\mbox{Simplicial rings} \to \mbox{Simplicial sets}
$$
but no longer make such strict assumptions about the vanishing of higher homotopy groups of
the functors when evaluated on ordinary rings.
As before, we only care about the homotopic properties of test objects and functor values, and
so could equivalently consider functors from topological rings to topological spaces.
In the next section, we will recall the setting of quasicategories which keeps track of
the correct amount of structure. It is worth mentioning one more motivation for
considering derived stack. If we have accepted that stacks
are fundamental objects, then we would hope our working framework would encompass
natural constructions involving them. As a basic example, one can see that
the cotangent bundle of something as simple as a classifying stack is already a derived stack.

%satisfying appropriate descent and
%representability properties.
%The choice to consider values in simplicial sets allows one to
%iterate usual geometric constructions without leaving the working framework.
%More basically,
%it is the natural framework for considering functors on simplicial rings.

%%%%%%%%%%%%%%%%%%%%%%%%%%%%%%%%%%%%%%%%%%%%%%%%%

\subsection{Basic terminology}

Our aim in this section is to review some of the basic terminology in the theory
of derived stacks. It is a formidable task to come to terms with the intricate foundation
of this theory. Fortunately,
there are several excellent sources to which we may refer the reader~\cite{Toen,Lurie}.
In the following, we
content ourselves with introducing the
main objects and their most relevant attributes.

One challenge of introducing derived stacks is their unfamiliar categorical underpinnings.
%At the most basic level, we would like to be able to discuss homotopic properties of
%topological spaces without completely passing to their homotopy types.
A natural setting for their discussion is not category theory
strictly speaking but higher category theory in the form of
topological categories. In other words, we should work in the
context of categories enriched over topological spaces: the
morphisms between objects are topological spaces and the composition
maps are continuous. While this is a correct approach (and
equivalent to the approach adopted below), it is often unnecessarily
restrictive in practice. To provide more elbow room, we should
rather work in some more flexible version of $(\infty,1)$-categories
such as quasicategories (there is also the alternative framework of
Segal categories \cite{HAG1}, see \cite{Bergner} for a comparison
between the different frameworks). To give the definition of
quasicategories is easy: they are certain simplicial sets sometimes
called weak Kan complexes. Let $\Delta^n$ denote the standard
$n$-simplex, and $\Lambda_i^n\hookrightarrow \Delta^n$ the ``inner
horn" obtained by deleting the interior open $n$-simplex and the
$i$th $(n-1)$-dimensional face.

\begin{defn}
A quasicategory is a simplicial set $K$ satisfying the following extension property:
for any $0<i<n$, any map $\Lambda_i^n \to K$ extends to a map $\Delta^n\to K$.
\end{defn}

The theory of quasicategories is well documented in the literature,
and there are many good sources for the interested reader (see in
particular \cite{Joyal} and the survey \cite{Bergner}, in addition
to the book \cite{topoi}). Rather than recalling any further formal
properties, it may be more meaningful to try to spell out what
motivates the definition. Roughly speaking, one wants the notion of
a category whose morphisms are topological spaces and whose
compositions and associativity properties are defined up to coherent
homotopies. For a quasicategory, the objects are given by its
$0$-simplices, and the $n$-morphisms by its $n$-simplices. Though
there is no explicit mention of an associative composition in the
above definition, the extension property is exactly what is needed
to define such structure up to coherent homotopies. Finally, given a
topological category, one may construct a quasicategory by taking
its topological nerve (which is by definition the simplicial nerve
of its singular complex). This sets up an equivalence between the
theory of topological categories and that of quasicategories.

A common way that quasicategories arise is
via the localization of simplicial model categories.
One may think of many quasicategories
as fitting into a sequence
$$
\mbox{Simplicial model category $\cC$}
\to\mbox{Quasicategory $N(\cC^\circ)$}
\to\mbox{Homotopy category $h\cC$}
$$
where the quasicategory $N(\cC^\circ)$ is the simplicial nerve of the subcategory
$\cC^\circ\subset \cC$ of fibrant-cofibrant objects.
More generally, one can associate to any model category an underlying quasicategory
by inverting the weak equivalences in the appropriate sense.
%Given a Segal category, one may always find a model category which
%gives rise to it though it will only be unique up to Quillen equivalence.
For example, starting from the categories of
(compactly generated Hausdorff) topological spaces
and simplicial sets with their usual model structures,
we obtain canonically equivalent quasicategories which we
will identify, denote by $\cS$, and refer to as the quasicategory of spaces.
Note that
passing to the homotopy category is often too drastic: there is not enough structure
in order to make usual constructions.
To make an analogy, we sometimes think of the following toy model of the above sequence
$$
\mbox{Based vector spaces}
\to\mbox{Vector spaces}
\to\mbox{Dimensions of vector spaces}.
$$
While it is often best to consider vector spaces with no preferred basis,
standard constructions can not be made at the level of their dimensions.

\medskip

With the preceding discussion in hand, we now proceed to recall the definition
of derived stacks. As informally discussed in the previous section, we will think of
derived stacks
in terms of their functors of points. Thus the first order of business is to describe
what our test objects are and where our functors take their values.

Let $\cC\cA_k$ denote the category of commutative
unital $k$-algebras, and let $\cC\cA_k^\Delta$ denote its simplicial category.
We endow the latter with the structure of a simplicial % ($k$-linear)
model category in which the weak equivalences and fibrations are
weak equivalences and fibrations on the underlying simplicial sets.
Let $\SCA_k$ denote the quasicategory obtained from $\cC\cA_k$ by
considering its fibrant-cofibrant objects. One refers to objects
of its opposite quasicategory as affine derived schemes.
It is possible to speak of \'etale and smooth morphisms between affine derived schemes,
and hence in particular the \'etale topology on $\SCA_k$.
Recall that $\cS$ denotes the quasicategory of spaces obtained from the
simplicial model category
of  simplicial sets, or equivalently (compactly generated Hausdorff) spaces, by considering
fibrant-cofibrant objects.

\begin{defn}
A {derived scheme} over $k$ is a functor
$$
\cF:\SCA_k\to \cS
$$
that is a sheaf in the \'etale topology,
and admits an \'etale atlas $U\to \cF$ where $U$ is an affine derived scheme.
\end{defn}

The reader will notice that the definition is more akin to that of a Deligne-Mumford stack
than an ordinary scheme. This turns out to be a more natural notion from the perspective
of locally ringed spaces, or more accurately ringed topos theory.
Namely, one may alternatively define a derived scheme to be
an $\infty$-topos $\cX$ equipped with a $\SCA_k$-valued structure sheaf $\cO$
satisfying the following representability:
there is a collection of objects $U_\alpha\in \cX$ such that the map
$\coprod_\alpha U_\alpha \to 1_\cX$ is surjective, and each $\infty$-topos
$\cX_{/U_\alpha}$ with structure sheaf $\cO_\cX|U_\alpha$
is equivalent to the spectrum of some simplicial commmutative ring.
The allowable gluings of the categorical setting of topoi naturally lead
to the generality of Deligne-Mumford stacks.
To arrive at the derived notion of algebraic space, we simply impose
the following condition on the gluings.

%One source of examples of derived schemes are cosimplicial schemes.

\begin{defn}
A {derived algebraic space} over $k$ is a derived scheme $\cF$
such that $\cF(A)$ is discrete whenever $A$ is discrete.
\end{defn}

%In line with the following discussion, we will call algebraic spaces $0$-Artin stacks.
Now to pass to arbitrary derived Artin stacks, we relax the representability assumption
of a derived algebraic space.
(Note that we are already considering functors with values in
the quasicategory of spaces $\cS$ so there
is no need to generalize anything in this direction.)
We will use the term derived stack to refer to any functor
$$
\cF:\SCA_k\to \cS
$$
that is a sheaf in the \'etale topology.
Let us consider what kind of representability we could
allow for an atlas
$p:U\to \cF$. Since all of our previous objects (specifically derived algebraic spaces)
admit atlases of affine derived schemes, we will keep to this here as well and assume
$U$ is an affine derived scheme.
But now it makes sense to allow $p$ to be a smooth relative derived algebraic space
rather than only an \'etale map.
One calls sheaves $\cF$ that admits such a presentation $1$-Artin stacks.
Now of course, we could
iterate this definition and allow \'etale sheaves $\cF$ that admit
atlases
$p:U\to \cF$ where $U$ is an affine derived scheme, but $p$
is a smooth relative $1$-Artin stack.
One calls sheaves $\cF$ that admits such a presentation $2$-Artin stacks.
And so on: the inductive story is encapsulated in the following definition.

\begin{defn}
Let $\dstack$ be the quasicategory of derived stacks whose objects are
$\cS$-valued \'etale sheaves on $\SCA_k$.
In what follows, let $X\to Y$ be a map of such sheaves,
and let $T$ be an arbitrary affine derived test scheme.

\smallskip

A morphism $p:X\to Y$ is a relative $0$-Artin stack if for any map $T\to Y$,
the base change $T\times_Y X$ is a derived algebraic space. Such a map
$p$ is said to be
smooth if the induced map $T\times_Y X\to T$ is smooth as a map of derived schemes.

\smallskip

For $n>0$, a morphism $p:X\to Y$ is a relative $n$-Artin stack if for any map $T\to Y$,
there exists an affine derived scheme $U$, and a smooth
surjection $U\to T\times_Y X$ which is a relative $(n-1)$-Artin stack. Such a map
$p$ is said to be
smooth if the induced map $U\to T$ is smooth.

\smallskip

A derived $n$-Artin stack $X$ is a relative derived $n$-Artin stack $X\to \Spec(k)$.
A derived Artin stack is a derived $n$-Artin stack for some $n\geq 0$.
\end{defn}

It is worth remarking that the basic building blocks of derived Artin stacks
(according to the above definition)
are {\em affine} derived schemes as opposed to all derived schemes.
In particular, not all derived schemes are derived stacks in the above sense.
Rather, one can check that for $n\geq 1$,
a derived scheme $\cF$ is a derived $n$-stack if and only if
for all discrete test rings $A$, the homotopy groups of $\cF(A)$ vanish
in degrees greater than or equal to $n$.

%\medskip

%
%It is convenient to have an explicit combinatorial way to discuss the structure
%of a derived Artin stack.
%To this end, given a derived Artin stack $X$, we can construct a simplicial affine derived scheme
%$X^\bul$ which represents it.

%Suppose $X$ is a derived Artin $n$-stack. By definition, there is an affine derived scheme $U^0$
%and a smooth surjection $U^0 \to X$ which is a relative Artin $(n-1)$-stack.
%Thus the fiber product
%$$
%P^1 = U^0 \underset{X}{\times} U^0
%$$
%is a derived Artin $(n-1)$-stack.
%%It comes equipped with two canonical face maps $f_0, f_1:P^1\to U^0$
%%and a canonical degeneracy map $\delta_0:U^0\to P^1$.

%By definition, there is an affine derived scheme $U^1$
%and a smooth surjection $U^1 \to P^1$ which is a relative Artin $(n-2)$-stack.
%Thus the fiber product
%$$
%P^2 = U^0 \underset{P^1}{\times} U^1
%$$
%is a derived Artin $(n-2)$-stack.

%In general, after $k$ iterations, we have an affine derived scheme $U^k$ and a smooth surjection
%$U^k\to P^k$ which is a relative Artin $(n-k-1)$-stack. Thus the fiber product
%$$
%P^{k+1} = U^0 \underset{P^k}{\times} U^k
%$$
%is a derived Artin $(n-k-1)$-stack.

%The collection of affine derived schemes $U^\bul$ almost comes equipped with the structure
%of a simplicial object.
%%We have the requisite face maps, but not necessarily the degeneracy maps.
%The requisite maps
%may be obtained by passing to a suitable refinement.
%In this way, we obtain a simplicial affine derived scheme $X^\bul$ which represents $X$.
%

\medskip

One final issue to comment upon before wrapping up this survey is our choice of simplicial
commutative rings as coefficients. There are several possible generalizations
of the ordinary category of discrete rings which one might consider when
defining affine derived schemes and hence derived stacks. In the preceding
discussion, we have worked with
the quasicategory $\SCA_k$ of simplicial commutative $k$-algebras,
or equivalently topological $k$-algebras. To an algebraically minded person,
this world may feel very far from the intuitions of discrete rings. Thus it is
worth pointing out that in characteristic zero, in place of
topological $k$-algebras
%$\SCA_k$,
one may instead consider the category of
commutative differential graded $k$-algebras.
It admits a model structure in which
the weak equivalences are simply the quasiisomorphisms,
and the cofibrations are retracts of iterated cell attachments.
We write $\DGA_k$ for the underlying quasicategory.
The Dold-Kan theorem provides a canonical
fully faithful embedding
$$
\SCA_k\hookrightarrow\DGA_k.
$$
Its essential image consists of connective objects (those objects whose cohomology
vanishes in negative degrees).
Thus in characteristic zero,
we may think of the quasicategory of affine derived schemes as
opposite to that of
connective differential graded $k$-algebras.
In the main text of this paper, we freely interpolate between
the simplicial and differential graded points of view.

%%%%%%%%%%%%%%%%%%%%%%%%%%%%%%%%%%%%%%%%%%%%%%%%%
%%%%%%%%%%%%%%%%%%%%%%%%%%%%%%%%%%%%%%%%%%%%%%%%%
%%%%


\begin{thebibliography}{BBDG}



\bibitem[ABV]{ABV}
J. Adams, D. Barbasch, and D. Vogan, The Langlands Classification
and Irreducible Characters for Real Reductive Groups. {\em Progress
in Mathematics} {\bf 104}. Birkhauser, Boston-Basel-Berlin, 1992.

\bibitem[AP]{Allday Puppe} C. Allday and V. Puppe,
On a conjecture of Goresky, Kottwitz and MacPherson. {\em Canadian
J. Math} {\bf 51} (1999) 3--9.


\bibitem[BD]{BD Hitchin} A. Beilinson and V. Drinfeld, Quantization
of Hitchin Hamiltonians and Hecke Eigensheaves. Preprint, available
at math.uchicago.edu/\~{}arinkin.

\bibitem[BGS]{BGS} A. Beilinson, V. Ginzburg and W. Soergel, Koszul
duality patterns in representation theory. {\em Jour. Amer. Math.
Soc.}  {\bf 9}  (1996),  no. 2, 473--527.

\bibitem[Be]{Behrend} K. Behrend,
 Differential Graded Schemes II: The 2-category of Differential Graded
 Schemes. Preprint math.AG/0212226.


\bibitem[BeF]{BeFa} K. Behrend and B. Fantechi,
 The intrinsic normal cone.
{\em Invent. Math.} {\bf 128} (1997), no. 1, 45--88.


\bibitem[BN1]{geometric Vogan} D. Ben-Zvi and D. Nadler, Affine
Hecke algebras and Vogan duality. In preparation.

\bibitem[BN2]{complex} D. Ben-Zvi and D. Nadler, Geometric Langlands
on $\pline$ and representations of complex groups. In preparation.

\bibitem[BN3]{base change} D. Ben-Zvi and D. Nadler, Geometric Base
Change. In preparation.

\bibitem[Ber]{Bergner} J. Bergner, A survey of $(\infty, 1)$-categories.
arXiv:math/0610239.


\bibitem[Be]{Roma ICM} R. Bezrukavnikov, Noncommutative
counterparts of the Springer resolution. arXiv:math.RT/0604445.
 International Congress of Mathematicians. Vol. II,  1119--1144, Eur. Math. Soc., Z\"urich, 2006.

\bibitem[BeFi]{BezFink} R. Bezrukavnikov and M. Finkelberg,
Equivariant Satake category and Kostant-Whittaker reduction.
Preprint, 2007.

%\bibitem[BV]{Bondal vandenBergh} A. Bondal and M. van den Bergh,
%Generators and representability of functors in commutative and
%noncommutative geometry. arXiv:math.AG/0204218.

\bibitem[BF1]{BF1} R.-O. Buchweitz and H. Flenner, Global Hochschild
(co-)homology of singular spaces. math.AG/0606593.

\bibitem[BF2]{BF2} R.-O. Buchweitz and H. Flenner, The global decomposition
theorem for Hochschild (co-)homology of singular spaces via the
Atiyah-Chern character. math.AG/0606730.

\bibitem[C1]{Cald1} A. C\u ald\u araru, The Mukai pairing. I.
The Hochschild structure. math.AG/0308079.

\bibitem[C2]{Cald2} A. C\u ald\u araru, The Mukai pairing. II. The
Hochschild-Kostant-Rosenberg isomorphism.  {\em Adv. Math.}  {\bf
194} (2005), no. 1, 34--66. math.AG/0308080.

\bibitem[CG]{CG} N. Chriss and V. Ginzburg,
Representation theory and complex geometry. Birkh\"auser Boston,
Inc., Boston, MA, 1997. x+495 pp.

\bibitem[CFK]{CF-K} I. Ciocan-Fontanine and M. Kapranov, Virtual
fundamental classes via dg-manifolds. Preprint math.AG/0703214.

\bibitem[Co]{Connes} A. Connes, Cohomologie cyclique et foncteurs ${\rm
Ext}\sp n$.  {\em C. R. Acad. Sci. Paris Sér. I Math.} {\bf 296}
(1983), no. 23, 953--958.

\bibitem[D]{Drinfeld} V. Drinfeld, DG quotients of DG categories.
arXiv:math.KT/0210114.  {\em J. Algebra}  {\bf 272}  (2004),  no. 2,
643--691.

%\bibitem[DHK]{DHK} W. Dwyer, M. Hopkins and D. Kan, The homotopy
%theory of cyclic sets.  {\em Trans. Amer. Math. Soc.}  {\bf 291}
%(1985), no. 1, 281--289.



\bibitem[DK]{Dwyer Kan} W. Dwyer and D. Kan, Three homotopy theories for cyclic
modules. Proceedings of the Northwestern conference on cohomology of
groups (Evanston, Ill., 1985).  {\em J. Pure Appl. Algebra}  {\bf
44} (1987), no. 1-3, 165--175.



\bibitem[Fr]{Franz} M. Franz, Koszul duality and equivariant cohomology.  Doc.
Math.  11  (2006), 243--259.

\bibitem[F]{ramifications} E. Frenkel, Ramifications of the geometric Langlands
Program. Proceedings of the CIME Summer School
    "Representation Theory and Complex Analysis", Venice, June 2004.
    arXiv:math.QA/0611294.

\bibitem[FG]{FG} E. Frenkel and D. Gaitsgory, Local geometric Langlands
correspondence and affine Kac-Moody algebras. Algebraic geometry and
number theory,  69--260, {\em Progr. Math.} {\bf 253}, Birkhäuser
2006. arXiv:math.RT/0508382.

\bibitem[GKM]{GKM} M. Goresky, R. Kottwitz and R. MacPherson, Equivariant
cohomology, Koszul duality, and the localization theorem.  Invent.
Math.  131  (1998),  no. 1, 25--83.

\bibitem[GW]{GW} S. Gukov and E. Witten, Gauge Theory, Ramification
and the Geometric Langlands Program. arXiv:hep-th/0612073.

%\bibitem[H]{Hinich} V. Hinich, Drinfeld double for orbifolds.
%arXiv:math.QA/0511476.

\bibitem[HKR]{HKR} G. Hochschild, B. Kostant, and A. Rosenberg, Differential
forms on regular affine algebras.  {\em Trans. Amer. Math. Soc.}
{\bf 102} 1962 383--408.



\bibitem[J]{Jones} J.D.S. Jones, Cyclic homology and equivariant homology.
{\em Invent. Math.}  {\bf 87}  (1987),  no. 2, 403--423.

\bibitem[Jo]{Joyal} A. Joyal, Quasi-categories and Kan complexes.
{\em J. Pure Appl. Algebra} {\bf 175} (2002) 207--222.

%\bibitem[JS]{JoyalStreet} A. Joyal and R. Street,
%Tortile Yang-Baxter operators in tensor categories. {\em J. Pure
%Appl. Algebra} {\bf 71} (1991), no. 1, 43--51.

\bibitem[Ka1]{Kap dR} M. Kapranov, On DG-modules over the de Rham complex
and the vanishing cycles functor.  Algebraic geometry (Chicago, IL,
1989),  57--86, {\em Lect. Notes in Math.} {\bf 1479}, Springer,
Berlin, 1991.

\bibitem[Ka2]{Kap RW} M. Kapranov, Rozansky-Witten invariants via Atiyah classes.
{\em Compositio Math.} {\bf 115} (1999), no. 1, 71--113.
math.AG/9704009.



\bibitem[KW]{KW}
A. Kapustin and E. Witten,
``Electric-Magnetic Duality And The Geometric Langlands Program," hep-th/0604151.

\bibitem[KS]{KS} M. Kashiwara and W. Schmid,
Quasi-equivariant ${\D}$-modules, equivariant derived category, and
representations of reductive Lie groups. Lie theory and geometry,
457--488, {\em Progr. Math.} {\bf 123}, Birkhäuser Boston, Boston,
MA, 1994.

\bibitem[KL]{KL} D. Kazhdan and G. Lusztig, Proof of the Deligne-Langlands
conjecture for Hecke algebras. {\em Invent. Math.}  {\bf 87} (1987),
no. 1, 153--215.

\bibitem[Ke]{Keller} B. Keller, On differential graded categories.
arXiv:math.AG/0601185.  International Congress of Mathematicians.
Vol. II, 151--190, Eur. Math. Soc., Z\"urich, 2006.

\bibitem[K2]{Kont dg} M. Kontsevich, Enumeration of rational curves via torus
actions. arXiv:hep-th/9405035.  The moduli space of curves (Texel
Island, 1994),  335--368, {\em Progr. Math.} {\bf 129}, Birkh\"auser
Boston, Boston, MA, 1995.

\bibitem[K]{Kont} M. Kontsevich, Deformation quantization of
Poisson manifolds.  {\em Lett. Math. Phys.}  {\bf 66}  (2003),  no.
3, 157--216. math.QA/9709040.




\bibitem[Lo]{Loday} J.-L. Loday, Cyclic homology.
Appendix E by Mar\'ia O. Ronco.
Second edition.
Chapter 13 by the author in collaboration with Teimuraz Pirashvili.
Grundlehren der Mathematischen Wissenschaften [Fundamental Principles
 of Mathematical Sciences], 301. Springer-Verlag, Berlin,  1998.

%\bibitem[L0]{Lurie Tannaka} J. Lurie, Tannaka duality for geometric stacks.
%arXiv:math.AG/0412266.

\bibitem[L1]{Lurie} J. Lurie, Derived Algebraic Geometry, MIT Ph.D. Thesis, 200?

\bibitem[L2]{topoi} J. Lurie, Higher topos theory.
arXiv:math.CT/0608040.

\bibitem[L3]{DAG1} J. Lurie, Derived Algebraic Geometry 1: Stable
infinity categories. arXiv:math.CT/0608228.

\bibitem[L4]{DAG2} J. Lurie, Derived Algebraic Geometry 2: Noncommutative algebra.
arXiv:math.CT/0702299.

\bibitem[L5]{DAG3} J. Lurie, Derived Algebraic Geometry 3:
Commutative algebra. arXiv:math.CT/0703204.

\bibitem[M]{Mark} N. Markarian, Poincar\'e-Birkhoff-Witt
isomorphism, Hochschild homology and Riemann-Roch theorem. MPI
preprint, 2001. arXiv version: The Atiyah class, Hochschild
cohomology and the Riemann-Roch theorem, e-print math.AG/0610553.



\bibitem[N]{Nad}
D. Nadler, ``Microlocal branes are constructible sheaves,"
math.SG/0612399.



\bibitem[NZ]{NZ} D. Nadler and E. Zaslow, ``Constructible Sheaves
and the Fukaya Category,'' math.SG/0406379.



%\bibitem[Q]{Quillen} D. Quillen,
%Rational homotopy theory. {\em Ann. of Math.} (2) {\bf 90} 1969
%205--295.




\bibitem[R1]{Rama1} A. Ramadoss, The big Chern Classes and the Chern
character. Preprint math.AG/0512104.

\bibitem[R2]{Rama2} A. Ramadoss, The relative Riemann-Roch theorem
from Hochschild homology. Preprint math.AG/0603127.

\bibitem[RW]{RW} J. Roberts and S. Willerton, On the Rozansky-Witten
weight systems. Preprint math.AG/0602653.

\bibitem[SS]{Schwede Shipley} S. Schwede, B. Shipley, Equivalences of monoidal model categories.
{\em Algebr. Geom. Topol.} {\bf 3}  (2003), 287--334 (electronic).


%\bibitem[Sh]{Shelstad} D. Shelstad, $L$-indistinguishability for real groups.
%{\em Math. Ann.}  {\bf 259}  (1982), no. 3, 385--430.

\bibitem[ST]{ST} C. Simpson and C. Teleman, de Rham's theorem for
$\infty$-stacks. Available at
www.dpmms.cam.ac.uk/\~{}teleman/math/simpson.pdf.

\bibitem[So]{Soergel} W. Soergel, Langlands' philosophy and Koszul duality,
      Algebra--representation theory (Constanta, 2000), {\em NATO Sci. Ser.
      II Math. Phys. Chem.}, vol. {\bf 28}, Kluwer Acad. Publ., Dordrecht,
      2001, pp. 379-414.



\bibitem[S]{Swan} R. Swan, Hochschild cohomology of quasiprojective
schemes. {\em J. Pure Appl. Algebra } {\bf 110}  (1996),  no. 1,
57--80.

\bibitem[Ta]{Tabauda} G. Tabauda, Une structure de cat\'egorie de mod\`eles de
Quillen sur la cat\'egorie des dg-cat\'egories. {\em Comptes Rendus
A.S.P.} {\bf 340} (2005) 15--19.

%\bibitem[To0]{Toen affine} B. To\"en, Affine stacks (Champs affines).
%arXiv:math.AG/0012219. To appear, Selecta Math.

\bibitem[To1]{Toen dg} B. To\"en, The homotopy theory of dg
categories and derived Morita theory. {\em Invent. Math.}, to
appear. arXiv:math.AG/0408337.

\bibitem[To2]{Toen} B. To\"en, Higher and Derived Stacks: a global overview.
To appear, Proceedings 2005 AMS Summer School in Algebraic Geometry.
arXiv:math.AG/0604504.

%\bibitem[To3]{Toen dg review} B. To\"en, Lectures on dg-categories.
%Available at http://www.picard.ups-tlse.fr/\~{}toen/note.html.

%\bibitem[ToVa]{ToenVaquie} B. To\"en and M. Vaqui\'e, Moduli of
%objects in dg categories. arXiv:math.AG/0503269.

\bibitem[ToVe1]{HAG1} B. To\"en and G. Vezzosi,
Homotopical Algebraic Geometry I: Topos theory. {\em Adv. Math.}
arXiv:math.AG/0404373.

\bibitem[ToVe2]{HAG2} B. To\"en and G. Vezzosi,
Homotopical Algebraic Geometry II: geometric stacks and
applications. {\em Memoirs of the AMS}. arXiv:math.AG/0404373.

%\bibitem[Ts]{Tsygan} B. Tsygan, Cyclic homology.  Cyclic homology in
%non-commutative geometry,  73--113, Encyclopaedia Math. Sci., 121,
%Springer, Berlin, 2004.

\bibitem[V]{Vogan} D. Vogan,
Irreducible characters of semisimple Lie groups. IV.
Character-multiplicity duality.  {\em Duke Math. J.}  {\bf
  49}  (1982), no. 4, 943--1073.


\bibitem[Y]{Yekut} A. Yekutieli, The continuous Hochschild cochain
complex of a scheme. Canad. J. Math. 54 (2002) no. 6,
 1319-1337.
 \end{thebibliography}
\end{document}